\numberwithin{equation}{section}
\numberwithin{figure}{section}
\theoremstyle{plain}
\newtheorem{theorem}{Theorem}[section]
\theoremstyle{plain}
\newtheorem*{theorem*}{Theorem}
\theoremstyle{plain}
\newtheorem{proposition}[theorem]{Proposition}
\theoremstyle{plain}
\newtheorem{lemma}[theorem]{Lemma}
\theoremstyle{plain}
\newtheorem{corollary}[theorem]{Corollary}
\theoremstyle{definition}
\newtheorem{definition}[theorem]{Definition}
\theoremstyle{definition}
\theoremstyle{definition}
\theoremstyle{remark}
\newtheorem{remark}[theorem]{Remark}
\newcommand{\Uqg}{U_q(\mathfrak{g})}
\newcommand{\Uqu}{U_q(\mathfrak{u})}
\newcommand{\UqlS}{U_q(\mathfrak{l}_S)}
\newcommand{\UqkS}{U_q(\mathfrak{k}_S)}
\newcommand{\CqG}{\mathbb{C}_q[G]}
\newcommand{\CqU}{\mathbb{C}_q[U]}
\newcommand{\Cqflag}{\mathbb{C}_q[U / K_S]}
\newcommand{\id}{\mathrm{id}}
\newcommand{\bbC}{\mathbb{C}}
\newcommand{\bbZ}{\mathbb{Z}}
\newcommand{\diff}{\mathrm{d}}
\newcommand{\del}{\partial}
\newcommand{\delbar}{\bar{\partial}}
\newcommand{\ev}{\mathsf{ev}}
\newcommand{\coev}{\mathsf{coev}}
\newcommand{\EV}{\mathsf{E}}
\newcommand{\EVp}{\mathsf{E}^\prime}
\newcommand{\CV}{\mathsf{C}}
\newcommand{\CVp}{\mathsf{C}^\prime}
\newcommand{\braid}{\hat{\mathsf{R}}}
\newcommand{\braidP}{\mathsf{P}}
\newcommand{\braidQ}{\mathsf{Q}}
\newcommand{\met}{g}
\newcommand{\metMP}{g_{- +}}
\newcommand{\metPM}{g_{+ -}}
\newcommand{\Tnabla}{T_\nabla}
\newcommand{\coTnabla}{\mathrm{coT}_\nabla}
\newcommand{\calc}{\Omega}
\newcommand{\calcM}{\Omega_-}
\newcommand{\calcP}{\Omega_+}
\newcommand{\overcalcM}{\Gamma_-}
\newcommand{\overcalcP}{\Gamma_+}
\newcommand{\FovercalcM}{\tilde{\Gamma}_-}
\newcommand{\FovercalcP}{\tilde{\Gamma}_+}
\newcommand{\alg}{\mathcal{A}}
\newcommand{\subalg}{\mathcal{B}}
\newcommand{\conn}{\nabla}
\newcommand{\connM}{\nabla_-}
\newcommand{\connP}{\nabla_+}
\newcommand{\opS}{\mathsf{S}}
\newcommand{\opSt}{\tilde{\mathsf{S}}}
\newcommand{\opT}{\mathsf{T}}
\newcommand{\PhiMP}{\Phi_{- +}}
\newcommand{\PhiPM}{\Phi_{+ -}}
\newcommand{\PsiMP}{\Psi_{- +}}
\newcommand{\PsiPM}{\Psi_{+ -}}
\newcommand{\lieg}{\mathfrak{g}}
\newcommand{\liel}{\mathfrak{l}}
\newcommand{\lieu}{\mathfrak{u}}
\newcommand{\sigmaMM}{\sigma_{- -}}
\newcommand{\sigmaMP}{\sigma_{- +}}
\newcommand{\sigmaPM}{\sigma_{+ -}}
\newcommand{\sigmaPP}{\sigma_{+ +}}
\newcommand{\elemMPL}{M_{- +}^L}
\newcommand{\elemMPR}{M_{- +}^R}
\newcommand{\elemPML}{M_{+ -}^L}
\newcommand{\elemPMR}{M_{+ -}^R}
\begin{document}

\title[Metrics and connections on quantum projective spaces]{Fubini-Study metrics and Levi-Civita connections on quantum projective spaces}

\author{Marco Matassa}

\address{OsloMet – Oslo Metropolitan University}

\email{marco.matassa@oslomet.no}

\begin{abstract}
We introduce analogues of the Fubini-Study metrics and the corresponding Levi-Civita connections on quantum projective spaces.
We define the quantum metrics as two-tensors, symmetric in the appropriate sense, in terms of the differential calculi introduced by Heckenberger and Kolb.
We define connections on these calculi and show that they are torsion free and cotorsion free, where the latter condition uses the quantum metric and is a weaker notion of metric compatibility.
Finally we show that these connections are bimodule connections and that the metric compatibility also holds in a stronger sense.
\end{abstract}

\maketitle

\section*{Introduction}

Metrics and connections are two of the cornerstones upon which our description of differential geometry is built, hence it is desirable to extend these notions to the realm of quantum spaces.
By quantum spaces, we mean a class of appropriately defined non-commutative algebras, which we interpret as quantizations of functions on the underlying classical spaces.
There are various possible perspectives on this problem and we recall some of them below.
The goal of this paper is to introduce certain appropriate analogues of the Fubini-Study metrics and the corresponding Levi-Civita connections for the \emph{quantum projective spaces}. This generalizes certain results of \cite{majid-sphere} obtained in the case of the quantum two-sphere.

Given a (unital) non-commutative algebra $A$, one possible approach to introduce a metric is the theory of \emph{compact quantum metric spaces} \cite{rieffel}, developed by Rieffel following the ideas of Connes.
In this theory one introduces a metric on the state space of $A$ in terms of an appropriately defined Dirac operator, which should satisfy some properties.
Such Dirac operators are readily available for quantum projective spaces, see \cite{dirac-projective}.
Roughly speaking, what is being quantized in this approach is the distance between points, since in the commutative situation the points can be identified with the pure states.
Instead we are looking for a quantization of the metric tensor, since we want to have some notion of compatibility between a connection and a metric.
For this reason we adopt a more algebraic approach, which is explained for instance in the recent book \cite{quantum-book} by Beggs and Majid.

Let us recall some of the ideas of this approach, which we refer to as \emph{quantum Riemannian geometry}.
Given an algebra $A$, we begin by introducing a differential calculus $\Omega^\bullet$ over $A$, with its degree-one part denoted by $\Omega^1$.
Then a quantum metric can be defined as an element $g \in \Omega^1 \otimes_A \Omega^1$ satisfying an appropriate invertibility condition.
Using the differential calculus, we can also define connections in the standard algebraic sense.
In particular, given a connection $\nabla$ on $\Omega^1$, there is a standard notion of torsion as well.
To formulate an analogue of the compatibility of $\nabla$ with the metric $g$ there are two possibilities: 1) a weak version which uses the notion of cotorsion, due to Majid; 2) a strong version that requires $\nabla$ to be a bimodule connection.
In the classical case the second version coincides with the usual metric compatibility, while the first version is a weaker property (to be recalled later).

This setup can be applied to the quantum projective spaces, which we regard as a family within the class of \emph{quantum irreducible flag manifolds}.
It turns out that all the quantum spaces in this class admit canonical differential calculi $\Omega^\bullet$, introduced by Heckenberger and Kolb in \cite{locally-finite, heko}.
We refer to these calculi as canonical since, as soon as some natural conditions are imposed, they are uniquely defined.
These quantum spaces and their differential calculi admit a uniform description, which we adopt in this paper, making simplifications relative to the quantum projective spaces only when needed.
We expect that the results obtained in this paper will hold more generally for all quantum irreducible flag manifolds, with those obtained here providing important steps in this direction.

Having the calculi $\calc^\bullet$ at our disposal, we can discuss quantum metrics and connections on them.
We denote by $\subalg$ the algebra of a generic quantum projective space and write $\calc = \calc^1$.
Our first main result is the existence of quantum metrics in the sense of \cref{def:quantum-metric}, which also requires the existence of appropriate inverse metrics.

\begin{theorem*}[\cref{thm:quantum-metric}]
Any quantum projective space $\subalg$ admits a quantum metric $\met \in \calc \otimes_\subalg \calc$.
Moreover, in the classical limit it reduces to the Fubini-Study metric.
\end{theorem*}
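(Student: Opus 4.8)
The plan is to construct the quantum metric explicitly using the structure of the Heckenberger–Kolb calculus. The key observation is that for quantum irreducible flag manifolds, the cotangent space $\calc^1$ decomposes into a holomorphic and an antiholomorphic part, $\calc = \calcM \oplus \calcP$, where each summand is an irreducible module over the quantum Levi factor $\UqlS$. This decomposition is the quantum analogue of the splitting of the complexified cotangent bundle on a Kähler manifold. The natural candidate for the Fubini–Study metric is a sum $\met = \metMP + \metPM$, where $\metMP \in \calcM \otimes_\subalg \calcP$ and $\metPM \in \calcP \otimes_\subalg \calcM$ pair the holomorphic part with the antiholomorphic part. I would build these two pieces from the canonical dual bases associated with the module structure, using that the evaluation and coevaluation maps $\ev$ and $\coev$ for the relevant $\UqlS$-modules provide exactly the kind of nondegenerate pairing needed. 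Concretely, applying $\coev$ to the trivial module produces a distinguished invariant element of $\calcM \otimes_\subalg \calcP$, and symmetrizing between the two orderings yields $\met$.

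The verification then splits into three parts, matching the two requirements of \cref{def:quantum-metric} together with the classical-limit claim. First I would check that $\met$ is central, or at least lies in the correct invariant subspace, so that it defines a genuine two-tensor compatible with the bimodule structure; this should follow from the $\UqlS$-invariance of the coevaluation element. Second, I would verify the invertibility condition, namely that there exists an inverse metric $\met^{-1} \in \calc \otimes_\subalg \calc$ satisfying the bimodule-map inversion identities relating $\met$ and $\met^{-1}$ through the evaluation and coevaluation of $\calc$. Here the point is that the holomorphic-antiholomorphic pairing is nondegenerate precisely because the corresponding $\UqlS$-modules are dual to each other, so $\ev$ and $\coev$ are mutually inverse up to the expected scalar factors. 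Third, for the symmetry in the appropriate sense, I would use the braiding $\braid$ on $\calc \otimes_\subalg \calc$ coming from the quantum group action, showing that $\met$ lies in the correct eigenspace (the analogue of the symmetric tensors).

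For the classical limit, I would take $q \to 1$ and track how the quantum coevaluation element degenerates to the classical invariant pairing. Since at $q = 1$ the calculus $\calc$ becomes the ordinary complexified cotangent space of $\mathbb{C}P^{N-1}$ with its $\mathrm{SU}(N)$-action, and since the Fubini–Study metric is the unique (up to scale) $\mathrm{SU}(N)$-invariant Hermitian metric, it suffices to show that $\met$ is invariant and nondegenerate in the limit, after which uniqueness forces it to coincide with the Fubini–Study metric up to normalization. I would fix the normalization by comparing the action of $\metMP$ and $\metPM$ on a single pair of dual generators.

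The main obstacle I anticipate is the invertibility condition rather than the symmetry or the classical limit. Establishing that a given $\met$ admits an honest inverse $\met^{-1}$ as a bimodule map, and not merely a one-sided pairing, requires controlling how the evaluation and coevaluation interact with the nontrivial bimodule structure of $\calc$ over the noncommutative algebra $\subalg$—the left and right actions differ by the braiding, and the $q$-deformed factors appearing in $\ev \circ \coev$ must be shown to be nonzero and to cancel correctly. Keeping track of these scalar factors, which are governed by the weights of the $\UqlS$-modules and typically involve $q$-integers or ratios of $q$-dimensions, is where the computation becomes delicate; the key input will be the explicit form of the Heckenberger–Kolb calculus and the known branching of $\calc$ as a $\UqlS$-module.
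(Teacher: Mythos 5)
Your overall shape matches the paper's construction: the metric is indeed built as $\met = \metPM + \metMP$, pairing the holomorphic and antiholomorphic halves through the categorical duality of $V(\omega_s)$, and you are right that invertibility is the crux (the paper defines $\metPM = \EVp_{1 2} \EV_{2 3} \del p \otimes \delbar p$ and $\metMP = \EVp_{1 2} \EV_{2 3} \delbar p \otimes \del p$, and checks the classical limit by an explicit formula, though your uniqueness-of-the-invariant-metric argument would also work there). However, your invertibility step has a genuine gap. First, a small but real confusion: \cref{def:quantum-metric} requires a \emph{bimodule map} $(\cdot, \cdot): \calc \otimes_\subalg \calc \to \subalg$, not an inverse \emph{element} $\met^{-1} \in \calc \otimes_\subalg \calc$. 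More seriously, the inverse pairing is \emph{not} given by a coevaluation of dual modules, as your plan assumes: the correct map necessarily contains a correction term quadratic in the generators. In the paper it is $(\delbar p, \del p)_{- +} = \CVp_2\, p - q^{-(\omega_s, 2 \rho)}\, p p$ and $(\del p, \delbar p)_{+ -} = q^{(\alpha_s, \alpha_s)} q^{-(\omega_s, \omega_s + 2 \rho)} \opS_{1 2 3} \CV_3\, p - q^{(\alpha_s, \alpha_s)} q^{-(\omega_s, 2 \rho)}\, p p$, whose classical shadow is $(\delbar p^{i j}, \del p^{k l}) = \delta_{k j} p^{i l} - p^{i j} p^{k l}$. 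The subtracted $p p$ term is forced: without it the candidate pairing is not well defined against the relations of the Heckenberger--Kolb calculus (e.g.\ $\EVp_{1 2} \del p = 0$ and the right-module relations) and does not descend through $\otimes_\subalg$. Establishing this well-definedness is where the bulk of the work lies — the paper builds the pairing out of bimodule maps $\Phi$, $\Psi$ on the free modules over the larger algebra $\alg$ and proves descent — and fiber-level nondegeneracy of dual $\UqlS$-modules does not by itself produce a map defined on $\calc \otimes_\subalg \calc$.

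Second, your argument never invokes the restriction to quantum projective spaces, yet the theorem is stated for them precisely because the construction of the inverse needs the quadratic (Hecke) relation for $\braid_{V, V}$: it is used both in the simplified presentation of the calculus and in the identities for $\opS$ and $\opSt$ that verify the two-sided inversion identity $\met^{(1)} (\met^{(2)}, \omega) = \omega = (\omega, \met^{(1)}) \met^{(2)}$. At best, a covariance/Schur-type argument of the kind you sketch shows that $\omega \mapsto (\omega, \met^{(1)}) \met^{(2)}$ acts as a scalar on each of $\calcP$ and $\calcM$; you would still need to prove that scalar is nonzero and normalize it to $1$, which is a computation of essentially the kind the paper carries out. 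A minor point: your symmetry check via a braiding eigenspace is not the paper's notion of symmetry, which is $\wedge(\met) = 0$ (\cref{def:metric-symmetric}); this does not affect the theorem, since \cref{def:quantum-metric} asks only for invertibility, but it is a different condition and in the quantum setting the two are not interchangeable.
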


Next, we look at connections on the first-order differential calculi $\calc$.
We show the existence of some particular connections and investigate the properties of torsion and cotorsion.
The latter involves the quantum metric $\met$ introduced above.
In particular, the condition of cotorsion freeness should be seen as a weaker notion of compatibility with the metric (see \cref{def:cotorsion} and the remarks after that).
Our second main result is the following.

\begin{theorem*}[\cref{thm:levi-civita}]
Any quantum projective space $\subalg$ admits a connection $\conn: \calc \to \calc \otimes_\subalg \calc$ which is torsion free and cotorsion free.
Moreover, in the classical limit it reduces to the Levi-Civita connection for the Fubini-Study metric on the cotangent bundle.
\end{theorem*}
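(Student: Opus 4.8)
The plan is to exploit the decomposition $\calc = \calcM \oplus \calcP$ of the Heckenberger--Kolb calculus into its holomorphic and antiholomorphic parts, which is the quantum counterpart of the $(1,0)/(0,1)$ splitting of the underlying (Kähler) complex structure. Since classically the Levi-Civita connection of a Kähler metric preserves the type decomposition, I would seek $\conn$ in the block-diagonal form $\conn = \connM \oplus \connP$, with $\connM \colon \calcM \to \calc \otimes_\subalg \calcM$ and $\connP \colon \calcP \to \calc \otimes_\subalg \calcP$. The first task is then to construct the two pieces. I would take them to be the canonical $\Uqg$-covariant connections on the summands: each $\connM$, $\connP$ splits into a part valued in $\calcM \otimes_\subalg(-)$ and a part valued in $\calcP \otimes_\subalg(-)$, where one component is the intrinsic Dolbeault-type differential attached to the summand and the other is built from the bimodule braidings $\sigmaMM$, $\sigmaMP$, $\sigmaPM$, $\sigmaPP$. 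Verifying the left Leibniz rule (and the right Leibniz rule with the $\sigma$'s, which also sets up the later bimodule-connection statement) is then a bookkeeping computation with the braidings.

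With $\conn$ in hand I would prove torsion freeness $\Tnabla = \wedge \circ \conn - \diff = 0$ componentwise. Decomposing the two-forms $\calc^2$ into their $(2,0)$, $(1,1)$, $(0,2)$ parts and using $\diff = \del + \delbar$, the identity $\wedge \circ \conn = \diff$ separates into one equation on each summand: for $\connM$, the $\calcM \otimes_\subalg \calcM$ contribution must reproduce $\delbar$ into $\Omega^{(0,2)}$ and the $\calcP \otimes_\subalg \calcM$ contribution must reproduce $\del$ into $\Omega^{(1,1)}$ (and symmetrically for $\connP$). The purely holomorphic and antiholomorphic equations reduce to the relations defining $\calc^2$, namely that the spectral projections $\braidP$, $\braidQ$ act as the expected (anti)symmetrizers, while the two $(1,1)$ equations encode the quantum Kähler condition. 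I expect each of these to follow from the structural identities of the Heckenberger--Kolb calculus rather than from new input.

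The crux is cotorsion freeness. Writing $\met = \metMP + \metPM$ with $\metMP \in \calcM \otimes_\subalg \calcP$ and $\metPM \in \calcP \otimes_\subalg \calcM$ --- reflecting that the Fubini--Study metric is Hermitian and purely of mixed type --- I would compute $\coTnabla = (\diff \otimes \id)\met - (\wedge \otimes \id)(\id \otimes \conn)\met$ on each component. Because $\conn$ preserves types, $(\id \otimes \connM)$ and $(\id \otimes \connP)$ act cleanly on $\metPM$ and $\metMP$ respectively, and the computation reduces to comparing $\del$ and $\delbar$ applied to the metric coefficients against the braided contributions of the connection. This is where the explicit compatibility between $\met$ and the braidings $\sigmaMP$, $\sigmaPM$ --- equivalently, the interplay encoded by the maps $\PhiMP$, $\PhiPM$, $\PsiMP$, $\PsiPM$ relating the metric to the connection --- must be invoked. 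I expect this to be the main obstacle: one must show that the antiholomorphic derivative of $\metMP$ and the holomorphic derivative of $\metPM$ are exactly cancelled by the connection terms, which is the quantum shadow of the vanishing of $\diff$ applied to the Kähler form together with metric compatibility.

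Finally, for the classical limit I would check that as $q \to 1$ the block-diagonal ansatz, the Dolbeault pieces, and the braidings degenerate to the ordinary de Rham data, so that $\conn$ limits to a torsion-free connection compatible with the Fubini--Study metric; by the uniqueness in the fundamental theorem of Riemannian geometry this limit is the Levi-Civita connection. Since the stronger, genuine metric compatibility is established later in the paper, this identification is unambiguous.
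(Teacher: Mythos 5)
Your overall strategy coincides with the paper's: split $\conn = \connP \oplus \connM$ along the holomorphic/antiholomorphic decomposition of $\calc$, prove torsion and cotorsion freeness componentwise using the structural identities of the Heckenberger--Kolb calculus together with the K\"ahler property of $\met$ (the paper's \cref{prop:metric-kahler}), and then read off the classical limit. In outline, your torsion and cotorsion steps do match \cref{prop:torsion} and \cref{prop:cotorsion}.

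The genuine gap is in the construction step: you never actually produce $\connP$ and $\connM$. Invoking ``the canonical $\Uqg$-covariant connections on the summands'' presupposes an existence theorem you do not prove, and in any case yields no formulas to feed into the torsion, cotorsion and classical-limit computations your own plan requires. Worse, assembling the connections out of the bimodule braidings $\sigmaMM$, $\sigmaMP$, $\sigmaPM$, $\sigmaPP$ is circular: for a bimodule connection the generalized braiding is not prior data but is uniquely determined by the connection, and in this development the $\sigma_{ab}$ are extracted \emph{from} $\connM(\delbar p\, p)$ and $\connP(\del p\, p)$ only after the connections exist (\cref{lem:connM-bimodule}, \cref{lem:connP-bimodule}); bimodule maps alone can never supply the inhomogeneous (differential) part of a connection. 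The paper instead defines the connections explicitly on generators,
\[
\connM(\delbar p) = \EV_{2 3} \del p \otimes \delbar p - q^{-(\omega_s, 2 \rho)} p \metPM, \qquad
\connP(\del p) = q^{(\alpha_s, \alpha_s)} \EV_{2 3} \opT_{1 2 3 4} \delbar p \otimes \del p - q^{(\alpha_s, \alpha_s)} q^{-(\omega_s, 2 \rho)} p \metMP,
\]
and the substantive work --- which you dismiss as bookkeeping --- is proving these assignments are well defined on the quotient bimodules, i.e.\ compatible with the relations \eqref{eq:calcP-quadratic} and \eqref{eq:calcM-quadratic}; this is precisely where the quadratic (Hecke) condition characterizing projective spaces enters (\cref{prop:connection-minus}, \cref{prop:connection-plus}). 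Two smaller inaccuracies in your picture: on generators the connections take values purely in the mixed components (e.g.\ $\connM(\delbar p) \in \calcP \otimes_\subalg \calcM$), so there is no separate $(0,2)$ or $(2,0)$ torsion equation to verify on generators, only the $(1,1)$ one; and the maps $\PhiMP$, $\PhiPM$, $\PsiMP$, $\PsiPM$ are used in the paper to construct the \emph{inverse metric}, not to encode metric--connection compatibility, so they are not the tool your cotorsion step calls for --- that step runs instead on $(\diff \otimes \id)\met = 0$ together with evaluation identities such as \eqref{eq:identity-ETT} and \eqref{eq:identity-EpET}.
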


A connection which is torsion and cotorsion free is called a weak quantum Levi-Civita connection in \cite{quantum-book}, since the ordinary Levi-Civita connection (on the tangent bundle) can be characterized as the unique connection which is torsion free and compatible with the metric.
It is natural to ask whether $\conn$ is a bimodule connection and if the condition of metric compatibility holds in the strong form.
Indeed, this turns out to be the case.

\begin{theorem*}[\cref{thm:strong-levi-civita}]
The connection $\nabla: \calc \to \calc \otimes_\subalg \calc$ is a bimodule connection and is compatible with the quantum metric, in the sense that $\conn \met = 0$.
\end{theorem*}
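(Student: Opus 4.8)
The plan is to treat the two assertions in turn, building on the torsion-free and cotorsion-free connection $\conn$ from \cref{thm:levi-civita} and the metric $\met = \metMP + \metPM$ from \cref{thm:quantum-metric}. First I would exhibit the bimodule connection structure, that is, produce a bimodule map $\sigma : \calc \otimes_\subalg \calc \to \calc \otimes_\subalg \calc$ satisfying the right Leibniz rule $\conn(\omega b) = (\conn \omega)\, b + \sigma(\omega \otimes \diff b)$ for all $\omega \in \calc$ and $b \in \subalg$. The natural candidate is built from the braiding $\braid$ underlying the Heckenberger--Kolb calculus, decomposed according to the holomorphic/antiholomorphic splitting $\calc = \calcM \oplus \calcP$ into the four components $\sigmaMM, \sigmaMP, \sigmaPM, \sigmaPP$. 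I would verify the bimodule property by checking it on a generating set for $\calc$ as a $\subalg$-bimodule, using the explicit description of $\conn$ and the compatibility of $\braid$ with the bimodule structure; since $\conn$ and $\braid$ are both covariant for the ambient quantum group symmetry, this reduces to a finite, representation-theoretic check.

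With $\sigma$ in hand, $\conn$ extends to $\calc \otimes_\subalg \calc$ via $\conn(\omega \otimes \eta) = \conn\omega \otimes \eta + (\sigma \otimes \id)(\omega \otimes \conn\eta)$, and the statement to prove becomes $\conn\met = 0$ in $\calc \otimes_\subalg \calc \otimes_\subalg \calc$. The first reduction is standard: by the usual identity of quantum Riemannian geometry relating these tensors, the image $(\wedge \otimes \id)\conn\met$ is expressed through the torsion $\Tnabla$ and the cotorsion $\coTnabla$, both of which vanish by \cref{thm:levi-civita}. Hence the totally $\wedge$-antisymmetrized part of $\conn\met$ is already zero, and what remains is to control the part that $\wedge \otimes \id$ does not detect.

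This symmetric part is the \emph{main obstacle}. To handle it I would exploit the bidegree grading: $\conn\met$ decomposes into components of definite holomorphic/antiholomorphic type in $\calc^{\otimes 3}$, and since $\met$ is off-diagonal (valued in $\calcM \otimes_\subalg \calcP \oplus \calcP \otimes_\subalg \calcM$), only a few of these components are nonzero. On the components where $\wedge \otimes \id$ is injective the previous paragraph already gives vanishing; for the remaining ones I would compute directly, using the explicit eigenvalues of $\braid$ on $\calcM \otimes_\subalg \calcP$ and $\calcP \otimes_\subalg \calcM$ together with the explicit form of $\metMP$ and $\metPM$. The cleanest route, which I expect to work, is to show that the metric is braided-invariant, $\sigma(\met) = \met$, and that $\connM, \connP$ intertwine $\del, \delbar$ with $\sigma$ in the appropriate sense; combined with the covariance of all objects under the quantum group --- so that $\conn\met$ lies in a single, low-dimensional isotypic component --- this forces $\conn\met = 0$. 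The crux is thus establishing the braided-invariance of $\met$ and the requisite intertwining property of $\sigma$, after which the strong metric compatibility follows.
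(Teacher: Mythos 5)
There are genuine gaps, the most serious one in the bimodule-connection step. The generalized braiding $\sigma$ is not something you get to choose: by \cref{def:bimodule-connection} it is uniquely determined by $\conn(\omega b) - \conn(\omega) b$, so your ``natural candidate built from the braiding'' must be checked against this difference, and it fails. Computing $\conn(\omega b)-\conn(\omega)b$ on generators (as the paper does in \cref{lem:connM-bimodule} and \cref{lem:connP-bimodule}) shows that on the mixed components $\calcP \otimes_\subalg \calcM$ and $\calcM \otimes_\subalg \calcP$ the true $\sigma$ is \emph{not} a conjugated braiding: it contains the extra terms $-(q^{(\alpha_s,\alpha_s)}-1)\, q^{(\alpha_s,\alpha_s)} q^{-(\omega_s,2\rho)}\, p\, \metMP\, p$ and $-(q^{-(\alpha_s,\alpha_s)}-1)\, q^{-(\omega_s,2\rho)}\, p\, \metPM\, p$, which vanish only in the classical limit $q \to 1$. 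A pure-braiding ansatz therefore violates the right Leibniz rule for $q \neq 1$, and no amount of verification will rescue it. Beyond identifying the correct formula, one must still prove that it is well defined on $\calc \otimes_\subalg \calc$, i.e.\ compatible with the relations \eqref{eq:calcP-quadratic}, \eqref{eq:calcM-quadratic} and with the tensor product over $\subalg$; this is the content of \cref{prop:sigmaPP} through \cref{prop:sigmaMP} and is where the bulk of the work lies. Your appeal to covariance does not discharge this: covariance of $\connP$, $\connM$ (and of $\sigma$) is itself only asserted as a remark in the paper, not proved, and the reduction to ``a finite representation-theoretic check'' is left entirely implicit.

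The metric-compatibility half also has two holes. First, your reduction of the $\wedge$-antisymmetric part of $\conn\met$ to torsion and cotorsion uses the identity $\coTnabla = (\wedge \otimes \id)\,\conn\met$, which holds only under the additional hypothesis $\wedge \circ (\sigma + \id) = 0$ (see the remark following \cref{def:bimodule-connection}); you neither state nor verify this condition for your $\sigma$, and with the correction terms above it is a nontrivial computation. Second, the endgame for the part not seen by $\wedge \otimes \id$ rests on two unproven expectations, $\sigma(\met)=\met$ and an ``intertwining'' property, glued together by a vague isotypic argument. There is a germ of a valid alternative here: if one proves that $\conn$ and $\sigma$ are covariant and $\met$ is coinvariant, then $\conn\met$ is a coinvariant element of $\calc^{\otimes 3}$, and one can argue that the coinvariants of $\calc^{\otimes 3}$ vanish (each factor $\calcP$, $\calcM$ carries a nonzero central charge, and three such charges can never sum to zero), which would give $\conn\met = 0$ outright, bypassing torsion, cotorsion and braided-invariance altogether. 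But none of these covariance statements is established in your proposal, and the correct mechanism is the vanishing of the trivial isotypic component, not $\conn\met$ ``lying in a low-dimensional'' one. The paper takes a different, fully explicit route: it computes $(\conn \otimes \id)\met$ and $(\sigma \otimes \id)(\id \otimes \conn)\met$ term by term, and the vanishing is driven by the degree-three identity of \cref{lem:vanishing-third} (the Kähler-type property of $\met$), the Hecke relations \eqref{eq:S-quadratic1}--\eqref{eq:S-quadratic2}, and the braid equations for $\opS$ and $\opSt$ — ingredients that play no role in your outline.
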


In this case we say that $\nabla$ is a quantum Levi-Civita connection, in perfect agreement with the classical description.
Hence we find that, in the case of quantum projective spaces, the classical theory can be lifted to the quantum realm in a fairly satisfactory way.

Our results generalize those of \cite{majid-sphere} for the quantum two-sphere (the simplest case of a quantum projective space), with the notable difference that the conditions of being a bimodule connection and metric compatibility in the strong form were not investigated.

A quantum metric and a connection are the main ingredients needed to study further aspects of quantum Riemannian geometry, as discussed in \cite{quantum-book}.
This program is carried out further in \cite{majid-sphere}, where it is shown for instance that the quantum two-sphere satisfies an analogue of the Einstein condition: this means that the quantum metric is proportional to an appropriately defined Ricci tensor, defined using the curvature of the connection.
We conjecture that this will hold for all quantum projective spaces, and more generally for all quantum irreducible flag manifolds.
We plan to tackle this problem in future research.

Let us also discuss how our results compare to the existing literature on connections for quantum projective spaces.
A lot of attention has been reserved to the case of line bundles, for instance we mention \cite{linebundles-cp2, linebundles-cpn} for their focus on complex geometry.
More relevant for us is the paper \cite{projective-bundle}, where the theory of quantum principal bundles is used to introduce a connection on the cotangent bundle, using a non-canonical calculus on the total space (the quantum special unitary group, in this case).
We point out that no further properties of these connections are explored, and extensive use is made of the explicit algebraic relations, making it hard to generalize to arbitrary quantum flag manifolds.

We should mention that the connections introduced in \cite{projective-bundle} turn out to coincide with those we describe here.
This follows from the recent results of \cite{connections-irreducible}, where representation-theoretic methods are used to prove the following result: there exists a unique covariant connection on the Heckenberger-Kolb calculus $\calc$ over a quantum irreducible flag manifold.
Moreover they show that this connection is torsion free.
It should be possible to extend these techniques to study some further aspects, a plan which is currently under investigation.

However, one notable drawback of the representation-theoretic approach is that it does not give explicit formulae for the connections.
On the other hand, in this paper we provide explicit formulae, which for instance allow us to straightforwardly check the classical limit.
Another bonus is that our approach is essentially self-contained, since we only use the relations in the Heckenberger-Kolb calculus $\calc$, plus general identities of categorical nature.

Finally let us say something about uniqueness of the structures presented in this paper, since in the classical case the Levi-Civita connection is the unique connection which is torsion free and compatible with the metric.
This is also the case here, as long as we insist that everything should be \emph{covariant}, that is compatible with the quantum group (co)action.
In this case uniqueness follows from representation theory, essentially as in the classical case: as already mentioned, the results from \cite{connections-irreducible} show that there is a unique covariant connection on $\calc$; by similar arguments, one shows that coinvariant quantum metrics are unique up to a scalar.
However, what is not clear from this point of view is why the connection and the quantum metric should be compatible, which is one of the goals we achieve in this paper.

Let us now discuss the organization of this paper.
The first four sections contain various background material, presented in a form suitable for our needs.
In \cref{sec:quantum-groups} we recall some basic facts about compact quantum groups, while in \cref{sec:categorical} we recall various identities holding in the setting of rigid braided monoidal categories, which we use throughout the text.
In \cref{sec:calculi-etc} we give the precise definitions involving differential calculi, quantum metrics and connections.
In \cref{sec:quantum-flag} we describe the quantum irreducible flag manifolds following \cite{heko}, with some small changes.
\cref{sec:hk-calculi} is also largely explanatory, as we recall the description of the Heckenberger-Kolb calculi for quantum irreducible flag manifolds, but we also prove various alternative expressions for some of the relations of the calculi.

The next three sections contain the proofs of our main results.
In \cref{sec:quantum-metrics} we introduce the quantum metrics, discuss some of their properties and finally prove the existence of appropriate inverse metrics.
In \cref{sec:connections} we introduce two connections on the holomorphic and antiholomorphic part of the calculi.
Their direct sum gives a connection which we show to be torsion free and cotorsion free.
In \cref{sec:bimodule-connections} we show that this is a bimodule connection and verify the property of metric compatibility in the strong form.

Many technical computations are relegated to the appendices, to make the main text more readable.
In \cref{sec:projective-spaces} we recall various results about projective spaces, to facilitate the comparison with the quantum case.
In \cref{sec:properties-S} we prove various properties satisfied by the maps $\opS$ and $\opSt$, which we use to rewrite some of the relations of the Heckenberger-Kolb calculus.
In \cref{sec:identities} we prove many of the technical identities that are used in the main text.
Finally in \cref{sec:bimodule-maps} we introduce various bimodule maps, some used to define the inverse metrics and some to check the bimodule property of the connections.

\medskip
\textbf{Acknowledgements.} I would like to thank Réamonn Ó Buachalla for various discussions and his comments on a preliminary version of this paper.

\section{Quantum groups}
\label{sec:quantum-groups}

In this section we review some background material on compact quantum groups.

\subsection{Quantized enveloping algebras}

We use the conventions of the book \cite{klsc}, since they are used in our main reference \cite{heko}.
Let $\lieg$ be a complex simple Lie algebra.
Given a real number $q$ such that $0 < q < 1$, the \emph{quantized enveloping algebra} $\Uqg$ is a certain Hopf algebra deformation of the enveloping algebra $U(\lieg)$, defined as follows.
It has generators $\{ K_i, \ E_i, \ F_i \}_{i = 1}^r$ with $r := \mathrm{rank}(\lieg)$ and relations as in \cite[Section 6.1.2]{klsc}.
In particular, the comultiplication, antipode and counit are given by
\[
\begin{gathered}
\Delta(K_i) = K_i \otimes K_i, \quad
\Delta(E_i) = E_i \otimes K_i + 1 \otimes E_i, \quad
\Delta(F_i) = F_i \otimes 1 + K_i^{-1} \otimes F_i, \\
S(K_i) = K_i^{-1}, \quad
S(E_i) = - E_i K_i^{-1}, \quad
S(F_i) = - K_i F_i, \\
\varepsilon(K_i) = 1, \quad
\varepsilon(E_i) = 0, \quad
\varepsilon(F_i) = 0.
\end{gathered}
\]
Given $\lambda = \sum_{i = 1}^r n_i \alpha_i$ we write $K_\lambda := K_1^{n_1} \cdots K_r^{n_r}$.
Let $\rho := \frac{1}{2} \sum_{\alpha > 0} \alpha$ be the half-sum of the positive roots of $\lieg$.
Then we have $S^2(X) = K_{2 \rho} X K_{2 \rho}^{-1}$ for any $X \in \Uqg$.

We also consider a $*$-structure on $\Uqg$, which in the classical case corresponds to the compact real form $\lieu$ of $\lieg$.
We can take for instance
\[
K_i^* = K_i, \quad
E_i^* = K_i F_i, \quad
F_i^* = E_i K_i^{-1}.
\]
The precise formulae are not very important here, as any equivalent $*$-structure works equally well for our purposes.
We write $\Uqu := (\Uqg, *)$ when we consider $\Uqg$ endowed with the $*$-structure corresponding to the compact real form.

\subsection{Quantized coordinate rings}

The \emph{quantized coordinate ring} $\CqG$ is defined as a subspace of the linear dual $\Uqg^*$.
We take the span of all the matrix coefficients of the finite-dimensional irreducible representations $V(\lambda)$ (see below).
It becomes a Hopf algebra by duality in the following manner: given $X, Y \in \Uqg$ and $a, b \in \CqG$ we define
\[
\begin{gathered}
(a b)(X) := (a \otimes b) \Delta(X), \quad
\Delta(a) (X \otimes Y) := a(X Y), \\
S(a)(X) := a(S(X)), \quad
1(X) := \varepsilon(X), \quad
\varepsilon(a) := a(1).
\end{gathered}
\]
Moreover it becomes a Hopf $*$-algebra by setting
\[
a^*(X) := \overline{a(S(X)^*)}.
\]
We write $\CqU := (\CqG, *)$ for $\CqG$ endowed with this $*$-structure.

We have a left action $\triangleright$ and a right action $\triangleleft$ of $\Uqg$ on $\CqG$ given by
\[
(X \triangleright a)(Y) := a(Y X), \quad
(a \triangleleft X)(Y) := a(X Y).
\]
Using the action of $\Uqg$ on $\CqG$ we can define quantum homogeneous spaces.

\subsection{Matrix coefficients}
\label{sec:mat-coeff}

The representation theory of $\Uqg$ is essentially the same as that of $U(\lieg)$, hence of $\lieg$.
In particular we have analogues of the highest weight modules $V(\lambda)$ for any dominant weight $\lambda$, which we denote by the same symbol.
Given a finite-dimensional representation $V$, we define its \emph{matrix coefficients} by
\[
(c^V_{f, v})(X) := f(X v), \quad f \in V^*, \ v \in V, \ X \in \Uqg.
\]
These elements span $\CqG$, according to the description given above.

We say that an inner product $(\cdot, \cdot)$ on $V$ is \emph{$\Uqu$-invariant} if it satisfies
\[
(X v, w) = (v, X^* w), \quad \forall v, w \in V, \ \forall X \in \Uqu.
\]
Here we use the $*$-structure of $\Uqu$.
It is well-known that an $\Uqu$-invariant inner product exists on every representation $V(\lambda)$, and it is unique up to a constant.
We typically write $\{v_i\}_i$ for an orthonormal weight basis of $V(\lambda)$ with respect to $(\cdot, \cdot)$, and write $\lambda_i$ for the \emph{weight} of $v_i$.
We also denote by $\{f^i\}_i$ the corresponding dual basis of $V(\lambda)^*$.

\section{Categorical preliminaries}
\label{sec:categorical}

The category of finite-dimensional $\Uqg$-modules is braided monoidal, that is we have a tensor product and an analogue of the flip map.
We use some of the language of tensor categories to make our computations more natural, with \cite{egno} as our main reference.

\subsection{Braiding}

A \emph{braiding} on a monoidal category is the choice of a natural isomorphism $X \otimes Y \cong Y \otimes X$ for each pair of objects $X$ and $Y$, satisfying the hexagon relations \cite[Definition 8.1.1]{egno}.
It is a generalization of the flip map in the category of vector spaces.

For the category of finite-dimensional $\Uqg$-modules we write the braiding as
\[
\braid_{V, W}: V \otimes W \to W \otimes V.
\]
An important relation satisfied by the braiding is the \emph{braid equation}, which is
\[
(\braid_{W, Z} \otimes \id_V) (\id_W \otimes \braid_{V, Z}) (\braid_{V, W} \otimes \id_Z)
= (\id_Z \otimes \braid_{V, W}) (\braid_{V, Z} \otimes \id_W) (\id_V \otimes \braid_{W, Z}),
\]
acting on $V \otimes W \otimes Z$ for any modules $V, W, Z$.
In the following we employ a leg-notation for the action on tensor products, in terms of which the braid equation reads
\begin{equation}
\label{eq:braid-equation}
(\braid_{W, Z})_{1 2} (\braid_{V, Z})_{2 3} (\braid_{V, W})_{1 2}
= (\braid_{V, W})_{2 3} (\braid_{V, Z})_{1 2} (\braid_{W, Z})_{2 3}.
\end{equation}

A braiding on the category of finite-dimensional $\Uqg$-modules is not quite unique.
We adopt the same choice as \cite{heko}, which is described as follows.
Consider two simple modules $V(\lambda)$ and $V(\mu)$ and choose a highest weight vector $v_\lambda$ for the first and a lowest weight vector $v_{w_0 \mu}$ for the second.
Then the braiding is completely determined by
\[
\braid_{V(\lambda), V(\mu)} (v_\lambda \otimes v_{w_0 \mu}) = q^{(\lambda, w_0 \mu)} v_{w_0 \mu} \otimes v_\lambda.
\]
Here $(\cdot, \cdot)$ denotes the usual non-degenerate symmetric bilinear form on the dual of the Cartan subalgebra of $\lieg$ (rescaled so that $(\alpha, \alpha) = 2$ for short roots $\alpha$, for definiteness).
Indeed, $v_\lambda \otimes v_{w_0 \mu}$ is a cyclic vector for $V(\lambda) \otimes V(\mu)$, hence $\braid_{V(\lambda), V(\mu)}$ is completely determined by the action on this vector and the fact that it is a $\Uqg$-module map.

\subsection{Duality}

The notion of duality in a monoidal category is captured by the existence of \emph{evaluation} and \emph{coevaluation} morphisms.
In our setting these are maps
\[
\begin{gathered}
\ev_V: V^* \otimes V \to \bbC, \quad
\coev_V: \bbC \to V \otimes V^*, \\
\ev^\prime_V: V \otimes V^* \to \bbC, \quad
\coev^\prime_V: \bbC \to V^* \otimes V,
\end{gathered}
\]
satisfying certain duality relations to be recalled below.
Here $V$ is a finite-dimensional $\Uqg$-module and $V^*$ is its linear dual.
The maps $\ev_V$ and $\coev_V$ are related to the existence of a \emph{left dual}, while $\ev_V^\prime$ and $\coev_V^\prime$ to the existence of a \emph{right dual}.
In the case of $\Uqg$, the property $S^2(X) = K_{2 \rho} X K_{2 \rho}^{-1}$ guarantees that the two duals can be identified.

Let us now discuss the explicit formulae for the category of finite-dimensional $\Uqg$-modules.
Take a weight basis $\{v_i\}_i$ of $V$, with $\lambda_i$ the weight of $v_i$, and a dual basis $\{f^i\}_i$ of $V^*$.
Then the evaluation and coevaluation maps are given by
\[
\begin{gathered}
\ev_V(f^i \otimes v_j) = \delta^i_j, \quad
\coev_V = \sum_i v_i \otimes f^i, \\
\ev^\prime_V(v_i \otimes f^j) = q^{(2 \rho, \lambda_i)} \delta^j_i, \quad
\coev^\prime_V = \sum_i q^{-(2 \rho, \lambda_i)} f^i \otimes v_i.
\end{gathered}
\]
The factor $q^{(2 \rho, \lambda_i)}$ comes from the action of $K_{2 \rho}$ and is related to the square of the antipode.

In the following we are going to fix a simple module $V$ and write
\[
\EV := \ev_V, \quad
\EVp := \ev^\prime_V, \quad
\CV := \coev_V, \quad
\CVp := \coev^\prime_V.
\]
We use the leg-notation for the action of these morphisms on tensor products. We write
\[
\begin{split}
& \EV_{i, i + 1} (w_1 \otimes \cdots \otimes w_{i - 1} \otimes f \otimes v \otimes w_{i + 2} \otimes \cdots \otimes w_n) \\
& := \EV(f \otimes v) w_1 \otimes \cdots \otimes w_{i - 1} \otimes w_{i + 2} \otimes \cdots w_n,
\end{split}
\]
with $v \in V$ and $f \in V^*$, while for the coevaluation we write
\[
\CV_i (w_1 \otimes \cdots \otimes w_n) := w_1 \otimes \cdots \otimes w_{i - 1} \otimes \sum_j v_j \otimes f^j \otimes w_i \otimes \cdots \otimes w_n.
\]
Similarly in the case of $\EVp$ and $\CVp$.
Using the leg-notation, the duality relations of \cite[Section 2.10]{egno} for the evaluation and coevaluation morphisms can be written as
\begin{equation}
\label{eq:duality}
\begin{gathered}
\EV_{2 3} \CV_1 = \id, \quad
\EV_{1 2} \CV_2 = \id, \\
\EVp_{2 3} \CVp_1 = \id, \quad
\EVp_{1 2} \CVp_2 = \id.
\end{gathered}
\end{equation}

We also have various compatibility relations with the braiding $\braid_{V, W}$, since the latter is a natural isomorphism in both entries.
For the evaluation morphisms we have
\begin{equation}
\label{eq:evaluations}
\begin{gathered}
\EV_{1 2} = \EV_{2 3} (\braid_{V^*, W})_{1 2} (\braid_{V, W})_{2 3}, \quad
\EV_{2 3} = \EV_{1 2} (\braid_{W, V})_{2 3} (\braid_{W, V^*})_{1 2}, \\
\EVp_{1 2} = \EVp_{2 3} (\braid_{V, W})_{1 2} (\braid_{V^*, W})_{2 3}, \quad
\EVp_{2 3} = \EVp_{1 2} (\braid_{W, V^*})_{2 3} (\braid_{W, V})_{1 2}.
\end{gathered}
\end{equation}
Similarly, for the coevaluations morphisms we have
\begin{equation}
\label{eq:coevaluations}
\begin{gathered}
\CV_1 = (\braid_{W, V^*})_{2 3} (\braid_{W, V})_{1 2} \CV_2, \quad
\CV_2 = (\braid_{V, W})_{1 2} (\braid_{V^*, W})_{2 3} \CV_1, \\
\CVp_1 = (\braid_{W, V})_{2 3} (\braid_{W, V^*})_{1 2} \CVp_2, \quad
\CVp_2 = (\braid_{V^*, W})_{1 2} (\braid_{V, W})_{2 3} \CVp_1.
\end{gathered}
\end{equation}

Finally we need the following identity, valid for a simple module $V$.

\begin{lemma}
\label{lem:relation-evaluations}
Let $V = V(\lambda)$ be a simple module. Then we have
\begin{equation}
\label{eq:identity-EEp}
\EV \circ \braid_{V, V^*} = q^{-(\lambda, \lambda + 2 \rho)} \EVp.
\end{equation}
\end{lemma}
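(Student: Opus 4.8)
The plan is to use Schur's lemma to reduce the identity to the computation of a single scalar, and then to pin down that scalar by evaluating both sides on a carefully chosen weight vector.

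First I would observe that $\EV$, $\EVp$ and $\braid_{V, V^*}$ are all morphisms of $\Uqg$-modules, so that $\EV \circ \braid_{V, V^*}$ and $\EVp$ are both morphisms $V \otimes V^* \to \bbC$. Since $V = V(\lambda)$ is simple, duality gives $\mathrm{Hom}_{\Uqg}(V \otimes V^*, \bbC) \cong \mathrm{Hom}_{\Uqg}(V, V) = \bbC$, so this space is one-dimensional. Hence the two morphisms are proportional, say $\EV \circ \braid_{V, V^*} = c\, \EVp$ for some scalar $c \in \bbC$, and the only remaining task is to determine $c$.

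To find $c$ I would evaluate both sides on $v_\lambda \otimes f$, where $v_\lambda$ is a highest weight vector of $V$ and $f \in V^*$ is the corresponding dual functional. The key observation is that $f$ is a lowest weight vector of $V^*$: indeed $V^* \cong V(-w_0 \lambda)$, whose lowest weight is $-\lambda$, and since $v_\lambda$ has weight $\lambda$ the functional $f$ has weight $-\lambda$ and spans the one-dimensional lowest weight space. This lets me apply the explicit braiding formula with first module $V(\lambda)$ and second module $V^* = V(-w_0\lambda)$, whose lowest weight vector has weight $w_0(-w_0\lambda) = -\lambda$, giving
\[
\braid_{V, V^*}(v_\lambda \otimes f) = q^{(\lambda, -\lambda)}\, f \otimes v_\lambda = q^{-(\lambda, \lambda)}\, f \otimes v_\lambda.
\]

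Applying $\EV$ to the right-hand side yields $q^{-(\lambda, \lambda)}$, since $\EV(f \otimes v_\lambda) = 1$. On the other hand, the explicit formula for $\EVp$ gives $\EVp(v_\lambda \otimes f) = q^{(2\rho, \lambda)}$, as $v_\lambda$ has weight $\lambda$. Comparing the two I obtain $q^{-(\lambda, \lambda)} = c\, q^{(2\rho, \lambda)}$, hence $c = q^{-(\lambda, \lambda) - (2\rho, \lambda)} = q^{-(\lambda, \lambda + 2\rho)}$, as claimed. The main subtlety I anticipate is the identification of the dual functional $f$ with the lowest weight vector of $V^*$, together with careful bookkeeping of the weight and root conventions used in the braiding; once these are settled, the braiding formula applies directly and the scalar falls out immediately.
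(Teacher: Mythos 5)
Your proposal is correct and follows essentially the same route as the paper: Schur's lemma reduces the claim to a scalar, which is then computed by evaluating both sides on the highest weight vector tensored with its dual functional. Your extra step of identifying the dual functional as the lowest weight vector of $V^* \cong V(-w_0\lambda)$ is exactly the justification the paper leaves implicit when it invokes its braiding convention.
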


\begin{proof}
Observe that both $\EV \circ \braid_{V, V^*}$ and $\EVp$ are morphisms from $V \otimes V^*$ to $\bbC$.
Since $V$ is a simple module, we must have $\EV \circ \braid_{V, V^*} = c \EVp$ for some $c \in \bbC$.
To find the constant we evaluate both sides at $v_1 \otimes f^1$, where $v_1$ is a highest weight vector of $V = V(\lambda)$ and $f^1$ is its dual.
In our conventions for the braiding we have $\braid_{V, V^*}(v_1 \otimes f^1) = q^{-(\lambda, \lambda)} f^1 \otimes v_1$. Then
\[
\EV \circ \braid_{V, V^*} (v_1 \otimes f^1) = q^{-(\lambda, \lambda)} \EV (f^1 \otimes v_1) = q^{-(\lambda, \lambda)}.
\]
On the other hand we have $\EVp (v_1 \otimes f^1) = q^{(2 \rho, \lambda_1)} = q^{(2 \rho, \lambda)}$.
Hence $c = q^{-(\lambda, \lambda + 2 \rho)}$.
\end{proof}

\section{Differential calculi, metric and connections}
\label{sec:calculi-etc}

In this section we collect various definitions about differential calculi and connections.

\subsection{Differential calculi}

In this section $A$ denotes an arbitrary algebra.
The definitions recalled here are fairly standard and one possible reference is \cite{klsc}.

\begin{definition}
A \emph{differential calculus} over $A$ is a differential graded algebra $(\Omega^\bullet, \diff)$ such that $\Omega^0 = A$ and which is generated by the elements $a, \diff b$ with $a, b \in A$.

If $A$ is a $*$-algebra, we say that $(\Omega^\bullet, \diff)$ is a \emph{$*$-differential calculus} if in addition the $*$-structure of $A$ extends to an involutive conjugate-linear map on $\Omega^\bullet$, such that $\diff a^* = (\diff a)^*$ and $(\omega \wedge \chi)^* = (-1)^{p q} \chi^* \wedge \omega^*$ for all $\omega \in \Omega^p$ and $\chi \in \Omega^q$.
\end{definition}

The concrete definition of a differential calculus usually begins with the description of its degree-one part. This leads to the following definition.

\begin{definition}
A \emph{first order differential calculus} (FODC) over $A$ is an $A$-bimodule $\Omega$ with a linear map $\diff: A \to \Omega$ which obeys the Leibniz rule
\[
\diff(a b) = \diff a b + a \diff b, \quad a, b \in A,
\]
and such that $\Omega$ is generated as a left $A$-module by the elements $\diff a$ with $a \in A$.
\end{definition}

Given any FODC $(\Omega, \diff)$, there exists a universal differential calculus such that its degree-one part is $\Omega$.
The universal property in this case is the following.

\begin{definition}
The \emph{universal differential calculus} associated to a FODC $(\Omega, \diff)$ over $A$ is the unique differential calculus $(\Omega^\bullet_u, \diff_u)$ over $A$ with $\Omega^1_u = \Omega$, $\diff_u |_A = \diff$ and such that the following property is satisfied: for any differential calculus $(\Gamma^\bullet, \diff^\prime)$ with $\Gamma^1 = \Omega$ and $\diff^\prime |_A = \diff$, there exists a map of differential graded algebras $\phi: \Omega^\bullet_u \to \Gamma^\bullet$ such that $\phi |_{A \oplus \Omega} = \id$.
\end{definition}

The universal differential calculus can be constructed as a quotient of the tensor algebra of the $A$-bimodule $\Omega^1$, with differential $\diff_u(a_0 \diff a_1 \wedge \cdots \wedge \diff a_n) = \diff a_0 \diff a_1 \wedge \cdots \wedge \diff a_n$.
Any differential calculus can be obtained as a quotient of the universal differential calculus.

Finally we recall the notion of induced calculus over a subalgebra.

\begin{definition}
Let $B \subset A$ be a subalgebra and $(\Omega, \diff)$ a FODC over $A$.
Then the \emph{induced} FODC over $B$ is defined by $\Omega|_B = \mathrm{span} \{ b_1 \diff b_2: b_1, b_2 \in B \}$ and with differential $\diff|_B$.
\end{definition}

\subsection{Metrics}

We now recall the notion of quantum metric as stated in \cite[Definition 1.15]{quantum-book}.
Notice that invertibility is part of the definition.

\begin{definition}
\label{def:quantum-metric}
A \emph{(generalized) quantum metric} is an element $g \in \Omega^1 \otimes_A \Omega^1$ which is invertible, in the sense that there exists a bimodule map $(\cdot, \cdot): \Omega^1 \otimes_A \Omega^1 \to A$ such that
\[
(\omega, g^{(1)}) g^{(2)} = \omega = g^{(1)} (g^{(2)}, \omega)
\]
for all $\omega \in \Omega^1$, where we write $g = g^{(1)} \otimes g^{(2)}$.
\end{definition}

\begin{remark}
From the categorical point of view, a quantum metric makes $\Omega^1$ into a self-dual object in the monoidal category of $A$-bimodules.
\end{remark}

Notice that the definition of a quantum metric only uses $\Omega^1$, the degree-one part of $\Omega^\bullet$.
To impose an analogue of the symmetry condition we also use $\Omega^2$.

\begin{definition}
\label{def:metric-symmetric}
A quantum metric $g \in \Omega^1 \otimes_A \Omega^1$ is \emph{symmetric} if we have $\wedge(g) = 0$, where $\wedge: \Omega^1 \otimes \Omega^1 \to \Omega^2$ denotes the wedge product of one-forms.
\end{definition}

Finally, in the case when $A$ is a $*$-algebra and $\Omega^\bullet$ is a $*$-differential calculus, we can require the metric to be real in the following sense.

\begin{definition}
\label{def:metric-real}
A quantum metric $g \in \Omega^1 \otimes_A \Omega^1$ is \emph{real} if we have $g^\dagger = g$, where $\dagger := \mathrm{flip} \circ (* \otimes *)$ is given by the $*$-structure composed with the flip map.
\end{definition}

\subsection{Connections}

The notion of connection on a module is quite standard.
We are only going to consider left connections, so we omit "left" after the definition.

\begin{definition}
A (left) \emph{connection} on a (left) $A$-module $E$ is a linear map $\nabla_E: E \to \Omega^1 \otimes_A E$ which obeys the (left) Leibniz rule, that is
\[
\nabla_E(a e) = \diff a \otimes e + a \nabla_E(e), \quad
a \in A, \ e \in E.
\]
\end{definition}

For the left $A$-module $E = \Omega^1$ we can define additional properties.

\begin{definition}
\label{def:torsion}
The \emph{torsion} of a connection $\nabla: \Omega^1 \to \Omega^1 \otimes_A \Omega^1$ is the left $A$-module map $T_\nabla: \Omega^1 \to \Omega^2$ defined by
\[
T_\nabla := \wedge \circ \nabla - \diff.
\]
A connection is called \emph{torsion free} if $T_\nabla = 0$.
\end{definition}

Now suppose that $\Omega^1$ admits a quantum metric $g \in \Omega^1 \otimes_A \Omega^1$ as in \cref{def:quantum-metric}.
Then we can consider the cotorsion of the connection $\nabla$ with respect to $g$, a notion introduced by Majid in \cite{majid-braided} as a weaker version of metric compatibility.

\begin{definition}
\label{def:cotorsion}
The \emph{cotorsion} of a connection $\nabla: \Omega^1 \to \Omega^1 \otimes_A \Omega^1$ with quantum metric $g \in \Omega^1 \otimes_A \Omega^1$ is the element $\mathrm{co} T_\nabla \in \Omega^2 \otimes_A \Omega^1$ defined by
\[
\mathrm{co} T_\nabla := (\diff \otimes \id - (\wedge \otimes \id) \circ (\id \otimes \nabla)) g.
\]
A connection is called \emph{cotorsion free} if $\mathrm{co} T_\nabla = 0$.
\end{definition}

\begin{remark}
Let $M$ be a smooth manifold with metric $g$.
Consider a connection $\nabla$ on $M$, defined in the usual sense as acting on vector fields.
Define a connection $\nabla^*$ by
\[
X(g(Y, Z)) = g(\nabla^*_X Y, Z) + g(Y, \nabla_X Z).
\]
Here $X, Y, Z$ are vector fields on $M$.
As discussed in \cite[Corollary 5.70]{quantum-book}, the cotorsion of the connection $\nabla$ can be identified with the torsion of the connection $\nabla^*$ defined above.
In particular, if $\nabla$ is torsion free then the cotorsion free condition gives
\[
(\nabla_X g)(Y, Z) = (\nabla_Y g)(X, Z)
\]
for all vector fields $X, Y, Z$.
This is a weaker condition than $(\nabla_X g)(Y, Z) = 0$, which is the standard metric compatibility condition with respect to the metric $g$.
\end{remark}

In the classical case, the Levi-Civita connection is the unique connection on the tangent bundle of a smooth manifold which is torsion free and compatible with the metric.
This motivates the following definition, see \cite[Definition 8.2]{quantum-book}.

\begin{definition}
\label{def:levi-civita}
A \emph{weak quantum Levi-Civita connection} is a connection $\nabla: \Omega^1 \to \Omega^1 \otimes_A \Omega^1$ which is torsion free and cotorsion free.
\end{definition}

To introduce a strong version we need bimodule connections, which we now recall.

\subsection{Bimodule connections}

Classically a connection on $\Omega^1$ naturally extends to a connection on the tensor product $\Omega^1 \otimes_A \Omega^1$.
In the quantum case this lifting requires the connection to be a bimodule connection, defined as in \cite[Definition 3.66]{quantum-book}.

\begin{definition}
\label{def:bimodule-connection}
A (left) \emph{bimodule connection} on an $A$-bimodule $E$ is a (left) connection $\nabla_E: E \to \Omega^1 \otimes_A E$ together with an $A$-bimodule map $\sigma_E: E \otimes_A \Omega^1 \to \Omega^1 \otimes_A E$ such that
\[
\nabla_E(e a) = \sigma_E(e \otimes \diff a) + \nabla_E(e) a, \quad e \in E, \ a \in A.
\]
\end{definition}

The bimodule map $\sigma_E$, called the \emph{generalized braiding}, is not additional data for the connection.
Indeed, if it exists it is uniquely determined by the condition above.

A bimodule connection $\nabla: \Omega^1 \to \Omega^1 \otimes_A \Omega^1$ can be extended to a connection on $\Omega^1 \otimes_A \Omega^1$ by the Leibniz rule and the generalized braiding, see \cite[Theorem 3.78]{quantum-book}.
In particular, given a quantum metric $g \in \Omega^1 \otimes_A \Omega^1$ we can consider
\[
\nabla g = (\nabla \otimes \id) g + (\sigma \otimes \id) (\id \otimes \nabla) g.
\]
This naturally leads to the following definition.

\begin{definition}
Let $\nabla$ be a bimodule connection on $\Omega^1$.
We say that it is \emph{quantum metric compatible} with a quantum metric $g \in \Omega^1 \otimes_A \Omega^1$ if $\nabla g = 0$.
\end{definition}

\begin{remark}
As discussed before, in the classical case cotorsion freeness is a weaker property than metric compatibility.
In the quantum case we need an extra assumption to compare the two notions, namely the condition $\wedge \circ (\sigma + \id) = 0$ for the generalized braiding.
Under this condition and torsion freeness of $\nabla$ we obtain $\mathrm{co} T_\nabla = (\wedge \otimes \id) \nabla g$, see \cite[Section 8.1]{quantum-book}.
This shows that cotorsion freeness is a weaker property, in this case.
\end{remark}

Finally we can formulate the notion of Levi-Civita connection, as in the classical case.

\begin{definition}
Let $\nabla: \Omega^1 \to \Omega^1 \otimes_A \Omega^1$ be a bimodule connection and let $g \in \Omega^1 \otimes_A \Omega^1$ be a quantum metric.
Then we say that $\nabla$ is a \emph{quantum Levi-Civita connection} if it is torsion free and quantum metric compatible with $g$.
\end{definition}

\section{Quantum flag manifolds}
\label{sec:quantum-flag}

The quantum projective spaces can be regarded as the easiest family to describe within the class of quantum (irreducible) flag manifolds.
All these quantum spaces admit a uniform description, which we recall here.
Even though the focus of this paper is on the projective spaces, many of our computations also work for general irreducible flag manifolds.
We take some care in explaining the index-free notation we are going to employ in the following, as it simplifies the computations tremendously (once one gets the hang of it).

\subsection{Geometrical description}

We start by quickly recalling the definition of a flag manifold in the classical case, for precise definitions see for instance \cite{cap-book}.
Let $G$ be a complex simple Lie group, with compact real form $U$.
Corresponding to any subset of simple roots, denoted by $S$, we can define a \emph{parabolic subgroup} $P_S \subset G$ and a \emph{Levi subgroup} $L_S \subset P_S$.
A \emph{(generalized) flag manifold} is a homogeneous space of the form $G / P_S$.
In terms of the compact real form, we have the subgroup $K_S := P_S \cap U = L_S \cap U$ and the isomorphism $G / P_S \cong U / K_S$.

In the quantum case, we begin by introducing an analogue of the Levi factor $\liel_S$ (the Lie algebra of $L_S$), following \cite{quantum-flag}.
The \emph{quantized Levi factor} $\UqlS$ is defined by
\[
\UqlS := \langle K_i, \ E_j, \ F_j : i \in I, \ j \in S \rangle \subset \Uqg.
\]
Here $\langle \cdot \rangle$ denotes the subalgebra generated by the given elements in $\Uqg$.
It is easily verified that $\UqlS$ is a Hopf subalgebra.
Moreover it is a Hopf $*$-subalgebra with $*$ corresponding to the compact real form.
Taking the $*$-structure into account we write $\UqkS := (\UqlS, *)$.
The \emph{quantum flag manifold} $\Cqflag$ is then defined as
\[
\Cqflag := \CqU^{\UqkS} = \{ a \in \CqU: X \triangleright a = \varepsilon(X) a, \ \forall X \in \UqkS \}.
\]

In the following we restrict to the case of \emph{irreducible} flag manifolds.
At the Lie algebra level these can be characterized as follows: the set $S$ consists of all the simple roots except for $\alpha_s$, where $\alpha_s$ is a simple root appearing with multiplicity one in the highest root of $\lieg$.

\subsection{Generators and relations}

The quantum flag manifolds $\Cqflag$ admit a uniform description in terms of generators and relations.
We follow the presentation in \cite{heko}.

Consider the simple $\Uqg$-module $V := V(\omega_s)$, where $\omega_s$ is the fundamental weight corresponding to the simple root $\alpha_s$ described above, and write $N := \dim V$.

We define the algebra $\alg$ with generators $\{ v^i, f^i \}_{i = 1}^N$ and relations
\begin{equation}
\label{eq:relationsA-indices}
\begin{gathered}
f^i f^j = q^{-(\omega_s, \omega_s)} \sum_{k, l} (\braid_{V, V})^{i j}_{k l} f^k f^l, \quad
v^i v^j = q^{-(\omega_s, \omega_s)} \sum_{k, l} (\braid_{V^*, V^*})^{i j}_{k l} v^k v^l, \\
v^i f^j = q^{(\omega_s, \omega_s)} \sum_{k, l} (\braid_{V, V^*})^{i j}_{k l} f^k v^l, \quad
\sum_i v^i f^i = 1.
\end{gathered}
\end{equation}
These generators should be interpreted as follows: after fixing a weight basis $\{v_i\}_{i = 1}^N$ of $V = V(\omega_s)$, we have the \emph{dual basis} $\{f^i\}_{i = 1}^N$ of $V^* \cong V(- w_0 \omega_s)$ and the \emph{double dual basis} $\{v^i\}_{i = 1}^N$ of $V^{**} \cong V(\omega_s)$ (defined by $v^i(f^j) = \delta_{i j}$).
One can check that this identification makes $\alg$ into a $\Uqg$-module algebra.
Then we have the following result.

\begin{lemma}
\label{lem:flag-isomorphism}
The $\Uqg$-module algebra $\alg$ is isomorphic to the $\Uqg$-module subalgebra of $\CqG$ generated by the matrix coefficients $c^{\omega_s}_{f^i, v_1}$ and $c^{-w_0 \omega_s}_{v^i, f^1}$, where $v_1$ is a fixed highest weight vector of $V(\omega_s)$.
The isomorphism is given by
\[
f^i \mapsto c^{\omega_s}_{f^i, v_1}, \quad
v^i \mapsto c^{-w_0 \omega_s}_{v^i, f^1}.
\]
\end{lemma}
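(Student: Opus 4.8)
The plan is to realize the stated assignment as an algebra homomorphism $\phi\colon\alg\to\CqG$, to verify that $\phi$ is $\Uqg$-equivariant and surjective onto the indicated subalgebra, and then to prove injectivity, which is the only substantial point. The basic tool throughout is the fact that matrix coefficients multiply tensorially, $c^V_{f,v}\,c^W_{g,w}=c^{V\otimes W}_{f\otimes g,\,v\otimes w}$, which is immediate from $(ab)(X)=(a\otimes b)\Delta(X)$ and $c^V_{f,v}(X)=f(Xv)$. Consequently each relation in \eqref{eq:relationsA-indices} becomes an identity between matrix coefficients on a twofold tensor product, to be checked by evaluating on an arbitrary $X\in\Uqg$ and transferring the braiding from the dual (source) functionals onto the source vectors, using that $\braid$ is a $\Uqg$-module map.

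First I would check the four relations. The mixed relation for $v^if^j$ is immediate from the defining normalization of the braiding: since $v_1$ is a highest weight vector of $V$ and $f^1$ is a lowest weight vector of $V^*\cong V(-w_0\omega_s)$, one has $\braid_{V,V^*}(v_1\otimes f^1)=q^{-(\omega_s,\omega_s)}\,f^1\otimes v_1$, which produces exactly the factor $q^{(\omega_s,\omega_s)}$ appearing there. For the relations among the $f^i$ and among the $v^i$ one instead uses that $v_1\otimes v_1$ lies in the Cartan component $V(2\omega_s)\subset V\otimes V$, on which $\braid_{V,V}$ acts by the scalar $q^{(\omega_s,\omega_s)}$ — a Casimir-eigenvalue computation, with the $+1$ sign attached to the Cartan piece — so that $q^{-(\omega_s,\omega_s)}\braid_{V,V}$ is the identity there; the case of the $v^i$ is symmetric, using $(w_0\omega_s,w_0\omega_s)=(\omega_s,\omega_s)$. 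Finally the normalization $\sum_i v^if^i=1$ holds because, under the identification $V^{**}\cong V$, the functional $\sum_i v^i\otimes f^i$ on $V^*\otimes V$ is precisely the evaluation $\EV\colon V^*\otimes V\to\bbC$, a morphism into the trivial module; hence $\sum_i(v^i\otimes f^i)\big(X(f^1\otimes v_1)\big)=\varepsilon(X)\,\EV(f^1\otimes v_1)=\varepsilon(X)$, which is the unit of $\CqG$.

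Equivariance of $\phi$ then holds because the action $\triangleright$ on matrix coefficients reproduces exactly the module structure making $\{f^i\}$ a copy of $V^*$ and $\{v^i\}$ a copy of $V^{**}$, while surjectivity onto the stated subalgebra is automatic, that subalgebra being generated by the images by definition.

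The main obstacle is injectivity, since a priori the matrix coefficients could satisfy relations beyond those in \eqref{eq:relationsA-indices}. I would establish it by matching graded $\Uqg$-module dimensions across the surjection $\phi$. Restricting first to the subalgebra generated by the $f^i$ alone, the image is $\bigoplus_n$ (the span of the $c^{n\omega_s}_{\,\cdot\,,\,v_1^{\otimes n}}$), whose degree-$n$ part has dimension $\dim V(n\omega_s)$; on the abstract side the quadratic relations cut $V^{*\otimes n}$ down to its Cartan component, and here the irreducibility (cominuscularity) of $\omega_s$ is exactly what guarantees that this quantum symmetric algebra is a flat deformation of its classical counterpart, so its degree-$n$ part again has dimension $\dim V(n\omega_s)$. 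The same applies to the $v^i$, and the mixed relation together with $\sum_i v^if^i=1$ supplies a normal form reducing a general element of $\alg$ to the span of products of an $f$-monomial with a $v$-monomial, matching the Peter--Weyl decomposition of the subalgebra of $\CqG$. Comparing dimensions degree by degree forces $\phi$ to be an isomorphism. Alternatively, and more economically, one may invoke the explicit presentation of $\Cqflag$ established in \cite{heko, quantum-flag}, of which this lemma is essentially a reformulation.
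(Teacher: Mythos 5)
The first thing to note is that the paper does not prove this lemma at all: it is recalled as background from \cite{heko} (and the module-algebra structure of $\alg$ is explicitly left as ``one can check''), so your closing remark --- that the lemma is essentially a reformulation of the presentation in \cite{heko, quantum-flag} --- is in fact the paper's entire argument. Your verification that the assignment extends to an algebra homomorphism $\phi$ is correct and is the part one could reasonably write out in full: matrix coefficients multiply tensorially; $\braid_{V,V}$ acts by $q^{(\omega_s,\omega_s)}$ on the Cartan component, hence on $v_1 \otimes v_1$ (and likewise on $f^1 \otimes f^1$ for $V^*$, which is a lowest weight vector generating the Cartan component of $V^* \otimes V^*$); the normalization $\braid_{V,V^*}(v_1 \otimes f^1) = q^{-(\omega_s,\omega_s)} f^1 \otimes v_1$ agrees with the paper's conventions (compare \cref{lem:relation-evaluations}); and $\sum_i v^i \otimes f^i = \EV$ is a morphism to the trivial module, which gives the unit relation. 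These are exactly the facts needed to push the four relations \eqref{eq:relationsA-indices} through $\phi$.

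The genuine gap is in the injectivity argument, in two places. First, the claim that the abstract quadratic algebra on the $f^i$ has degree-$n$ dimension $\dim V(n\omega_s)$ is the entire mathematical content of the lemma, and it cannot be dispatched by saying that ``cominuscularity guarantees flatness.'' Be careful about which algebra this is: for $q \neq 1$ the relations quotient by the image of $1 - q^{-(\omega_s,\omega_s)}\braid^t$, which is the dual of \emph{all} non-Cartan components of $V \otimes V$ (their braiding eigenvalues differ from $q^{(\omega_s,\omega_s)}$), not merely of the quantum antisymmetric part; so this is not the braided symmetric algebra, and its classical counterpart is the homogeneous coordinate ring of the cone over $G/P_S$, whose quadratic presentation is Kostant's theorem. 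What you need is the quantum analogue --- that the intersection over $i$ of the pairwise Cartan components $V^{\otimes(i-1)} \otimes V(2\omega_s) \otimes V^{\otimes(n-i-1)}$ equals $V(n\omega_s)$ in the quantum setting --- and that is precisely the hard result supplied by \cite{heko, locally-finite}; invoking it makes your ``direct'' proof circular, while proving it is a substantial undertaking (only for projective spaces does it reduce to the familiar PBW property of quantum affine space). Second, the mixed case is looser than it looks. The normal form gives $\alg = \sum_{m,n} \alg_f^{(m)}\alg_v^{(n)}$ (with the obvious notation for the spans of bidegree-$(m,n)$ monomials), but this sum is \emph{not} direct: the relation $\sum_i v^i f^i = 1$ forces $\alg_f^{(m)}\alg_v^{(n)} \subseteq \alg_f^{(m+1)}\alg_v^{(n+1)}$, so each $\bbZ$-degree is an infinite-dimensional increasing union of such subspaces, and ``comparing dimensions degree by degree'' does not parse as stated. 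A correct version runs: bound $\dim \alg_f^{(m)}\alg_v^{(n)} \leq \dim V(m\omega_s)\dim V(n\omega_s)$ using the flatness of each half; identify the image of this subspace as the full dual of $V(m\omega_s) \otimes V(-n w_0\omega_s)$, which requires the fact (quoted in the paper's discussion of the braiding) that a highest weight vector tensored with a lowest weight vector is cyclic; conclude injectivity on each bidegree piece; and finally separate distinct $\bbZ$-degrees by their $K_i$-eigenvalues under $\triangleright$ and use the nestedness above. None of these steps appears in your sketch, and the cyclicity input in particular cannot be omitted, since without it you do not know the dimension of the image in mixed bidegrees.
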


The algebra $\alg$ is $\bbZ$-graded by $\deg f^i := 1$ and $\deg v^i := -1$.
We write $\subalg := \alg^0$ for its degree-zero subalgebra, which is generated by the elements $p^{i j} := f^i v^j$.

\begin{proposition}
The algebra $\subalg$ is isomorphic to the quantum irreducible flag manifold $\Cqflag$ as a $\Uqg$-module under the isomorphism above.
\end{proposition}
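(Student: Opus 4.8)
We must show that the degree-zero subalgebra $\subalg = \alg^0$, generated by the elements $p^{ij} = f^i v^j$, is isomorphic to the quantum irreducible flag manifold $\Cqflag = \CqU^{\UqkS}$ as a $\Uqg$-module, compatibly with the isomorphism of \cref{lem:flag-isomorphism}.

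**The plan.** The key tool is already in hand: \cref{lem:flag-isomorphism} identifies $\alg$ with the $\Uqg$-module subalgebra of $\CqG$ generated by the matrix coefficients $c^{\omega_s}_{f^i, v_1}$ and $c^{-w_0\omega_s}_{v^i, f^1}$, and under this isomorphism $p^{ij} = f^i v^j$ maps to the product $c^{\omega_s}_{f^i, v_1} \, c^{-w_0\omega_s}_{v^i, f^1}$. My strategy is to transport the whole question into $\CqG$ via this identification and then show two things: first, that the image of $\subalg$ lands inside $\Cqflag$; second, that it exhausts $\Cqflag$. Throughout I would exploit that both sides are $\Uqg$-modules and that the proposed map is already known to be a $\Uqg$-module algebra map, so only the identification of the image with the coinvariant subalgebra remains.

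**Inclusion $\subalg \hookrightarrow \Cqflag$.** First I would compute the weight behaviour of the generators. Since $f^i \mapsto c^{\omega_s}_{f^i, v_1}$ has its right-module structure governed by the highest weight vector $v_1$, and $v^i \mapsto c^{-w_0\omega_s}_{v^i, f^1}$ likewise by $f^1$, the products $p^{ij}$ are built from matrix coefficients with fixed "column" index corresponding to a highest/lowest weight vector. The point is that $\UqkS = (\UqlS, *)$ is generated by $K_i$ ($i \in I$) and $E_j, F_j$ ($j \in S$), where $S$ omits precisely the node $s$. For the left action $\triangleright$, one has $(X \triangleright c^V_{f,w})(Y) = c^V_{f,w}(YX) = f(YXw)$, so $X \triangleright c^V_{f,w} = c^V_{f, Xw}$. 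Thus $X \triangleright p^{ij}$ is controlled by the action of $X$ on the fixed vectors $v_1 \in V(\omega_s)$ and $f^1 \in V(-w_0\omega_s)$. Because $\omega_s$ is orthogonal to all $\alpha_j$ with $j \in S$ (this is exactly the irreducibility condition that $\alpha_s$ appears with multiplicity one in the highest root, making $v_1$ a $\UqlS$-highest weight vector that is annihilated by the $E_j, F_j$ with $j \in S$ up to the counit), the generator $v_1$ is $\UqkS$-invariant in the sense $X \triangleright c^{\omega_s}_{f^i, v_1} = \varepsilon(X)\, c^{\omega_s}_{f^i, v_1}$ for $X \in \UqkS$, and dually for $f^1$. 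Hence each $p^{ij}$ is $\UqkS$-coinvariant, and since $\Cqflag$ is a subalgebra, the whole of $\subalg$ lies in $\Cqflag$.

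**Surjectivity and the main obstacle.** The reverse inclusion is where the real work lies. I would decompose $\Cqflag = \CqU^{\UqkS}$ using the Peter–Weyl decomposition of $\CqG$ into isotypic blocks $\bigoplus_\lambda V(\lambda)^* \otimes V(\lambda)$ (matrix coefficients of $V(\lambda)$), and observe that the $\UqkS$-coinvariants under $\triangleright$ pick out, within each block, the multiplicity space of the trivial $\UqkS$-type inside $V(\lambda)$ restricted to $\UqlS$. The claim becomes: the weights $\lambda$ carrying a $\UqkS$-invariant vector are exactly those appearing as summands of tensor powers $V(\omega_s)^{\otimes a} \otimes V(-w_0\omega_s)^{\otimes b}$, and moreover the resulting coinvariant matrix coefficients are precisely the monomials in the $p^{ij}$. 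The cleanest route is to appeal to the known description in \cite{heko, quantum-flag} that $\Cqflag$ is the $\bbZ$-graded-degree-zero part of a Hopf algebra-like construction, matching the $\bbZ$-grading on $\alg$ with $\deg f^i = 1$, $\deg v^i = -1$; the coinvariance condition kills exactly the nonzero-degree components, identifying $\Cqflag$ with $\alg^0 = \subalg$. The main obstacle I anticipate is verifying that the generation is complete — that no $\UqkS$-coinvariant matrix coefficient is missed by products of the $p^{ij}$. This is essentially a branching/multiplicity statement for the restriction $\Uqg \downarrow \UqlS$, and I would handle it by reducing to the classical branching (the representation theory of $\Uqg$ matching that of $U(\lieg)$, as recalled in \cref{sec:mat-coeff}) and then invoking the established identification of $\Cqflag$ in the cited references, rather than reproving the branching from scratch.
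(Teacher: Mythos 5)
The paper itself offers no proof of this proposition: it is quoted from \cite{heko} (see also \cite{quantum-flag}), so your sketch has to be judged on its own merits. Its overall architecture (transport via \cref{lem:flag-isomorphism}, then inclusion plus surjectivity through Peter--Weyl and branching) is indeed the standard route taken in the literature, and deferring the surjectivity half to \cite{heko, quantum-flag} is consistent with what the paper does anyway.

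However, your inclusion argument contains a genuine error. You claim that each factor is separately coinvariant, i.e.\ that $X \triangleright c^{\omega_s}_{f^i, v_1} = \varepsilon(X)\, c^{\omega_s}_{f^i, v_1}$ for all $X \in \UqkS$. This is false: $\UqlS$ is generated by $K_i$ for \emph{all} $i \in I$ (only the $E_j, F_j$ are restricted to $j \in S$), so in particular $K_s \in \UqkS$, and since $v_1$ has weight $\omega_s$ one gets $K_s \triangleright c^{\omega_s}_{f^i, v_1} = q^{(\alpha_s, \omega_s)} c^{\omega_s}_{f^i, v_1} \neq \varepsilon(K_s)\, c^{\omega_s}_{f^i, v_1}$. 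Were your claim true, \emph{all} of $\alg$ (not merely its degree-zero part) would land in $\Cqflag$, which contradicts both the statement being proved and your own closing remark that coinvariance ``kills exactly the nonzero-degree components''. The correct argument is precisely the weight cancellation: the generators $E_j, F_j$ with $j \in S$ annihilate both fixed vectors $v_1$ and $f^1$ (because $(\omega_s, \alpha_j^\vee) = 0$ for $j \neq s$, so the relevant weight strings are trivial), while the $K_i$-eigenvalues $q^{(\alpha_i, \omega_s)}$ and $q^{-(\alpha_i, \omega_s)}$ of $v_1$ and $f^1$ cancel only in the products $p^{i j}$ — this cancellation is exactly why the proposition concerns $\subalg = \alg^0$ and not $\alg$. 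A minor further point: the orthogonality $(\omega_s, \alpha_j) = 0$ for $j \in S$ is simply the definition of a fundamental weight and holds for any node $s$; it is not the irreducibility (cominuscule) condition, which plays no role in this particular statement.
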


The relations for the generators $p^{i j}$ of $\subalg$ are given in \cite[Section 3.1.3]{heko}. Write
\[
\braidP_{V, V} := \braid_{V, V} - q^{(\omega_s, \omega_s)}, \quad
\braidP_{V^*, V^*} := \braid_{V^*, V^*} - q^{(\omega_s, \omega_s)}.
\]
Then the relations can be written as
\begin{equation}
\label{eq:relationsB-indices}
\begin{gathered}
\sum_{a, b, c, d} (\braidP_{V, V})^{i j}_{a b} (\braid^{-1}_{V, V^*})^{b k}_{c d} p^{a c} p^{d l} = 0, \quad
\sum_{a, b, c, d} (\braidP_{V^*, V^*})^{k l}_{a b} (\braid^{-1}_{V, V^*})^{j a}_{c d} p^{i c} p^{d b} = 0, \\
\sum_i q^{(2 \rho, \lambda_i)} p^{i i} = q^{(\omega_s, 2 \rho)}.
\end{gathered}
\end{equation}

\begin{remark}
The last relation appears as $q^{(\omega_s, \omega_s)} \sum_{i, j, k} (\braid_{V, V^*})^{k k}_{i j} p^{i j} = 1$ in \cite{heko}.
We rewrite it using the identity $\EV \circ \braid_{V, V^*} = q^{-(\omega_s, \omega_s + 2 \rho)} \EVp$ from \cref{lem:relation-evaluations}, which leads to $\sum_k (\braid_{V, V^*})^{k k}_{i j} = q^{-(\omega_s, \omega_s + 2 \rho)} q^{(2 \rho, \lambda_i)} \delta_{i j}$.
Using this we obtain $\sum_i q^{(2 \rho, \lambda_i)} p^{i i} = q^{(\omega_s, 2 \rho)}$.
\end{remark}

As shown in \cite[Proposition 3.3]{kahler}, the algebras $\alg$ and $\subalg$ can be made into $*$-algebras as follows.
Choosing an orthonormal basis for $V = V(\omega_s)$ with respect to a $\Uqu$-invariant inner product, the $*$-structure is given by $(f^i)^* = v^i$.
In this case, the isomorphism from \cref{lem:flag-isomorphism} becomes a $*$-isomorphism.
For the generators $p^{i j}$ of $\subalg$ we have $(p^{i j})^* = p^{j i}$.

\subsection{Index-free notation}

In the following we adopt an index-free notation, as done in \cite{heko}, since it makes computations significantly clearer.
The basic idea is very simple: for instance, with $\{v_i\}_i$ the basis of $V$ (note the lower index) we write
\[
\sum_{a, b} (\braid_{V, V})_{j k}^{a b} v_i v_a v_b v_l
\ \longleftrightarrow \
(\braid_{V, V})_{2 3} v v v v.
\]
We want to use a similar notation for the generators $\{ f^i, v^i \}_{i = 1}^N$ of $\alg$.
What complicates matters here is that we want to consider $f^i$ as a linear functional on $V$ and $v^i$ as a linear functional on $V^*$, since this is how we have defined the $\Uqg$-module structure on $\alg$ (and the reason why we use upper indices for the generators).
The bottom line is that we need to consider the action of $\Uqg$-module maps on these elements via the transpose.

To give a concrete example, in this notation the first relation of \eqref{eq:relationsA-indices} becomes
\[
f^i f^j = q^{-(\omega_s, \omega_s)} \sum_{k, l} (\braid_{V, V})^{i j}_{k l} f^k f^l
\ \longleftrightarrow \
f f = q^{-(\omega_s, \omega_s)} (\braid_{V, V})_{1 2} f f.
\]
To give a different example, the last relation $\sum_i v^i f^i = 1$ of \eqref{eq:relationsA-indices} becomes $\EV_{1 2} v f = 1$, since the LHS is $\sum_{i, j} \EV_{i j} v^i f^j$ and we have $\EV_{i j} = \delta_{i j}$.
As a final example, the expression $\CV_1 f$ carries three indices and corresponds to $(\CV_1 f)^{i j k} = \delta_{i j} f^k$.

With this notation, the relations of $\alg$ can be rewritten in the condensed form
\begin{equation}
\label{eq:relationsA}
\begin{gathered}
f f = q^{-(\omega_s, \omega_s)} (\braid_{V, V})_{1 2} f f, \quad
v v = q^{-(\omega_s, \omega_s)} (\braid_{V^*, V^*})_{1 2} v v, \\
v f = q^{(\omega_s, \omega_s)} (\braid_{V, V^*})_{1 2} f v, \quad
\EV_{1 2} v f = 1.
\end{gathered}
\end{equation}
The situation is similar for the flag manifold $\subalg \subset \alg$.
In this case we have the generators $p^{i j} = f^i v^j$, which carry two indices, and the relations can be written as
\begin{equation}
\label{eq:relationsB}
(\braidP_{V, V})_{1 2} (\braid^{-1}_{V, V^*})_{2 3} p p = 0, \quad
(\braidP_{V^*, V^*})_{3 4} (\braid^{-1}_{V, V^*})_{2 3} p p = 0, \quad
\EVp_{1 2} p = q^{(\omega_s, 2 \rho)}.
\end{equation}

\section{Heckenberger-Kolb calculus}
\label{sec:hk-calculi}

In this section we describe the Heckenberger-Kolb calculus associated to an irreducible quantum flag manifold, as introduced in \cite{heko}.
We also give a slightly different presentation of some of the relations, which turns out to be more convenient for our purposes.
Finally we focus on a particular situation, which we refer to as the quadratic case, which geometrically corresponds to the quantum projective spaces.

\subsection{Definitions}

We start by describing the FODC $(\calc, \diff)$ associated to the Heckenberger-Kolb calculus.
We have $\calc := \calcP \oplus \calcM$ and $\diff := \del + \delbar$, where the two FODCs $\calcP$ and $\calcM$ are generated as left $\subalg$-modules by $\del p$ and $\delbar p$ respectively (we use the index-free notation from now on).
To describe the relations we need some additional notation.
Recall that $\braidP_{V, V} = \braid_{V, V} - q^{(\omega_s, \omega_s)}$ and $\braidP_{V^*, V^*} = \braid_{V^*, V^*} - q^{(\omega_s, \omega_s)}$. We also write
\[
\braidQ_{V, V} := \braid_{V, V} + q^{(\omega_s, \omega_s) - (\alpha_s, \alpha_s)}, \quad
\braidQ_{V^*, V^*} := \braid_{V^*, V^*} + q^{(\omega_s, \omega_s) - (\alpha_s, \alpha_s)}.
\]
Then $\calcP$ is generated by $\del p$, as a left $\subalg$-module, with relations
\begin{equation}
\label{eq:calcP-relations}
(\braidP_{V, V})_{1 2} (\braidQ_{V, V})_{1 2} (\braid^{-1}_{V, V^*})_{2 3} p \del p = 0, \quad
(\braidP_{V^*, V^*})_{3 4} (\braid^{-1}_{V, V^*})_{2 3} p \del p = 0, \quad
\EVp_{1 2} \del p = 0.
\end{equation}
Similarly, $\calcM$ is generated by $\delbar p$ with relations
\begin{equation}
\label{eq:calcM-relations}
(\braidP_{V^*, V^*})_{3 4} (Q_{V^*, V^*})_{3 4} (\braid^{-1}_{V, V^*})_{2 3} p \delbar p = 0, \quad
(\braidP_{V, V})_{1 2} (\braid^{-1}_{V, V^*})_{2 3} p \delbar p = 0, \quad
\EVp_{1 2} \delbar p = 0.
\end{equation}

To define the right $\subalg$-module structure, let us introduce the notation
\begin{equation}
\label{eq:T-map}
\opT_{1 2 3 4} := (\braid_{V, V^*})_{2 3} (\braid_{V, V})_{1 2} (\braid^{-1}_{V^*, V^*})_{3 4} (\braid^{-1}_{V, V^*})_{2 3}.
\end{equation}
Then the right $\subalg$-module structure of $\calcP$ and $\calcM$ is defined by
\begin{equation}
\label{eq:right-module}
\del p p = q^{(\alpha_s, \alpha_s)} \opT_{1 2 3 4} p \del p, \quad
\delbar p p = q^{-(\alpha_s, \alpha_s)} \opT_{1 2 3 4} p \delbar p.
\end{equation}

Finally, the Heckenberger-Kolb calculus $(\Omega^\bullet, \diff)$ is the universal differential calculus associated to $(\Omega, \diff)$.
It turns out that we have the decomposition $\diff = \del + \delbar$ also in higher degrees.
In particular, this implies that $\del^2 = \delbar^2 = 0$ and $\del \delbar = - \delbar \del$.
Moreover, as shown in \cite[Theorem 4.2]{kahler}, the calculus $(\Omega^\bullet, \diff)$ becomes a $*$-calculus upon setting $(\del p^{i j})^* = \delbar p^{j i}$.

\subsection{Induced calculi}
\label{sec:overcalc}

In \cite{heko} the FODCs $\calcP$ and $\calcM$ over $\subalg$ are constructed as induced calculi from some auxiliary FODCs $\overcalcP$ and $\overcalcM$ over the larger algebra $\alg$.
This description is also useful for us, so we recall the details below.

Consider the algebra $\alg$ introduced before, generated by $f$ and $v$.
We define the left $\alg$-modules $\overcalcP$ and $\overcalcM$, generated respectively by $\del f$ and $\delbar v$, with relations
\begin{equation}
\label{eq:overcalc-relations}
\begin{gathered}
(\braidP_{V, V})_{1 2} (\braidQ_{V, V})_{1 2} f \del f = 0, \quad
(\braidP_{V^*, V^*})_{1 2} (\braidQ_{V^*, V^*})_{1 2} v \delbar v = 0, \\
\EV_{1 2} v \del f = 0, \quad
\EVp_{1 2} f \delbar v = 0.
\end{gathered}
\end{equation}
The last two relations come from $\EV_{1 2} v f = 1$ and $\EVp_{1 2} f v = q^{(\omega_s, 2 \rho)}$.

The right $\alg$-module relations are as follows. For $\overcalcP$ we have
\begin{equation}
\label{eq:overcalcP-right}
\del f f = q^{(\alpha_s, \alpha_s) - (\omega_s, \omega_s)} (\braid_{V, V})_{1 2} f \del f, \quad
\del f v = q^{- (\omega_s, \omega_s)} (\braid^{-1}_{V, V^*})_{1 2} v \del f.
\end{equation}
The relations for $\overcalcM$ are
\begin{equation}
\label{eq:overcalcM-right}
\delbar v f = q^{(\omega_s, \omega_s)} (\braid_{V, V^*})_{1 2} f \delbar v, \quad
\delbar v v = q^{(\omega_s, \omega_s) - (\alpha_s, \alpha_s)} (\braid^{-1}_{V^*, V^*})_{1 2} v \delbar v.
\end{equation}
The differentials of the two FODCs are specified by requiring that $\del v = 0$ and $\delbar f = 0$.

The FODCs $\calcP$ and $\calcM$ are the induced calculi over $\subalg$ obtained from $\overcalcP$ and $\overcalcM$.
This description, together with the relation $\EV_{1 2} v f = 1$ and $p = f v$, leads to the relations
\begin{equation}
\label{eq:evaluations-calculi}
\begin{gathered}
\EV_{2 3} p \del p = 0, \quad
\EV_{2 3} \del p p = \del p, \\
\EV_{2 3} p \delbar p = \delbar p, \quad
\EV_{2 3} \delbar p p = 0.
\end{gathered}
\end{equation}
For more details see for instance \cite[Lemma 5.2]{kahler} (with different notation).

\subsection{Different presentation}

It is convenient to work with the relations of the FODC $\calc$ in a slightly different form, which we now derive.
We begin by defining the maps
\begin{equation}
\label{eq:S-maps}
\begin{split}
\opS_{1 2 3} & := (\braid_{V, V^*})_{2 3} (\braid_{V, V})_{1 2} (\braid^{-1}_{V, V^*})_{2 3}, \\
\opSt_{2 3 4} & := (\braid_{V, V^*})_{2 3} (\braid^{-1}_{V^*, V^*})_{3 4} (\braid^{-1}_{V, V^*})_{2 3}.
\end{split}
\end{equation}
Observe that $\opT_{1 2 3 4} = \opS_{1 2 3} \opSt_{2 3 4}$, where $\opT$ is the map defined in \eqref{eq:T-map} and related to the right $\subalg$-module structure.
The proof of the following result is given in \cref{prop:S-properties-proof}.

\begin{proposition}
\label{prop:S-properties}
The maps $\opS$ and $\opSt$ satisfy the following properties.

\begin{enumerate}
\item We have the commutation relations
\begin{equation}
\label{eq:S-commutation}
\opS_{1 2 3} \opSt_{2 3 4} = \opSt_{2 3 4} \opS_{1 2 3}, \quad
\opSt_{2 3 4} \opS_{3 4 5} = \opS_{3 4 5} \opSt_{2 3 4}.
\end{equation}
\item We have the "braid equations"
\begin{equation}
\label{eq:S-braid-equation}
\opS_{1 2 3} \opS_{3 4 5} \opS_{1 2 3} = \opS_{3 4 5} \opS_{1 2 3} \opS_{3 4 5}, \quad
\opSt_{2 3 4} \opSt_{4 5 6} \opSt_{2 3 4} = \opSt_{4 5 6} \opSt_{2 3 4} \opSt_{4 5 6}.
\end{equation}
\end{enumerate}
\end{proposition}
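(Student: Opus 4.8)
The plan is to treat $\opS$ and $\opSt$ as \emph{dressed braidings} and to deduce all four identities from the braid equation \eqref{eq:braid-equation} together with the fact that braidings supported on disjoint legs commute. Throughout I abbreviate $b := \braid_{V, V^*}$, $c := \braid_{V, V}$ and $\bar{c} := \braid_{V^*, V^*}$, so that the definitions \eqref{eq:S-maps} read $\opS_{123} = b_{23}\, c_{12}\, b^{-1}_{23}$ and $\opSt_{234} = b_{23}\, \bar{c}^{-1}_{34}\, b^{-1}_{23}$, with the evident shifts for the other index placements. The guiding picture is that $\opS_{123}$ is the braiding $c$ of the two $V$-legs $1$ and $3$, conjugated by $b_{23}$ so as to preserve the alternating type pattern $(V, V^*, V)$, while $\opSt_{234}$ is $\bar{c}^{-1}$ acting on the two $V^*$-legs $2$ and $4$, conjugated into the pattern $(V^*, V, V^*)$.

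First I would dispose of the commutation relations \eqref{eq:S-commutation}. For the first one, the abbreviations give $\opS_{123}\,\opSt_{234} = b_{23}\, c_{12}\, (b^{-1}_{23} b_{23})\, \bar{c}^{-1}_{34}\, b^{-1}_{23} = b_{23}\, c_{12}\, \bar{c}^{-1}_{34}\, b^{-1}_{23}$, and likewise $\opSt_{234}\,\opS_{123} = b_{23}\, \bar{c}^{-1}_{34}\, c_{12}\, b^{-1}_{23}$; since $c_{12}$ and $\bar{c}^{-1}_{34}$ are supported on the disjoint leg sets $\{1,2\}$ and $\{3,4\}$ they commute, and the two expressions agree. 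The second commutation relation $\opSt_{234}\,\opS_{345} = \opS_{345}\,\opSt_{234}$ is genuinely harder, since the conjugating factors no longer cancel outright; here I would shuffle the disjoint letters past one another and apply \eqref{eq:braid-equation} to exchange the core braidings. The strand picture ($\opSt_{234}$ moving the $V^*$-legs $2,4$ and $\opS_{345}$ the $V$-legs $3,5$, which are disjoint strands) predicts in advance that the two maps commute.

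Next I would establish the braid equations \eqref{eq:S-braid-equation}, which I expect to be the main obstacle. The conceptual content is that $\opS_{123}, \opS_{345}, \opS_{123}$ realise the three-strand braiding of the $V$-legs $1, 3, 5$, so the identity should be exactly the braid equation \eqref{eq:braid-equation} for $c = \braid_{V, V}$ transported through the conjugating factors $b^{\pm}$. Concretely I would expand both sides into words in $b$ and $c$, use disjointness of leg-supports to migrate the conjugating braidings $b^{\pm}_{23}, b^{\pm}_{45}$ away from the central $c_{12}, c_{34}$ letters, cancel the adjacent $b^{-1} b$ pairs wherever they arise, and thereby reduce both sides to the same word up to a single application of \eqref{eq:braid-equation}. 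The $\opSt$ braid equation follows by the identical argument with $c$ replaced by $\bar{c}^{-1} = \braid^{-1}_{V^*, V^*}$ and the leg pattern shifted by one; alternatively it can be obtained from the $\opS$ case by the symmetry exchanging $V \leftrightarrow V^*$ and reversing the order of legs.

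The main difficulty throughout is purely combinatorial bookkeeping: because the three factors in each braid equation have pairwise overlapping supports (sharing one or two legs), the conjugating $b^{\pm}$ letters cannot all be cancelled simultaneously, and one must choose the order of the far-commutation moves carefully so that exactly one instance of \eqref{eq:braid-equation} closes the computation. I would organise this by tracking the positions of the two like-coloured legs being braided and checking at each step that the intermediate type pattern remains the expected alternating one, which both guides and verifies the manipulation. I emphasise that no evaluation or coevaluation morphisms are needed for these identities; only the braid equation \eqref{eq:braid-equation} and the naturality of the braiding enter.
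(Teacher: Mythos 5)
Your plan is correct, and it rests on exactly the two pillars the paper uses: commutativity of braidings with disjoint leg-support (naturality/functoriality of $\otimes$) and the braid equation \eqref{eq:braid-equation}; your complete computation for the first relation in \eqref{eq:S-commutation} (cancel the inner $b^{-1}_{23}b_{23}$, then far-commute $c_{12}$ past $\bar{c}^{-1}_{34}$) is literally the paper's "straightforward" case. Where the two arguments differ is in packaging: instead of carrying out the typed, concrete word-manipulations you describe, the paper's proof (\cref{prop:S-properties-proof}) first passes to the abstract braid group, identifying $\opS_{123} = \sigma_2\sigma_1\sigma_2^{-1}$ and $\opSt_{234} = \sigma_2\sigma_3^{-1}\sigma_2^{-1}$, so that all four identities become pure braid-group computations with conjugated (band-type) generators. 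This one abstraction step buys precisely what you flag as the main difficulty: the "combinatorial bookkeeping" of tracking the alternating type pattern disappears, since the typed far-commutation and typed braid-equation moves are replaced by the defining relations $\sigma_i\sigma_j = \sigma_j\sigma_i$ ($|i-j|\ge 2$) and $\sigma_i\sigma_{i+1}\sigma_i = \sigma_{i+1}\sigma_i\sigma_{i+1}$, and derived identities such as $\sigma_2\sigma_1\sigma_2^{-1} = \sigma_1^{-1}\sigma_2\sigma_1$ can be invoked freely. Your concrete route goes through, but one quantitative remark: your expectation that "exactly one instance of \eqref{eq:braid-equation} closes the computation" for \eqref{eq:S-braid-equation} is an underestimate --- the paper's verification of $\opS_{123}\opS_{345}\opS_{123} = \opS_{345}\opS_{123}\opS_{345}$ uses the braid relation four times (twice to rewrite both copies of $\sigma_2\sigma_1\sigma_2^{-1}$ as $\sigma_1^{-1}\sigma_2\sigma_1$, once for the middle $\sigma_2\sigma_3\sigma_2 = \sigma_3\sigma_2\sigma_3$, and once to convert back), interleaved with far-commutations; the same is true, suitably shifted, for the second commutation relation, where two conjugation-swaps $\sigma_2\sigma_3^{-1}\sigma_2^{-1} = \sigma_3^{-1}\sigma_2^{-1}\sigma_3$ and $\sigma_4\sigma_3\sigma_4^{-1} = \sigma_3^{-1}\sigma_4\sigma_3$ are needed before $\sigma_2^{-1}$ and $\sigma_4$ can be far-commuted. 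Finally, your closing observation that no evaluation or coevaluation morphisms enter is confirmed by the paper's proof.
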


In particular, observe that we can also write $\opT_{1 2 3 4} = \opSt_{2 3 4} \opS_{1 2 3}$.

We now use the maps $\opS$ and $\opSt$ to rewrite some of the relations of the FODC $\Omega$.
We begin with the relations that involve $\braidP_{V, V}$ and $\braidP_{V^*, V^*}$.

\begin{lemma}
\label{lem:P-relations}
The relations
\[
(\braidP_{V^*, V^*})_{3 4} (\braid^{-1}_{V, V^*})_{2 3} p \del p = 0, \quad
(\braidP_{V, V})_{1 2} (\braid^{-1}_{V, V^*})_{2 3} p \delbar p = 0,
\]
of the FODC $\Omega$ are equivalent to
\begin{equation}
\label{eq:S-action-left}
(\opSt_{2 3 4} - q^{-(\omega_s, \omega_s)}) p \del p = 0, \quad
(\opS_{1 2 3} - q^{(\omega_s, \omega_s)}) p \delbar p = 0.
\end{equation}
Using the right $\subalg$-module structure, they are also equivalent to
\begin{equation}
\label{eq:S-action-right}
(\opSt_{2 3 4} - q^{-(\omega_s, \omega_s)}) \del p p = 0, \quad
(\opS_{1 2 3} - q^{(\omega_s, \omega_s)}) \delbar p p = 0.
\end{equation}
\end{lemma}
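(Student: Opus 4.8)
The plan is to treat each of the two displayed pairs of relations separately, and in every case to exploit the structure of $\opS$ and $\opSt$ visible from their definition in \eqref{eq:S-maps}: both are conjugates of a single braiding by the invertible map $(\braid_{V, V^*})_{2 3}$. Writing $C := (\braid_{V, V^*})_{2 3}$, one reads off directly that $\opS_{1 2 3} = C\,(\braid_{V, V})_{1 2}\,C^{-1}$ and $\opSt_{2 3 4} = C\,(\braid^{-1}_{V^*, V^*})_{3 4}\,C^{-1}$. Since conjugation by an invertible operator carries eigenvectors to eigenvectors and preserves eigenvalues, an eigenvalue condition for $\opS_{1 2 3}$ or $\opSt_{2 3 4}$ applied to $p\,\del p$ or $p\,\delbar p$ is equivalent to the corresponding eigenvalue condition for the underlying braiding applied to $C^{-1}$ of that element. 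This single observation reduces the first equivalence to bookkeeping.

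Concretely, I would first handle the antiholomorphic relation $(\braidP_{V, V})_{1 2}(\braid^{-1}_{V, V^*})_{2 3}\,p\,\delbar p = 0$. Setting $z := (\braid^{-1}_{V, V^*})_{2 3}\,p\,\delbar p = C^{-1}\,p\,\delbar p$ and recalling $\braidP_{V, V} = \braid_{V, V} - q^{(\omega_s, \omega_s)}$, this relation says exactly that $z$ lies in the $q^{(\omega_s, \omega_s)}$-eigenspace of $(\braid_{V, V})_{1 2}$. Conjugating by $C$ turns this into $\opS_{1 2 3}\,p\,\delbar p = q^{(\omega_s, \omega_s)}\,p\,\delbar p$, which is the second relation of \eqref{eq:S-action-left}; because conjugation is invertible, the implication runs both ways. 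The holomorphic relation is treated identically with $y := (\braid^{-1}_{V, V^*})_{2 3}\,p\,\del p$, the only difference being that $\opSt_{2 3 4}$ involves $\braid^{-1}_{V^*, V^*}$ rather than $\braid_{V^*, V^*}$: the condition $(\braidP_{V^*, V^*})_{3 4}\,y = 0$ places $y$ in the $q^{(\omega_s, \omega_s)}$-eigenspace of $(\braid_{V^*, V^*})_{3 4}$, hence in the $q^{-(\omega_s, \omega_s)}$-eigenspace of $(\braid^{-1}_{V^*, V^*})_{3 4}$, producing the eigenvalue $q^{-(\omega_s, \omega_s)}$ in \eqref{eq:S-action-left}.

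For the right-module reformulation \eqref{eq:S-action-right} I would feed the right-module structure \eqref{eq:right-module} into the eigenvalue equations just obtained. The key formal point is that both $\opS_{1 2 3}$ and $\opSt_{2 3 4}$ commute with $\opT_{1 2 3 4}$: indeed $\opT_{1 2 3 4} = \opS_{1 2 3}\opSt_{2 3 4}$, and the commutation relation $\opS_{1 2 3}\opSt_{2 3 4} = \opSt_{2 3 4}\opS_{1 2 3}$ from \cref{prop:S-properties} immediately gives $\opSt_{2 3 4}\opT_{1 2 3 4} = \opT_{1 2 3 4}\opSt_{2 3 4}$, and similarly for $\opS_{1 2 3}$. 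Thus in $\del p\,p = q^{(\alpha_s, \alpha_s)}\opT_{1 2 3 4}\,p\,\del p$ the operator $\opSt_{2 3 4}$ passes through $\opT_{1 2 3 4}$ to act on $p\,\del p$, where by \eqref{eq:S-action-left} it has eigenvalue $q^{-(\omega_s, \omega_s)}$; this yields $\opSt_{2 3 4}\,\del p\,p = q^{-(\omega_s, \omega_s)}\,\del p\,p$. The companion computation for $\delbar p\,p = q^{-(\alpha_s, \alpha_s)}\opT_{1 2 3 4}\,p\,\delbar p$ proceeds identically with $\opS_{1 2 3}$. Since $\opT_{1 2 3 4}$ is invertible, the relation \eqref{eq:right-module} sets up a bijection between the components of $p\,\del p$ and those of $\del p\,p$, so these equivalences again run in both directions.

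I do not anticipate a genuine obstacle: once the conjugation structure of $\opS$ and $\opSt$ is recognized, the left-module equivalences are pure eigenvalue bookkeeping and the right-module versions follow formally from \cref{prop:S-properties} together with \eqref{eq:right-module}. The one error-prone point that I would double-check is the inversion of the eigenvalue—$q^{(\omega_s, \omega_s)}$ for $\opS$ versus $q^{-(\omega_s, \omega_s)}$ for $\opSt$—which originates solely in whether the conjugated braiding is $\braid_{V, V}$ or $\braid^{-1}_{V^*, V^*}$, and which I would verify against the highest-weight normalization fixing the braiding.
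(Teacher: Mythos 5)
Your proposal is correct and follows essentially the same route as the paper: the first equivalence is obtained by applying the invertible map $(\braid_{V,V^*})_{23}$ (your conjugation/eigenvalue argument is exactly this), and the second follows from the right-module relations \eqref{eq:right-module} together with the fact that $\opS_{123}$ and $\opSt_{234}$ commute with $\opT_{1234}=\opS_{123}\opSt_{234}$, with invertibility of $\opT_{1234}$ giving the reverse implication. Your eigenvalue bookkeeping for $\opSt$ (the inversion $q^{(\omega_s,\omega_s)}\mapsto q^{-(\omega_s,\omega_s)}$ coming from $\braid^{-1}_{V^*,V^*}$) also matches the paper.
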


\begin{proof}
We consider the second relation $(\braidP_{V, V})_{1 2} (\braid^{-1}_{V, V^*})_{2 3} p \delbar p = 0$, the other one is treated in a similar way.
Applying $(\braid_{V, V^*})_{2 3}$ and using $\braidP_{V, V} = \braid_{V, V} - q^{(\omega_s, \omega_s)}$ we get
\[
(\braid_{V, V^*})_{2 3} (\braid_{V, V})_{1 2} (\braid^{-1}_{V, V^*})_{2 3} p \delbar p - q^{(\omega_s, \omega_s)} p \delbar p = 0.
\]
Since $\opS_{1 2 3} = (\braid_{V, V^*})_{2 3} (\braid_{V, V})_{1 2} (\braid^{-1}_{V, V^*})_{2 3}$ we obtain $(\opS_{1 2 3} - q^{(\omega_s, \omega_s)}) p \delbar p = 0$.

Next, using the right $\subalg$-module structure from \eqref{eq:right-module}, we have $p \delbar p = q^{(\alpha_s, \alpha_s)} \opT^{-1}_{1 2 3 4} \delbar p p$.
Then the identity $(\opS_{1 2 3} - q^{(\omega_s, \omega_s)}) p \delbar p = 0$ can be rewritten in the form
\[
(\opS_{1 2 3} - q^{(\omega_s, \omega_s)}) \opT^{-1}_{1 2 3 4} \delbar p p = 0.
\]
As $\opS_{1 2 3}$ commutes with $\opT^{-1}_{1 2 3 4}$, the identity $(\opS_{1 2 3} - q^{(\omega_s, \omega_s)}) \delbar p p = 0$ follows by applying $\opT_{1 2 3 4}$.
\end{proof}

Note that the previous identities also hold with $p p$ instead of $p \del p$ or $p \delbar p$, since we have the relations \eqref{eq:relationsB}.
Next, we rewrite the right $\subalg$-module relations.

\begin{lemma}
\label{lem:right-module}
The right $\subalg$-module relations \eqref{eq:right-module} of $\calc$ are equivalent to
\begin{equation}
\label{eq:S-right-module}
\del p p = q^{(\alpha_s, \alpha_s) - (\omega_s, \omega_s)} \opS_{1 2 3} p \del p, \quad
\delbar p p = q^{(\omega_s, \omega_s) - (\alpha_s, \alpha_s)} \opSt_{2 3 4} p \delbar p.
\end{equation}
\end{lemma}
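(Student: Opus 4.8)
The plan is to derive the new relations \eqref{eq:S-right-module} directly from the original right-module relations \eqref{eq:right-module} by combining the two factorizations of $\opT$ with the eigenvalue relations established in \cref{lem:P-relations}. The crucial observation is that $\opT_{1 2 3 4}$ factors in two ways, namely $\opT_{1 2 3 4} = \opS_{1 2 3} \opSt_{2 3 4} = \opSt_{2 3 4} \opS_{1 2 3}$, where the equality of the two products is the commutation relation from \cref{prop:S-properties}. For each of $\del p$ and $\delbar p$ I would select the factorization that isolates precisely the factor that acts as a scalar on $p \del p$, respectively on $p \delbar p$, according to \eqref{eq:S-action-left}.

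For the holomorphic relation I would start from $\del p p = q^{(\alpha_s, \alpha_s)} \opT_{1 2 3 4} p \del p$ and insert $\opT_{1 2 3 4} = \opS_{1 2 3} \opSt_{2 3 4}$. The first identity in \eqref{eq:S-action-left} states that $\opSt_{2 3 4}$ acts as the scalar $q^{-(\omega_s, \omega_s)}$ on $p \del p$, so substituting yields $\del p p = q^{(\alpha_s, \alpha_s) - (\omega_s, \omega_s)} \opS_{1 2 3} p \del p$, which is the first relation of \eqref{eq:S-right-module}. For the antiholomorphic relation I would instead start from $\delbar p p = q^{-(\alpha_s, \alpha_s)} \opT_{1 2 3 4} p \delbar p$ and use the other factorization $\opT_{1 2 3 4} = \opSt_{2 3 4} \opS_{1 2 3}$; the second identity in \eqref{eq:S-action-left} shows that $\opS_{1 2 3}$ acts as $q^{(\omega_s, \omega_s)}$ on $p \delbar p$, and substituting gives $\delbar p p = q^{(\omega_s, \omega_s) - (\alpha_s, \alpha_s)} \opSt_{2 3 4} p \delbar p$, the second relation of \eqref{eq:S-right-module}.

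For the converse direction of the equivalence the same computations run backwards: assuming \eqref{eq:S-right-module}, I apply the eigenvalue identities of \eqref{eq:S-action-left} in the form $\opS_{1 2 3} p \del p = q^{-(\omega_s, \omega_s)} \opT_{1 2 3 4} p \del p$ (and analogously for $p \delbar p$), thereby recovering \eqref{eq:right-module}. Since every step is a scalar substitution justified by \cref{lem:P-relations}, there is no genuine obstacle here; the only point requiring care is to pair the correct factorization of $\opT$ with the correct eigenvalue relation — using $\opSt_{2 3 4}$ for the holomorphic part and $\opS_{1 2 3}$ for the antiholomorphic part — so that in each case one replaces exactly the factor that is diagonalized on $p \del p$ or $p \delbar p$.
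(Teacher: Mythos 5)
Your proposal is correct and follows essentially the same route as the paper: insert the factorization $\opT_{1 2 3 4} = \opS_{1 2 3} \opSt_{2 3 4}$ (respectively $\opSt_{2 3 4} \opS_{1 2 3}$) into the right-module relations \eqref{eq:right-module} and replace the factor that acts as a scalar on $p \del p$ or $p \delbar p$ via \eqref{eq:S-action-left}. The only difference is that you spell out the converse direction explicitly, which the paper leaves implicit since every step is an invertible scalar substitution.
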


\begin{proof}
By \cref{lem:P-relations}, the relation $(\braidP_{V^*, V^*})_{3 4} (\braid^{-1}_{V, V^*})_{2 3} p \del p = 0$ is equivalent to $\opSt_{2 3 4} p \del p = q^{-(\omega_s, \omega_s)} p \del p$. Using this and $\opT_{1 2 3 4} = \opS_{1 2 3} \opSt_{2 3 4}$ we compute
\[
\del p p = q^{(\alpha_s, \alpha_s)} \opT_{1 2 3 4} p \del p
= q^{(\alpha_s, \alpha_s)} \opS_{1 2 3} \opSt_{2 3 4} p \del p
= q^{(\alpha_s, \alpha_s) - (\omega_s, \omega_s)} \opS_{1 2 3} p \del p.
\]
Similarly, using $\opS_{1 2 3} p \delbar p = q^{(\omega_s, \omega_s)} p \delbar p$ we obtain
\[
\delbar p p = q^{-(\alpha_s, \alpha_s)} \opT_{1 2 3 4} p \delbar p
= q^{- (\alpha_s, \alpha_s)} \opSt_{2 3 4} \opS_{1 2 3} p \delbar p
= q^{(\omega_s, \omega_s) - (\alpha_s, \alpha_s)} \opSt_{2 3 4} p \delbar p. \qedhere
\]
\end{proof}

Finally, we rewrite the relations involving $\braidP_{V, V} \braidQ_{V, V}$ and $\braidP_{V^*, V^*} \braidQ_{V^*, V^*}$ in terms of $\opS$ and $\opSt$.

\begin{lemma}
\label{lem:relations-PQ}
The relations
\[
\begin{gathered}
(\braidP_{V, V})_{1 2} (\braidQ_{V, V})_{1 2} (\braid^{-1}_{V, V^*})_{2 3} p \del p = 0, \\
(\braidP_{V^*, V^*})_{3 4} (\braidQ_{V^*, V^*})_{3 4} (\braid^{-1}_{V, V^*})_{2 3} p \delbar p = 0,
\end{gathered}
\]
of the FODC $\Omega$ are equivalent to
\[
\begin{gathered}
(\opS_{1 2 3} - q^{(\omega_s, \omega_s)}) (\opS_{1 2 3} + q^{(\omega_s, \omega_s) - (\alpha_s, \alpha_s)}) p \del p = 0, \\
(\opSt^{-1}_{2 3 4} - q^{(\omega_s, \omega_s)}) (\opSt^{-1}_{2 3 4} + q^{(\omega_s, \omega_s) - (\alpha_s, \alpha_s)}) p \delbar p = 0.
\end{gathered}
\]
\end{lemma}

\begin{proof}
First, as in \cref{lem:P-relations}, we observe the following easily proven identities
\[
\begin{split}
(\braid_{V, V^*})_{2 3} (\braidP_{V, V})_{1 2} (\braid^{-1}_{V, V^*})_{2 3} & = \opS_{1 2 3} - q^{(\omega_s, \omega_s)}, \\
(\braid_{V, V^*})_{2 3} (\braidQ_{V, V})_{1 2} (\braid^{-1}_{V, V^*})_{2 3} & = \opS_{1 2 3} + q^{(\omega_s, \omega_s) - (\alpha_s, \alpha_s)}.
\end{split}
\]
Next, we note that $(\braidP_{V, V})_{1 2} (\braidQ_{V, V})_{1 2} (\braid^{-1}_{V, V^*})_{2 3} p \del p = 0$ is equivalent to
\[
(\braid_{V, V^*})_{2 3} (\braidP_{V, V})_{1 2} (\braid^{-1}_{V, V^*})_{2 3} (\braid_{V, V^*})_{2 3} (\braidQ_{V, V})_{1 2} (\braid^{-1}_{V, V^*})_{2 3} p \del p = 0,
\]
which leads to the first identity.

For the second identity, using $\opSt^{-1}_{2 3 4} = (\braid_{V, V^*})_{2 3} (\braid_{V^*, V^*})_{3 4} (\braid^{-1}_{V, V^*})_{2 3}$ we obtain
\[
\begin{split}
(\braid_{V, V^*})_{2 3} (\braidP_{V^*, V^*})_{3 4} (\braid^{-1}_{V, V^*})_{2 3} & = \opSt^{-1}_{2 3 4} - q^{(\omega_s, \omega_s)}, \\
(\braid_{V, V^*})_{2 3} (\braidQ_{V^*, V^*})_{3 4} (\braid^{-1}_{V, V^*})_{2 3} & = \opSt^{-1}_{2 3 4} + q^{(\omega_s, \omega_s) - (\alpha_s, \alpha_s)}.
\end{split}
\]
The result then easily follows.
\end{proof}

\subsection{The quadratic case}
\label{sec:quadratic-case}

In this paper we consider the situation when $\braid_{V, V}$ satisfies a quadratic relation, and refer to this as the \emph{quadratic case}.
When $V = V(\omega_s)$, this corresponds to a tensor product decomposition with only two simple factors, that is
\[
V(\omega_s) \otimes V(\omega_s) \cong V(2 \omega_s) \oplus V(2 \omega_s - \alpha_s).
\]
The eigenvalues of the braiding in this case are $q^{(\omega_s, \omega_s)}$ and $- q^{(\omega_s, \omega_s) - (\alpha_s, \alpha_s)}$, corresponding to $V(2 \omega_s)$ and $V(2 \omega_s - \alpha_s)$ respectively. The quadratic relation satisfied by the braiding $\braid_{V, V}$, also known as the Hecke relation in this context, is given by
\[
\braidP_{V, V} \braidQ_{V, V} = (\braid_{V, V} - q^{(\omega_s, \omega_s)}) (\braid_{V, V} + q^{(\omega_s, \omega_s) - (\alpha_s, \alpha_s)}) = 0.
\]
The situation is completely analogous for $\braid_{V^*, V^*}$.
Geometrically, the quadratic case of the Heckenberger-Kolb calculus corresponds to the quantum projective spaces.
Indeed this holds for $\Uqg = U_q(\mathfrak{sl}_{r + 1})$ and the choice $\omega_s = \omega_1$ or $\omega_s = \omega_r$, corresponding to the fundamental representation or its dual, which satisfies the quadratic decomposition above.

In the quadratic case the relations for $\calcP$ and $\calcM$ can be simplified.
Indeed, the first relation of \eqref{eq:calcP-relations} is automatically satisfied, due to the quadratic relation for $\braid_{V, V}$.
Similarly for the first relation of \eqref{eq:calcM-relations}, due to the quadratic relation for $\braid_{V^*, V^*}$.
Taking into account the presentation in terms of $\opS$ and $\opSt$ discussed above, we obtain the following description.

In the quadratic case, $\calcP$ is generated as a left $\subalg$-module by $\del p$ with relations
\begin{equation}
\label{eq:calcP-quadratic}
(\opSt_{2 3 4} - q^{-(\omega_s, \omega_s)}) p \del p = 0, \quad
\EVp_{1 2} \del p = 0.
\end{equation}
Similarly, $\calcM$ is generated by $\delbar p$ with relations
\begin{equation}
\label{eq:calcM-quadratic}
(\opS_{1 2 3} - q^{(\omega_s, \omega_s)}) p \delbar p = 0, \quad
\EVp_{1 2} \delbar p = 0.
\end{equation}

Let us also consider the FODCs $\overcalcP$ and $\overcalcM$ over the larger algebra $\alg$ in the quadratic case.
The first two relations in \eqref{eq:overcalc-relations} are identically satisfied and we are left with
\begin{equation}
\label{eq:overcalc-quadratic}
\EV_{1 2} v \del f = 0, \quad
\EVp_{1 2} f \delbar v = 0.
\end{equation}

Finally, as a consequence of the quadratic relation for the braiding, we have the following relations for the maps $\opS$ and $\opSt$, as shown in \cref{lem:S-quadratic}.

\begin{lemma}
In the quadratic case we have the relations
\begin{align}
\label{eq:S-quadratic1}
\opS_{1 2 3} & = q^{2 (\omega_s, \omega_s) - (\alpha_s, \alpha_s)} \opS_{1 2 3}^{-1} + q^{(\omega_s, \omega_s)} (1 - q^{- (\alpha_s, \alpha_s)}), \\
\label{eq:S-quadratic2}
\opSt_{2 3 4} & = q^{(\alpha_s, \alpha_s) - 2 (\omega_s, \omega_s)} \opSt_{2 3 4}^{-1} + q^{- (\omega_s, \omega_s)} (1 - q^{(\alpha_s, \alpha_s)}).
\end{align}
\end{lemma}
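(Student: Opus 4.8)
The goal is to derive the two quadratic identities \eqref{eq:S-quadratic1} and \eqref{eq:S-quadratic2} for $\opS$ and $\opSt$ from the Hecke relation satisfied by the braiding in the quadratic case. The key observation is that, since $\opS_{1 2 3} = (\braid_{V, V^*})_{2 3} (\braid_{V, V})_{1 2} (\braid^{-1}_{V, V^*})_{2 3}$ is obtained from $\braid_{V, V}$ by conjugation with the invertible operator $(\braid_{V, V^*})_{2 3}$, any polynomial relation satisfied by $\braid_{V, V}$ is inherited by $\opS_{1 2 3}$. In particular, the Hecke relation $(\braid_{V, V} - q^{(\omega_s, \omega_s)})(\braid_{V, V} + q^{(\omega_s, \omega_s) - (\alpha_s, \alpha_s)}) = 0$ transfers directly to
\[
(\opS_{1 2 3} - q^{(\omega_s, \omega_s)})(\opS_{1 2 3} + q^{(\omega_s, \omega_s) - (\alpha_s, \alpha_s)}) = 0.
\]
The analogous statement for $\opSt_{2 3 4}$ follows from the Hecke relation for $\braid_{V^*, V^*}$, again because $\opSt_{2 3 4}$ is a conjugate of $\braid^{-1}_{V^*, V^*}$ (by $(\braid_{V, V^*})_{2 3}$), and the inverse of a Hecke-type operator satisfies a Hecke relation with transformed eigenvalues.

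First I would make precise the conjugation argument: writing $\opS_{1 2 3} = B A B^{-1}$ with $A := (\braid_{V, V})_{1 2}$ and $B := (\braid_{V, V^*})_{2 3}$, any relation $p(A) = 0$ gives $p(\opS_{1 2 3}) = B\, p(A)\, B^{-1} = 0$. Then I would expand the transferred Hecke relation
\[
\opS_{1 2 3}^2 - q^{(\omega_s, \omega_s)}(1 - q^{-(\alpha_s, \alpha_s)}) \opS_{1 2 3} - q^{2(\omega_s, \omega_s) - (\alpha_s, \alpha_s)} = 0,
\]
where the cross term uses $q^{(\omega_s, \omega_s)} - q^{(\omega_s, \omega_s) - (\alpha_s, \alpha_s)} = q^{(\omega_s, \omega_s)}(1 - q^{-(\alpha_s, \alpha_s)})$ and the constant term is $q^{(\omega_s, \omega_s)} \cdot q^{(\omega_s, \omega_s) - (\alpha_s, \alpha_s)}$. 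Multiplying through by $\opS_{1 2 3}^{-1}$ (which exists since $\opS_{1 2 3}$ is a product of invertible braidings) and rearranging yields exactly \eqref{eq:S-quadratic1}.

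For \eqref{eq:S-quadratic2} the computation is parallel but I must track the inverse carefully. Since $\opSt_{2 3 4}$ is built from $\braid^{-1}_{V^*, V^*}$, I would first record that $\braid^{-1}_{V^*, V^*}$ satisfies the Hecke relation with inverted eigenvalues $q^{-(\omega_s, \omega_s)}$ and $-q^{(\alpha_s, \alpha_s) - (\omega_s, \omega_s)}$, transfer this via conjugation to obtain
\[
(\opSt_{2 3 4} - q^{-(\omega_s, \omega_s)})(\opSt_{2 3 4} + q^{(\alpha_s, \alpha_s) - (\omega_s, \omega_s)}) = 0,
\]
and then solve for $\opSt_{2 3 4}$ after multiplying by $\opSt_{2 3 4}^{-1}$, producing \eqref{eq:S-quadratic2}.

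The main obstacle, such as it is, is bookkeeping rather than anything conceptual: I must get the eigenvalue assignments right, particularly the swap to inverse eigenvalues for $\opSt$ (coming from the inverse braiding $\braid^{-1}_{V^*, V^*}$), and verify that the linear and constant coefficients reorganize into the stated form after dividing by $\opS_{1 2 3}$ or $\opSt_{2 3 4}$. The conjugation-invariance of polynomial relations is the one structural point that deserves an explicit sentence, since it is what lets the Hecke relation pass from the braiding to the conjugated maps $\opS$ and $\opSt$ without any further computation.
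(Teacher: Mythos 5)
Your proposal is correct and is essentially the paper's own proof: both arguments rest on the conjugation-invariance of polynomial identities (writing $\opS_{1 2 3} = B (\braid_{V, V})_{1 2} B^{-1}$ with $B = (\braid_{V, V^*})_{2 3}$, and similarly $\opSt_{2 3 4}$ as a conjugate of $(\braid^{-1}_{V^*, V^*})_{3 4}$) combined with rearranging the Hecke relation after multiplying through by the inverse. The only difference is the order of the two steps --- the paper first rewrites the Hecke relation for $\braid_{V, V}$ in the form \eqref{eq:S-quadratic1} and then conjugates term by term, whereas you conjugate first and rearrange afterwards --- which is immaterial.
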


Let us also record that, due to the quadratic condition, we have the identities
\begin{equation}
\label{eq:S-bimodule}
(\opS_{1 2 3} - q^{(\omega_s, \omega_s)}) (p \del p + \del p p) = 0, \quad
(\opSt_{2 3 4} - q^{-(\omega_s, \omega_s)}) (p \delbar p + \delbar p p) = 0.
\end{equation}
To see this, we use the right $\subalg$-module relations from \cref{lem:right-module} to write
\[
p \del p + \del p p = (1 + q^{(\alpha_s, \alpha_s) - (\omega_s, \omega_s)} \opS_{1 2 3}) p \del p, \quad
p \delbar p + \delbar p p = (1 + q^{(\omega_s, \omega_s) - (\alpha_s, \alpha_s)} \opSt_{2 3 4}) p \delbar p,
\]
from which \eqref{eq:S-bimodule} follows upon applying the identities \eqref{eq:S-quadratic1} and \eqref{eq:S-quadratic2}.

\section{Quantum metrics}
\label{sec:quantum-metrics}

In this section we define quantum metrics for the quantum projective spaces, reducing to the Fubini-Study metrics in the classical case.
Our main result here is \cref{thm:quantum-metric}, which shows that these are quantum metrics according to \cref{def:quantum-metric}, that is they are invertible in a suitable sense.
We also discuss various properties they satisfy.

\subsection{Definition and properties}

For this first part there is no particular need to restrict to the case of quantum projective spaces, hence $\Omega$ denotes the Heckenberger-Kolb FODC corresponding to a generic quantum irreducible flag manifold.

We define $\met := \metPM + \metMP$ where we write
\begin{equation}
\label{eq:metric}
\begin{split}
\metPM & := \EVp_{1 2} \EV_{2 3} \del p \otimes \delbar p \in \calcP \otimes_\subalg \calcM, \\
\metMP & := \EVp_{1 2} \EV_{2 3} \delbar p \otimes \del p \in \calcM \otimes_\subalg \calcP.
\end{split}
\end{equation}

\begin{remark}
For the quantum projective spaces, $\met$ reduces to the Fubini-Study metric in the classical limit.
This can be seen from the formula \eqref{eq:classical-metric} in terms of the projection $p$.
\end{remark}

Before tackling the issue of invertibility, we show some properties satisfied by $\met$.
We begin by showing that $\met$ is symmetric (\cref{def:metric-symmetric}) and real (\cref{def:metric-real}).

\begin{proposition}
\label{prop:g-symmetric}
We have that $\met$ is symmetric and real.
\end{proposition}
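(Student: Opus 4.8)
The plan is to verify the two properties separately, both by direct computation using the definitions of $\met$, $\metPM$, $\metMP$ in \eqref{eq:metric} together with the $*$-structure $(\del p^{ij})^* = \delbar p^{ji}$ of the calculus and the definition $\dagger = \mathrm{flip} \circ (* \otimes *)$. For the reality condition $\met^\dagger = \met$, I would first observe that since $\met = \metPM + \metMP$ and the $\dagger$-operation sends $\calcP \otimes_\subalg \calcM$ to $\calcM \otimes_\subalg \calcP$ (and vice versa), it suffices to show that $\metPM^\dagger = \metMP$; the opposite identity then follows by applying $\dagger$ again, using that $\dagger$ is involutive. Computing $\metPM^\dagger$ amounts to applying $* \otimes *$ to $\EVp_{12} \EV_{23} \del p \otimes \delbar p$ and then flipping the two tensor factors. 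The key input is that $(\del p)^* = \delbar p$ and $(\delbar p)^* = \del p$ in the appropriate index sense, so that $\dagger$ interchanges $\del p$ and $\delbar p$; the remaining task is to track how the evaluation coefficients $\EVp_{12}\EV_{23}$ transform under conjugation and flipping, and to check that the resulting coefficient matches that of $\metMP$.

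For the symmetry condition $\wedge(\met) = 0$ of \cref{def:metric-symmetric}, I would compute $\wedge(\metPM + \metMP)$ and show the two contributions cancel. The natural approach is to use the decomposition $\diff = \del + \delbar$ in higher degrees, together with the facts $\del^2 = \delbar^2 = 0$ and $\del\delbar = -\delbar\del$ recorded after the definition of the Heckenberger-Kolb calculus. Since $\metPM$ lives in $\calcP \otimes_\subalg \calcM$, its wedge $\wedge(\metPM)$ lands in $\calcP \wedge \calcM$, and similarly $\wedge(\metMP)$ lands in $\calcM \wedge \calcP$; these land in the same space $\Omega^{1,1}$, and I expect the anticommutation $\del\delbar = -\delbar\del$ to be exactly what forces their sum to vanish. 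Concretely I would try to recognize $\wedge(\metPM)$ and $\wedge(\metMP)$ as $\del\delbar$ and $\delbar\del$ applied to a common expression built from $p$ via the evaluation maps, so that the relation $\del\delbar = -\delbar\del$ gives the cancellation directly.

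The main obstacle I anticipate is the symmetry computation rather than the reality one: making the cancellation $\wedge(\metPM) = -\wedge(\metMP)$ precise requires carefully rewriting the wedge of a tensor of the form $\EVp_{12}\EV_{23}\,\del p \otimes \delbar p$ in terms of a second-order differential. This likely needs the relations \eqref{eq:evaluations-calculi} contracting the evaluation maps against $p\,\del p$ and $p\,\delbar p$, and possibly an identity expressing $\EVp_{12}\EV_{23}\,\del p \wedge \delbar p$ as (a multiple of) $\delbar\del$ or $\del\delbar$ acting on something like $\EVp_{12}\EV_{23}\, p$, or more plausibly on a trace-type contraction. I would treat the coefficient bookkeeping with the leg-notation and the compatibility relations \eqref{eq:evaluations}, \eqref{eq:coevaluations} for the evaluation and coevaluation morphisms with the braiding, checking that all the $q$-powers coming from $\EV$ versus $\EVp$ (governed by \cref{lem:relation-evaluations}) line up so that the two terms are genuinely negatives of each other.
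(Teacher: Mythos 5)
Your treatment of reality is essentially the paper's: write $\metPM = \sum_{i,j} q^{(2\rho,\lambda_i)}\,\del p^{ij}\otimes\delbar p^{ji}$ and $\metMP = \sum_{i,j} q^{(2\rho,\lambda_i)}\,\delbar p^{ij}\otimes\del p^{ji}$ in index form and check $\metPM^\dagger=\metMP$ directly from $(\del p^{ij})^*=\delbar p^{ji}$; the coefficients $q^{(2\rho,\lambda_i)}$ are real, so there is nothing delicate to track. That half is fine.

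For symmetry, your ingredients are in the right ballpark but the cancellation mechanism you commit to is not the one that works, and this is a genuine gap. The terms $\wedge(\metPM)$ and $\wedge(\metMP)$ are not separately recognized as $\del\delbar F$ and $\delbar\del F$ for a common $F$, and no appeal to $\del\delbar=-\delbar\del$ is needed. The paper instead uses the second-order identity \eqref{eq:deldelbarp} (proved in \cref{lem:deldelbar-formula}),
\[
\del\delbar p = \EV_{2 3}\,\del p\wedge\delbar p + \EV_{2 3}\,\delbar p\wedge\del p,
\]
applies $\EVp_{1 2}$ to both sides, and notes that the right-hand side becomes exactly $\wedge(\metPM)+\wedge(\metMP)$, while the left-hand side is $\EVp_{1 2}\,\del\delbar p = \del\bigl(\EVp_{1 2}\,\delbar p\bigr) = 0$ by the trace relation $\EVp_{1 2}\,\delbar p = 0$ from \eqref{eq:calcM-relations} (equivalently, $\EVp_{1 2}\,p = q^{(\omega_s,2\rho)}$ is a constant). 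So the missing idea in your plan is precisely this trace relation, which you mention only as a vague fallback (``a trace-type contraction''): together with \eqref{eq:deldelbarp} it is the whole proof. The ``common potential'' route is unlikely to close: the natural candidate potential $\EVp_{1 2}\,p$ is a constant, so only the \emph{sum} of the two wedges is a second derivative (of a constant, hence zero); the individual terms are not naturally exact, and the sign bookkeeping via $\del\delbar=-\delbar\del$ never enters.
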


\begin{proof}
To show that $\met$ is symmetric consider the identity \eqref{eq:deldelbarp}, that is
\[
\del \delbar p = \EV_{2 3} \del p \wedge \delbar p + \EV_{2 3} \delbar p \wedge \del p.
\]
Applying $\EVp_{1 2}$ and using $\EVp_{1 2} \delbar p = 0$ from \eqref{eq:calcM-relations} we obtain
\[
0 = \EVp_{1 2} \EV_{2 3} \del p \wedge \delbar p + \EVp_{1 2} \EV_{2 3} \delbar p \wedge \del p.
\]
Now observe that the right-hand side is equal to $\wedge(\met) = \wedge(\metPM) + \wedge(\metMP)$, where here we consider the wedge product as a map $\wedge: \calc^1 \otimes_\subalg \calc^1 \to \calc^2$.

To show that $\met$ is real it is convenient to employ the usual index notation.
We have $\metPM = \sum_{i, j} q^{(2 \rho, \lambda_i)} \del p^{i j} \otimes \delbar p^{j i}$ and $\metMP = \sum_{i, j} q^{(2 \rho, \lambda_i)} \delbar p^{i j} \otimes \del p^{j i}$.
Using $(\del p^{i j})^* = \delbar p^{j i}$ and $(\delbar p^{i j})^* = \del p^{j i}$ we easily check that $\metPM^\dagger = \metMP$ and hence $\met^\dagger = \met$.
\end{proof}

\begin{remark}
It is also possible to show that $\met$ is left $\CqG$-coinvariant, which amounts to a computation similar to that of \cite[Lemma 5.4]{kahler}.
\end{remark}

The next property we want to discuss is related to Kähler metrics.
First, we need the following technical result on the vanishing of certain terms of degree $3$.
This is essentialy proven in \cite[Lemma 5.3]{kahler}, but we revisit it here using the index-free notation.

From now on all tensor products are over $\subalg$ (omitted), except where specified.

\begin{lemma}
\label{lem:vanishing-third}
We have
\[
\begin{gathered}
\EVp_{1 2} \EV_{2 3} \EV_{2 3} \del p \otimes \delbar p \otimes \del p = 0, \\
\EVp_{1 2} \EV_{2 3} \EV_{2 3} \delbar p \otimes \del p \otimes \delbar p = 0.
\end{gathered}
\]
\end{lemma}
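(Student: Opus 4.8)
The plan is to prove only the first identity and to deduce the second from it by conjugation. Recall from the proof of \cref{prop:g-symmetric} that the $*$-structure of the calculus acts by $(\del p^{ij})^* = \delbar p^{ji}$, which on $\calcP \otimes \calcM \otimes \calcP$ induces a conjugate-linear involution $\dagger$ given by reversing the three legs and applying $*$ to each factor. Since $q$ is real, the coefficients $q^{(2\rho, \lambda_i)}$ produced by $\EVp$ and $\EV$ are invariant, and a direct check (exactly as in \cref{prop:g-symmetric}, using \cref{lem:relation-evaluations} to match the $\EV$ and $\EVp$ contractions) shows that $\dagger$ interchanges the two expressions in the statement, up to relabelling of the summation indices. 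As $\dagger$ is a bijection, it therefore suffices to establish $\EVp_{12} \EV_{23} \EV_{23}\, \del p \otimes \delbar p \otimes \del p = 0$.

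For this remaining identity I would pass to the auxiliary calculi $\overcalcP$ and $\overcalcM$ over the larger algebra $\alg$, using $\del p = (\del f)\, v$ and $\delbar p = f\, (\delbar v)$, which hold because $\del v = 0 = \delbar f$. After substituting these, the inner $\EV_{23}$ pairs the leg $v$ at the right of the first factor with the leg $f$ at the left of the middle factor, the second $\EV_{23}$ pairs the $\delbar v$ at the right of the middle factor with the $\del f$ at the left of the last factor, and the outer $\EVp_{12}$ pairs the $\del f$ of the first factor with the $v$ of the last. The goal is then to reorganize these $v$'s and $f$'s, using the right $\alg$-module relations \eqref{eq:overcalcP-right} and \eqref{eq:overcalcM-right} to commute them across the tensor factors, until one of the pairings becomes a contraction of the form $\EV_{12}\, v\, \del f$ or $\EVp_{12}\, f\, \delbar v$. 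These vanish identically by \eqref{eq:overcalc-quadratic}, forcing the whole expression to be zero. This is the index-free counterpart of the computation in \cite[Lemma 5.3]{kahler}.

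It is tempting to argue entirely at the level of $\subalg$ using the relations \eqref{eq:evaluations-calculi}, namely $\EV_{23} p \del p = 0$, $\EV_{23} \del p\, p = \del p$, $\EV_{23} p \delbar p = \delbar p$ and $\EV_{23} \delbar p\, p = 0$; but these projection-type identities preserve the cyclic structure of the contraction. Inserting a $p$ between two factors and collapsing it merely relabels the summation, and never brings the two legs contracted by $\EVp_{12}$ onto a single $\del p$, where $\EVp_{12} \del p = 0$ could be applied. This is precisely why I would lift to $\alg$: there the factorization $p = fv$ lets the $\EV$-contraction of the central legs produce the genuinely vanishing relation $\EV_{12} v \del f = 0$ rather than the harmless $\EV_{12} v f = 1$. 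The main obstacle is therefore the bookkeeping of the braidings $\braid_{V, V^*}$ and $\braid^{-1}_{V, V^*}$ generated when commuting $v$ and $f$ across the tensor slots over $\subalg$; keeping track of the accumulated $q$-powers and verifying that the final contraction lands exactly on a relation of \eqref{eq:overcalc-quadratic} is the delicate part of the argument.
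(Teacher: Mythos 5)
Your reduction of the second identity to the first is correct, and it is a genuine economy over the paper, which proves the two identities by separate (parallel) computations. Indeed, $\dagger = \mathrm{flip} \circ (* \otimes * \otimes *)$ is well defined on $\calc \otimes_\subalg \calc \otimes_\subalg \calc$ because $*$ is an anti-homomorphism, and writing the first expression as $\sum_{i,j,k} q^{(2\rho, \lambda_i)} \del p^{i j} \otimes \delbar p^{j k} \otimes \del p^{k i}$ (as in the proof of \cref{prop:g-symmetric}) one checks that $\dagger$ carries it precisely onto $\sum_{i,j,k} q^{(2\rho, \lambda_i)} \delbar p^{i j} \otimes \del p^{j k} \otimes \delbar p^{k i}$, with the weights preserved since $q$ is real.

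The gap is in your proof of the first identity. After substituting $\del p = \del f \, v$ and $\delbar p = f \, \delbar v$, every contracted index pair straddles two different tensor factors, whereas the relations \eqref{eq:overcalc-quadratic} you aim for, $\EV_{1 2} v \, \del f = 0$ and $\EVp_{1 2} f \, \delbar v = 0$, are relations \emph{inside} a single factor $\overcalcP$ or $\overcalcM$. To bring the two legs of one contraction into the same factor you must move a $v$ or an $f$ across a tensor sign — but the tensor products are over $\subalg$, and $v, f \notin \subalg$: only elements of $\subalg$ may cross $\otimes_\subalg$. The right $\alg$-module relations \eqref{eq:overcalcP-right}, \eqref{eq:overcalcM-right} authorize rewrites within a factor, never across the tensor signs, so the "bookkeeping of braidings" you defer to cannot even get started. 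This is exactly why the paper says the statement is only \emph{essentially} contained in \cite[Lemma 5.3]{kahler}: the argument there concerns wedge products, which live inside the differential graded algebra over $\alg$ where no $\otimes_\subalg$ intervenes, and it does not transfer verbatim to the present tensor-product statement.

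Relatedly, your claim that no argument at the level of $\subalg$ can succeed is refuted by the paper's own proof, which stays entirely at that level. The tool your obstruction analysis omits is the right $\subalg$-module relation \eqref{eq:right-module}, $\del p \, p = q^{(\alpha_s, \alpha_s)} \opT_{1 2 3 4} \, p \, \del p$ (and its $\delbar$ analogue): this is a braided commutation, not a relabelling of the summation. The paper inserts $p$ on the far right via $\del p = \EV_{2 3} \del p \, p$ from \eqref{eq:evaluations-calculi}, commutes this $p \in \subalg$ leftward across all three factors (which is legal), accumulating $\opT$'s; the identities $\EV_{2 3} \opT_{3 4 5 6} \opT_{1 2 3 4} = \opT_{1 2 3 4} \EV_{4 5}$ from \eqref{eq:identity-ETT} and $\EVp_{1 2} \EV_{2 3} \opT_{1 2 3 4} = \EVp_{1 2} \EV_{2 3}$ from \eqref{eq:identity-EpET} then absorb the $\opT$'s, and the $p$ lands to the left of the first factor in a position where $\EV_{2 3} p \, \del p = 0$ genuinely applies — note the proof ends on this relation, not on $\EVp_{1 2} \del p = 0$ as your analysis presupposes. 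So the cyclic-relabelling obstruction you correctly identified is broken by the braided bimodule structure, not by lifting to $\alg$; to complete your argument you would need to redo it along these lines, or at least restrict your $\alg$-level manipulations so that only elements of $\subalg$ ever cross the tensor signs.
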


\begin{proof}
Write $A = \EVp_{1 2} \EV_{2 3} \EV_{2 3} \del p \otimes \delbar p \otimes \del p$ for the first term.
Using the identity $\del p = \EV_{2 3} \del p p$ from \eqref{eq:evaluations-calculi} and the right $\subalg$-module relations \eqref{eq:right-module} we compute
\[
\begin{split}
A & = \EVp_{1 2} \EV_{2 3} \EV_{2 3} \EV_{6 7} \del p \otimes \delbar p \otimes \del p p \\
& = q^{(\alpha_s, \alpha_s)} \EVp_{1 2} \EV_{2 3} \EV_{2 3} \EV_{6 7} \opT_{5 6 7 8} \opT_{3 4 5 6} \opT_{1 2 3 4} p \del p \otimes \delbar p \otimes \del p \\
& = q^{(\alpha_s, \alpha_s)} \EVp_{1 2} \EV_{2 3} \EV_{4 5} \opT_{3 4 5 6} \EV_{2 3} \opT_{3 4 5 6} \opT_{1 2 3 4} p \del p \otimes \delbar p \otimes \del p.
\end{split}
\]
In the last step we have used $\EV_{2 3} \EV_{6 7} \opT_{5 6 7 8} = \EV_{4 5} \opT_{3 4 5 6} \EV_{2 3}$, as $\EV$ is an evaluation.
Now consider the identity $\EV_{2 3} \opT_{3 4 5 6} \opT_{1 2 3 4} = \opT_{1 2 3 4} \EV_{4 5}$ from \eqref{eq:identity-ETT}.
Using it twice we get
\[
\begin{split}
A & = q^{(\alpha_s, \alpha_s)} \EVp_{1 2} \EV_{2 3} \EV_{4 5} \opT_{3 4 5 6} \opT_{1 2 3 4} \EV_{4 5} p \del p \otimes \delbar p \otimes \del p \\
& = q^{(\alpha_s, \alpha_s)} \EVp_{1 2} \EV_{2 3} \EV_{2 3} \opT_{3 4 5 6} \opT_{1 2 3 4} \EV_{4 5} p \del p \otimes \delbar p \otimes \del p \\
& = q^{(\alpha_s, \alpha_s)} \EVp_{1 2} \EV_{2 3} \opT_{1 2 3 4} \EV_{4 5} \EV_{4 5} p \del p \otimes \delbar p \otimes \del p.
\end{split}
\]
Next, using $\EVp_{1 2} \EV_{2 3} \opT_{1 2 3 4} = \EVp_{1 2} \EV_{2 3}$ from \eqref{eq:S-evaluation2} we have
\[
A = q^{(\alpha_s, \alpha_s)} \EVp_{1 2} \EV_{2 3} \EV_{4 5} \EV_{4 5} p \del p \otimes \delbar p \otimes \del p.
\]
Finally using $\EV_{2 3} \EV_{4 5} \EV_{4 5} = \EV_{2 3} \EV_{2 3} \EV_{2 3}$ and $\EV_{2 3} p \del p = 0$ we obtain
\[
A = q^{(\alpha_s, \alpha_s)} \EVp_{1 2} \EV_{2 3} \EV_{2 3} \EV_{2 3} p \del p \otimes \delbar p \otimes \del p = 0.
\]

The second identity is similar. Write $B = \EVp_{1 2} \EV_{2 3} \EV_{2 3} \delbar p \otimes \del p \otimes \delbar p$.
Using the identity $\delbar p = \EV_{2 3} p \delbar p$ from \eqref{eq:evaluations-calculi} and the right $\subalg$-module relations \eqref{eq:right-module} we compute
\[
\begin{split}
B & = \EVp_{1 2} \EV_{2 3} \EV_{2 3} \EV_{2 3} p \delbar p \otimes \del p \otimes \delbar p \\
& = q^{(\alpha_s, \alpha_s)} \EVp_{1 2} \EV_{2 3} \EV_{2 3} \EV_{2 3} \opT^{-1}_{1 2 3 4} \opT^{-1}_{3 4 5 6} \opT^{-1}_{5 6 7 8} \delbar p \otimes \del p \otimes \delbar p p \\
& = q^{(\alpha_s, \alpha_s)} \EVp_{1 2} \EV_{2 3} \EV_{2 3} \EV_{4 5} \opT^{-1}_{1 2 3 4} \opT^{-1}_{3 4 5 6} \opT^{-1}_{5 6 7 8} \delbar p \otimes \del p \otimes \delbar p p.
\end{split}
\]
We have $\EV_{4 5} \opT^{-1}_{1 2 3 4} \opT^{-1}_{3 4 5 6} = \opT^{-1}_{1 2 3 4} \EV_{2 3}$, again from \eqref{eq:identity-ETT}. Then
\[
\begin{split}
B & = q^{(\alpha_s, \alpha_s)} \EVp_{1 2} \EV_{2 3} \EV_{2 3} \opT^{-1}_{1 2 3 4} \EV_{2 3} \opT^{-1}_{5 6 7 8} \delbar p \otimes \del p \otimes \delbar p p \\
& = q^{(\alpha_s, \alpha_s)} \EVp_{1 2} \EV_{2 3} \EV_{2 3} \opT^{-1}_{1 2 3 4} \opT^{-1}_{3 4 5 6} \EV_{2 3} \delbar p \otimes \del p \otimes \delbar p p \\
& = q^{(\alpha_s, \alpha_s)} \EVp_{1 2} \EV_{2 3} \EV_{4 5} \opT^{-1}_{1 2 3 4} \opT^{-1}_{3 4 5 6} \EV_{2 3} \delbar p \otimes \del p \otimes \delbar p p \\
& = q^{(\alpha_s, \alpha_s)} \EVp_{1 2} \EV_{2 3} \opT^{-1}_{1 2 3 4} \EV_{2 3} \EV_{2 3} \delbar p \otimes \del p \otimes \delbar p p.
\end{split}
\]
Using $\EVp_{1 2} \EV_{2 3} \opT^{-1}_{1 2 3 4} = \EVp_{1 2} \EV_{2 3}$ from \eqref{eq:S-evaluation2} we rewrite
\[
B = q^{(\alpha_s, \alpha_s)} \EVp_{1 2} \EV_{2 3} \EV_{2 3} \EV_{2 3} \delbar p \otimes \del p \otimes \delbar p p.
\]
Finally using $\EV_{2 3} \EV_{2 3} \EV_{2 3} = \EV_{2 3} \EV_{2 3} \EV_{6 7}$ and $\EV_{2 3} \delbar p p = 0$ we obtain
\[
B = q^{(\alpha_s, \alpha_s)} \EVp_{1 2} \EV_{2 3} \EV_{2 3} \EV_{6 7} \delbar p \otimes \del p \otimes \delbar p p = 0. \qedhere
\]
\end{proof}

We are now ready to prove the Kähler property of the metric.

\begin{proposition}
\label{prop:metric-kahler}
The metric $\met$ satisfies
\[
(\diff \otimes \id) g = (\id \otimes \diff) g = 0.
\]
\end{proposition}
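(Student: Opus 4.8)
The plan is to establish $(\diff \otimes \id)\met = 0$ first and then deduce $(\id \otimes \diff)\met = 0$ from it by reality. Since $\diff = \del + \delbar$ and $\del^2 = \delbar^2 = 0$, applying $\diff$ to the first leg of the two summands in \eqref{eq:metric} leaves only $\diff \del p = \delbar \del p$ and $\diff \delbar p = \del \delbar p$, so that
\[
(\diff \otimes \id)\met = \EVp_{1 2} \EV_{2 3} \delbar \del p \otimes \delbar p + \EVp_{1 2} \EV_{2 3} \del \delbar p \otimes \del p.
\]
The first term lies in $\calc^2 \otimes_\subalg \calcM$ and the second in $\calc^2 \otimes_\subalg \calcP$; as these are complementary summands, it suffices to show that each term vanishes on its own. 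This also resolves the worry that the two pieces might merely cancel against each other.

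Next I would expand the second-order differentials by means of the identity \eqref{eq:deldelbarp}, namely $\del \delbar p = \EV_{2 3} \del p \wedge \delbar p + \EV_{2 3} \delbar p \wedge \del p$, together with $\delbar \del p = -\del \delbar p$. Substituting these and reading the wedge as $\wedge$ applied to the first two legs, each of the two terms becomes $(\wedge \otimes \id)$ of an expression of the shape $\EVp_{1 2} \EV_{2 3} \EV_{2 3}$ acting on a triple tensor product of the one-forms $\del p$ and $\delbar p$. Concretely, the term with $\del p$ in the last slot produces the alternating triple $\del p \otimes \delbar p \otimes \del p$ and the non-alternating triple $\delbar p \otimes \del p \otimes \del p$, while the term with $\delbar p$ in the last slot produces $\delbar p \otimes \del p \otimes \delbar p$ and $\del p \otimes \delbar p \otimes \delbar p$. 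The two alternating triples are killed immediately by \cref{lem:vanishing-third}, so $(\wedge \otimes \id)$ of them vanishes.

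The main obstacle is the two non-alternating triples, which are not covered by \cref{lem:vanishing-third}. I expect these to vanish by exactly the strategy used in the proof of that lemma: write one holomorphic factor as $\del p = \EV_{2 3} \del p\, p$ using \eqref{eq:evaluations-calculi}, migrate the resulting $p$ through the tensor product with the right $\subalg$-module relations \eqref{eq:right-module}, and collapse the accumulated $\opT$ maps against the evaluations via \eqref{eq:identity-ETT} and \eqref{eq:S-evaluation2}. The only difference from \cref{lem:vanishing-third} is which terminal relation from \eqref{eq:evaluations-calculi} one lands on: one should migrate $p$ so as to sit adjacent to a $\del p$ on the left or a $\delbar p$ on the right and then invoke $\EV_{2 3} p \del p = 0$ or $\EV_{2 3} \delbar p\, p = 0$ accordingly. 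Placing $p$ in the correct adjacent position, and tracking the braiding factors $q^{(\alpha_s, \alpha_s)}$ through the migration, is the delicate bookkeeping step; this is where I expect the real work to be, and it completes $(\diff \otimes \id)\met = 0$.

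Finally, $(\id \otimes \diff)\met = 0$ follows without repeating the computation, using that $\met$ is real, i.e. $\met^\dagger = \met$ by \cref{prop:g-symmetric}. Since $\dagger = \mathrm{flip} \circ (* \otimes *)$ and $\diff a^* = (\diff a)^*$, one checks directly that $\big((\diff \otimes \id)\met\big)^\dagger = (\id \otimes \diff)\met$; as $\dagger$ is a bijection, the vanishing of the left-hand side forces $(\id \otimes \diff)\met = 0$.
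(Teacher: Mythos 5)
Your reduction of $(\diff \otimes \id)\met$ to four triples, and the use of \cref{lem:vanishing-third} to dispose of the two alternating ones, is exactly the paper's argument. The genuine gap is the step you explicitly defer as ``the real work'': the vanishing of the two non-alternating terms $\delbar p \wedge \del p \otimes \del p$ and $\del p \wedge \delbar p \otimes \delbar p$. Moreover, the strategy you sketch for that step is misdirected. Sliding a factor $p$ across $\otimes_\subalg$ uses only the definition of the balanced tensor product, not the right-module relations \eqref{eq:right-module}, so no $\opT$ maps and no powers of $q^{(\alpha_s, \alpha_s)}$ need ever appear. And a literal \cref{lem:vanishing-third}-style computation (append $p$ at the far end via \eqref{eq:evaluations-calculi}, carry it through with \eqref{eq:right-module}, collapse the $\opT$'s with \eqref{eq:identity-ETT} and \eqref{eq:S-evaluation2}) would not terminate in a kill relation for these patterns: for $\delbar p \otimes \del p \otimes \del p$, migrating $p$ into the first slot leaves you with $\EV_{2 3}\, p\, \delbar p = \delbar p \neq 0$ rather than $\EV_{2 3}\, p\, \del p = 0$. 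The $p$ must be created exactly at the same-type junction, and if you do that, no migration machinery is needed at all.

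That is precisely what the paper does, and it is a one-line argument rather than a delicate one: the non-alternating terms each contain an $\EV$-contracted adjacent pair of the same type, and one has
\[
\EV_{2 3}\, \del p \otimes \del p = 0, \qquad \EV_{2 3}\, \delbar p \otimes \delbar p = 0
\]
in $\calcP \otimes_\subalg \calcP$ and $\calcM \otimes_\subalg \calcM$. For the first, write the left factor as $\del p = \EV_{2 3}\, \del p\, p$ by \eqref{eq:evaluations-calculi} and slide $p$ across the tensor product, so that the contraction becomes $\EV_{2 3}\, p\, \del p = 0$; for the second, write the right factor as $\delbar p = \EV_{2 3}\, p\, \delbar p$ and slide $p$ to the left, landing on $\EV_{2 3}\, \delbar p\, p = 0$. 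Since wedging on the left by a fixed form is a right $\subalg$-module map, these tensor identities kill the wedged terms as well. So the missing step is true and easy—easier than your plan anticipates—but as written your proposal does not contain it.

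Your ending, on the other hand, is a genuinely different and valid route: the paper proves $(\id \otimes \diff)\met = 0$ by simply repeating the computation on the second leg, whereas you deduce it from reality. This works: $\dagger(\alpha \otimes \beta) = \beta^* \otimes \alpha^*$ is a well-defined conjugate-linear bijection $\calc^2 \otimes_\subalg \calc \to \calc \otimes_\subalg \calc^2$, and $\diff$ commutes with $*$ on one-forms (this follows from the $*$-calculus axioms, or concretely from $(\del p^{i j})^* = \delbar p^{j i}$), so $\bigl((\diff \otimes \id)\met\bigr)^\dagger = (\id \otimes \diff)\met^\dagger = (\id \otimes \diff)\met$ by \cref{prop:g-symmetric}. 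This trades a second (short) computation for these two structural checks; both routes are fine.
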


\begin{proof}
The metric can be written in the form
\[
\met = \metPM + \metMP = \EVp_{1 2} \EV_{2 3} (\del p \otimes \delbar p + \delbar p \otimes \del p).
\]
We apply $\diff$ to the first leg. Using $\diff = \del + \delbar$ and $\del^2 = \delbar^2 = 0$ we obtain
\[
(\diff \otimes \id) \met = \EVp_{1 2} \EV_{2 3} (\delbar \del p \otimes \delbar p + \del \delbar p \otimes \del p).
\]
We have $\del \delbar p = \EV_{2 3} (\del p \wedge \delbar p + \delbar p \wedge \del p)$ by \eqref{eq:deldelbarp}.
Using this and $\del \delbar = - \delbar \del$ we get
\[
\begin{split}
(\diff \otimes \id) \met & = - \EVp_{1 2} \EV_{2 3} \EV_{2 3} (\del p \wedge \delbar p \otimes \delbar p + \delbar p \wedge \del p \otimes \delbar p) \\
& + \EVp_{1 2} \EV_{2 3} \EV_{2 3} (\del p \wedge \delbar p \otimes \del p + \delbar p \wedge \del p \otimes \del p).
\end{split}
\]
The second and third term of the previous expression vanish by \cref{lem:vanishing-third}.
The other two terms also vanish, since we have the relations $\EV_{2 3} \del p \otimes \del p = \EV_{2 3} \delbar p \otimes \delbar p = 0$, which can be easily proven using \eqref{eq:evaluations-calculi} and keeping in mind that the tensor product is over $\subalg$.

Similarly, applying $\diff$ to the second leg we get
\[
(\id \otimes \diff) \met = \EVp_{1 2} \EV_{2 3} (\del p \otimes \del \delbar p + \delbar p \otimes \delbar \del p).
\]
Using again the expression for $\del \delbar p$ we obtain
\[
\begin{split}
(\id \otimes \diff) \met & = \EVp_{1 2} \EV_{2 3} \EV_{4 5} (\del p \otimes \del p \wedge \delbar p + \del p \otimes \delbar p \wedge \del p) \\
& - \EVp_{1 2} \EV_{2 3} \EV_{4 5} (\delbar p \otimes \del p \wedge \delbar p + \delbar p \otimes \delbar p \wedge \del p).
\end{split}
\]
This is easily seen to vanish as in the previous case.
\end{proof}

\begin{remark}
The definition of the metric $\met$ and its properties discussed above are valid for any quantum irreducible flag manifold, not just in the quadratic case (we never used this in the proofs).
This is completely analogous to the definition of the Kähler forms discussed in \cite{kahler}.
See also \cite{kahler-hochschild} for a discussion of general quantum flag manifolds, not necessarily irreducibles, from the point of view of (twisted) Hochschild homology.
\end{remark}

\subsection{Inverse metric}

From now on we focus on the case of quantum projective spaces.
This means that we take $\Uqg = U_q(\mathfrak{sl}_{r + 1})$ and choose either $\omega_s = \omega_1$ or $\omega_s = \omega_r$, in such a way that the quadratic condition discussed in \cref{sec:quadratic-case} is satisfied.

Our goal is to prove that $\met$ is a quantum metric according to \cref{def:quantum-metric}.
To show this, we need an "inverse metric", which is an appropriate $\subalg$-bimodule map $(\cdot, \cdot): \calc \otimes_\subalg \calc \to \subalg$.
We begin by defining a certain $\subalg$-bimodule map on $\calcM \otimes_\subalg \calcP$.

\begin{lemma}
We have a $\subalg$-bimodule map $(\cdot, \cdot)_{- +}: \calcM \otimes_\subalg \calcP \to \subalg$ defined by
\[
(\delbar p, \del p)_{- +} = \CVp_2 p - q^{-(\omega_s, 2 \rho)} p p.
\]
\end{lemma}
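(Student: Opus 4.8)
The plan is to view $\calcM \otimes_\subalg \calcP$ as generated, as a $\subalg$-bimodule, by the single element $\delbar p \otimes \del p$, so that defining the pairing amounts to specifying its value on this generator and checking that the defining relations of the bimodule are respected. Since $\calcM$ and $\calcP$ are generated as left $\subalg$-modules by $\delbar p$ and $\del p$ respectively, and the right $\subalg$-action can be rewritten in terms of the left one through \eqref{eq:S-right-module}, this reduces to the following families of checks: the left-module relations of $\calcM$ in the first leg, the left-module relations of $\calcP$ in the second leg, and the balancing over $\subalg$ together with right $\subalg$-linearity, both of which are controlled by the right-module relations \eqref{eq:S-right-module}. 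One should also note that both terms $\CVp_2 p$ and $p p$ are of degree zero in the $\bbZ$-grading of $\alg$, so that the right-hand side genuinely lands in $\subalg$.

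First I would dispose of the two relations involving $\EVp$, namely $\EVp_{1 2} \delbar p = 0$ of \eqref{eq:calcM-quadratic} and $\EVp_{1 2} \del p = 0$ of \eqref{eq:calcP-quadratic}. Each is a single linear combination of components set to zero, so the corresponding check is that $\EVp$, applied to the legs $1, 2$ and to the legs $3, 4$ of the right-hand side $\CVp_2 p - q^{-(\omega_s, 2 \rho)} p p$, vanishes. Both follow at once from the snake identities of \eqref{eq:duality}, which give $\EVp_{1 2} \CVp_2 p = \EVp_{3 4} \CVp_2 p = p$, together with the relation $\EVp_{1 2} p = q^{(\omega_s, 2 \rho)}$ of \eqref{eq:relationsB}, which gives $\EVp_{1 2}(p p) = \EVp_{3 4}(p p) = q^{(\omega_s, 2 \rho)} p$; in each case one is left with $p - q^{-(\omega_s, 2 \rho)} q^{(\omega_s, 2 \rho)} p = 0$. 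This is exactly what fixes the normalisation constant $q^{-(\omega_s, 2 \rho)}$ in the definition.

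The substantial part concerns the relations built from $\opS$ and $\opSt$, that is $(\opS_{1 2 3} - q^{(\omega_s, \omega_s)}) p \delbar p = 0$ from \eqref{eq:calcM-quadratic} and $(\opSt_{2 3 4} - q^{-(\omega_s, \omega_s)}) p \del p = 0$ from \eqref{eq:calcP-quadratic}, together with the balancing. Pushing the first through the left-linear assignment turns it into an identity in $\subalg$ of the form $(\opS_{1 2 3} - q^{(\omega_s, \omega_s)}) p \, [\CVp_2 p - q^{-(\omega_s, 2 \rho)} p p] = 0$, with $\opS_{1 2 3}$ acting on the two legs of the outer $p$ and the first leg of the bracket; the second, combined with the balancing relation $\delbar p\, p = q^{(\omega_s, \omega_s) - (\alpha_s, \alpha_s)} \opSt_{2 3 4} p \delbar p$ of \eqref{eq:S-right-module}, yields an analogous identity involving $\opSt$. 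To establish these I would commute $\opS$ and $\opSt$ past the coevaluation $\CVp_2$ using the braiding compatibilities \eqref{eq:coevaluations}, and rewrite the $p p$ term using the defining relations \eqref{eq:relationsB} of $\subalg$, the two contributions being reconciled by the commutation and braid relations of \cref{prop:S-properties} and the quadratic relations \eqref{eq:S-quadratic1}--\eqref{eq:S-quadratic2}.

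The main obstacle is precisely this last family of computations. The two terms making up the pairing transform in genuinely different ways under the braiding-type operators: $\CVp_2 p$ is governed by the coevaluation identities \eqref{eq:coevaluations}, whereas $p p$ is governed by the quadratic relations \eqref{eq:relationsB} of the base algebra, and the content of the lemma is that their difference is always annihilated by $\opS_{1 2 3} - q^{(\omega_s, \omega_s)}$, respectively $\opSt_{2 3 4} - q^{-(\omega_s, \omega_s)}$, modulo the relations of $\subalg$. I expect the decisive input to be the quadratic (Hecke) relation of the braiding special to the quantum projective spaces, which enters through \eqref{eq:S-quadratic1}--\eqref{eq:S-quadratic2} and is what forces the cancellation.
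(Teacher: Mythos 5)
Your setup is sound---$\calcM \otimes_\subalg \calcP$ is indeed generated as a $\subalg$-bimodule by $\delbar p \otimes \del p$, so well-definedness reduces to checking that the proposed value kills the relations you list---and your verification of the two $\EVp$ relations is correct. But the proof stops exactly where the content of the lemma lies: the checks against the $\opS$/$\opSt$ relations and, above all, the balancing relation $\delbar p\, p \otimes \del p = \delbar p \otimes p\, \del p$ (which via \eqref{eq:S-right-module} amounts to the identity $\opSt_{2 3 4}\,[p\,(\delbar p, \del p)_{- +}] = \opS_{3 4 5}^{-1}\,[(\delbar p, \del p)_{- +}\, p]$ in $\subalg$) are only described as a strategy, and you yourself flag them as ``the main obstacle''. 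Announcing that one would commute $\opS$, $\opSt$ past $\CVp_2$ via \eqref{eq:coevaluations} and reconcile terms via \cref{prop:S-properties} and \eqref{eq:S-quadratic1}--\eqref{eq:S-quadratic2} is not the same as doing it, and your guess at the mechanism looks misplaced: for the $\opS$/$\opSt$ relation checks the decisive input is not the Hecke relations but the remark after \cref{lem:P-relations} that $(\opS_{1 2 3} - q^{(\omega_s,\omega_s)})\, p p = 0$ and $(\opSt_{2 3 4} - q^{-(\omega_s,\omega_s)})\, p p = 0$, which kill both the $p p$ term and the $\CVp_2 p$ term (for those checks the coevaluation legs are mere spectators of the operators); it is only the balancing check where the operator legs actually meet the coevaluation and \eqref{eq:coevaluations} is needed. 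So there is a genuine gap: the hardest verification is unperformed, and the sketched route is not demonstrably the right one.

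For comparison, the paper avoids this thicket entirely: it works over the larger algebra $\alg$ with the calculi $\overcalcM$, $\overcalcP$ of \cref{sec:overcalc}, whose only relations in the quadratic case are the evaluation relations \eqref{eq:overcalc-quadratic}. The maps $\PhiMP(\delbar v \otimes \del f) = \CVp_1$ and $\PsiMP(\delbar v \otimes \del f) = v f$ are defined on free $\alg$-modules, shown in \cref{sec:bimodule-maps} to descend to the tensor product over $\subalg$ and, in \cref{prop:bimodule-metric}, to kill \eqref{eq:overcalc-quadratic} in the combination $\PhiMP - q^{-(\omega_s, 2 \rho)} \PsiMP$; since $\calcM$ and $\calcP$ are induced calculi, restriction then yields the desired $\subalg$-bimodule map with no further relation-checking, the only remaining computation being $\PhiMP(\delbar p \otimes \del p) = \CVp_2 p$ and $\PsiMP(\delbar p \otimes \del p) = p p$. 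To salvage your direct approach you must actually carry out the $\opS$/$\opSt$ and balancing verifications over $\subalg$; otherwise the lift-and-restrict argument is the efficient one.
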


\begin{proof}
We use the FODCs $\overcalcP$ and $\overcalcM$ over $\alg$ introduced in \cref{sec:overcalc} and some results from \cref{sec:bimodule-maps}.
According to \cref{prop:bimodule-metric} we have an $\alg$-bimodule map
\[
\PhiMP - q^{-(\omega_s, 2 \rho)} \PsiMP: \overcalcM \otimes_\subalg \overcalcP \to \alg,
\]
where $\PhiMP$ and $\PsiMP$ are the $\alg$-bimodule maps given by
\[
\PhiMP(\delbar v \otimes \del f) = \CVp_1, \quad
\PsiMP(\delbar v \otimes \del f) = v f.
\]
Recall that $\calcP$ and $\calcM$ are induced from $\overcalcP$ and $\overcalcM$ over $\alg$.
We now show that $\PhiMP - q^{-(\omega_s, 2 \rho)} \PsiMP$ restricts to a map $\calcM \otimes_\subalg \calcP \to \subalg$, which coincides with $(\cdot, \cdot)_{- +}$.

Starting with $\PhiMP$ we compute
\[
\PhiMP(\delbar p \otimes \del p)
= \PhiMP(f \delbar v \otimes \del f v)
= f \CVp_1 v = \CVp_2 f v = \CVp_2 p.
\]
Similarly for $\PsiMP$ we compute
\[
\PsiMP(\delbar p \otimes \del p)
= \PsiMP(f \delbar v \otimes \del f v)
= f v f v = p p.
\]
Therefore $(\delbar p, \del p)_{- +} = (\PhiMP - q^{-(\omega_s, 2 \rho)} \PsiMP)(\delbar p \otimes \del p)$, which gives the result.
\end{proof}

Similarly we define a $\subalg$-bimodule map on $\calcP \otimes_\subalg \calcM$, which is more involved.

\begin{lemma}
We have a $\subalg$-bimodule map $(\cdot, \cdot)_{+ -}: \calcP \otimes_\subalg \calcM \to \subalg$ defined by
\[
(\del p, \delbar p)_{+ -} = q^{(\alpha_s, \alpha_s)} q^{-(\omega_s, \omega_s + 2 \rho)} \opS_{1 2 3} \CV_3 p - q^{(\alpha_s, \alpha_s)} q^{-(\omega_s, 2 \rho)} p p.
\]
\end{lemma}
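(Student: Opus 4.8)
The plan is to mirror the proof just given for $(\cdot,\cdot)_{-+}$, constructing the map over the larger algebra $\alg$ and then restricting to $\subalg$. First I would invoke the appendix to obtain, as the analogue of \cref{prop:bimodule-metric} for the reversed order, two $\alg$-bimodule maps $\PhiPM, \PsiPM \colon \overcalcP \otimes_\subalg \overcalcM \to \alg$, determined by their values on the generators $\del f \otimes \delbar v$, together with the fact that the combination $\PhiPM - q^{-(\omega_s, 2\rho)} \PsiPM$ is a well-defined $\alg$-bimodule map. Since $\calcP$ and $\calcM$ are the induced calculi over $\subalg$ coming from $\overcalcP$ and $\overcalcM$, I would then check, exactly as in the $(-+)$ case, that this combination restricts to a $\subalg$-bimodule map $\calcP \otimes_\subalg \calcM \to \subalg$ and identify it with $(\cdot,\cdot)_{+-}$.

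The heart of the matter is evaluating the combination on $\del p \otimes \delbar p$. I would write $\del p = \del f v$ and $\delbar p = f \delbar v$, so that
\[
\del p \otimes \delbar p = \del f v \otimes f \delbar v .
\]
The difference with the $(-+)$ computation, and the reason this one is more involved, is that here the elements $v$ and $f$ sit on the \emph{inside} of the tensor product rather than on the outside. To apply the bimodule property of $\PhiPM$ and $\PsiPM$ I would first move them to the outside using the right-module relations \eqref{eq:overcalcP-right} and \eqref{eq:overcalcM-right} of the auxiliary calculi, namely $\del f v = q^{-(\omega_s,\omega_s)} (\braid^{-1}_{V,V^*})_{12} v \del f$ and the analogous relation $f \delbar v = q^{-(\omega_s,\omega_s)} (\braid^{-1}_{V,V^*})_{12} \delbar v f$. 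This brings $v$ to the far left and $f$ to the far right at the cost of two factors of $q^{-(\omega_s,\omega_s)}$ and two copies of $\braid^{-1}_{V,V^*}$.

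Once the interior elements are exterior, the bimodule property gives expressions of the form $v \cdot \PhiPM(\del f \otimes \delbar v) \cdot f$, with the braidings acting on the appropriate legs, and similarly for $\PsiPM$. For $\PsiPM$ this reassembles into $p p$ up to the accumulated scalar, producing the second term $q^{(\alpha_s,\alpha_s)} q^{-(\omega_s,2\rho)} p p$. For $\PhiPM$ the value on generators is a coevaluation, and I would use the compatibility relations \eqref{eq:coevaluations} between the braiding and the coevaluation, the definition \eqref{eq:S-maps} of $\opS_{123}$, and \cref{lem:relation-evaluations} to convert the $\CVp$-type coevaluation into a $\CV$-type one; this is what produces both the operator $\opS_{123}$ and the power $q^{-(\omega_s,\omega_s+2\rho)}$, yielding the first term $q^{(\alpha_s,\alpha_s)} q^{-(\omega_s,\omega_s+2\rho)} \opS_{123} \CV_3 p$.

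The main obstacle I expect is precisely this last reorganization: checking that the two braidings $\braid^{-1}_{V,V^*}$ picked up when moving $v$ and $f$ outward, once combined with the coevaluation delivered by $\PhiPM$ and with the braiding implicit in $\opS_{123}$, collapse exactly to $\opS_{123}\CV_3$, and that all accumulated scalar factors—those from the module moves \eqref{eq:overcalcP-right} and \eqref{eq:overcalcM-right}, the weight factors carried by the coevaluations, and the power from \cref{lem:relation-evaluations}—combine into the stated coefficients $q^{(\alpha_s,\alpha_s)} q^{-(\omega_s,\omega_s+2\rho)}$ and $q^{(\alpha_s,\alpha_s)} q^{-(\omega_s,2\rho)}$. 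Well-definedness of the restricted map should, as in the $(-+)$ case, follow formally from the induced-calculus description, so I do not anticipate trouble there.
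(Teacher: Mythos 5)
Your overall route is the paper's: work with the auxiliary calculi $\overcalcP$, $\overcalcM$ over $\alg$, move the interior generators $v$ and $f$ outward with the right-module relations, apply the $\alg$-bimodule property, and reorganize the braidings into $\opS_{1 2 3} \CV_3$. But there is a genuine error in the scalar structure, and it is not cosmetic. The combination you invoke, $\PhiPM - q^{-(\omega_s, 2\rho)} \PsiPM$, is the one for the \emph{reversed} order $\overcalcM \otimes_\subalg \overcalcP$; for $\overcalcP \otimes_\subalg \overcalcM$ the correct combination is $\PhiPM - \PsiPM$, with relative coefficient $1$ (\cref{prop:bimodule-metric}, part (1)). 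This coefficient is forced by well-definedness over $\alg$: the map must annihilate the relation $\EV_{1 2} v \del f = 0$ of \eqref{eq:overcalc-quadratic}, and since $\EV_{1 2} \CV_2 v = v$ by duality \eqref{eq:duality} while $\EV_{1 2} v f v = v$ by $\EV_{1 2} v f = 1$ from \eqref{eq:relationsA}, only the coefficient $1$ gives $v - v = 0$. (The coefficient $q^{-(\omega_s, 2\rho)}$ arises only in the $(- +)$ case, where the relevant relation is $\EVp_{1 2} f \delbar v = 0$ and one uses $\EVp_{1 2} f v = q^{(\omega_s, 2\rho)}$.) With your combination the map does not descend to $\overcalcP \otimes_\subalg \overcalcM$ at all; and even ignoring that, its value on $\del p \otimes \delbar p$ would be $q^{-(\omega_s, \omega_s)} \opS_{1 2 3} \CV_3 p - q^{-(\omega_s, 2\rho)} p p$, whose ratio of coefficients is $q^{-(\omega_s, \omega_s) + (\omega_s, 2\rho)}$ rather than the $q^{-(\omega_s, \omega_s)}$ required by the statement — no overall normalization can repair a wrong ratio.

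Relatedly, your account of where $q^{-(\omega_s, \omega_s + 2\rho)}$ comes from is incorrect. In this case $\PhiPM(\del f \otimes \delbar v) = \CV_1$ is already a $\CV$-type coevaluation (it is $\PhiMP$, in the $(- +)$ case, whose value is $\CVp_1$), so no conversion via \cref{lem:relation-evaluations} is needed, and that lemma plays no role in this proof. In the actual computation one finds $\PhiPM(\del p \otimes \delbar p) = q^{-(\omega_s, \omega_s)} \opS_{1 2 3} \CV_3 p$: the factor $q^{-2(\omega_s, \omega_s)}$ accumulated from moving $v$ and $f$ outward is partially cancelled by $q^{(\omega_s, \omega_s)}$ from commuting $v f = q^{(\omega_s, \omega_s)} (\braid_{V, V^*})_{1 2} f v$, and the braidings collapse to $\opS_{1 2 3}$ using \eqref{eq:coevaluations}, naturality, and the braid equation \eqref{eq:braid-equation}; meanwhile $\PsiPM(\del p \otimes \delbar p) = p p$ exactly, with all scalars cancelling. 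The coefficients in the statement then come from the single overall normalization $(\cdot, \cdot)_{+ -} = q^{(\alpha_s, \alpha_s)} q^{-(\omega_s, 2\rho)} (\PhiPM - \PsiPM)$ applied to both terms, not from separate factors attached to each term as your proposal has it.
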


\begin{proof}
According to \cref{prop:bimodule-metric} we have an $\alg$-bimodule map
\[
\PhiPM - \PsiPM: \overcalcP \otimes_\subalg \overcalcM \to \alg,
\]
where $\PhiPM$ and $\PsiPM$ are the $\alg$-bimodule maps given by
\[
\PhiPM(\del f \otimes \delbar v) = \CV_1, \quad
\PsiPM(\del f \otimes \delbar v) = f v.
\]
We now show that $\PhiPM - \PsiPM$ restricts to a map $\calcP \otimes_\subalg \calcM \to \subalg$.

Consider first $\PhiPM$.
Using the right $\alg$-module relations \eqref{eq:overcalcP-right} and \eqref{eq:overcalcM-right} we compute
\[
\begin{split}
\PhiPM(\del p \otimes \delbar p)
& = q^{-2 (\omega_s, \omega_s)} (\braid^{-1}_{V, V^*})_{1 2} (\braid^{-1}_{V, V^*})_{3 4} \PhiPM(v \del f \otimes \delbar v f) \\
& = q^{-2 (\omega_s, \omega_s)} (\braid^{-1}_{V, V^*})_{1 2} (\braid^{-1}_{V, V^*})_{3 4} \CV_2 (v f) \\
& = q^{-(\omega_s, \omega_s)} (\braid^{-1}_{V, V^*})_{1 2} (\braid^{-1}_{V, V^*})_{3 4} \CV_2 (\braid_{V, V^*})_{1 2} p.
\end{split}
\]
In the last step we have used $v f = q^{(\omega_s, \omega_s)} (\braid_{V, V^*})_{1 2} f v$ from \eqref{eq:relationsA}.
Next, we use the identity $(\braid^{-1}_{V, V^*})_{3 4} \CV_2 = (\braid_{V, V})_{2 3} \CV_3$ from \eqref{eq:coevaluations}. Then
\[
\begin{split}
\PhiPM(\del p \otimes \delbar p)
& = q^{-(\omega_s, \omega_s)} (\braid^{-1}_{V, V^*})_{1 2} (\braid_{V, V})_{2 3} \CV_3 (\braid_{V, V^*})_{1 2} p \\
& = q^{-(\omega_s, \omega_s)} (\braid^{-1}_{V, V^*})_{1 2} (\braid_{V, V})_{2 3} (\braid_{V, V^*})_{1 2} \CV_3 p \\
& = q^{-(\omega_s, \omega_s)} (\braid_{V, V^*})_{2 3} (\braid_{V, V})_{1 2} (\braid^{-1}_{V, V^*})_{2 3} \CV_3 p.
\end{split}
\]
Finally using the braid equation \eqref{eq:braid-equation} we obtain
\[
\begin{split}
\PhiPM(\del p \otimes \delbar p)
& = q^{-(\omega_s, \omega_s)} (\braid_{V, V^*})_{2 3} (\braid_{V, V})_{1 2} (\braid^{-1}_{V, V^*})_{2 3} \CV_3 p \\
& = q^{-(\omega_s, \omega_s)} \opS_{1 2 3} \CV_3 p.
\end{split}
\]
Similarly, for $\PsiPM$ we compute
\[
\begin{split}
\PsiPM(\del p \otimes \delbar p)
& = q^{-2 (\omega_s, \omega_s)} (\braid^{-1}_{V, V^*})_{1 2} (\braid^{-1}_{V, V^*})_{3 4} \PsiPM(v \del f \otimes \delbar v f) \\
& = q^{-2 (\omega_s, \omega_s)} (\braid^{-1}_{V, V^*})_{1 2} (\braid^{-1}_{V, V^*})_{3 4} (v f v f) \\
& = f v f v = p p.
\end{split}
\]

Using the identities above we find that
\[
q^{-(\alpha_s, \alpha_s)} q^{(\omega_s, 2 \rho)} (\del p, \delbar p)_{+ -} = (\PhiPM - \PsiPM)(\del p \otimes \delbar p).
\]
This proves the claim about $(\cdot, \cdot)_{+ -}$.
\end{proof}

\begin{remark}
The normalization factor in this map is chosen for later convenience.
\end{remark}

Hence we can define a $\subalg$-bimodule map $(\cdot, \cdot): \calc \otimes_\subalg \calc \to \subalg$ by
\[
(\cdot, \cdot) :=
\begin{cases}
(\cdot, \cdot)_{+ -} & \textrm{on } \calcP \otimes \calcM \\
(\cdot, \cdot)_{- +} & \textrm{on } \calcM \otimes \calcP \\
0 & \textrm{otherwise}
\end{cases}.
\]

\begin{remark}
In the classical limit the map $(\cdot, \cdot): \calc \otimes_\subalg \calc \to \subalg$ reduces to the inverse of the Fubini-Study metric, see the explicit formulae in \eqref{eq:inverse-metric}.
\end{remark}

We are now ready to show that $\met$ is a quantum metric according to \cref{def:quantum-metric}.

\begin{theorem}
\label{thm:quantum-metric}
Write $\met = \met^{(1)} \otimes \met^{(2)}$. Then for any $\omega \in \calc$ we have
\[
\met^{(1)} (\met^{(2)}, \omega) = \omega = (\omega, \met^{(1)}) \met^{(2)}.
\]
Hence $\met \in \calc \otimes_\subalg \calc$ is a quantum metric.
\end{theorem}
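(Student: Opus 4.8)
The plan is to exploit the direct sum decomposition $\calc = \calcP \oplus \calcM$ together with the fact that the inverse metric $(\cdot, \cdot)$ vanishes on $\calcP \otimes_\subalg \calcP$ and on $\calcM \otimes_\subalg \calcM$. Since $\metPM \in \calcP \otimes_\subalg \calcM$ and $\metMP \in \calcM \otimes_\subalg \calcP$, for each fixed homogeneous $\omega$ only one summand of $\met$ contributes to either composite. Moreover the map $\omega \mapsto (\omega, \met^{(1)}) \met^{(2)}$ is left $\subalg$-linear, because $(\cdot, \cdot)$ is a left module map in its first argument so that $(b\omega, \met^{(1)}) = b(\omega, \met^{(1)})$; dually, $\omega \mapsto \met^{(1)} (\met^{(2)}, \omega)$ is right $\subalg$-linear. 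As $\del p$ and $\delbar p$ generate $\calc$ both as a left and as a right $\subalg$-module (using the bimodule relations \eqref{eq:right-module} to pass between the two), it suffices to verify each identity on these two generators. This leaves four verifications: $\met^{(1)}(\met^{(2)}, \cdot) = \id$ evaluated on $\del p$ and on $\delbar p$, and $(\cdot, \met^{(1)})\met^{(2)} = \id$ on $\del p$ and on $\delbar p$. By the type-matching above, the case $\del p$ in the first identity and $\delbar p$ in the second invoke $(\cdot, \cdot)_{- +}$, while the remaining two invoke $(\cdot, \cdot)_{+ -}$.

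In each case I would substitute \eqref{eq:metric} and the explicit pairings. Every pairing is a difference of a coevaluation-type term, namely $\CVp_2 p$ for $(\cdot, \cdot)_{- +}$ and $\opS_{1 2 3} \CV_3 p$ for $(\cdot, \cdot)_{+ -}$, and a subtracted term proportional to $p p$. The $p p$-terms are engineered to die once they meet the evaluation factors $\EVp_{1 2}\EV_{2 3}$ built into the metric: after applying \eqref{eq:evaluations-calculi} (together with $\EVp_{1 2}\del p = 0$ and $\EVp_{1 2}\delbar p = 0$ from \eqref{eq:calcP-quadratic} and \eqref{eq:calcM-quadratic}), the relevant contractions $\EV_{2 3} p \del p$ and $\EV_{2 3} \delbar p p$ vanish. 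The coevaluation-type terms, by contrast, contract against $\EVp_{1 2}\EV_{2 3}$ through the duality (zig-zag) relations \eqref{eq:duality}, and combined with $\EV_{2 3}\del p p = \del p$ and $\EV_{2 3} p \delbar p = \delbar p$ from \eqref{eq:evaluations-calculi} reproduce exactly $\omega$. The two cases built on $(\cdot, \cdot)_{- +}$ are immediate: in index notation the weight $q^{(2 \rho, \lambda_i)}$ carried by the metric cancels the weight $q^{-(2 \rho, \lambda_i)}$ produced by $\CVp$, and the surviving contraction $\sum_j \del p^{k j} p^{j l} = \del p^{k l}$ is precisely $\EV_{2 3}\del p p = \del p$.

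The genuine obstacle is the pair of cases built on $(\cdot, \cdot)_{+ -}$, whose leading term $\opS_{1 2 3} \CV_3 p$ carries braidings through $\opS_{1 2 3} = (\braid_{V, V^*})_{2 3}(\braid_{V, V})_{1 2}(\braid^{-1}_{V, V^*})_{2 3}$. Here I would slide the coevaluation past the braidings using the naturality relations \eqref{eq:coevaluations} and \eqref{eq:evaluations}, then rearrange the resulting braidings onto the metric's evaluation legs via the braid equation \eqref{eq:braid-equation}. The key point, which I expect makes the computation close, is that one braiding combines with $\EV$ to produce $\EVp$ at the cost of the scalar $q^{-(\omega_s, \omega_s + 2 \rho)}$ by \cref{lem:relation-evaluations} applied with $\lambda = \omega_s$; this is exactly the prefactor appearing in $(\cdot, \cdot)_{+ -}$, while the remaining factor $q^{(\alpha_s, \alpha_s)}$ is matched by the one coming from the right $\subalg$-module structure \eqref{eq:right-module}. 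Checking that all the $q$-powers telescope to $1$, and that the braiding manipulations place the indices so that the final zig-zag again yields $\omega$, is the delicate bookkeeping that I expect to occupy the bulk of the argument.

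Finally, I would look for a structural shortcut through reality. Since $\met^\dagger = \met$ by \cref{prop:g-symmetric} and $(\del p^{i j})^* = \delbar p^{j i}$, $(p^{i j})^* = p^{j i}$, applying the $*$-operation ought to interchange the two snake identities and relate the $(\cdot, \cdot)_{+ -}$ computations to the $(\cdot, \cdot)_{- +}$ ones, provided one first checks that $(\cdot, \cdot)$ is compatible with the $\dagger$-structure. If this compatibility holds it would halve the work, reducing everything to the two easy $(\cdot, \cdot)_{- +}$ cases; if it is not convenient to establish, the four cases are simply carried out directly as described above.
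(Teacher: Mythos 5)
Your proposal follows essentially the same route as the paper's proof: reduce to the generators $\del p$, $\delbar p$ via the bimodule property of the pairing, split into the same four cases with the same type-matching, kill the $p p$-terms using \eqref{eq:evaluations-calculi} together with $\EVp_{1 2} \del p = \EVp_{1 2} \delbar p = 0$, and close the coevaluation terms by zig-zag duality, naturality/braid-equation manipulations and the right-module relations \eqref{eq:S-right-module}, with the $q$-powers cancelling exactly as you predict. The only differences are that your two $(\cdot, \cdot)_{+ -}$ cases remain a sketch (the paper's key intermediate step there is the identity $\EV_{2 3} \opS_{3 4 5} \CV_5 = \opSt^{-1}_{2 3 4}$, derived from precisely the naturality relations you cite), and that your optional reality shortcut is not used by the paper, which simply carries out all four cases directly.
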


\begin{proof}
Since $(\cdot, \cdot)$ is a $\subalg$-bimodule map, it suffices to prove the claim for the generators $\del p$ and $\delbar p$ of $\calc$.
We have to consider four different cases.

$\bullet$ \textbf{The case $\met^{(1)} (\met^{(2)}, \del p)$}. Since $(\del p, \del p) = 0$ we have
\[
\met^{(1)} (\met^{(2)}, \del p)
= \EVp_{1 2} \EV_{2 3} \del p (\delbar p, \del p) = \EVp_{1 2} \EV_{2 3} \del p \left( \CVp_2 p - q^{-(\omega_s, 2 \rho)} p p \right).
\]
We have $\EVp_{1 2} \EV_{2 3} \del p p p = \EVp_{1 2} \del p p = 0$. Hence
\[
\met^{(1)} (\met^{(2)}, \del p)
= \EVp_{1 2} \EV_{2 3} \del p \CVp_2 p
= \EVp_{1 2} \EV_{2 3} \CVp_4 \del p p.
\]
Using the duality relation $\EVp_{1 2} \CVp_2 = \id$ from \eqref{eq:duality} we obtain
\[
\met^{(1)} (\met^{(2)}, \del p)
= \EVp_{1 2} \CVp_2 \EV_{2 3} \del p p
= \EV_{2 3} \del p p = \del p.
\]

$\bullet$ \textbf{The case $(\delbar p, \met^{(1)}) \met^{(2)}$}. Since $(\delbar p, \delbar p) = 0$ we have
\[
(\delbar p, \met^{(1)}) \met^{(2)}
= \EVp_{3 4} \EV_{4 5} (\delbar p, \del p) \delbar p = \EVp_{3 4} \EV_{4 5} \left( \CVp_2 p - q^{-(\omega_s, 2 \rho)} p p \right) \delbar p.
\]
We have $\EVp_{3 4} \EV_{4 5} p p \delbar p = \EVp_{3 4} p \delbar p = 0$. Hence
\[
(\delbar p, g^{(1)}) g^{(2)} = \EVp_{3 4} \EV_{4 5} \CVp_2 p \delbar p.
\]
Using the duality relation $\EVp_{3 4} \CVp_2 = \id$ from \eqref{eq:duality} we obtain
\[
(\delbar p, g^{(1)}) g^{(2)} = \EVp_{3 4} \CVp_2 \EV_{2 3} p \delbar p = \delbar p.
\]

$\bullet$ \textbf{The case $(\del p, \met^{(1)}) \met^{(2)}$}. Since $(\del p, \del p) = 0$ we have
\[
\begin{split}
(\del p, \met^{(1)}) \met^{(2)}
& = \EVp_{3 4} \EV_{4 5} (\del p, \delbar p) \del p \\
& = \EVp_{3 4} \EV_{4 5} \left( q^{(\alpha_s, \alpha_s)} q^{-(\omega_s, \omega_s + 2 \rho)} \opS_{1 2 3} \CV_3 p - q^{(\alpha_s, \alpha_s)} q^{-(\omega_s, 2 \rho)} p p \right) \del p \\
& = q^{(\alpha_s, \alpha_s)} q^{-(\omega_s, \omega_s + 2 \rho)} \EVp_{3 4} \EV_{4 5} \opS_{1 2 3} \CV_3 p \del p,
\end{split}
\]
where in the last step we have used $\EVp_{3 4} \EV_{4 5} p p \del p = 0$.
Since $\EV_{4 5}$ commutes with $\opS_{1 2 3}$, using the duality relation $\EV_{4 5} \CV_3 = \id$ from \eqref{eq:duality} we get
\[
(\del p, \met^{(1)}) \met^{(2)} = q^{(\alpha_s, \alpha_s)} q^{-(\omega_s, \omega_s + 2 \rho)} \EVp_{3 4} \opS_{1 2 3} p \del p.
\]
Then using $p \del p = q^{(\omega_s, \omega_s) - (\alpha_s, \alpha_s)} \opS^{-1}_{1 2 3} \del p p$ from \eqref{eq:S-right-module} we conclude that
\[
(\del p, \met^{(1)}) \met^{(2)} = q^{-(\omega_s, 2 \rho)} \EVp_{3 4} \del p p = \del p.
\]

$\bullet$ \textbf{The case $\met^{(1)} (\met^{(2)}, \delbar p)$}. Since $(\delbar p, \delbar p) = 0$ we have
\[
\begin{split}
\met^{(1)} (\met^{(2)}, \delbar p)
& = \EVp_{1 2} \EV_{2 3} \delbar p (\del p, \delbar p) \\
& = \EVp_{1 2} \EV_{2 3} \delbar p \left( q^{(\alpha_s, \alpha_s)} q^{-(\omega_s, \omega_s + 2 \rho)} \opS_{1 2 3} \CV_3 p - q^{(\alpha_s, \alpha_s)} q^{-(\omega_s, 2 \rho)} p p \right).
\end{split}
\]
Using again $\EVp_{1 2} \EV_{2 3} \del p p p = 0$ we rewrite
\[
\met^{(1)} (\met^{(2)}, \delbar p)
= q^{(\alpha_s, \alpha_s)} q^{-(\omega_s, \omega_s + 2 \rho)} \EVp_{1 2} \EV_{2 3} \opS_{3 4 5} \CV_5 \delbar p p.
\]
Next, consider the expression
\[
\begin{split}
\EV_{2 3} \opS_{3 4 5} \CV_5
& = \EV_{2 3} (\braid_{V, V^*})_{4 5} (\braid_{V, V})_{3 4} (\braid^{-1}_{V, V^*})_{4 5} \CV_5 \\
& = (\braid_{V, V^*})_{2 3} \EV_{2 3} (\braid_{V, V})_{3 4} (\braid^{-1}_{V, V^*})_{4 5} \CV_5.
\end{split}
\]
By naturality of the braiding, equations \eqref{eq:evaluations} and \eqref{eq:coevaluations} give
\[
\EV_{2 3} (\braid_{V, V})_{3 4} = \EV_{3 4} (\braid^{-1}_{V, V^*})_{2 3}, \quad
(\braid^{-1}_{V, V^*})_{4 5} \CV_5 = (\braid_{V^*, V^*})_{5 6} \CV_4.
\]
Using these and the duality relation $\EV_{1 2} \CV_2 = \id$ from \eqref{eq:duality} we get
\[
\begin{split}
\EV_{2 3} \opS_{3 4 5} \CV_5
& = (\braid_{V, V^*})_{2 3} \EV_{3 4} (\braid^{-1}_{V, V^*})_{2 3} (\braid_{V^*, V^*})_{5 6} \CV_4 \\
& = (\braid_{V, V^*})_{2 3} (\braid_{V^*, V^*})_{3 4} \EV_{3 4} \CV_4 (\braid^{-1}_{V, V^*})_{2 3} \\
& = (\braid_{V, V^*})_{2 3} (\braid_{V^*, V^*})_{3 4} (\braid^{-1}_{V, V^*})_{2 3} = \opSt^{-1}_{2 3 4}.
\end{split}
\]
Therefore we have
\[
\met^{(1)} (\met^{(2)}, \delbar p) = q^{(\alpha_s, \alpha_s)} q^{-(\omega_s, \omega_s + 2 \rho)} \EVp_{1 2} \opSt^{-1}_{2 3 4} \delbar p p.
\]
Finally using $\delbar p p = q^{(\omega_s, \omega_s) - (\alpha_s, \alpha_s)} \opSt_{2 3 4} p \delbar p$ from \eqref{eq:S-right-module} we conclude that
\[
\met^{(1)} (\met^{(2)}, \delbar p) = q^{-(\omega_s, 2 \rho)} \EVp_{1 2} p \delbar p = \delbar p. \qedhere
\]
\end{proof}

\begin{corollary}
We have that $\met$ is central, in the sense that $b \met = \met b$ for all $b \in \subalg$.
\end{corollary}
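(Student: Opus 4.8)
The plan is to observe that centrality is a purely formal consequence of $\met$ being a quantum metric, so that the specific geometry of the quantum projective spaces plays no role here: I would invoke only the abstract inversion identities established in \cref{thm:quantum-metric} together with the fact that the pairing $(\cdot, \cdot)$ is an $\subalg$-bimodule map. Write $\met = \met^{(1)} \otimes \met^{(2)}$ and introduce a second copy $\met = \met^{(1')} \otimes \met^{(2')}$. For $b \in \subalg$ I would start from $b \met = b \met^{(1)} \otimes \met^{(2)}$ and rewrite the first leg using the left inversion identity $\met^{(1)}(\met^{(2)}, \omega) = \omega$ of \cref{thm:quantum-metric} applied to $\omega = b \met^{(1)}$, which gives $b \met^{(1)} = \met^{(1')} (\met^{(2')}, b \met^{(1)})$.

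Next I would move the resulting $\subalg$-valued coefficient $(\met^{(2')}, b \met^{(1)})$ across the balanced tensor product $\otimes_\subalg$, obtaining $b \met = \met^{(1')} \otimes (\met^{(2')}, b \met^{(1)}) \met^{(2)}$. Since $(\cdot, \cdot)$ is well defined on the tensor product over $\subalg$, it is middle-linear, i.e. $(\eta b, \xi) = (\eta, b \xi)$; applying this relocates $b$ so that $(\met^{(2')}, b \met^{(1)}) = (\met^{(2')} b, \met^{(1)})$. Finally the right inversion identity $(\omega, \met^{(1)}) \met^{(2)} = \omega$, applied to $\omega = \met^{(2')} b$, collapses the expression to $\met^{(1')} \otimes \met^{(2')} b = \met b$, which is exactly the desired equality $b \met = \met b$.

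The computation is short and essentially bookkeeping; the only point requiring care is to keep the two copies of the metric and their legs straight and to invoke the correct one of the two inversion identities at each stage, since an inattentive use of the wrong snake identity would fail to close the argument. The genuinely substantive input, namely the existence of the inverse pairing $(\cdot, \cdot)$ satisfying both inversion identities, has already been secured in \cref{thm:quantum-metric}, so no further hard analysis is needed and the corollary follows formally.
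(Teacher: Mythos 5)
Your argument is correct and is essentially the paper's approach: the paper simply cites the general fact that any quantum metric is central (\cite[Lemma 1.16]{quantum-book}), and your snake-identity computation is precisely the standard proof of that general lemma, using only the two inversion identities from \cref{thm:quantum-metric} and middle-linearity of the pairing. No gap.
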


\begin{proof}
This is a general property of quantum metrics, see \cite[Lemma 1.16]{quantum-book}.
Of course, this can also be proven directly using the relations of the FODC $\Omega$.
\end{proof}

Since $\met = \metPM + \metMP$ this also implies that $\metPM$ and $\metMP$ are central.

\section{Connections and properties}
\label{sec:connections}

In this section we introduce two connections $\connP$ and $\connM$ on the FODCs $\calcP$ and $\calcM$, for the case of quantum projective spaces.
Their direct sum $\conn = \connP + \connM$ is a connection on the FODC $\calc$, which in the classical limit reduces to the Levi-Civita connection on the cotangent bundle.
As for the quantum case, we show that this connection is torsion free and cotorsion free.
In other words, $\nabla$ is a weak quantum Levi-Civita connection in the sense of \cref{def:levi-civita}, which is our main result from \cref{thm:levi-civita}.

\subsection{Connections}

We begin with the connection on $\calcM$, which is easier.

\begin{proposition}
\label{prop:connection-minus}
We have a connection $\connM: \calcM \to \calc \otimes_\subalg \calcM$ defined by
\[
\connM(\delbar p) = \EV_{2 3} \del p \otimes \delbar p - q^{-(\omega_s, 2 \rho)} p \metPM.
\]
\end{proposition}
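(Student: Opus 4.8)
The plan is to exhibit $\connM$ as the left connection determined by its value on the single generator $\delbar p$ together with the left Leibniz rule, so that the only real content is \emph{well-definedness}: the assignment must be compatible with the defining relations of $\calcM$. Once this is checked, the Leibniz rule for a general element $b\,\delbar p$ with $b \in \subalg$ follows automatically from the Leibniz rule for $\diff$ on $\subalg$, and one sees at once that the right-hand side lands in $\calc \otimes_\subalg \calcM$: the term $\EV_{23}\del p \otimes \delbar p$ has its first tensor factor in $\calcP \subset \calc$ and its second in $\calcM$, and $p\metPM$ uses $\metPM \in \calcP \otimes_\subalg \calcM$. In the quadratic case the relations of $\calcM$ are those of \eqref{eq:calcM-quadratic}, namely $\EVp_{12}\delbar p = 0$ and $(\opS_{123} - q^{(\omega_s,\omega_s)}) p \delbar p = 0$; since it is enough to respect a generating set of relations (the remaining relations being left $\subalg$-multiples), I would verify compatibility with exactly these two.

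For the scalar relation $\EVp_{12}\delbar p = 0$ the coefficients are constants, so $\diff$ contributes nothing and it suffices to apply $\EVp$ to the legs carrying the input index of $\connM(\delbar p)$ and show the result vanishes. The first term then reproduces exactly the metric $\metPM$ of \eqref{eq:metric}, while in the second term the factor $p$ gets contracted into the scalar $\EVp_{12}p = q^{(\omega_s,2\rho)}$ from \eqref{eq:relationsB}, producing $-q^{-(\omega_s,2\rho)}q^{(\omega_s,2\rho)}\metPM = -\metPM$; the two cancel. This cancellation is precisely what pins down the normalization $q^{-(\omega_s,2\rho)}$ of the second term.

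The main work is compatibility with $(\opS_{123} - q^{(\omega_s,\omega_s)}) p \delbar p = 0$, which genuinely involves the Leibniz rule. Here I would compute $\connM(p\,\delbar p) = \diff p \otimes \delbar p + p\,\connM(\delbar p)$, substitute $\diff p = \del p + \delbar p$, and then apply $\opS_{123} - q^{(\omega_s,\omega_s)}$ on the two legs of $p$ together with the first leg of $\delbar p$, aiming to reorganize everything back into the defining relation. The tools available are the right $\subalg$-module relations \eqref{eq:S-right-module} and \eqref{eq:S-action-right} (to trade $\del p\,p$ and $\delbar p\,p$ for the $\opS$- and $\opSt$-transforms of $p\,\del p$ and $p\,\delbar p$), the evaluation identities \eqref{eq:evaluations-calculi} together with the relation $\EV_{23}\delbar p \otimes \delbar p = 0$, and, crucially, the commutation and braid relations for $\opS$ and $\opSt$ from \cref{prop:S-properties} along with the naturality relations \eqref{eq:evaluations} and \eqref{eq:coevaluations}, needed to move $\opS_{123}$ past the evaluation and coevaluation maps hidden inside $\connM(\delbar p)$ and $\metPM$.

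The hard part will be exactly this last step: because the Leibniz rule forces the mixed term $\diff p = \del p + \delbar p$, the holomorphic and antiholomorphic relations of $\calcP$ and $\calcM$ must be used in tandem, and the operator $\opS_{123}$ acts across the tensor product, on legs belonging to different factors. Keeping the leg bookkeeping straight while repeatedly applying the braid identities of \cref{prop:S-properties} to collapse the expression onto $(\opS_{123}-q^{(\omega_s,\omega_s)})p\,\delbar p$ is where the real care is required; everything else is formal. Having verified both relations, $\connM$ is a well-defined left connection, and the Leibniz rule holds by construction.
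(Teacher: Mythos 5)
Your setup is right, and your treatment of the scalar relation $\EVp_{1 2}\delbar p = 0$ coincides with the paper's: the first term reproduces $\metPM$, the second contracts $\EVp_{1 2} p = q^{(\omega_s, 2\rho)}$ to give $-\metPM$, and they cancel. But the core of the proposition --- checking $(\opS_{1 2 3} - q^{(\omega_s,\omega_s)})\connM(p\,\delbar p) = 0$ --- is left as a toolbox list plus the hope that the expression ``collapses onto $(\opS_{1 2 3}-q^{(\omega_s,\omega_s)})p\,\delbar p$'', and that hope points at the wrong mechanism. In the paper's proof the four terms of $\connM(p\,\delbar p) = \del p \otimes \delbar p + \delbar p \otimes \delbar p + \EV_{4 5}\, p\,\del p \otimes \delbar p - q^{-(\omega_s,2\rho)}\, p\, p\, \metPM$ are never reorganized into the antiholomorphic relation at all. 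The terms $\delbar p \otimes \delbar p$ and $p\,p\,\metPM$ die individually: one writes $\delbar p \otimes \delbar p = \EV_{4 5}\,\delbar p \otimes p\,\delbar p = \EV_{4 5}\,\delbar p\, p \otimes \delbar p$ (moving $p$ across $\otimes_\subalg$) and invokes $(\opS_{1 2 3}-q^{(\omega_s,\omega_s)})\,\delbar p\, p = 0$ from \eqref{eq:S-action-right}. The two remaining terms must then be \emph{combined}: using $\delbar p = \EV_{2 3}\, p\,\delbar p$ again, one gets $\del p \otimes \delbar p + \EV_{4 5}\, p\,\del p \otimes \delbar p = \EV_{4 5}(\del p\, p + p\,\del p)\otimes\delbar p$, and this is annihilated by $\opS_{1 2 3}-q^{(\omega_s,\omega_s)}$ (which commutes with $\EV_{4 5}$) because of \eqref{eq:S-bimodule} from \cref{lem:relations-PQ}, namely $(\opS_{1 2 3}-q^{(\omega_s,\omega_s)})(p\,\del p + \del p\, p) = 0$. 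This identity --- a statement about the \emph{holomorphic} generators, equivalent in the quadratic case to the Hecke relation for $\opS$ combined with \eqref{eq:S-right-module} --- is the decisive input, and it is absent from your list; without it (or without rederiving it from \eqref{eq:S-right-module} and \eqref{eq:S-quadratic1}), your plan does not close.

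Two smaller corrections. First, no naturality of the braiding is needed here: $\opS_{1 2 3}$ commutes with $\EV_{4 5}$ simply because they act on disjoint legs, and no coevaluations occur in $\connM(\delbar p)$ or $\metPM$ (both are built from evaluations only), so the appeal to \eqref{eq:evaluations} and \eqref{eq:coevaluations} is a red herring. Second, the relation $\EV_{2 3}\,\delbar p \otimes \delbar p = 0$ that you cite is true but is not what handles the $\delbar p \otimes \delbar p$ term; what is needed is its vanishing under $\opS_{1 2 3}-q^{(\omega_s,\omega_s)}$, obtained from \eqref{eq:S-action-right} as above.
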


\begin{proof}
Recall that, in the quadratic case, $\calcM$ is generated as a left $\subalg$-module by $\delbar p$ with the relations $(\opS_{1 2 3} - q^{(\omega_s, \omega_s)}) p \delbar p = 0$ and $\EVp_{1 2} \delbar p = 0$, as described in \eqref{eq:calcP-quadratic}.
Hence to show that $\connM$ is well-defined we need to check the relations
\[
(\opS_{1 2 3} - q^{(\omega_s, \omega_s)}) \connM(p \delbar p) = 0, \quad
\EVp_{1 2} \connM(\delbar p) = 0.
\]
Let us begin with the second relation.
Recall that $\metPM = \EVp_{1 2} \EV_{2 3} \del p \otimes \delbar p$ and we have the identity $\EVp_{1 2} p = q^{(\omega_s, 2 \rho)}$.
Using these we compute
\[
\EVp_{1 2} \connM(\delbar p) = \metPM - \metPM = 0.
\]

Next, we want to show that $(\opS_{1 2 3} - q^{(\omega_s, \omega_s)}) \connM(p \delbar p) = 0$. Using the Leibniz rule we have
\[
\begin{split}
\connM (p \delbar p) & = \diff p \otimes \delbar p + p \connM(\delbar p) \\
& = \del p \otimes \delbar p + \delbar p \otimes \delbar p + \EV_{4 5} (p \del p \otimes \delbar p) - q^{-(\omega_s, 2 \rho)} p p g_{+ -}.
\end{split}
\]
First we claim that $(\opS_{1 2 3} - q^{(\omega_s, \omega_s)}) \delbar p \otimes \delbar p = 0$.
To see this, we use \eqref{eq:evaluations-calculi} to write
\[
\delbar p \otimes \delbar p = \EV_{4 5} \delbar p \otimes p \delbar p = \EV_{4 5} \delbar p p \otimes \delbar p.
\]
Then using $(\opS_{1 2 3} - q^{(\omega_s, \omega_s)}) \delbar p p = 0$ from \eqref{eq:S-action-right} we obtain the claim.
Similarly one shows that the term $p p g_{+ -}$ vanishes under $\opS_{1 2 3} - q^{(\omega_s, \omega_s)}$.
Hence let us consider
\[
A = \del p \otimes \delbar p + \EV_{4 5} p \del p \otimes \delbar p,
\]
which we want to vanish under $\opS_{1 2 3} - q^{(\omega_s, \omega_s)}$.
Using $\delbar p = \EV_{2 3} p \delbar p$ we rewrite it as
\[
A = \EV_{4 5} \del p \otimes p \delbar p + \EV_{4 5} p \del p \otimes \delbar p = \EV_{4 5} (\del p p + p \del p) \otimes \delbar p.
\]
Clearly $\opS_{1 2 3} - q^{(\omega_s, \omega_s)}$ commutes with $\EV_{4 5}$.
Finally, using $(\opS_{1 2 3} - q^{(\omega_s, \omega_s)}) (\del p p + p \del p) = 0$ from \eqref{eq:S-bimodule}, we conclude that $(\opS_{1 2 3} - q^{(\omega_s, \omega_s)}) A = 0$.
\end{proof}

Next we consider the case of $\calcP$, which is more complicated.

\begin{proposition}
\label{prop:connection-plus}
We have a connection $\connP: \calcP \to \calc \otimes_\subalg \calcP$ defined by
\[
\connP(\del p) = q^{(\alpha_s, \alpha_s)} \EV_{2 3} \opT_{1 2 3 4} \delbar p \otimes \del p - q^{(\alpha_s, \alpha_s)} q^{-(\omega_s, 2 \rho)} p \metMP.
\]
\end{proposition}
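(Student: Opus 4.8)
The plan is to verify that the proposed formula for $\connP(\del p)$ respects the relations presenting $\calcP$, after which $\connP$ is determined on all of $\calcP = \subalg \cdot \del p$ by the left Leibniz rule. In the quadratic case $\calcP$ is generated by $\del p$ subject to the two relations of \eqref{eq:calcP-quadratic}, namely $\EVp_{1 2} \del p = 0$ and $(\opSt_{2 3 4} - q^{-(\omega_s, \omega_s)}) p \del p = 0$. Accordingly, well-definedness reduces to the two identities
\[
\EVp_{1 2} \connP(\del p) = 0, \quad (\opSt_{2 3 4} - q^{-(\omega_s, \omega_s)}) \connP(p \del p) = 0,
\]
exactly in parallel with the proof of \cref{prop:connection-minus}, with $\opS_{1 2 3}$ replaced by $\opSt_{2 3 4}$ and the roles of $\del p$ and $\delbar p$ interchanged.

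For the first identity I would apply $\EVp_{1 2}$ to the defining formula. The contraction identity $\EVp_{1 2} \EV_{2 3} \opT_{1 2 3 4} = \EVp_{1 2} \EV_{2 3}$ from \eqref{eq:S-evaluation2} (already used in \cref{lem:vanishing-third}) turns the leading term into $q^{(\alpha_s, \alpha_s)} \metMP$, while $\EVp_{1 2} p = q^{(\omega_s, 2 \rho)}$ from \eqref{eq:relationsB} turns the second term into $q^{(\alpha_s, \alpha_s)} \metMP$ as well. The two contributions cancel, giving $\EVp_{1 2} \connP(\del p) = 0$.

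For the second identity I would first expand by the Leibniz rule, using $\diff p = \del p + \delbar p$,
\[
\connP(p \del p) = \del p \otimes \del p + \delbar p \otimes \del p + p \connP(\del p).
\]
The terms then split into two groups. The holomorphic square $\del p \otimes \del p$ and the term $p p \metMP$ arising inside $p \connP(\del p)$ should each be annihilated by $\opSt_{2 3 4} - q^{-(\omega_s, \omega_s)}$ on their own: after rewriting them with the evaluation identities \eqref{eq:evaluations-calculi} so that a factor $\del p p$ appears, this follows from the right-module relation $(\opSt_{2 3 4} - q^{-(\omega_s, \omega_s)}) \del p p = 0$ of \eqref{eq:S-action-right}, in direct analogy with the vanishing of $\delbar p \otimes \delbar p$ in \cref{prop:connection-minus}. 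The remaining mixed term $\delbar p \otimes \del p$ must then be combined with the leading part $q^{(\alpha_s, \alpha_s)} p \EV_{2 3} \opT_{1 2 3 4} \delbar p \otimes \del p$ of $p \connP(\del p)$; using the evaluation identities together with the right-module relation \eqref{eq:S-right-module} that expresses $\delbar p p$ through $\opSt_{2 3 4} p \delbar p$, these should recombine into a combination $p \delbar p + \delbar p p$ (up to an evaluation), which is annihilated by $\opSt_{2 3 4} - q^{-(\omega_s, \omega_s)}$ by the bimodule relation \eqref{eq:S-bimodule}.

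The main obstacle, and the reason the plus case is genuinely harder than the minus case, is the presence of the map $\opT_{1 2 3 4} = \opS_{1 2 3} \opSt_{2 3 4}$ in the defining formula, inherited from the right-module structure \eqref{eq:right-module} and absent in $\connM$. Verifying the second identity requires commuting the operator $\opSt_{2 3 4}$ past both the evaluations $\EV$ and the map $\opT$ while tracking the leg indices carefully; the needed commutations and recombinations are supplied by the relations $\opS_{1 2 3} \opSt_{2 3 4} = \opSt_{2 3 4} \opS_{1 2 3}$ and the braid equations of \cref{prop:S-properties}. Arranging this bookkeeping so that the mixed terms land precisely in the combination $p \delbar p + \delbar p p$ covered by \eqref{eq:S-bimodule} is where the real work lies.
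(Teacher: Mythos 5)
Your overall strategy is exactly the paper's: check the two defining relations \eqref{eq:calcM-quadratic} of $\calcP$, expand $\connP(p \del p)$ by the Leibniz rule, kill $\del p \otimes \del p$ and $p p \metMP$ separately, and recombine the two mixed terms into $\EV_{4 5} \opT_{3 4 5 6} (\delbar p p + p \delbar p) \otimes \del p$ so that \eqref{eq:S-bimodule} applies. The first identity is handled as in the paper (the contraction identity you invoke is \eqref{eq:identity-EpET}, but it is indeed the one used). However, there is one concrete gap in the part you defer to ``bookkeeping'': after the mixed terms are assembled, one must commute $\opSt_{2 3 4} - q^{-(\omega_s, \omega_s)}$ past the prefix $\EV_{4 5} \opT_{3 4 5 6}$, and this is \emph{not} supplied by the commutation relations and braid equations of \cref{prop:S-properties}, as you claim. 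Those are pure braid-group identities, and no braid-group manipulation can move $\opSt_{2 3 4}$ past an evaluation $\EV_{4 5}$ whose legs overlap with the legs $2,3,4$ on which $\opSt$ acts. The required identity is
\[
\opSt_{2 3 4}\, \EV_{4 5}\, \opT_{3 4 5 6} = \EV_{4 5}\, \opT_{3 4 5 6}\, \opSt_{2 3 4},
\]
which the paper obtains from \cref{lem:identity-StESt} (multiplying \eqref{eq:identity-StESt} by $\opS_{3 4 5}$ and using \eqref{eq:S-commutation}); the proof of \cref{lem:identity-StESt} in turn needs the evaluation--braiding compatibilities \eqref{eq:identity-ETT} and \eqref{eq:S-evaluation2}, i.e.\ genuinely categorical input beyond \cref{prop:S-properties}. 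Without this identity your plan does not close, and it is precisely the point where the plus case is harder than the minus case (where no $\opT$ prefix occurs).

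Two smaller inaccuracies, neither fatal. First, for the term $\del p \otimes \del p$: the natural rewriting $\del p \otimes \del p = \EV_{2 3}(\del p \otimes p \del p)$ exposes a factor $p \del p$, so the vanishing comes from the \emph{left} relation $(\opSt_{2 3 4} - q^{-(\omega_s, \omega_s)}) p \del p = 0$ of \eqref{eq:calcP-quadratic} rather than from the right-hand form on $\del p p$ as you state; if you insist on producing $\del p p$ as the first factor, the evaluation lands on the wrong legs and in fact gives $\EV_{4 5}(\del p p \otimes \del p) = 0$, not $\del p \otimes \del p$. The two forms are equivalent by \cref{lem:P-relations}, so this is only a matter of which rewriting actually works. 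Second, the rewriting of the bare mixed term $\delbar p \otimes \del p$ proceeds via $\delbar p \otimes \del p = \EV_{4 5} \delbar p \otimes \del p p$ and then the relation $\del p p = q^{(\alpha_s, \alpha_s)} \opT_{1 2 3 4} p \del p$ of \eqref{eq:right-module} applied in legs $3,4,5,6$, i.e.\ the $\opT$-form for $\del p p$, not the relation expressing $\delbar p p$ through $\opSt_{2 3 4} p \delbar p$ that you cite; this is what makes the prefix $\EV_{4 5} \opT_{3 4 5 6}$ match the one coming from $p\,\connP(\del p)$.
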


\begin{proof}
Recall that, in the quadratic case, $\calcP$ is generated as a left $\subalg$-module by $\del p$ with the relations $(\opSt_{2 3 4} - q^{-(\omega_s, \omega_s)}) p \del p = 0$ and $\EVp_{1 2} \del p = 0$, as described in \eqref{eq:calcM-quadratic}.
Hence to show that $\connP$ is well-defined we need to check the relations
\[
(\opSt_{2 3 4} - q^{-(\omega_s, \omega_s)}) \connP(p \del p) = 0, \quad
\EVp_{1 2} \connP(\del p) = 0.
\]

Let us begin with the second relation.
We can use the identity $\EVp_{1 2} \EV_{2 3} \opT_{1 2 3 4} = \EVp_{1 2} \EV_{2 3}$ from \eqref{eq:identity-EpET}.
Then we obtain the expression
\[
\EVp_{1 2} \EV_{2 3} \opT_{1 2 3 4} \delbar p \otimes \del p
= \EVp_{1 2} \EV_{2 3} \delbar p \otimes \del p = \metMP.
\]
Using this and $\EVp_{1 2} p = q^{(\omega_s, 2 \rho)}$ we conclude that
\[
\EVp_{1 2} \connP(\del p) = q^{(\alpha_s, \alpha_s)} \metMP - q^{(\alpha_s, \alpha_s)} \metMP = 0.
\]

Next we want to show that $(\opSt_{2 3 4} - q^{-(\omega_s, \omega_s)}) \connP(p \del p) = 0$. We have
\[
\begin{split}
\connP(p \del p) & = \diff p \otimes \del p + p \connP(\del p) \\
& = \del p \otimes \del p + \delbar p \otimes \del p + q^{(\alpha_s, \alpha_s)} p \EV_{2 3} \opT_{1 2 3 4} \delbar p \otimes \del p - q^{(\alpha_s, \alpha_s)} q^{-(\omega_s, 2 \rho)} p p \metMP.
\end{split}
\]
The terms $\del p \otimes \del p$ and $p p$ can be shown to vanish under $\opSt_{2 3 4} - q^{-(\omega_s, \omega_s)}$, exactly as in the proof of \cref{prop:connection-minus}.
Hence let us consider the term
\[
A = \delbar p \otimes \del p + q^{(\alpha_s, \alpha_s)} \EV_{4 5} \opT_{3 4 5 6} p \delbar p \otimes \del p.
\]
We want to show that it vanishes under $\opSt_{2 3 4} - q^{-(\omega_s, \omega_s)}$.
We can rewrite it using
\[
\delbar p \otimes \del p
= \EV_{4 5} \delbar p \otimes \del p p
= q^{(\alpha_s, \alpha_s)} \EV_{4 5} \opT_{3 4 5 6} \delbar p p \otimes \del p.
\]
Hence we obtain the expression
\[
A = q^{(\alpha_s, \alpha_s)} \EV_{4 5} \opT_{3 4 5 6} (\delbar p p + p \delbar p) \otimes \del p.
\]
Now consider the identity $\opSt_{2 3 4} \EV_{4 5} \opSt_{4 5 6} = \EV_{4 5} \opSt_{4 5 6} \opSt_{2 3 4}$ from \eqref{eq:identity-StESt}.
Multiplying by $\opS_{3 4 5}$ on the right and using the fact that $\opS$ and $\opSt$ commute we get
\[
\opSt_{2 3 4} \EV_{4 5} \opT_{3 4 5 6} = \EV_{4 5} \opT_{3 4 5 6} \opSt_{2 3 4}.
\]
Then we can commute $\opSt_{2 3 4}$ with $\EV_{4 5} \opT_{3 4 5 6}$ to obtain
\[
(\opSt_{2 3 4} - q^{-(\omega_s, \omega_s)}) A = q^{(\alpha_s, \alpha_s)} \EV_{4 5} \opT_{3 4 5 6} (\opSt_{2 3 4} - q^{-(\omega_s, \omega_s)}) (\delbar p p + p \delbar p) \otimes \del p.
\]
Finally, we can use the identity $(\opSt_{2 3 4} - q^{-(\omega_s, \omega_s)}) (\delbar p p + p \delbar p) = 0$ from \eqref{eq:S-bimodule} to conclude that $(\opSt_{2 3 4} - q^{-(\omega_s, \omega_s)}) A = 0$, which finishes the proof.
\end{proof}

In the following we write
\[
\conn := \connP + \connM: \Omega \to \Omega \otimes_\subalg \Omega
\]
for the direct sum of the two connections on $\calc = \calcP \oplus \calcM$.

\begin{remark}
In the classical limit, the connection $\nabla$ reduces to the Levi-Civita connection on the cotangent bundle, as can be seen from the formulae in \eqref{eq:classical-connection}.
\end{remark}

\begin{remark}
The constructions of this paper can be put in a $\CqG$-covariant setting, but we do not give the details here, as this is not our main goal.
Recall that the algebra $\subalg$ is a left $\CqG$-comodule, while the FODCs $\calcP$ and $\calcM$ are shown to be left covariant in \cite{heko}.
It is possible to show that the connections $\connP$ and $\connM$ introduced here are left $\CqG$-covariant.
This means that $\connP: \calcP \to \calc \otimes_\subalg \calcP$ (and similarly $\connM$) is a map of left $\CqG$-comodules, where $\calc \otimes_\subalg \calcP$ is given the usual structure of tensor product of left comodules.

In \cite[Theorem 4.5]{connections-irreducible} it is shown, using representation-theoretic methods, that in the case of quantum irreducible flag manifolds there exists a unique left \emph{covariant} connection for comodules belonging to a certain class.
In particular this applies to $\calcP$ and $\calcM$, hence giving (non-explicitly) the connections we have introduced here.
Moreover, the argument of \cite{connections-irreducible} can be easily adapted to give a uniqueness result for $\calcP \oplus \calcM$.
This implies that the connection $\conn$ described here must coincide with the one introduced in \cite{projective-bundle}.
\end{remark}

\subsection{Torsion}

The first property of the connection $\conn$ we want to explore is its torsion, which according to \cref{def:torsion} is the left $\subalg$-module map
\[
T_\nabla = \wedge \circ \nabla - \diff: \calc^1 \to \calc^2.
\]

\begin{proposition}
\label{prop:torsion}
We have $\Tnabla = 0$, that is $\nabla$ is torsion free.
\end{proposition}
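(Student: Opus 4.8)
The plan is to use the fact that, by \cref{def:torsion}, the torsion $\Tnabla = \wedge \circ \conn - \diff$ is a left $\subalg$-module map. It therefore suffices to verify $\Tnabla = 0$ on the two left-module generators $\del p$ and $\delbar p$ of $\calc$. Since $\conn = \connP + \connM$ respects the decomposition $\calc = \calcP \oplus \calcM$, we have $\conn(\del p) = \connP(\del p)$ and $\conn(\delbar p) = \connM(\delbar p)$, and I would treat the two generators in turn. First I would record the targets: because $\del^2 = \delbar^2 = 0$ and $\del \delbar = - \delbar \del$, we have $\diff(\delbar p) = \del \delbar p$ and $\diff(\del p) = \delbar \del p = - \del \delbar p$. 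The crucial input is the identity \eqref{eq:deldelbarp}, namely $\del \delbar p = \EV_{2 3} \del p \wedge \delbar p + \EV_{2 3} \delbar p \wedge \del p$, which expresses both targets purely as wedge products of the generators.

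For the generator $\delbar p$ I would apply $\wedge$ to the formula of \cref{prop:connection-minus}. The first term gives $\EV_{2 3} \del p \wedge \delbar p$, which already accounts for one half of \eqref{eq:deldelbarp}; the remaining task is to show that the metric-correction term $- q^{-(\omega_s, 2 \rho)} p \, \wedge(\metPM)$ reproduces exactly the other half $\EV_{2 3} \delbar p \wedge \del p$. Using $\wedge(\met) = 0$ from \cref{prop:g-symmetric}, which gives $\wedge(\metPM) = - \wedge(\metMP) = - \EVp_{1 2} \EV_{2 3} \delbar p \wedge \del p$, this reduces to a relation in $\calc^2$ asserting that $\EV_{2 3} \delbar p \wedge \del p$ equals $p$ times the fixed two-form $\wedge(\metMP)$, up to the expected scalar. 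This is the quantum analogue of the classical fact that $\sum_j \delbar p^{i j} \wedge \del p^{j l}$ is $p^{i l}$ times the Fubini-Study form, and I would establish it from the second-order relations of the Heckenberger-Kolb calculus together with the evaluation identities \eqref{eq:evaluations-calculi}.

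For the generator $\del p$ I would proceed analogously, applying $\wedge$ to the formula of \cref{prop:connection-plus} and comparing with $- \del \delbar p$ via \eqref{eq:deldelbarp}. The extra difficulty here is the factor $\opT_{1 2 3 4}$ in the first term: I would first simplify $\EV_{2 3} \opT_{1 2 3 4} \delbar p \wedge \del p$ using the naturality relations for $\EV$ and the $\opT$-identities of the same type already exploited in \cref{lem:vanishing-third} (e.g.\ the $\EV_{2 3} \opT$ and $\EVp_{1 2} \EV_{2 3} \opT$ reductions), reducing it to a combination of $\EV_{2 3} \del p \wedge \delbar p$ and $\EV_{2 3} \delbar p \wedge \del p$. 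Then the correction term $- q^{(\alpha_s, \alpha_s)} q^{-(\omega_s, 2 \rho)} p \, \wedge(\metMP)$, handled by the same $\calc^2$-factorization as above, should supply precisely the missing piece.

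The main obstacle I anticipate is exactly the control of the metric-correction terms, that is, proving the two $\calc^2$-level factorizations identifying $\EV_{2 3} \delbar p \wedge \del p$ and $\EV_{2 3} \del p \wedge \delbar p$ with left multiples of the fixed two-forms $\wedge(\metMP)$ and $\wedge(\metPM)$. This is where the higher-degree relations of the calculus and the quadratic hypothesis genuinely enter, and the $\opT$-dressing in the $\del p$ case makes the index bookkeeping considerably heavier than in the $\delbar p$ case. Once these factorizations are in hand, however, the cancellations are forced: $\wedge \conn(\delbar p) = \del \delbar p = \diff(\delbar p)$ and $\wedge \conn(\del p) = - \del \delbar p = \diff(\del p)$, so $\Tnabla$ vanishes on both generators, and hence identically by the module-map property.
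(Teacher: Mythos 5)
Your plan reproduces the paper's proof in outline: reduce to the generators since $\Tnabla$ is a left $\subalg$-module map, compare $\wedge \circ \conn$ with $\diff$ via \eqref{eq:deldelbarp}, use $\wedge(\met) = 0$ to trade $\wedge(\metPM)$ for $-\wedge(\metMP)$, and close both cases with the factorization $\EV_{2 3} \delbar p \wedge \del p = q^{-(\omega_s, 2 \rho)} p \, \wedge(\metMP)$. That last identity is exactly the wedge of \eqref{eq:identity-metMP}, which the paper proves in \cref{lem:identity-metric} at the tensor level (using the bimodule relations and the categorical identities, not degree-two relations as you suggest, but this is a minor point); so this half of your plan is sound, modulo the deferred proof.

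There is, however, a genuine gap in how you propose to handle the $\opT$-dressed term in the $\del p$ case. You want to simplify $\EV_{2 3} \opT_{1 2 3 4} \delbar p \wedge \del p$ by ``naturality relations for $\EV$ and the $\opT$-identities of the same type already exploited in \cref{lem:vanishing-third}''. Those identities, \eqref{eq:identity-ETT} and \eqref{eq:identity-EpET}, only simplify $\EV_{2 3} \opT_{3 4 5 6} \opT_{1 2 3 4}$ or $\EVp_{1 2} \EV_{2 3} \opT_{1 2 3 4}$; the single evaluation $\EV_{2 3} \opT_{1 2 3 4}$ admits no purely categorical reduction of this kind. The paper instead obtains the needed statement (\cref{lem:identity-torsion}) from the differential structure: apply $\delbar$ to the right-module relation $\del p \, p = q^{(\alpha_s, \alpha_s)} \opT_{1 2 3 4} \, p \, \del p$, and use $\del \delbar p \, p = \opT_{1 2 3 4} \, p \, \del \delbar p$ from \cref{lem:deldelbar-pp} together with \eqref{eq:deldelbarp}. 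Moreover the resulting identity,
\[
q^{(\alpha_s, \alpha_s)} \EV_{2 3} \opT_{1 2 3 4} \delbar p \wedge \del p = - \EV_{2 3} \del p \wedge \delbar p + (q^{(\alpha_s, \alpha_s)} - 1) \EV_{2 3} \delbar p \wedge \del p,
\]
has precise coefficients that are essential: the vanishing of $\Tnabla(\del p)$ is not ``forced'' by the mere existence of some combination, but by the fact that after substitution the leftover term is exactly $q^{(\alpha_s, \alpha_s)} \EV_{2 3} \delbar p \wedge \del p$, which the factorization identity then cancels against $q^{(\alpha_s, \alpha_s)} q^{-(\omega_s, 2 \rho)} p \, \wedge(\metMP)$. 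So your skeleton is the paper's, but the two supporting lemmas carry the real content of the proof, and for the harder one your proposed toolkit would not suffice as described.
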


\begin{proof}
It suffices to show that $\Tnabla$ vanishes on the generators $\del p$ and $\delbar p$, since $\Tnabla$ is a $\subalg$-module map.
For the rest of this proof we write $\kappa_{+ -} = \wedge(\metPM)$ and $\kappa_{- +} = \wedge(\metMP)$.
Moreover observe that $\kappa_{+ -} = - \kappa_{- +}$, since $\met$ is symmetric by \cref{prop:g-symmetric}.

Consider first $\Tnabla(\delbar p)$.
We have $\diff \delbar p = \del \delbar p$, since $\diff = \del + \delbar$.
Moreover we can write $\del \delbar p = \EV_{2 3} (\del p \wedge \delbar p + \delbar p \wedge \del p)$, as in \eqref{eq:deldelbarp}.
Therefore
\[
\begin{split}
\Tnabla(\delbar p) & = \EV_{2 3} \del p \wedge \delbar p - q^{-(\omega_s, 2 \rho)} p \kappa_{+ -} - \EV_{2 3} (\del p \wedge \delbar p + \delbar p \wedge \del p) \\
& = q^{-(\omega_s, 2 \rho)} p \kappa_{- +} - \EV_{2 3} \delbar p \wedge \del p.
\end{split}
\]
Next, consider the identity $\EV_{2 3} \delbar p \otimes \del p = q^{-(\omega_s, 2 \rho)} p \metMP$ from \eqref{eq:identity-metMP}.
Applying $\wedge$ to it gives $\EV_{2 3} \delbar p \wedge \del p = q^{-(\omega_s, 2 \rho)} p \kappa_{- +}$. Hence we conclude that
\[
\Tnabla(\delbar p) = q^{-(\omega_s, 2 \rho)} p \kappa_{- +} - q^{-(\omega_s, 2 \rho)} p \kappa_{- +} = 0.
\]

Now consider $\Tnabla(\del p)$. Using $\diff \del p = \delbar \del p = - \EV_{2 3} (\del p \wedge \delbar p + \delbar p \wedge \del p)$ we have
\[
\begin{split}
\Tnabla(\del p) & = q^{(\alpha_s, \alpha_s)} \EV_{2 3} \opT_{1 2 3 4} \delbar p \wedge \del p - q^{(\alpha_s, \alpha_s)} q^{-(\omega_s, 2 \rho)} p \kappa_{- +} \\
& + \EV_{2 3} (\del p \wedge \delbar p + \delbar p \wedge \del p).
\end{split}
\]
In \cref{lem:identity-torsion} we show that we have the identity
\[
q^{(\alpha_s, \alpha_s)} \EV_{2 3} \opT_{1 2 3 4} \delbar p \wedge \del p = - \EV_{2 3} \del p \wedge \delbar p + (q^{(\alpha_s, \alpha_s)} - 1) \EV_{2 3} \delbar p \wedge \del p.
\]
Plugging this in and simplifying we obtain
\[
\Tnabla(\del p) = q^{(\alpha_s, \alpha_s)} \EV_{2 3} \delbar p \wedge \del p - q^{(\alpha_s, \alpha_s)} q^{-(\omega_s, 2 \rho)} p \kappa_{- +}.
\]
Using again $\EV_{2 3} \delbar p \wedge \del p = q^{-(\omega_s, 2 \rho)} p \kappa_{- +}$, we conclude that
\[
\Tnabla(\del p) = q^{(\alpha_s, \alpha_s)} q^{-(\omega_s, 2 \rho)} p \kappa_{- +} - q^{(\alpha_s, \alpha_s)} q^{-(\omega_s, 2 \rho)} p \kappa_{- +} = 0. \qedhere
\]
\end{proof}

\subsection{Cotorsion}

From \cref{def:cotorsion}, the cotorsion corresponding to a connection $\nabla: \Omega^1 \to \Omega^1 \otimes_A \Omega^1$ and a quantum metric $\met$ is the element $\coTnabla \in \Omega^2 \otimes_A \Omega^1$ given by
\[
\coTnabla = (\diff \otimes \id - (\wedge \otimes \id) \circ (\id \otimes \nabla)) g.
\]

\begin{proposition}
\label{prop:cotorsion}
We have $\coTnabla = 0$, that is $\nabla$ is cotorsion free.
\end{proposition}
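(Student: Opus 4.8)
The plan is to first dispose of the term $(\diff \otimes \id)\met$ using the Kähler property, and then to show that the remaining term vanishes by a bidegree argument together with two short direct computations. By \cref{prop:metric-kahler} we have $(\diff \otimes \id)\met = 0$, so the formula in \cref{def:cotorsion} reduces to
\[
\coTnabla = -(\wedge \otimes \id)(\id \otimes \conn)\met .
\]
Writing $\met = \metPM + \metMP$ as in \eqref{eq:metric}, I would apply $\conn$ to the second leg of each summand: since the second leg of $\metPM$ lies in $\calcM$ and that of $\metMP$ in $\calcP$, this amounts to applying $\connM$ and $\connP$ respectively.

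The key structural observation is a bidegree separation. Inspecting \cref{prop:connection-minus,prop:connection-plus}, the first tensor leg of $\connM(\delbar p)$ always lies in $\calcP$ and that of $\connP(\del p)$ always lies in $\calcM$ (the maps $\EV$, $\opT$ only reshuffle indices, not the holomorphic type of the forms). Hence $(\id \otimes \connM)\metPM$ lands in $\calcP \otimes \calcP \otimes \calcM$ and $(\id \otimes \connP)\metMP$ in $\calcM \otimes \calcM \otimes \calcP$. After applying $\wedge \otimes \id$, the first contribution is built from wedges $\del p \wedge \del p$ of two holomorphic one-forms and the second from wedges $\delbar p \wedge \delbar p$ of two antiholomorphic ones; these lie in the distinct $(2,0)$ and $(0,2)$ components of $\calc^2 \otimes_\subalg \calc$. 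Therefore it suffices to show that each contribution vanishes separately; write $A$ for the first and $B$ for the second.

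For $A$ I would substitute $\connM(\delbar p) = \EV_{2 3}\del p \otimes \delbar p - q^{-(\omega_s, 2\rho)} p\,\metPM$ and treat the two terms. The leading term produces, after $\wedge \otimes \id$, an expression in which the two holomorphic legs are contracted exactly as in $\EV_{2 3}\del p \otimes \del p$; since this element vanishes over $\subalg$ (as already used in the proof of \cref{prop:metric-kahler}), its wedge $\EV_{2 3}\del p \wedge \del p$ vanishes and this term is zero. The subleading term $p\,\metPM$ contributes, after moving the central element $p$ across the wedge, a factor $\EVp_{1 2}\EV_{2 3}\del p\, p$; using $\EV_{2 3}\del p\, p = \del p$ from \eqref{eq:evaluations-calculi} and then $\EVp_{1 2}\del p = 0$ from \eqref{eq:calcP-quadratic}, this also vanishes. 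Hence $A = 0$.

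For $B$ the strategy is identical, using $\connP(\del p) = q^{(\alpha_s,\alpha_s)}\EV_{2 3}\opT_{1 2 3 4}\delbar p \otimes \del p - q^{(\alpha_s,\alpha_s)}q^{-(\omega_s,2\rho)}p\,\metMP$. The subleading term is in fact simpler, since it yields $\EVp_{1 2}\EV_{2 3}\delbar p\, p$ and $\EV_{2 3}\delbar p\, p = 0$ directly from \eqref{eq:evaluations-calculi}. The main obstacle is the leading term: here the presence of $\opT_{1 2 3 4}$ means one must first commute the evaluations past $\opT$ — using identities of the type $\EVp_{1 2}\EV_{2 3}\opT_{1 2 3 4} = \EVp_{1 2}\EV_{2 3}$ together with the naturality relations \eqref{eq:evaluations} and \eqref{eq:coevaluations} — in order to bring the two antiholomorphic legs into the configuration $\EV_{2 3}\delbar p \wedge \delbar p$, which vanishes for the same reason as above. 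This bookkeeping with $\opT$ is the only genuinely delicate part; everything else parallels the computation of $A$.
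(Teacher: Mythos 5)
Your reduction via \cref{prop:metric-kahler}, the bidegree splitting, and your treatment of three of the four pieces — the leading term of $A$ and both subleading terms — are correct and agree with the paper's own proof of \cref{prop:cotorsion} (one small inaccuracy: $p$ is not central in $\subalg$; moving it across $\otimes_\subalg$ or the wedge only uses the balanced bimodule structure, no centrality). The genuine gap is exactly the step you dismiss as "bookkeeping": the leading term of $B$, which after applying $\wedge \otimes \id$ reads
\[
-\,q^{(\alpha_s,\alpha_s)}\,\EVp_{1 2}\EV_{2 3}\EV_{4 5}\opT_{3 4 5 6}\,\delbar p \wedge \delbar p \otimes \del p .
\]
The identities you invoke do not apply here as stated: $\EVp_{1 2}\EV_{2 3}\opT_{1 2 3 4} = \EVp_{1 2}\EV_{2 3}$ requires $\opT$ on legs $1\,2\,3\,4$, while the $\opT$ above sits on legs $3\,4\,5\,6$, straddling the second wedge factor and $\del p$; and the only available identity moving such a $\opT$ past an evaluation, $\EV_{2 3}\opT_{3 4 5 6}\opT_{1 2 3 4} = \opT_{1 2 3 4}\EV_{4 5}$ from \eqref{eq:identity-ETT}, needs \emph{two} copies of $\opT$. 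More fundamentally, no chain of purely operator-level identities (evaluations, braidings, naturality) can suffice: such identities cannot distinguish $\delbar p \wedge \delbar p$ from $\delbar p \otimes \delbar p$, and the un-wedged analogue of your claim is not true — in the paper's proof of \cref{thm:strong-levi-civita} precisely the tensor $\EVp_{1 2}\EV_{2 3}\EV_{4 5}\opT_{3 4 5 6}\,\delbar p \otimes \delbar p \otimes \del p$ survives and is killed only after applying $\sigma \otimes \id$, that is, only after an extra factor $q^{-(\alpha_s,\alpha_s)}\opT_{1 2 3 4}$ is inserted.

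The missing idea is the one place where the wedge structure genuinely enters: apply $\delbar$ to the right-module relation $\delbar p\, p = q^{-(\alpha_s,\alpha_s)}\opT_{1 2 3 4}\, p\,\delbar p$ from \eqref{eq:right-module} to obtain
\[
-\,\delbar p \wedge \delbar p = q^{-(\alpha_s,\alpha_s)}\,\opT_{1 2 3 4}\,\delbar p \wedge \delbar p ,
\]
a relation satisfied by the wedge pair but not by the tensor pair. Substituting this for $\delbar p \wedge \delbar p$ inserts the second $\opT_{1 2 3 4}$ for free and absorbs the prefactor $-q^{(\alpha_s,\alpha_s)}$; then, since $\EV_{2 3}\EV_{4 5} = \EV_{2 3}\EV_{2 3}$, the identity \eqref{eq:identity-ETT} converts $\EV_{2 3}\opT_{3 4 5 6}\opT_{1 2 3 4}$ into $\opT_{1 2 3 4}\EV_{4 5}$, the identity \eqref{eq:identity-EpET} removes the remaining $\opT_{1 2 3 4}$, and you land on $\EVp_{1 2}\EV_{2 3}\EV_{2 3}\,\delbar p \wedge \delbar p \otimes \del p = 0$, which is the configuration you were aiming for. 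With that step supplied, your argument coincides with the paper's proof.
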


\begin{proof}
Using $(\diff \otimes \id) g = 0$ from \cref{prop:metric-kahler}, we can write the cotorsion as
\[
\coTnabla = - (\wedge \otimes \id) \circ (\id \otimes \nabla) \met.
\]
First we compute $(\id \otimes \nabla) \met$. We have
\[
\begin{split}
(\id \otimes \nabla) \met & = \EVp_{1 2} \EV_{2 3} (\del p \otimes \nabla(\delbar p) + \delbar p \otimes \nabla(\del p)) \\
& = \EVp_{1 2} \EV_{2 3} (\del p \otimes \EV_{2 3} (\del p \otimes \delbar p) - q^{-(\omega_s, 2 \rho)} \del p \otimes p \metMP) \\
& + q^{(\alpha_s, \alpha_s)} \EVp_{1 2} \EV_{2 3} (\delbar p \otimes \EV_{2 3} \opT_{1 2 3 4} (\delbar p \otimes \del p) - q^{-(\omega_s, 2 \rho)} \delbar p \otimes p \metPM).
\end{split}
\]
It is easy to show that $\EVp_{1 2} \EV_{2 3} \del p \otimes p \metMP = 0$ and $\EVp_{1 2} \EV_{2 3} \delbar p \otimes p \metPM = 0$ by using the relations in \eqref{eq:evaluations-calculi} (recall that the tensor product is over $\subalg$).
Hence we have
\[
(\id \otimes \nabla) \met = \EVp_{1 2} \EV_{2 3} \EV_{4 5} \del p \otimes \del p \otimes \delbar p + q^{(\alpha_s, \alpha_s)} \EVp_{1 2} \EV_{2 3} \EV_{4 5} \opT_{3 4 5 6} \delbar p \otimes \delbar p \otimes \del p.
\]
The first term vanishes using $\EV_{2 3} \del p \otimes \del p = 0$, since it can be rewritten as
\[
\EVp_{1 2} \EV_{2 3} \EV_{2 3} \del p \otimes \del p \otimes \delbar p = 0.
\]
Then applying $- (\wedge \otimes \id)$ to $(\id \otimes \nabla) \met$ we are left with
\[
\coTnabla = - q^{(\alpha_s, \alpha_s)} \EVp_{1 2} \EV_{2 3} \EV_{4 5} \opT_{3 4 5 6} \delbar p \wedge \delbar p \otimes \del p.
\]

We now show that this term vanishes.
First, apply $\delbar$ to $\delbar p p = q^{-(\alpha_s, \alpha_s)} \opT_{1 2 3 4} p \delbar p$ to get the identity $- \delbar p \wedge \delbar p = q^{-(\alpha_s, \alpha_s)} \opT_{1 2 3 4} \delbar p \wedge \delbar p$. Using this we rewrite
\[
\coTnabla = \EVp_{1 2} \EV_{2 3} \EV_{4 5} \opT_{3 4 5 6} \opT_{1 2 3 4} \delbar p \wedge \delbar p \otimes \del p.
\]
Noting that $\EV_{2 3} \EV_{4 5} = \EV_{2 3} \EV_{2 3}$ allows us to use the identity $\EV_{2 3} \opT_{3 4 5 6} \opT_{1 2 3 4} = \opT_{1 2 3 4} \EV_{4 5}$ from \eqref{eq:identity-ETT}. Then the cotorsion takes the form
\[
\coTnabla = \EVp_{1 2} \EV_{2 3} \opT_{1 2 3 4} \EV_{4 5} \delbar p \wedge \delbar p \otimes \del p.
\]
Next, using $\EVp_{1 2} \EV_{2 3} \opT_{1 2 3 4} = \EVp_{1 2} \EV_{2 3}$ from \eqref{eq:identity-EpET} we have
\[
\coTnabla
= \EVp_{1 2} \EV_{2 3} \EV_{4 5} \delbar p \wedge \delbar p \otimes \del p
= \EVp_{1 2} \EV_{2 3} \EV_{2 3} \delbar p \wedge \delbar p \otimes \del p.
\]
Finally this expression vanishes, since $\EV_{2 3} \delbar p \wedge \delbar p = 0$.
\end{proof}

\subsection{Levi-Civita connection}

Summarizing the results obtained so far, we obtain the following theorem, which is the main result of this section.

\begin{theorem}
\label{thm:levi-civita}
The connection $\nabla: \calc \to \calc \otimes_\subalg \calc$ is a weak quantum Levi-Civita connection with respect to the quantum metric $\met$.
\end{theorem}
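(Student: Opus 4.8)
The plan is to assemble the theorem directly from the two properties established in the preceding subsections, since by \cref{def:levi-civita} a weak quantum Levi-Civita connection is by definition a connection on $\calc$ that is torsion free and cotorsion free with respect to a quantum metric. All the substantive work has therefore already been carried out, and the theorem is a summary.

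First I would recall that $\met \in \calc \otimes_\subalg \calc$ is a quantum metric in the sense of \cref{def:quantum-metric}, as shown in \cref{thm:quantum-metric}; this is precisely what makes the cotorsion $\coTnabla$ a well-defined element of $\calc^2 \otimes_\subalg \calc$. Then I would invoke \cref{prop:torsion}, which gives $\Tnabla = \wedge \circ \conn - \diff = 0$, so that $\conn$ is torsion free, and \cref{prop:cotorsion}, which gives $\coTnabla = 0$, so that $\conn$ is cotorsion free with respect to $\met$. Combining these two vanishing statements with \cref{def:levi-civita} immediately yields that $\conn$ is a weak quantum Levi-Civita connection with respect to $\met$.

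Since the statement merely collects the previous results, there is no genuine obstacle at this final stage. The real content lies in the two propositions being combined. For torsion freeness, the delicate point is the reduction of $\Tnabla(\del p)$ by means of the identity of \cref{lem:identity-torsion} for $q^{(\alpha_s, \alpha_s)} \EV_{2 3} \opT_{1 2 3 4} \delbar p \wedge \del p$, together with the symmetry of $\met$ from \cref{prop:g-symmetric} (which gives $\wedge(\metPM) = -\wedge(\metMP)$). For cotorsion freeness, the key steps are using $(\diff \otimes \id) \met = 0$ from \cref{prop:metric-kahler} to reduce $\coTnabla$ to $-(\wedge \otimes \id)(\id \otimes \conn)\met$, and then exploiting the calculus relations, in particular $\EV_{2 3} \delbar p \wedge \delbar p = 0$ after commuting the evaluation maps past $\opT$ via \eqref{eq:identity-ETT} and \eqref{eq:identity-EpET}, to make the remaining term vanish.
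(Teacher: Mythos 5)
Your proposal is correct and follows exactly the paper's own argument: the theorem is proved by combining \cref{prop:torsion} and \cref{prop:cotorsion} with \cref{def:levi-civita}. Your additional remarks correctly locate where the substantive work lies (the identity of \cref{lem:identity-torsion} for the torsion, and \cref{prop:metric-kahler} together with the evaluation identities for the cotorsion).
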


\begin{proof}
According to \cref{def:levi-civita}, this means that $\nabla$ is torsion free and cotorsion free (the latter involves $\met$).
This is what we have proven in \cref{prop:torsion} and \cref{prop:cotorsion}.
\end{proof}

In view of these properties, and the fact that it reduces to the Levi-Civita connection on the cotangent bundle in the classical limit, it seems appropriate to consider $\nabla$ as a quantum analogue of the Levi-Civita connection for the quantum projective spaces.

Having the quantum metric $\met$ and the weak Levi-Civita connection $\nabla$, one can investigate further aspects of quantum Riemannian geometry in the sense of \cite{quantum-book}.
These include the computation of the Riemann tensor and an appropriately defined Ricci tensor, for instance.
For the case of the quantum 2-sphere, which corresponds to the easiest case of a quantum projective space, such computations have been performed in \cite{majid-sphere}.
An important result is that an analogue of the Einstein condition holds, that is the Ricci tensor is proportional to the quantum metric.
We conjecture that this will also be the case for general quantum projective spaces, and we plan to investigate this aspect in future research.

We can also ask for the stronger version of the property of compatibility with the metric, as opposed to the cotorsion free condition.
We investigate this in the next section.

\section{Bimodule connections and metric compatibility}
\label{sec:bimodule-connections}

In this section we show that the connections $\connP$ and $\connM$ are bimodule connections.
Then we use this fact to show that the connection $\conn$ satisfies the quantum metric compatibility $\conn \met = 0$, which means that $\nabla$ is a quantum Levi-Civita connection.

\subsection{Bimodule connections}

We investigate whether $\conn: \calc \to \calc \otimes_\subalg \calc$ is a bimodule connection by studying its components $\connP$ and $\connM$.
Recall from \cref{def:bimodule-connection} that this requires the existence of a $\subalg$-bimodule map  $\sigma: \calc \otimes_\subalg \calc \to \calc \otimes_\subalg \calc$ such that
\[
\nabla(\omega b) = \sigma(\omega \otimes \diff b) + \nabla(\omega) b, \quad \omega \in \calc, \ b \in \subalg.
\]

We begin by obtaining a useful expression for $\connM(\delbar p p)$.

\begin{lemma}
\label{lem:connM-bimodule}
We have $\connM(\delbar p p) = \sigmaMM + \sigmaMP + \connM(\delbar p) p$, where
\[
\begin{split}
\sigmaMM & = q^{-(\alpha_s, \alpha_s)} \opT_{1 2 3 4} \delbar p \otimes \delbar p \\
\sigmaMP & = q^{2 (\omega_s, \omega_s) - 2 (\alpha_s, \alpha_s)} \opS_{1 2 3}^{-1} \opSt_{2 3 4} \del p \otimes \delbar p - (q^{-(\alpha_s, \alpha_s)} - 1) q^{-(\omega_s, 2 \rho)} p \metPM p.
\end{split}
\]
\end{lemma}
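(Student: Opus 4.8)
The plan is to compute $\connM(\delbar p p)$ from scratch and match it against the claimed right-hand side. Since $\connM$ is only a left connection, I cannot apply the Leibniz rule to the right product $\delbar p \cdot p$ directly; instead I first rewrite it as a left-module expression using the right $\subalg$-module relation $\delbar p\, p = q^{-(\alpha_s, \alpha_s)} \opT_{1 2 3 4} p \delbar p$ from \eqref{eq:right-module}. The entries of $\opT_{1 2 3 4}$ are scalars acting on the index slots, so $\connM$ commutes with them, and the left Leibniz rule together with $\diff p = \del p + \delbar p$ gives
\[
\connM(\delbar p p) = q^{-(\alpha_s, \alpha_s)} \opT_{1 2 3 4} \delbar p \otimes \delbar p + q^{-(\alpha_s, \alpha_s)} \opT_{1 2 3 4} \del p \otimes \delbar p + q^{-(\alpha_s, \alpha_s)} \opT_{1 2 3 4} p \connM(\delbar p).
\]
The first summand is already $\sigmaMM$, so this term is disposed of immediately. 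It then remains to show that the sum of the last two summands, minus the right-action term $\connM(\delbar p) p$, equals $\sigmaMP$.

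For the middle summand I would use the factorization $\opT_{1 2 3 4} = \opS_{1 2 3} \opSt_{2 3 4}$ and feed $\opS_{1 2 3}$ into the Hecke-type quadratic relation \eqref{eq:S-quadratic1}. This replaces $\opS_{1 2 3}$ by a multiple of $\opS_{1 2 3}^{-1}$ plus a scalar; the $\opS_{1 2 3}^{-1}$ contribution is precisely $q^{2 (\omega_s, \omega_s) - 2 (\alpha_s, \alpha_s)} \opS_{1 2 3}^{-1} \opSt_{2 3 4} \del p \otimes \delbar p$, the first term of $\sigmaMP$, while the scalar contribution produces a remainder proportional to $\opSt_{2 3 4} \del p \otimes \delbar p$. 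The appearance of the exponent $2(\omega_s,\omega_s) - 2(\alpha_s,\alpha_s)$ in $\sigmaMP$ is thus accounted for entirely by this use of the quadratic relation.

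The remaining work is to reconcile $q^{-(\alpha_s, \alpha_s)} \opT_{1 2 3 4} p \connM(\delbar p)$ with $\connM(\delbar p) p$. I would expand $\connM(\delbar p) = \EV_{2 3} \del p \otimes \delbar p - q^{-(\omega_s, 2 \rho)} p \metPM$ in both, transport the trailing $p$ in $\connM(\delbar p) p$ through the one-forms using \eqref{eq:S-right-module}, and move the leading $p$ inside $\opT_{1 2 3 4} p \connM(\delbar p)$ using the commutation relations \eqref{eq:S-commutation} between $\opS$ and $\opSt$ together with the evaluation identities \eqref{eq:evaluations-calculi}. The two $\metPM$-contributions should be matched using the centrality of $\metPM$ (from the corollary to \cref{thm:quantum-metric}), which lets one convert between $p p \metPM$ and $p \metPM p$ and so produce the coefficient $-(q^{-(\alpha_s, \alpha_s)} - 1) q^{-(\omega_s, 2 \rho)}$ multiplying $p \metPM p$, while the non-metric remainders should cancel against the leftover $\opSt_{2 3 4} \del p \otimes \delbar p$ term from the previous paragraph.

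I expect this last reconciliation to be the main obstacle. The difficulty is not conceptual but combinatorial: every step involves re-indexing the leg positions as $p$ is carried across $\connM(\delbar p)$, and one must apply the right-module relations and the braid and evaluation identities in exactly the right order so that the scalar coefficients line up. The quadratic relation and the centrality of $\metPM$ are the two structural facts that force the specific numerical coefficients appearing in $\sigmaMP$; verifying that nothing else survives is the heart of the computation.
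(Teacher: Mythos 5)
Your plan follows the paper's proof essentially step for step: rewrite $\delbar p\, p$ via the right-module relation, apply the left Leibniz rule, peel off $\sigmaMM$, and use the Hecke relation \eqref{eq:S-quadratic1} on the $\opS_{1 2 3}$ factor of $\opT_{1 2 3 4}$ to produce the $\opS_{1 2 3}^{-1}\opSt_{2 3 4}$ term of $\sigmaMP$. The one ingredient your cited toolkit does not supply for the final reconciliation is the naturality identity $\opT_{1 2 3 4}\EV_{4 5} = \EV_{2 3}\opT_{3 4 5 6}\opT_{1 2 3 4}$ from \eqref{eq:identity-ETT}: the commutation relations \eqref{eq:S-commutation} alone will not move $\opT_{1 2 3 4}$ past the evaluation arising from $p\,\connM(\delbar p)$, whereas \eqref{eq:identity-ETT} (together with $\opT_{1 2 3 4}\, p\, p = p\, p$ and centrality of $\metPM$) converts $q^{-(\alpha_s,\alpha_s)}\opT_{1 2 3 4}\EV_{4 5}\, p\,\del p\otimes\delbar p$ directly into $q^{-(\alpha_s,\alpha_s)}\EV_{2 3}\,\del p\otimes\delbar p\, p$, after which the surplus $(q^{-(\alpha_s,\alpha_s)}-1)$ multiples of $\EV_{2 3}\,\del p\otimes\delbar p\, p$ and of $q^{-(\omega_s,2\rho)}\, p\,\metPM\, p$ are exactly what cancel the leftover $\opSt_{2 3 4}\,\del p\otimes\delbar p$ remainder and produce the stated coefficient in $\sigmaMP$.
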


\begin{proof}
We have $\connM(\delbar p p) = q^{-(\alpha_s, \alpha_s)} \opT_{1 2 3 4} \connM(p \delbar p)$ by \eqref{eq:right-module}.
Then we compute
\[
\begin{split}
\connM(\delbar p p)
& = q^{-(\alpha_s, \alpha_s)} \opT_{1 2 3 4} \diff p \otimes \delbar p + q^{-(\alpha_s, \alpha_s)} \opT_{1 2 3 4} p \connM(\delbar p) \\
& = q^{-(\alpha_s, \alpha_s)} \opT_{1 2 3 4} \delbar p \otimes \delbar p + q^{-(\alpha_s, \alpha_s)} \opT_{1 2 3 4} \del p \otimes \delbar p \\
& + q^{-(\alpha_s, \alpha_s)} \opT_{1 2 3 4} \EV_{4 5} p \del p \otimes \delbar p - q^{-(\alpha_s, \alpha_s)} q^{-(\omega_s, 2 \rho)} \opT_{1 2 3 4} p p \metPM.
\end{split}
\]
We have $\opT_{1 2 3 4} \EV_{4 5} = \EV_{2 3} \opT_{3 4 5 6} \opT_{1 2 3 4}$ by \eqref{eq:identity-ETT}. Then
\[
\opT_{1 2 3 4} \EV_{4 5} p \del p \otimes \delbar p = \EV_{4 5} \opT_{3 4 5 6} \opT_{1 2 3 4} p \del p \otimes \delbar p = \EV_{2 3} \del p \otimes \delbar p p.
\]
Also using the analogue of \cref{lem:P-relations} for $p p$ we have
\[
\opT_{1 2 3 4} p p = \opS_{1 2 3} \opSt_{2 3 4} p p = q^{-(\omega_s, \omega_s)} \opS_{1 2 3} p p = p p.
\]
Using these relations and $p \metPM = \metPM p$ we obtain
\[
\begin{split}
\connM(\delbar p p)
& = q^{-(\alpha_s, \alpha_s)} \opT_{1 2 3 4} \delbar p \otimes \delbar p + q^{-(\alpha_s, \alpha_s)} \opT_{1 2 3 4} \del p \otimes \delbar p \\
& + q^{-(\alpha_s, \alpha_s)} \EV_{2 3} \del p \otimes \delbar p p - q^{-(\alpha_s, \alpha_s)} q^{-(\omega_s, 2 \rho)} p \metPM p.
\end{split}
\]
The second line coincides with $q^{-(\alpha_s, \alpha_s)} \connM(\del p) p$. Then we can write
\[
\connM(\delbar p p) = \sigmaMM + \sigmaMP + \connM(\delbar p) p,
\]
where we define $\sigmaMM = q^{-(\alpha_s, \alpha_s)} \opT_{1 2 3 4} \delbar p \otimes \delbar p$ and
\[
\begin{split}
\sigmaMP & = q^{-(\alpha_s, \alpha_s)} \opT_{1 2 3 4} \del p \otimes \delbar p + (q^{-(\alpha_s, \alpha_s)} - 1) \EV_{2 3} \del p \otimes \delbar p p \\
& - (q^{-(\alpha_s, \alpha_s)} - 1) q^{-(\omega_s, 2 \rho)} p \metPM p.
\end{split}
\]

Now we rewrite $\sigmaMP$ in a more convenient form.
In the quadratic case we have $\opS_{1 2 3} = q^{2 (\omega_s, \omega_s) - (\alpha_s, \alpha_s)} \opS_{1 2 3}^{-1} + q^{(\omega_s, \omega_s)} (1 - q^{-(\alpha_s, \alpha_s)})$ from \eqref{eq:S-quadratic1}. Then we get
\[
\opT_{1 2 3 4} \del p \otimes \delbar p
= q^{2 (\omega_s, \omega_s) - (\alpha_s, \alpha_s)} \opS_{1 2 3}^{-1} \opSt_{2 3 4} \del p \otimes \delbar p + q^{(\omega_s, \omega_s)} (1 - q^{-(\alpha_s, \alpha_s)}) \opSt_{2 3 4} \del p \otimes \delbar p.
\]
Using \eqref{eq:evaluations-calculi} and \eqref{eq:S-right-module} we can rewrite
\[
\begin{split}
\opSt_{2 3 4} \del p \otimes \delbar p
& = \opSt_{2 3 4} \EV_{2 3} \del p p \otimes \delbar p
= \EV_{2 3} \opSt_{4 5 6} \del p \otimes p \delbar p \\
& = q^{(\alpha_s, \alpha_s) - (\omega_s, \omega_s)} \EV_{2 3} \del p \otimes \delbar p p.
\end{split}
\]
Hence we get
\[
\opT_{1 2 3 4} \del p \otimes \delbar p
= q^{2 (\omega_s, \omega_s) - (\alpha_s, \alpha_s)} \opS_{1 2 3}^{-1} \opSt_{2 3 4} \del p \otimes \delbar p + (1 - q^{-(\alpha_s, \alpha_s)}) q^{(\alpha_s, \alpha_s)} \EV_{2 3} \del p \otimes \delbar p p.
\]
Finally plugging back into $\sigmaMP$ gives the expression
\[
\sigmaMP = q^{2 (\omega_s, \omega_s) - 2 (\alpha_s, \alpha_s)} \opS_{1 2 3}^{-1} \opSt_{2 3 4} \del p \otimes \delbar p - (q^{-(\alpha_s, \alpha_s)} - 1) q^{-(\omega_s, 2 \rho)} p p \metPM.
\]
This gives the result as claimed.
\end{proof}

We proceed in the same way for $\connP$, which is a bit more involved.

\begin{lemma}
\label{lem:connP-bimodule}
We have $\connP(\del p p) = \sigmaPP + \sigmaPM + \connP(\del p) p$, where
\[
\begin{split}
\sigmaPP & = q^{(\alpha_s, \alpha_s)} \opT_{1 2 3 4} \del p \otimes \del p, \\
\sigmaPM & = q^{2 (\alpha_s, \alpha_s) - 2 (\omega_s, \omega_s)} \opS_{1 2 3} \opSt_{2 3 4}^{-1} \delbar p \otimes \del p - (q^{(\alpha_s, \alpha_s)} - 1) q^{(\alpha_s, \alpha_s)} q^{-(\omega_s, 2 \rho)} p \metMP p.
\end{split}
\]
\end{lemma}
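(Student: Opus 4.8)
The plan is to mirror the computation of \cref{lem:connM-bimodule}, exchanging the roles of $\opS$ and $\opSt$ (and of $\del p$ and $\delbar p$) throughout. First I would use the right $\subalg$-module relation $\del p p = q^{(\alpha_s, \alpha_s)} \opT_{1 2 3 4} p \del p$ from \eqref{eq:right-module} to reduce to $\connP(\del p p) = q^{(\alpha_s, \alpha_s)} \opT_{1 2 3 4} \connP(p \del p)$, and then apply the Leibniz rule $\connP(p \del p) = \diff p \otimes \del p + p \connP(\del p)$. Splitting $\diff p = \del p + \delbar p$, the holomorphic piece $q^{(\alpha_s, \alpha_s)} \opT_{1 2 3 4} \del p \otimes \del p$ is immediately recognized as $\sigmaPP$, leaving the $\delbar p \otimes \del p$ term together with the two contributions coming from $p \connP(\del p)$ to be organized into $\sigmaPM$ plus $\connP(\del p) p$.

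For this middle step I would substitute the explicit formula for $\connP(\del p)$ and shift all indices by the leading factor $p$, so that $p \connP(\del p)$ produces an evaluation term $q^{(\alpha_s, \alpha_s)} \EV_{4 5} \opT_{3 4 5 6} p \delbar p \otimes \del p$ and a metric term $-q^{(\alpha_s, \alpha_s)} q^{-(\omega_s, 2 \rho)} p p \metMP$. The evaluation term is handled with the $\opT$--$\EV$ commutation identity \eqref{eq:identity-ETT}, which lets one pull $\opT_{1 2 3 4}$ through $\EV_{4 5}$; for the metric term I would use the analogue of \cref{lem:P-relations} for $p p$, namely $\opT_{1 2 3 4} p p = \opS_{1 2 3} \opSt_{2 3 4} p p = p p$, together with the centrality of $\metMP$ noted after \cref{thm:quantum-metric} to write $p p \metMP = p \metMP p$. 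Collecting everything produces a raw expression for $\sigmaPM$ built out of $\opT_{1 2 3 4} \delbar p \otimes \del p$, an $\EV \del p \otimes \delbar p p$-type term, and a $p \metMP p$ term.

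The final rewriting is where the $\opS \leftrightarrow \opSt$ duality with the $\connM$ case becomes explicit. Whereas \cref{lem:connM-bimodule} applies the quadratic relation \eqref{eq:S-quadratic1} to the factor $\opS_{1 2 3}$ inside $\opT_{1 2 3 4} = \opS_{1 2 3} \opSt_{2 3 4}$, here I would instead apply \eqref{eq:S-quadratic2}, $\opSt_{2 3 4} = q^{(\alpha_s, \alpha_s) - 2 (\omega_s, \omega_s)} \opSt_{2 3 4}^{-1} + q^{-(\omega_s, \omega_s)}(1 - q^{(\alpha_s, \alpha_s)})$, to the factor $\opSt_{2 3 4}$. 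This is precisely what creates the term $q^{2 (\alpha_s, \alpha_s) - 2 (\omega_s, \omega_s)} \opS_{1 2 3} \opSt_{2 3 4}^{-1} \delbar p \otimes \del p$ of the stated answer. The residual piece (proportional to $1 - q^{(\alpha_s, \alpha_s)}$) I would convert back into a $p \metMP p$ contribution using \eqref{eq:evaluations-calculi} to reinsert the contracted $p$ and \eqref{eq:S-right-module} to move $\opSt$ across, exactly dual to the manipulation $\opSt_{2 3 4} \del p \otimes \delbar p = q^{(\alpha_s, \alpha_s) - (\omega_s, \omega_s)} \EV_{2 3} \del p \otimes \delbar p p$ used in the proof of \cref{lem:connM-bimodule}.

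I expect the main obstacle to be bookkeeping rather than conceptual. Unlike $\connM(\delbar p)$, the first term of $\connP(\del p)$ already carries a factor $\opT_{1 2 3 4}$, so after the shift the middle step involves the composite $\opT_{1 2 3 4} \EV_{4 5} \opT_{3 4 5 6}$ of two $\opT$'s separated by an evaluation; applying \eqref{eq:identity-ETT} yields a triple product of $\opT$'s that must be collapsed using the commutation relations \eqref{eq:S-commutation} and the braid equations \eqref{eq:S-braid-equation} before a single clean $\opSt_{2 3 4}$ is exposed for the quadratic relation. Keeping the powers of $q$ and the leg labels aligned across these repeated substitutions is the delicate part; once the composite-$\opT$ term is reduced, the remainder follows the $\connM$ template verbatim.
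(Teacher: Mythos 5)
Your overall route is the paper's: reduce via $\del p\, p = q^{(\alpha_s,\alpha_s)}\opT_{1 2 3 4}\, p\, \del p$, apply the Leibniz rule, handle the composite $\opT_{1 2 3 4}\EV_{4 5}\opT_{3 4 5 6}$ with \eqref{eq:identity-ETT} and the braid equation for $\opT$, use $\opT_{1 2 3 4} p p = p p$ and centrality of $\metMP$ for the metric term, and finish by applying the quadratic relation \eqref{eq:S-quadratic2} to the factor $\opSt_{2 3 4}$ inside $\opT_{1 2 3 4}$. Up to that point the proposal matches the paper step for step, including the correct identification of where the extra braid-equation work enters.

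The one step that would fail as described is your treatment of the residual piece $q^{-(\omega_s,\omega_s)}(1-q^{(\alpha_s,\alpha_s)})\opS_{1 2 3}\delbar p\otimes\del p$ left over by \eqref{eq:S-quadratic2}. You propose to convert it ``back into a $p\metMP p$ contribution'' by a manipulation ``exactly dual'' to $\opSt_{2 3 4}\del p\otimes\delbar p = q^{(\alpha_s,\alpha_s)-(\omega_s,\omega_s)}\EV_{2 3}\del p\otimes\delbar p\, p$. Neither half of this is accurate. First, the situation is not dual: in the $\connM$ case the operator ends up acting entirely inside one tensor factor, so a single application of \eqref{eq:S-right-module} suffices; here $\opS_{1 2 3}$ straddles the two factors of $\delbar p\otimes\del p$, and the paper needs the longer chain $\delbar p\otimes\del p = \EV_{4 5}\delbar p\otimes\del p\, p$, then \eqref{eq:S-right-module}, then $\EV_{4 5}=\EV_{2 3}\opSt_{2 3 4}\opS_{3 4 5}$ from \eqref{eq:S-evaluation2} together with the braid equation for $\opS$, arriving at $\opS_{1 2 3}\delbar p\otimes\del p = q^{(\omega_s,\omega_s)}\EV_{2 3}\opT_{1 2 3 4}\delbar p\otimes\del p\, p$. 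Second, the outcome is not a $p\metMP p$ term: by \eqref{eq:identity-metMP} one has $p\metMP p = q^{(\omega_s,2\rho)}\EV_{2 3}\delbar p\otimes\del p\, p$, which carries no $\opT_{1 2 3 4}$, whereas the residual produces $\EV_{2 3}\opT_{1 2 3 4}\delbar p\otimes\del p\, p$; these are not proportional. What actually happens is that the converted residual, carrying the coefficient $q^{(\alpha_s,\alpha_s)}(1-q^{(\alpha_s,\alpha_s)})$, exactly cancels the term $(q^{(\alpha_s,\alpha_s)}-1)q^{(\alpha_s,\alpha_s)}\EV_{2 3}\opT_{1 2 3 4}\delbar p\otimes\del p\, p$ already present in the raw expression for $\sigmaPM$, while the $p\metMP p$ term of the final formula survives untouched from the $\opT_{1 2 3 4} p p \metMP$ contribution. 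With this correction the computation closes as in the paper.
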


\begin{proof}
We have $\connP(\del p p) = q^{(\alpha_s, \alpha_s)} \opT_{1 2 3 4} \connP(p \del p)$ by \eqref{eq:right-module}.
Then we compute
\[
\begin{split}
\connP(\del p p)
& = q^{(\alpha_s, \alpha_s)} \opT_{1 2 3 4} \diff p \otimes \del p + q^{(\alpha_s, \alpha_s)} \opT_{1 2 3 4} p \connP(\del p) \\
& = q^{(\alpha_s, \alpha_s)} \opT_{1 2 3 4} \del p \otimes \del p + q^{(\alpha_s, \alpha_s)} \opT_{1 2 3 4} \delbar p \otimes \del p \\
& + q^{2 (\alpha_s, \alpha_s)} \opT_{1 2 3 4} \EV_{4 5} \opT_{3 4 5 6} p \delbar p \otimes \del p - q^{2 (\alpha_s, \alpha_s)} q^{-(\omega_s, 2 \rho)} \opT_{1 2 3 4} p p \metMP.
\end{split}
\]
We have $\opT_{1 2 3 4} \EV_{4 5} = \EV_{2 3} \opT_{3 4 5 6} \opT_{1 2 3 4}$ from \eqref{eq:identity-ETT}. Then consider
\[
\opT_{1 2 3 4} \EV_{4 5} \opT_{3 4 5 6} p \delbar p \otimes \del p = \EV_{2 3} \opT_{3 4 5 6} \opT_{1 2 3 4} \opT_{3 4 5 6} p \delbar p \otimes \del p.
\]
Using \eqref{eq:S-braid-equation} we can derive an analogue of the braid equation for $\opT$, that is
\[
\opT_{1 2 3 4} \opT_{3 4 5 6} \opT_{1 2 3 4} = \opT_{3 4 5 6} \opT_{1 2 3 4} \opT_{3 4 5 6}.
\]
Using this identity we obtain
\[
\begin{split}
\opT_{1 2 3 4} \EV_{4 5} \opT_{3 4 5 6} p \delbar p \otimes \del p
& = \EV_{2 3} \opT_{1 2 3 4} \opT_{3 4 5 6} \opT_{1 2 3 4} p \delbar p \otimes \del p \\
& = \EV_{2 3} \opT_{1 2 3 4} \delbar p \otimes \del p p.
\end{split}
\]
Since we have $\opT_{1 2 3 4} p p \metMP = p p \metMP = p \metMP p$, as in the proof of \cref{lem:connM-bimodule}, we get
\[
\begin{split}
\connP(\del p p)
& = q^{(\alpha_s, \alpha_s)} \opT_{1 2 3 4} \del p \otimes \del p + q^{(\alpha_s, \alpha_s)} \opT_{1 2 3 4} \delbar p \otimes \del p \\
& + q^{2 (\alpha_s, \alpha_s)} \EV_{2 3} \opT_{1 2 3 4} \delbar p \otimes \del p p - q^{2 (\alpha_s, \alpha_s)} q^{-(\omega_s, 2 \rho)} p \metMP p.
\end{split}
\]
The second line coincides with $q^{(\alpha_s, \alpha_s)} \connP(\del p) p$. Then we can write
\[
\connP(\del p p) = \sigmaPP + \sigmaPM + \connP(\del p) p,
\]
where $\sigmaPP = q^{(\alpha_s, \alpha_s)} \opT_{1 2 3 4} \del p \otimes \del p$ and
\[
\begin{split}
\sigmaPM & = q^{(\alpha_s, \alpha_s)} \opT_{1 2 3 4} \delbar p \otimes \del p + (q^{(\alpha_s, \alpha_s)} - 1) q^{(\alpha_s, \alpha_s)} \EV_{2 3} \opT_{1 2 3 4} \delbar p \otimes \del p p \\
& - (q^{(\alpha_s, \alpha_s)} - 1) q^{(\alpha_s, \alpha_s)} q^{-(\omega_s, 2 \rho)} p \metMP p.
\end{split}
\]

We now rewrite $\sigmaPM$ in a more convenient form.
In the quadratic case we have $\opSt_{2 3 4} = q^{(\alpha_s, \alpha_s) - 2 (\omega_s, \omega_s)} \opSt_{2 3 4}^{-1} + q^{-(\omega_s, \omega_s)} (1 - q^{(\alpha_s, \alpha_s)})$ from \eqref{eq:S-quadratic2}. Then
\[
\opT_{1 2 3 4} \delbar p \otimes \del p = q^{(\alpha_s, \alpha_s) - 2 (\omega_s, \omega_s)} \opS_{1 2 3} \opSt_{2 3 4}^{-1} \delbar p \otimes \del p + q^{-(\omega_s, \omega_s)} (1 - q^{(\alpha_s, \alpha_s)}) \opS_{1 2 3} \delbar p \otimes \del p.
\]
Consider the term $\opS_{1 2 3} \delbar p \otimes \del p$.
Using \eqref{eq:evaluations-calculi} and \eqref{eq:S-right-module} we get
\[
\opS_{1 2 3} \delbar p \otimes \del p = \EV_{4 5} \opS_{1 2 3} \delbar p \otimes \del p p = q^{(\alpha_s, \alpha_s) - (\omega_s, \omega_s)} \EV_{4 5} \opS_{1 2 3} \opS_{3 4 5} \delbar p p \otimes \del p.
\]
Next, we use $\EV_{4 5} = \EV_{2 3} \opSt_{2 3 4} \opS_{3 4 5}$ from \eqref{eq:S-evaluation2} and the "braid equation" for $\opS$ \eqref{eq:S-braid-equation}. Then
\[
\begin{split}
\opS_{1 2 3} \delbar p \otimes \del p
& = q^{(\alpha_s, \alpha_s) - (\omega_s, \omega_s)} \EV_{2 3} \opSt_{2 3 4} \opS_{3 4 5} \opS_{1 2 3} \opS_{3 4 5} \delbar p p \otimes \del p \\
& = q^{(\alpha_s, \alpha_s) - (\omega_s, \omega_s)} \EV_{2 3} \opSt_{2 3 4} \opS_{1 2 3} \opS_{3 4 5} \opS_{1 2 3} \delbar p p \otimes \del p.
\end{split}
\]
Now we can use \eqref{eq:S-action-right} and \eqref{eq:S-right-module} to get
\[
\begin{split}
\opS_{1 2 3} \delbar p \otimes \del p
& = q^{(\alpha_s, \alpha_s)} \EV_{2 3} \opSt_{2 3 4} \opS_{1 2 3} \opS_{3 4 5} \delbar p p \otimes \del p
= q^{(\omega_s, \omega_s)} \EV_{2 3} \opSt_{2 3 4} \opS_{1 2 3} \delbar p \otimes \del p p \\
& = q^{(\omega_s, \omega_s)} \EV_{2 3} \opT_{1 2 3 4} \delbar p \otimes \del p p.
\end{split}
\]
Therefore we obtain
\[
\opT_{1 2 3 4} \delbar p \otimes \del p = q^{(\alpha_s, \alpha_s) - 2 (\omega_s, \omega_s)} \opS_{1 2 3} \opSt_{2 3 4}^{-1} \delbar p \otimes \del p + (1 - q^{(\alpha_s, \alpha_s)}) \EV_{2 3} \opT_{1 2 3 4} \delbar p \otimes \del p p.
\]
Finally plugging this into $\sigmaPM$ gives the expression
\[
\sigmaPM = q^{2 (\alpha_s, \alpha_s) - 2 (\omega_s, \omega_s)} \opS_{1 2 3} \opSt_{2 3 4}^{-1} \delbar p \otimes \del p - (q^{(\alpha_s, \alpha_s)} - 1) q^{(\alpha_s, \alpha_s)} q^{-(\omega_s, 2 \rho)} p \metMP p.
\]
This gives the result as claimed.
\end{proof}

The computations above suggest that the terms $\sigma_{a b}$ with $a, b \in \{ +, - \}$ might correspond to $\subalg$-bimodule maps $\calc_a \otimes_\subalg \calc_b \to \calc_b \otimes_\subalg \calc_a$.
This is indeed the case, as we verify by lengthy computations in \cref{sec:bimodule-maps}. Then we obtain the following result.

\begin{proposition}
\label{prop:bimodule-connections}
We have that $\connP$ and $\connM$ are bimodule connections.
\end{proposition}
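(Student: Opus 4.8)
The plan is to exhibit, for each of $\connM$ and $\connP$, an explicit $\subalg$-bimodule map playing the role of the generalized braiding $\sigma$ in \cref{def:bimodule-connection}, and then to verify the twisted Leibniz rule. The candidates can be read off directly from \cref{lem:connM-bimodule} and \cref{lem:connP-bimodule}. For $\connM$ I would take $\sigma_{\connM} := \sigmaMP \oplus \sigmaMM$, viewed as a map $\calcM \otimes_\subalg \calc \to \calc \otimes_\subalg \calcM$, using the decomposition $\calcM \otimes_\subalg \calc = (\calcM \otimes_\subalg \calcP) \oplus (\calcM \otimes_\subalg \calcM)$: on the first summand it is the bimodule map $\sigmaMP: \calcM \otimes_\subalg \calcP \to \calcP \otimes_\subalg \calcM$ and on the second the bimodule map $\sigmaMM: \calcM \otimes_\subalg \calcM \to \calcM \otimes_\subalg \calcM$. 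Analogously $\sigma_{\connP} := \sigmaPP \oplus \sigmaPM: \calcP \otimes_\subalg \calc \to \calc \otimes_\subalg \calcP$ for $\connP$.

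The first step is to record that these four formulas define genuine $\subalg$-bimodule maps $\calc_a \otimes_\subalg \calc_b \to \calc_b \otimes_\subalg \calc_a$. This is precisely the content deferred to \cref{sec:bimodule-maps}, so here I would invoke it: each expression respects the defining relations \eqref{eq:calcP-quadratic} and \eqref{eq:calcM-quadratic}, descends to the balanced tensor product over $\subalg$, and intertwines the left and right actions. I expect this to be the genuine obstacle of the whole argument, since it amounts to pushing the right-module relations \eqref{eq:S-right-module}, together with the commutation and braid identities \eqref{eq:S-commutation}--\eqref{eq:S-braid-equation}, through the explicit operators $\opS$, $\opSt$, $\opT$ appearing in the formulas; but it is exactly what the appendix is set up to supply.

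The twisted Leibniz rule then follows on generators essentially by inspection. Since $\diff p = \del p + \delbar p$, the definition of $\sigma_{\connM}$ gives $\sigma_{\connM}(\delbar p \otimes \diff p) = \sigmaMP + \sigmaMM$, so \cref{lem:connM-bimodule} is exactly the statement $\connM(\delbar p\, p) = \sigma_{\connM}(\delbar p \otimes \diff p) + \connM(\delbar p)\, p$; in the same way \cref{lem:connP-bimodule} yields $\connP(\del p\, p) = \sigma_{\connP}(\del p \otimes \diff p) + \connP(\del p)\, p$. This verifies the rule for the left-module generators $\delbar p$, $\del p$ and the algebra generator $p$.

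Finally I would promote this from generators to all of $\calcM$ (resp.\ $\calcP$) and all of $\subalg$, which is purely formal. Writing $\nabla$ for either $\connM$ or $\connP$ and $\sigma$ for the corresponding braiding, the identity $\nabla(e b) = \sigma(e \otimes \diff b) + \nabla(e)\, b$ is preserved under left multiplication of $e$ by $\subalg$ (using the left Leibniz rule for $\nabla$ and left $\subalg$-linearity of $\sigma$), and it is multiplicative in $b$ (using $\diff(b b') = \diff b\, b' + b\, \diff b'$, the $\subalg$-bilinearity of $\sigma$, and the rule already established for $b$ and $b'$), while it holds trivially for $b = 1$ because $\diff 1 = 0$. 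Since $\calcM$ and $\calcP$ are generated as left $\subalg$-modules by $\delbar p$ and $\del p$, and $\subalg$ is generated by the $p^{ij}$, the rule on generators propagates to all of $\calcM \otimes_\subalg \calc$ and $\calcP \otimes_\subalg \calc$, so $\connM$ and $\connP$ are bimodule connections with generalized braidings $\sigma_{\connM}$ and $\sigma_{\connP}$.
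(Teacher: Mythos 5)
Your proposal is correct and follows essentially the same route as the paper: invoke the four appendix propositions (\cref{prop:sigmaPP}, \cref{prop:sigmaMM}, \cref{prop:sigmaPM}, \cref{prop:sigmaMP}) to get the bimodule maps, assemble them into a generalized braiding $\sigma$, and read off the twisted Leibniz rule on generators from \cref{lem:connM-bimodule} and \cref{lem:connP-bimodule}. The only difference is that you spell out the routine propagation from generators to arbitrary elements of $\calc$ and $\subalg$, which the paper leaves implicit.
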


\begin{proof}
From \cref{prop:sigmaPP}, \cref{prop:sigmaMM}, \cref{prop:sigmaPM} and \cref{prop:sigmaMP} we have four $\subalg$-bimodule maps $\calc_a \otimes_\subalg \calc_b \to \calc_b \otimes_\subalg \calc_a$ with appropriate $a, b \in \{ +, - \}$ given by
\[
\begin{split}
\sigmaPP(\del p \otimes \del p) & = q^{(\alpha_s, \alpha_s)} \opT_{1 2 3 4} \del p \otimes \del p, \\
\sigmaMM(\delbar p \otimes \delbar p) & = q^{-(\alpha_s, \alpha_s)} \opT_{1 2 3 4} \delbar p \otimes \delbar p, \\
\sigmaPM(\del p \otimes \delbar p) & = q^{2 (\alpha_s, \alpha_s) - 2 (\omega_s, \omega_s)} \opS_{1 2 3} \opSt_{2 3 4}^{-1} \delbar p \otimes \del p - (q^{(\alpha_s, \alpha_s)} - 1) q^{(\alpha_s, \alpha_s)} q^{-(\omega_s, 2 \rho)} p \metMP p, \\
\sigmaMP(\delbar p \otimes \del p) & = q^{2 (\omega_s, \omega_s) - 2 (\alpha_s, \alpha_s)} \opS_{1 2 3}^{-1} \opSt_{2 3 4} \del p \otimes \delbar p - (q^{-(\alpha_s, \alpha_s)} - 1) q^{-(\omega_s, 2 \rho)} p \metPM p.
\end{split}
\]
They can be assembled into a $\subalg$-bimodule map $\sigma: \calc \otimes_\subalg \calc \to \calc \otimes_\subalg \calc$.
Then using the expressions from \cref{lem:connM-bimodule} and \cref{lem:connP-bimodule} we observe that
\[
\begin{split}
\connM(\delbar p p) & = \sigma(\delbar p \otimes \diff p) + \connM(\delbar p) p, \\
\connP(\del p p) & = \sigma(\del p \otimes \diff p) + \connP(\del p) p,
\end{split}
\]
which shows that $\connM$ and $\connP$ are bimodule connection with generalized braiding $\sigma$.
\end{proof}

\subsection{Metric compatibility}

We now investigate whether the connection $\conn$ is quantum metric compatible, in the sense that $\conn \met = 0$.
Here the action on $\conn$ on $\met$ is given by
\[
\conn g = (\conn \otimes \id) \met + (\sigma \otimes \id) (\id \otimes \conn) \met.
\]

\begin{theorem}
\label{thm:strong-levi-civita}
We have $\conn \met = 0$. Hence $\conn$ is a quantum Levi-Civita connection.
\end{theorem}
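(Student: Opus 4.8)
The plan is to evaluate $\conn g = (\conn \otimes \id) \met + (\sigma \otimes \id)(\id \otimes \conn) \met$ directly, using the explicit formulas for $\connP, \connM$ from \cref{prop:connection-plus} and \cref{prop:connection-minus} and the generalized braiding $\sigma$ assembled in \cref{prop:bimodule-connections}, while exploiting the holomorphic/antiholomorphic decomposition $\calc = \calcP \oplus \calcM$. Since $\connP(\del p) \in \calcM \otimes_\subalg \calcP$ and $\connM(\delbar p) \in \calcP \otimes_\subalg \calcM$ are homogeneous for this grading, and since only the components $\sigmaPP$ and $\sigmaMM$ of $\sigma$ enter here (the first two legs of $(\id \otimes \conn)\met$ being of equal type, and these components preserve the grading), the element $\conn g$ splits as a sum of four terms lying in pairwise distinct graded components of $\calc \otimes_\subalg \calc \otimes_\subalg \calc$. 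It therefore suffices to show that each of $(\conn \otimes \id)\metPM$, $(\conn \otimes \id)\metMP$, $(\sigma \otimes \id)(\id \otimes \conn)\metPM$ and $(\sigma \otimes \id)(\id \otimes \conn)\metMP$ vanishes separately.

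For the two terms coming from $(\id \otimes \conn)\met$ I would reuse the computation already carried out in the proof of \cref{prop:cotorsion}. There it is shown that the $\calcP \otimes \calcP \otimes \calcM$ part $(\id \otimes \conn)\metPM$ vanishes outright via $\EV_{2 3} \del p \otimes \del p = 0$, whence $(\sigma \otimes \id)(\id \otimes \conn)\metPM = 0$. For the remaining part one has
\[
(\id \otimes \conn)\metMP = q^{(\alpha_s, \alpha_s)} \EVp_{1 2} \EV_{2 3} \EV_{4 5} \opT_{3 4 5 6} \delbar p \otimes \delbar p \otimes \del p.
\]
Applying $\sigma \otimes \id$, the first two legs lie in $\calcM \otimes_\subalg \calcM$, so the relevant component is $\sigmaMM(\delbar p \otimes \delbar p) = q^{-(\alpha_s, \alpha_s)} \opT_{1 2 3 4} \delbar p \otimes \delbar p$. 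The powers of $q^{(\alpha_s, \alpha_s)}$ cancel and one is left with $\EVp_{1 2} \EV_{2 3} \EV_{4 5} \opT_{3 4 5 6} \opT_{1 2 3 4} \delbar p \otimes \delbar p \otimes \del p$. This is precisely the expression handled in \cref{prop:cotorsion} (with $\otimes$ in place of $\wedge$): using $\EV_{2 3} \EV_{4 5} = \EV_{2 3} \EV_{2 3}$ together with $\EV_{2 3} \opT_{3 4 5 6} \opT_{1 2 3 4} = \opT_{1 2 3 4} \EV_{4 5}$ from \eqref{eq:identity-ETT} and then $\EVp_{1 2} \EV_{2 3} \opT_{1 2 3 4} = \EVp_{1 2} \EV_{2 3}$ from \eqref{eq:identity-EpET}, it collapses to a multiple of $\EV_{2 3} \delbar p \otimes \delbar p$, which is zero. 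The point worth emphasising is that $\sigmaMM$ supplies here exactly the factor $q^{-(\alpha_s, \alpha_s)} \opT_{1 2 3 4}$ that the right-module relation supplied (through $-\delbar p \wedge \delbar p = q^{-(\alpha_s, \alpha_s)} \opT_{1 2 3 4} \delbar p \wedge \delbar p$) in the cotorsion argument.

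It remains to treat $(\conn \otimes \id)\metPM$ and $(\conn \otimes \id)\metMP$. For each I would substitute the connection formulas into $\met = \EVp_{1 2} \EV_{2 3}(\del p \otimes \delbar p + \delbar p \otimes \del p)$, first checking that the correction terms $p \metMP$ and $p \metPM$ drop out under the outer double evaluation $\EVp_{1 2} \EV_{2 3}$, exactly as in the proofs of \cref{prop:metric-kahler} and \cref{prop:cotorsion} (keeping in mind that all tensor products are over $\subalg$). The surviving terms are then reduced using \eqref{eq:evaluations-calculi}, the right-module relations \eqref{eq:S-right-module}, and the commutation and braid identities for $\opS, \opSt, \opT$ from \cref{prop:S-properties} together with \eqref{eq:identity-ETT}, \eqref{eq:identity-EpET} and \eqref{eq:S-evaluation2}, until they too reduce to one of the vanishing contractions $\EV_{2 3} \del p \otimes \del p = \EV_{2 3} \delbar p \otimes \delbar p = 0$.

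The main obstacle I anticipate is the faithful leg-position bookkeeping when threading the connection maps and $\sigmaMM$ through the outer evaluations, and the repeated, carefully ordered use of the braid equation for $\opT$ and the evaluation–$\opT$ identities; the decisive simplification in every case is to manoeuvre each graded piece into the form of an evaluation applied to $\del p \otimes \del p$ or $\delbar p \otimes \delbar p$. Once all four pieces are shown to vanish we obtain $\conn g = 0$; combined with torsion freeness from \cref{prop:torsion}, this shows that $\conn$ is a quantum Levi-Civita connection.
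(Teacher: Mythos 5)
Your overall architecture is the paper's: split $\conn\met$ into the four pieces $(\conn \otimes \id)\metPM$, $(\conn \otimes \id)\metMP$, $(\sigma \otimes \id)(\id \otimes \conn)\metPM$, $(\sigma \otimes \id)(\id \otimes \conn)\metMP$ and kill each one, and your treatment of the two $(\id \otimes \conn)$ pieces — in particular the observation that $\sigmaMM$ supplies exactly the factor $q^{-(\alpha_s,\alpha_s)}\opT_{1 2 3 4}$ that the right-module relation supplied in the cotorsion argument, so that \eqref{eq:identity-ETT} and \eqref{eq:identity-EpET} collapse everything onto $\EV_{2 3}\delbar p \otimes \delbar p = 0$ — is correct and matches the paper.

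The gap is in the two $(\conn \otimes \id)$ pieces, which you defer to "reduce to one of the vanishing contractions $\EV_{2 3} \del p \otimes \del p = \EV_{2 3} \delbar p \otimes \delbar p = 0$." That cannot work: after substituting the connection formulas, these pieces are (multiples of) $\EVp_{1 2}\EV_{2 3}\EV_{2 3}\,\del p \otimes \delbar p \otimes \del p$ and $\EVp_{1 2}\EV_{2 3}\EV_{2 3}\opT_{1 2 3 4}\,\delbar p \otimes \del p \otimes \delbar p$, whose legs \emph{alternate} in holomorphic type, so no adjacent pair of same-type generators is available to contract. The actual vanishing mechanism is \cref{lem:vanishing-third} — the degree-three identity underlying the K\"ahler property $(\diff \otimes \id)\met = 0$ — which is a separate, substantial computation and is not a consequence of the contractions you name. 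Moreover, for $(\conn \otimes \id)\metPM$ even reaching the form where \cref{lem:vanishing-third} applies is the hardest step of the whole proof: one must strip the $\opT_{1 2 3 4}$ off $\delbar p \otimes \del p \otimes \delbar p$ by converting $\EV_{2 3}\opS_{1 2 3}$ into $\EVp_{1 2}$ and $\EV_{2 3}$ contributions via the Hecke relation \eqref{eq:S-quadratic1} (in the form \eqref{eq:identity-compatibility}), applied twice, together with the braid equation \eqref{eq:S-braid-equation} and \eqref{eq:S-evaluation2}. This is precisely where the quadratic (projective-space) hypothesis enters, and it is absent from your outline. Without invoking \cref{lem:vanishing-third} and the quadratic relation at these points, the two $(\conn\otimes\id)$ pieces do not close.
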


\begin{proof}
We compute separately the action of $\conn$ on the two legs of $\metPM$ and $\metMP$.

$\bullet$ \textbf{The term $(\conn \otimes \id) \metMP$}.
We have
\[
\begin{split}
(\conn \otimes \id) \metMP
& = \EVp_{1 2} \EV_{2 3} \conn(\delbar p) \otimes \del p \\
& = \EVp_{1 2} \EV_{2 3} \EV_{2 3} \del p \otimes \delbar p \otimes \del p - q^{-(\omega_s, 2 \rho)} \EVp_{1 2} \EV_{2 3} p \metPM \otimes \del p.
\end{split}
\]
The second term vanishes, since $p \metPM = \metPM p$ and $\EV_{2 3} p \del p = 0$.
The first term vanishes due to \cref{lem:vanishing-third}, which is related to the Kähler property of the metric.

$\bullet$ \textbf{The term $(\conn \otimes \id) \metPM$}.
We have
\[
\begin{split}
(\conn \otimes \id) \metPM
& = \EVp_{1 2} \EV_{2 3} \conn(\del p) \otimes \delbar p \\
& = q^{(\alpha_s, \alpha_s)} \EVp_{1 2} \EV_{2 3} \EV_{2 3} \opT_{1 2 3 4} \delbar p \otimes \del p \otimes \delbar p - q^{(\alpha_s, \alpha_s)} q^{-(\omega_s, 2 \rho)} \EVp_{1 2} \EV_{2 3} p \metMP \otimes \delbar p.
\end{split}
\]
The second term vanishes due to $\EVp_{1 2} \EV_{2 3} p \delbar p = \EVp_{1 2} \delbar p = 0$.

Now write $A = \EV_{2 3} \EV_{2 3} \opT_{1 2 3 4} \delbar p \otimes \del p \otimes \delbar p$. Then
\[
\begin{split}
A & = \EV_{2 3} \EV_{2 3} \opT_{1 2 3 4} \EV_{6 7} \delbar p \otimes \del p \otimes p \delbar p \\
& = \EV_{2 3} \EV_{2 3} \EV_{6 7} \opSt_{2 3 4} \opS_{1 2 3} \delbar p \otimes \del p \otimes p \delbar p \\
& = \EV_{2 3} \EV_{4 5} \EV_{2 3} \opSt_{2 3 4} \opS_{1 2 3} \delbar p \otimes \del p p \otimes \delbar p.
\end{split}
\]
We proceed by using \eqref{eq:S-action-right} and \eqref{eq:S-right-module}. We obtain
\[
\begin{split}
A & = q^{(\alpha_s, \alpha_s) - (\omega_s, \omega_s)} \EV_{2 3} \EV_{4 5} \EV_{2 3} \opSt_{2 3 4} \opS_{1 2 3} \opS_{3 4 5} \delbar p p \otimes \del p \otimes \delbar p \\
& = q^{(\alpha_s, \alpha_s) - 2 (\omega_s, \omega_s)} \EV_{2 3} \EV_{4 5} \EV_{2 3} \opSt_{2 3 4} \opS_{1 2 3} \opS_{3 4 5} \opS_{1 2 3} \delbar p p \otimes \del p \otimes \delbar p.
\end{split}
\]
Now we use the "braid equation" $\opS_{1 2 3} \opS_{3 4 5} \opS_{1 2 3} = \opS_{3 4 5} \opS_{1 2 3} \opS_{3 4 5}$ from \eqref{eq:S-braid-equation} and the identity $\EV_{2 3} \opSt_{2 3 4} \opS_{3 4 5} = \EV_{4 5}$ from \eqref{eq:S-evaluation2}. We get
\[
\begin{split}
A & = q^{(\alpha_s, \alpha_s) - 2 (\omega_s, \omega_s)} \EV_{2 3} \EV_{4 5} \EV_{2 3} \opSt_{2 3 4} \opS_{3 4 5} \opS_{1 2 3} \opS_{3 4 5} \delbar p p \otimes \del p \otimes \delbar p \\
& = q^{(\alpha_s, \alpha_s) - 2 (\omega_s, \omega_s)} \EV_{2 3} \EV_{4 5} \EV_{4 5} \opS_{1 2 3} \opS_{3 4 5} \delbar p p \otimes \del p \otimes \delbar p \\
& = q^{(\alpha_s, \alpha_s) - 2 (\omega_s, \omega_s)} \EV_{2 3} \EV_{2 3} \opS_{1 2 3} \EV_{4 5} \opS_{3 4 5} \delbar p p \otimes \del p \otimes \delbar p.
\end{split}
\]

Next consider the identity $\opS_{1 2 3} = q^{2 (\omega_s, \omega_s) - (\alpha_s, \alpha_s)} \opS_{1 2 3}^{-1} + q^{(\omega_s, \omega_s)} (1 - q^{-(\alpha_s, \alpha_s)})$ from \eqref{eq:S-quadratic1}, valid in the quadratic case.
Using $\EV_{2 3} \opS_{1 2 3}^{-1} = q^{-(\omega_s, \omega_s + 2 \rho)} \EVp_{1 2}$ from \eqref{eq:S-evaluation} we get
\begin{equation}
\label{eq:identity-compatibility}
\EV_{2 3} \opS_{1 2 3} = q^{-(\omega_s, 2 \rho)} q^{(\omega_s, \omega_s) - (\alpha_s, \alpha_s)} \EVp_{1 2} + q^{(\omega_s, \omega_s)} (1 - q^{-(\alpha_s, \alpha_s)}) \EV_{2 3}.
\end{equation}
Plugging this into the previous identity for $A$ we get
\[
\begin{split}
A & = q^{-(\omega_s, 2 \rho)} q^{-(\omega_s, \omega_s)} \EV_{2 3} \EV_{2 3} \opS_{1 2 3} \EVp_{3 4} \delbar p p \otimes \del p \otimes \delbar p \\
& + (1 - q^{-(\alpha_s, \alpha_s)}) q^{(\alpha_s, \alpha_s) - (\omega_s, \omega_s)} \EV_{2 3} \EV_{2 3} \opS_{1 2 3} \EV_{4 5} \delbar p \otimes p \del p \otimes \delbar p \\
& = q^{-(\omega_s, \omega_s)} \EV_{2 3} \EV_{2 3} \opS_{1 2 3} \delbar p \otimes \del p \otimes \delbar p.
\end{split}
\]
Above we used $\EVp_{1 2} p = q^{(\omega_s, 2 \rho)}$ and $\EV_{2 3} p \del p = 0$.
Using \eqref{eq:identity-compatibility} once more we get
\[
\begin{split}
A & = q^{-(\omega_s, 2 \rho)} q^{- (\alpha_s, \alpha_s)} \EV_{2 3} \EVp_{1 2} \delbar p \otimes \del p \otimes \delbar p + (1 - q^{-(\alpha_s, \alpha_s)}) \EV_{2 3} \EV_{2 3} \delbar p \otimes \del p \otimes \delbar p \\
& = (1 - q^{-(\alpha_s, \alpha_s)}) \EV_{2 3} \EV_{2 3} \delbar p \otimes \del p \otimes \delbar p.
\end{split}
\]

Plugging this expression into $(\conn \otimes \id) \metPM$ we obtain
\[
(\conn \otimes \id) \metPM = (1 - q^{-(\alpha_s, \alpha_s)}) q^{(\alpha_s, \alpha_s)} \EVp_{1 2} \EV_{2 3} \EV_{2 3} \delbar p \otimes \del p \otimes \delbar p.
\]
But then this term vanishes due to \cref{lem:vanishing-third}.

$\bullet$ \textbf{The term $(\id \otimes \conn) \metPM$}.
We have
\[
\begin{split}
(\id \otimes \conn) \metPM
& = \EVp_{1 2} \EV_{2 3} \del p \otimes \conn(\delbar p) \\
& = \EVp_{1 2} \EV_{2 3} \EV_{4 5} \del p \otimes \del p \otimes \delbar p - q^{-(\omega_s, 2 \rho)} \EVp_{1 2} \EV_{2 3} \del p \otimes p \metPM.
\end{split}
\]
The first term vanishes by $\EV_{2 3} \del p \otimes \del p = 0$ and the second term by $\EVp_{1 2} \EV_{2 3} \del p p = 0$.

$\bullet$ \textbf{The term $(\id \otimes \conn) \metMP$}.
We have
\[
\begin{split}
(\id \otimes \conn) \metMP
& = \EVp_{1 2} \EV_{2 3} \delbar p \otimes \conn(\del p) \\
& = q^{(\alpha_s, \alpha_s)} \EVp_{1 2} \EV_{2 3} \EV_{4 5} \opT_{3 4 5 6} \delbar p \otimes \delbar p \otimes \del p - q^{(\alpha_s, \alpha_s)} q^{-(\omega_s, 2 \rho)} \EVp_{1 2} \EV_{2 3} \delbar p \otimes p \metMP.
\end{split}
\]
The second term vanishes due to $\EV_{2 3} \delbar p p = 0$.
Now we apply $\sigma \otimes \id$ to this expression.
Since $\sigma(\delbar p \otimes \delbar p) = q^{-(\alpha_s, \alpha_s)} \opT_{1 2 3 4} \delbar p \otimes \delbar p$ we get
\[
\begin{split}
(\sigma \otimes \id) (\id \otimes \conn) \metMP
& = q^{(\alpha_s, \alpha_s)} \EVp_{1 2} \EV_{2 3} \EV_{4 5} \opT_{3 4 5 6} \sigma(\delbar p \otimes \delbar p) \otimes \del p \\
& = \EVp_{1 2} \EV_{2 3} \EV_{4 5} \opT_{3 4 5 6} \opT_{1 2 3 4} \delbar p \otimes \delbar p \otimes \del p.
\end{split}
\]
Next we use $\EV_{2 3} \opT_{3 4 5 6} \opT_{1 2 3 4} = \opT_{1 2 3 4} \EV_{4 5}$ from \eqref{eq:identity-ETT}. We obtain
\[
\begin{split}
(\sigma \otimes \id) (\id \otimes \conn) \metMP
& = \EVp_{1 2} \EV_{2 3} \EV_{2 3} \opT_{3 4 5 6} \opT_{1 2 3 4} \delbar p \otimes \delbar p \otimes \del p \\
& = \EVp_{1 2} \EV_{2 3} \opT_{1 2 3 4} \EV_{4 5} \delbar p \otimes \delbar p \otimes \del p.
\end{split}
\]
Using the identity $\EVp_{1 2} \EV_{2 3} \opT_{1 2 3 4} = \EVp_{1 2} \EV_{2 3}$ from \eqref{lem:identity-EpET} we get
\[
\begin{split}
(\sigma \otimes \id) (\id \otimes \conn) \metMP
& = \EVp_{1 2} \EV_{2 3} \EV_{4 5} \delbar p \otimes \delbar p \otimes \del p \\
& = \EVp_{1 2} \EV_{2 3} \EV_{2 3} \delbar p \otimes \delbar p \otimes \del p = 0.
\end{split}
\]
In the last step we have used $\EV_{2 3} \delbar p \otimes \delbar p = 0$.
\end{proof}

The quantum metric compatibility of $\conn$ shows that this connection has all the properties one would expect from the Levi-Civita connection on quantum projective spaces.

\appendix

\section{Results on projective spaces}
\label{sec:projective-spaces}

In this appendix we recall some results on (classical) projective spaces, to facilitate the comparison between the classical and the quantum descriptions.

From the point of view of differential geometry, the complex projective space $\bbC P^N$ can be identified with $\bbC^{N + 1} \backslash \{0\}$ modulo the relation $(Z^1, \cdots, Z^{N + 1}) \sim \lambda (Z^1, \cdots, Z^{N + 1})$ with $\lambda \neq 0$. Here $\{Z^1, \cdots, Z^{N + 1}\}$ are the global coordinates of $\bbC^{N + 1}$.
Define the functions
\[
p^{i j} = \frac{Z^i \bar{Z}^j}{\|Z\|^2}, \quad
i, j = 1, \cdots, N + 1.
\]
These descend to $\bbC P^N$, as they are invariant under the equivalence relation.

Consider the coordinate patch with $Z^{N + 1} \neq 0$ and denote by $z^i = Z^i / Z^{N + 1}$ the corresponding homogeneous coordinates (the discussion is similar for the other patches).
Then with respect to these local coordinates the \emph{Fubini-Study metric} takes the form
\[
g = \sum_{i, j = 1}^N g_{i \bar{j}} \diff z^i \odot \diff \bar{z}^j
= \sum_{i, j = 1}^N \frac{(1 + \|z\|^2) \delta_{i j} - \bar{z}^i z^j}{(1 + \|z\|^2)^2} \diff z^i \odot \diff \bar{z}^j,
\]
where we write $\diff z^i \odot \diff \bar{z}^j = \diff z^i \otimes \diff \bar{z}^j + \diff \bar{z}^j \otimes \diff z^i$ for the symmetric product.
The inverse metric can be seen to have components $g^{\bar{i} j} = (\delta_{i j} + \bar{z}^i z^j) (1 + \|z\|^2)$.

The metric can be rewritten in terms of the functions $p^{i j}$, which can be seen as the entries of a projection of rank one.
An explicit computation shows that
\begin{equation}
\label{eq:classical-metric}
g = \sum_{i, j = 1}^{N + 1} (\del p^{i j} \otimes \delbar p^{j i} + \delbar p^{i j} \otimes \del p^{j i}).
\end{equation}
Similarly, the inverse metric can be seen as a map $(\cdot, \cdot)$ on the cotangent bundle satisfying
\begin{equation}
\label{eq:inverse-metric}
\begin{gathered}
(\del p^{i j}, \delbar p^{k l}) = \delta_{i l} p^{k j} - p^{i j} p^{k l}, \quad
(\delbar p^{i j}, \del p^{k l}) = \delta_{k j} p^{i l} - p^{i j} p^{k l}, \\
(\del p^{i j}, \del p^{k l}) = 0, \quad
(\delbar p^{i j}, \delbar p^{k l}) = 0.
\end{gathered}
\end{equation}

Next, we describe the \emph{Levi-Civita connection} on the cotangent bundle, defined with respect to the Fubini-Study metric.
We have the formulae
\begin{equation}
\label{eq:classical-connection}
\begin{split}
\nabla(\del p^{i j}) & = \sum_{k = 1}^{N + 1} \delbar p^{k j} \otimes \del p^{i k} - p^{i j} g_{- +}, \\
\nabla(\delbar p^{i j}) & = \sum_{k = 1}^{N + 1} \del p^{i k} \otimes \delbar p^{k j} - p^{i j} g_{+ -}.
\end{split}
\end{equation}
Here $g_{+ -} = \sum_{i, j} \del p^{i j} \otimes \delbar p^{j i}$ and $g_{- +} = \sum_{i, j} \delbar p^{i j} \otimes \del p^{j i}$.

The classical analogue of the relations \eqref{eq:calcP-quadratic} and \eqref{eq:calcM-quadratic} are
\[
\begin{gathered}
p^{i j} \del p^{k l} = p^{i l} \del p^{k j}, \quad
p^{i j} \delbar p^{k l} = p^{k j} \delbar p^{i l}, \\
\sum_{i = 1}^{N + 1} \del p^{i i} = 0, \quad
\sum_{i = 1}^{N + 1} \delbar p^{i i} = 0.
\end{gathered}
\]
Using these relations, one can prove directly that the Levi-Civita connection is given by \eqref{eq:classical-connection}.
Indeed, one checks that $\nabla$ is torsion free and metric compatible, that is $\nabla g = 0$.

\section{The maps \texorpdfstring{$\opS$}{S} and \texorpdfstring{$\opSt$}{St}}
\label{sec:properties-S}

In this appendix we prove various properties satisfied by the maps
\[
\begin{split}
\opS_{1 2 3} & = (\braid_{V, V^*})_{2 3} (\braid_{V, V})_{1 2} (\braid^{-1}_{V, V^*})_{2 3}, \\
\opSt_{2 3 4} & = (\braid_{V, V^*})_{2 3} (\braid^{-1}_{V^*, V^*})_{3 4} (\braid^{-1}_{V, V^*})_{2 3}.
\end{split}
\]
These were introduced in \eqref{eq:S-maps} to rewrite some of the relations of the differential calculus.

The most important properties are the commutation relations among them. 

\begin{proposition}
\label{prop:S-properties-proof}
The maps $\opS$ and $\opSt$ satisfy the following properties.

\begin{enumerate}
\item We have the commutation relations
\[
\opS_{1 2 3} \opSt_{2 3 4} = \opSt_{2 3 4} \opS_{1 2 3}, \quad
\opSt_{2 3 4} \opS_{3 4 5} = \opS_{3 4 5} \opSt_{2 3 4}.
\]
\item We have the "braid equations"
\[
\opS_{1 2 3} \opS_{3 4 5} \opS_{1 2 3} = \opS_{3 4 5} \opS_{1 2 3} \opS_{3 4 5}, \quad
\opSt_{2 3 4} \opSt_{4 5 6} \opSt_{2 3 4} = \opSt_{4 5 6} \opSt_{2 3 4} \opSt_{4 5 6}.
\]
\end{enumerate}
\end{proposition}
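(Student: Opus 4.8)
The plan is to reduce everything to the elementary braidings by first exhibiting $\opS$ and $\opSt$ as conjugates. Since $(\braid^{-1}_{V, V^*})_{2 3}$ is the inverse of $(\braid_{V, V^*})_{2 3}$, setting $C := (\braid_{V, V^*})_{2 3}$ I can read off from \eqref{eq:S-maps} that
\[
\opS_{1 2 3} = C \, (\braid_{V, V})_{1 2} \, C^{-1}, \qquad
\opSt_{2 3 4} = C \, (\braid^{-1}_{V^*, V^*})_{3 4} \, C^{-1}.
\]
Thus $\opS_{1 2 3}$ is the conjugate of the braiding of the two $V$-legs in slots $1,2$ (once the intervening dual leg has been moved aside), while $\opSt_{2 3 4}$ is the conjugate of the braiding of the two $V^*$-legs in slots $3,4$, and crucially the conjugating operator $C$ is the \emph{same} for both.

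Given this, the first commutation relation is immediate. Indeed,
\[
\opS_{1 2 3} \opSt_{2 3 4} = C \, (\braid_{V, V})_{1 2} (\braid^{-1}_{V^*, V^*})_{3 4} \, C^{-1},
\]
and $(\braid_{V, V})_{1 2}$ and $(\braid^{-1}_{V^*, V^*})_{3 4}$ act on the disjoint pairs of legs $1,2$ and $3,4$, hence commute by functoriality of the tensor product. Commuting them and reabsorbing the factors of $C$ yields $\opSt_{2 3 4} \opS_{1 2 3}$.

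For the remaining identities — the second commutation relation and the two braid equations — the supports genuinely overlap (for instance $\opS_{1 2 3}$ and $\opS_{3 4 5}$ share the $V$-leg in slot $3$), so the simple conjugation argument no longer closes and one must invoke the braid equation \eqref{eq:braid-equation}. My approach would be to expand each factor according to \eqref{eq:S-maps}, repeatedly use that braidings on disjoint legs commute in order to bring together the three elementary braidings that form a braid-equation triple, apply \eqref{eq:braid-equation} to flip that triple, and then run the rearrangement backwards to recognize the opposite ordering of the $\opS$'s (respectively $\opSt$'s). Conceptually each of these identities is the image, under the braiding functor, of a single Reidemeister-III relation in the coloured braid groupoid on the strands $V, V^*, V, V^*, V, \dots$, so it must hold formally; the explicit computation merely realizes this.

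The main obstacle will be the bookkeeping forced by the overlapping supports together with the type constraints: an elementary braiding such as $(\braid_{V, V})_{3 4}$ can only be applied after the neighbouring $V^*$-leg has been braided away by a factor of $\braid^{\pm 1}_{V, V^*}$, so at each stage of the rearrangement I must track carefully which legs currently occupy which slots, and hence which braidings are permitted and which pairs commute. Assembling the correct braid-equation triple and verifying that the leftover factors of $\braid_{V, V^*}^{\pm 1}$ match on both sides is the substantive step; once the triple is isolated, \eqref{eq:braid-equation} does the rest.
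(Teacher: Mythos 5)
Your approach is essentially the paper's: all four identities are formal consequences of the braid relations, and your conjugation argument for $\opS_{1 2 3} \opSt_{2 3 4} = \opSt_{2 3 4} \opS_{1 2 3}$ (same conjugator $C = (\braid_{V,V^*})_{2 3}$, disjoint supports of $(\braid_{V,V})_{1 2}$ and $(\braid^{-1}_{V^*,V^*})_{3 4}$) is exactly the paper's "the first identity is straightforward." The difference is that you leave the other three identities as a plan, and the obstacle you flag — tracking which object occupies which slot so that only type-correct braidings are composed — is precisely what the paper's formulation avoids. The paper observes that, since far-commutativity and the braid equation hold for the elementary braidings in every colouring by $V$ and $V^*$, one may compute in the abstract braid group under the identifications
\[
\opS_{1 2 3} = \sigma_2 \sigma_1 \sigma_2^{-1}, \qquad
\opSt_{2 3 4} = \sigma_2 \sigma_3^{-1} \sigma_2^{-1},
\]
and then each remaining identity is a three-line word computation (e.g.\ $\opSt_{2 3 4}\opS_{3 4 5} = (\sigma_3^{-1}\sigma_2^{-1}\sigma_3)(\sigma_3^{-1}\sigma_4\sigma_3) = \sigma_3^{-1}\sigma_2^{-1}\sigma_4\sigma_3$, which is symmetric under swapping the two factors because $\sigma_2$ and $\sigma_4$ commute). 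Your Reidemeister-III intuition is the right justification for why this reduction is legitimate, but as written the proposal stops short of verifying the second commutation relation and the two braid equations; adopting the abstract braid-group bookkeeping would let you finish them with no colour-tracking at all.
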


\begin{proof}
These identities are valid more generally within the braid group with generators $\{ \sigma_i \}_i$.
Under the obvious identifications, we can consider the elements
\[
\opS_{1 2 3} = \sigma_2 \sigma_1 \sigma_2^{-1}, \quad
\opSt_{2 3 4} = \sigma_2 \sigma_3^{-1} \sigma_2^{-1}.
\]

(1) The first identity is straightforward. For the second we compute
\[
\begin{split}
\opSt_{2 3 4} \opS_{3 4 5}
& = (\sigma_2 \sigma_3^{-1} \sigma_2^{-1}) (\sigma_4 \sigma_3 \sigma_4^{-1}) = (\sigma_3^{-1} \sigma_2^{-1} \sigma_3) (\sigma_3^{-1} \sigma_4 \sigma_3) \\
& = \sigma_3^{-1} \sigma_2^{-1} \sigma_4 \sigma_3 = (\sigma_3^{-1} \sigma_4 \sigma_3) (\sigma_3^{-1} \sigma_2^{-1} \sigma_3) \\
& = (\sigma_4 \sigma_3 \sigma_4^{-1}) (\sigma_2 \sigma_3^{-1} \sigma_2^{-1}) = \opS_{3 4 5} \opSt_{2 3 4}.
\end{split}
\]

(2) We consider the first identity. We compute
\[
\begin{split}
\opS_{1 2 3} \opS_{3 4 5} \opS_{1 2 3}
& = (\sigma_2 \sigma_1 \sigma_2^{-1}) (\sigma_4 \sigma_3 \sigma_4^{-1}) (\sigma_2 \sigma_1 \sigma_2^{-1}) = \sigma_4 (\sigma_2 \sigma_1 \sigma_2^{-1}) \sigma_3 (\sigma_2 \sigma_1 \sigma_2^{-1}) \sigma_4^{-1} \\
& = \sigma_4 (\sigma_1^{-1} \sigma_2 \sigma_1) \sigma_3 (\sigma_1^{-1} \sigma_2 \sigma_1) \sigma_4^{-1} = \sigma_4 \sigma_1^{-1} (\sigma_2 \sigma_3 \sigma_2) \sigma_1 \sigma_4^{-1} \\
& = \sigma_4 \sigma_1^{-1} (\sigma_3 \sigma_2 \sigma_3) \sigma_1 \sigma_4^{-1} = \sigma_4 \sigma_3 (\sigma_1^{-1} \sigma_2 \sigma_1) \sigma_3 \sigma_4^{-1} \\
& = \sigma_4 \sigma_3 (\sigma_2 \sigma_1 \sigma_2^{-1}) \sigma_3 \sigma_4^{-1} = (\sigma_4 \sigma_3 \sigma_4^{-1}) (\sigma_2 \sigma_1 \sigma_2^{-1}) (\sigma_4 \sigma_3 \sigma_4^{-1}) \\
& = \opS_{3 4 5} \opS_{1 2 3} \opS_{3 4 5}.
\end{split}
\]
The second identity can be proven in a similar way.
\end{proof}

Now we consider the case when $\braid_{V, V}$ satisfies the quadratic relation
\[
\braidP_{V, V} \braidQ_{V, V} = (\braid_{V, V} - q^{(\omega_s, \omega_s)}) (\braid_{V, V} + q^{(\omega_s, \omega_s) - (\alpha_s, \alpha_s)}) = 0.
\]
Then an analogous relation also holds for the braiding $\braid_{V^*, V^*}$.

\begin{lemma}
\label{lem:S-quadratic}
In the quadratic case we have the relations
\[
\begin{split}
\opS_{1 2 3} & = q^{2 (\omega_s, \omega_s) - (\alpha_s, \alpha_s)} \opS_{1 2 3}^{-1} + q^{(\omega_s, \omega_s)} (1 - q^{- (\alpha_s, \alpha_s)}), \\
\opSt_{2 3 4} & = q^{(\alpha_s, \alpha_s) - 2 (\omega_s, \omega_s)} \opSt_{2 3 4}^{-1} + q^{- (\omega_s, \omega_s)} (1 - q^{(\alpha_s, \alpha_s)}).
\end{split}
\]
\end{lemma}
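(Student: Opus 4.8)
The plan is to reduce both identities to the Hecke relation satisfied by the braidings, exploiting the fact that $\opS_{1 2 3}$ and $\opSt_{2 3 4}$ are conjugates of a single braiding. First I would abbreviate $a := q^{(\omega_s, \omega_s)}$ and $b := q^{(\omega_s, \omega_s) - (\alpha_s, \alpha_s)}$, so that the quadratic case is exactly the statement $(\braid_{V, V} - a)(\braid_{V, V} + b) = 0$, with the analogous relation holding for $\braid_{V^*, V^*}$. Expanding gives $\braid_{V, V}^2 = (a - b) \braid_{V, V} + ab$, and since $\braid_{V, V}$ is invertible I may divide by it (and by its square) to obtain the two equivalent forms
\[
\braid_{V, V} = ab\, \braid_{V, V}^{-1} + (a - b), \qquad
\braid_{V^*, V^*}^{-1} = (ab)^{-1} \braid_{V^*, V^*} - (ab)^{-1} (a - b),
\]
the second one being the inverse-solved version of the same Hecke relation, now applied to $\braid_{V^*, V^*}$.

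For the first identity I would observe that, by the definition \eqref{eq:S-maps}, we have $\opS_{1 2 3} = g\, (\braid_{V, V})_{1 2}\, g^{-1}$ with $g := (\braid_{V, V^*})_{2 3}$, so that $\opS_{1 2 3}$ is a conjugate of $(\braid_{V, V})_{1 2}$ and correspondingly $\opS_{1 2 3}^{-1} = g\, (\braid_{V, V})_{1 2}^{-1}\, g^{-1}$. Conjugating the first displayed relation above (acting on legs $1, 2$) by $g$, and using $g\, \id\, g^{-1} = \id$ for the constant term, yields $\opS_{1 2 3} = ab\, \opS_{1 2 3}^{-1} + (a - b)$. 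Substituting $ab = q^{2 (\omega_s, \omega_s) - (\alpha_s, \alpha_s)}$ and $a - b = q^{(\omega_s, \omega_s)} (1 - q^{-(\alpha_s, \alpha_s)})$ then gives exactly \eqref{eq:S-quadratic1}.

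The second identity is entirely analogous, the only wrinkle being that $\opSt_{2 3 4}$ is built from the \emph{inverse} braiding: again from \eqref{eq:S-maps} we have $\opSt_{2 3 4} = g\, (\braid_{V^*, V^*}^{-1})_{3 4}\, g^{-1}$ with the same $g$, whence $\opSt_{2 3 4}^{-1} = g\, (\braid_{V^*, V^*})_{3 4}\, g^{-1}$. I would therefore conjugate the second displayed relation above by $g$ (on legs $3, 4$), obtaining $\opSt_{2 3 4} = (ab)^{-1} \opSt_{2 3 4}^{-1} - (ab)^{-1} (a - b)$, and a short simplification gives $(ab)^{-1} = q^{(\alpha_s, \alpha_s) - 2 (\omega_s, \omega_s)}$ and $-(ab)^{-1}(a - b) = q^{-(\omega_s, \omega_s)} (1 - q^{(\alpha_s, \alpha_s)})$, which is \eqref{eq:S-quadratic2}. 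There is no real obstacle here: the whole argument is just the conjugation of a scalar-coefficient polynomial relation by an invertible operator. The single point demanding attention is precisely the inverse appearing in $\opSt$, which forces one to solve the Hecke relation for $\braid_{V^*, V^*}^{-1}$ rather than for $\braid_{V^*, V^*}$ itself, and to track the resulting sign in the constant term.
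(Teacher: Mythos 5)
Your proposal is correct and follows essentially the same route as the paper: solve the Hecke relation for $\braid_{V,V}$ (resp.\ for $\braid_{V^*,V^*}^{-1}$) and conjugate by $(\braid_{V,V^*})_{2 3}$, which fixes the scalar terms. The paper writes out only the $\opS$ case and declares the other similar, whereas you correctly track the extra inversion needed for $\opSt$; the constants $ab = q^{2(\omega_s,\omega_s)-(\alpha_s,\alpha_s)}$ and $a-b$ match the stated formulae.
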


\begin{proof}
The quadratic relation $\braidP_{V, V} \braidQ_{V, V} = 0$ can be rewritten as
\[
\braid_{V, V} = q^{2 (\omega_s, \omega_s) - (\alpha_s, \alpha_s)} \braid_{V, V}^{-1} + q^{(\omega_s, \omega_s)} (1 - q^{- (\alpha_s, \alpha_s)}).
\]
Using this identity we can rewrite $\opS$ in the following way
\[
\begin{split}
\opS_{1 2 3} & = (\braid_{V, V^*})_{2 3} (\braid_{V, V})_{1 2} (\braid^{-1}_{V, V^*})_{2 3} \\
& = q^{2 (\omega_s, \omega_s) - (\alpha_s, \alpha_s)} (\braid_{V, V^*})_{2 3} (\braid_{V, V}^{-1})_{1 2} (\braid^{-1}_{V, V^*})_{2 3} + q^{(\omega_s, \omega_s)} (1 - q^{- (\alpha_s, \alpha_s)}) (\braid_{V, V^*})_{2 3} (\braid^{-1}_{V, V^*})_{2 3} \\
& = q^{2 (\omega_s, \omega_s) - (\alpha_s, \alpha_s)} \opS_{1 2 3}^{-1} + q^{(\omega_s, \omega_s)} (1 - q^{- (\alpha_s, \alpha_s)}).
\end{split}
\]
The other identity is proven similarly.
\end{proof}

\section{Various identities}
\label{sec:identities}

In this appendix we provide the proofs of various identities that have been used in the main text.
We divide them into two groups, those that only depend on the (rigid) braided monoidal structure, and those that also involve the differential calculus $\calc$.

\subsection{Categorical identities}

The first identity we consider appears in the proof of \cite[Proposition 3.11]{heko} in slightly different terms.
We reprove it here for convenience.

\begin{lemma}
\label{lem:identity-ETT}
We have the identity
\begin{equation}
\label{eq:identity-ETT}
\EV_{2 3} \opT_{3 4 5 6} \opT_{1 2 3 4} = \opT_{1 2 3 4} \EV_{4 5}.
\end{equation}
\end{lemma}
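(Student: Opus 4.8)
The plan is to prove \eqref{eq:identity-ETT} as an identity of morphisms in the rigid braided category, by transporting the evaluation $\EV_{23}$ to the right through the braidings that make up $\opT_{3456}\opT_{1234}$. A conceptual picture guides the computation: since $p$, $\del p$, $\delbar p$ all carry leg-types $V\otimes V^*$, the operator $\opT_{1234}$ acts on $(V\otimes V^*)^{\otimes 2}$ and permutes the two ``pair-strands'' (via $\opT_{1234}=\opS_{123}\opSt_{234}$ it braids the two $V$-legs by $\braid_{V,V}$ and the two $V^*$-legs by $\braid^{-1}_{V^*,V^*}$), while $\EV_{23}=\id_V\otimes\ev_V\otimes\id_{V^*}$ contracts the inner $V^*\otimes V$ of two adjacent pair-strands. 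In this language \eqref{eq:identity-ETT} is exactly the compatibility of this pair-braiding with the contraction, i.e.\ a naturality statement for the braiding with respect to the morphism $\EV_{23}$.

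To carry this out concretely I would first expand $\opT=\opS\opSt$ and use the commutation and braid relations of \cref{prop:S-properties} to order the factors conveniently: since $\opS_{123}$ and $\opSt_{456}$ act on disjoint legs they commute, while the overlaps are governed by \eqref{eq:S-commutation} and \eqref{eq:S-braid-equation}. The core step is then to push $\EV_{23}$ past the elementary braidings of $\opS_{345}$ and $\opSt_{234}$. For the braidings acting on legs disjoint from $\{2,3\}$ this is immediate; for those touching leg $2$ or $3$ I would use the naturality relations \eqref{eq:evaluations} and \eqref{eq:coevaluations} together with the braid equation \eqref{eq:braid-equation} to move the spectator legs around the cap and to turn $\EV$ on legs $2,3$ into $\EV$ on legs $4,5$. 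When a factor $\braid_{V,V^*}$ meets the cap directly one uses \cref{lem:relation-evaluations} to trade $\EV\circ\braid_{V,V^*}$ for a multiple of $\EVp$, and dually $\EVp\circ\braid^{-1}_{V,V^*}$ for a multiple of $\EV$, with the accumulated scalars cancelling. Once every braiding has been transported to the left of the cap, the braid relations for $\opS$ and $\opSt$ reassemble them into exactly $\opT_{1234}$, now standing to the left of $\EV_{45}$.

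The main obstacle is the bookkeeping: the cap $\EV_{23}$ acts on legs that are simultaneously crossed by braidings coming from both $\opT_{1234}$ and $\opT_{3456}$, so each sliding step changes which legs carry which type, and the relabelling of legs after the contraction must be tracked with care. I expect the cleanest route is to reduce the whole statement to a tangle identity: drawing both sides, $\opT_{1234}$ is a crossing of two pair-strands and $\EV$ is a cap, and the equality becomes an instance of the braid relation together with the zig-zag/duality relations \eqref{eq:duality}. Checking that the scalar factors produced by \cref{lem:relation-evaluations} agree on the two sides is the only point demanding genuine care beyond the diagrammatics.
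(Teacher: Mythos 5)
Your proposal is correct and follows essentially the same route as the paper: expand $\opT_{3 4 5 6} \opT_{1 2 3 4}$ into its elementary braidings and slide the cap $\EV_{2 3}$ rightward using the naturality relations \eqref{eq:evaluations}, turning it into $\EV_{4 5}$ while the remaining braidings reassemble into $\opT_{1 2 3 4}$. The only superfluous ingredient is the appeal to \cref{lem:relation-evaluations}: the cap never meets a single $\braid_{V, V^*}$ in isolation, only the scalar-free two-braiding compositions $\EV_{2 3} (\braid_{V, V})_{3 4} (\braid_{V, V^*})_{2 3} = \EV_{3 4}$ and $\EV_{3 4} (\braid^{-1}_{V, V^*})_{4 5} (\braid^{-1}_{V^*, V^*})_{3 4} = \EV_{4 5}$, so no scalars ever arise and there is nothing to cancel.
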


\begin{proof}
Using the definition of $\opT_{1 2 3 4}$ we can write
\[
\opT_{3 4 5 6} \opT_{1 2 3 4} = (\braid_{V, V^*})_{4 5} (\braid_{V, V})_{3 4} (\braid_{V, V^*})_{2 3} (\braid_{V, V})_{1 2} (\braid^{-1}_{V^*, V^*})_{5 6} (\braid^{-1}_{V, V^*})_{4 5} (\braid^{-1}_{V^*, V^*})_{3 4} (\braid^{-1}_{V, V^*})_{2 3}.
\]
We have $\EV_{2 3} (\braid_{V, V^*})_{4 5} = (\braid_{V, V^*})_{2 3} \EV_{2 3}$.
Moreover $\EV_{2 3} (\braid_{V, V})_{3 4} (\braid_{V, V^*})_{2 3} =  \EV_{3 4}$ by naturality of the braiding, as in \eqref{eq:evaluations}.
Then we obtain
\[
\EV_{2 3} (\braid_{V, V^*})_{4 5} (\braid_{V, V})_{3 4} (\braid_{V, V^*})_{2 3} (\braid_{V, V})_{1 2}
= (\braid_{V, V^*})_{2 3} (\braid_{V, V})_{1 2} \EV_{3 4}.
\]
Hence we can write
\[
\EV_{2 3} \opT_{3 4 5 6} \opT_{1 2 3 4}
= (\braid_{V, V^*})_{2 3} (\braid_{V, V})_{1 2} \EV_{3 4} (\braid^{-1}_{V^*, V^*})_{5 6} (\braid^{-1}_{V, V^*})_{4 5} (\braid^{-1}_{V^*, V^*})_{3 4} (\braid^{-1}_{V, V^*})_{2 3}.
\]
As above, using $\EV_{3 4} (\braid^{-1}_{V, V^*})_{4 5} (\braid^{-1}_{V^*, V^*})_{3 4} = \EV_{4 5}$ due to \eqref{eq:evaluations} we obtain
\[
\EV_{3 4} (\braid^{-1}_{V^*, V^*})_{5 6} (\braid^{-1}_{V, V^*})_{4 5} (\braid^{-1}_{V^*, V^*})_{3 4} (\braid^{-1}_{V, V^*})_{2 3}
= (\braid^{-1}_{V^*, V^*})_{3 4} (\braid^{-1}_{V, V^*})_{2 3} \EV_{4 5}.
\]
We conclude that
\[
\EV_{2 3} \opT_{3 4 5 6} \opT_{1 2 3 4}
= (\braid_{V, V^*})_{2 3} (\braid_{V, V})_{1 2} (\braid^{-1}_{V^*, V^*})_{3 4} (\braid^{-1}_{V, V^*})_{2 3} \EV_{4 5} = \opT_{1 2 3 4} \EV_{4 5}. \qedhere
\]
\end{proof}

In the following we suppose that $V$ is a simple module.
First we obtain some relations for the evaluations $\EV$ and $\EVp$ applied to the maps $\opS$ and $\opSt$.

\begin{lemma}
\label{lem:S-evaluation}
Let $V = V(\lambda)$ be a simple module. Then we have
\begin{equation}
\label{eq:S-evaluation}
\EVp_{1 2} \opS_{1 2 3} = q^{(\lambda, \lambda + 2 \rho)} \EV_{2 3}, \quad
\EVp_{3 4} \opSt_{2 3 4}^{-1} = q^{(\lambda, \lambda + 2 \rho)} \EV_{2 3}.
\end{equation}
From these identities we also obtain
\begin{equation}
\label{eq:S-evaluation2}
\EVp_{1 2} \opS_{1 2 3} \opSt_{2 3 4} = \EVp_{3 4}, \quad
\EV_{2 3} \opSt_{2 3 4} \opS_{3 4 5} = \EV_{4 5}.
\end{equation}
\end{lemma}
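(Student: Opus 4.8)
The plan is to prove the two identities of \eqref{eq:S-evaluation} by direct manipulation and then deduce \eqref{eq:S-evaluation2} as a purely formal consequence. The only non-formal input is \cref{lem:relation-evaluations}, which lets me trade $\EVp$ for $\EV$ at the cost of the scalar $q^{(\lambda, \lambda + 2\rho)}$; everything else is leg-bookkeeping with the braid equation \eqref{eq:braid-equation} and the naturality relations \eqref{eq:evaluations}.

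For the first identity I would start from $\opS_{1 2 3} = (\braid_{V, V^*})_{2 3}(\braid_{V, V})_{1 2}(\braid^{-1}_{V, V^*})_{2 3}$ and rewrite the leading evaluation via \cref{lem:relation-evaluations} in the form $\EVp_{1 2} = q^{(\lambda, \lambda + 2\rho)} \EV_{1 2} (\braid_{V, V^*})_{1 2}$. This produces the block $(\braid_{V, V^*})_{1 2}(\braid_{V, V^*})_{2 3}(\braid_{V, V})_{1 2}$, acting on $V \otimes V \otimes V^*$, which is precisely the left-hand side of the braid equation \eqref{eq:braid-equation} for the triple of modules $V, V, V^*$. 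Applying it rewrites the block as $(\braid_{V, V})_{2 3}(\braid_{V, V^*})_{1 2}(\braid_{V, V^*})_{2 3}$, and the trailing $(\braid_{V, V^*})_{2 3}$ then cancels the $(\braid^{-1}_{V, V^*})_{2 3}$ coming from $\opS$. What remains is $q^{(\lambda, \lambda + 2\rho)} \EV_{1 2}(\braid_{V, V})_{2 3}(\braid_{V, V^*})_{1 2}$, and this equals $q^{(\lambda, \lambda + 2\rho)} \EV_{2 3}$ by the naturality relation $\EV_{2 3} = \EV_{1 2}(\braid_{W, V})_{2 3}(\braid_{W, V^*})_{1 2}$ from \eqref{eq:evaluations} with $W = V$.

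The second identity is handled in the same spirit, now writing $\opSt_{2 3 4}^{-1} = (\braid_{V, V^*})_{2 3}(\braid_{V^*, V^*})_{3 4}(\braid^{-1}_{V, V^*})_{2 3}$ and replacing $\EVp_{3 4}$ by $q^{(\lambda, \lambda + 2\rho)}\EV_{3 4}(\braid_{V, V^*})_{3 4}$. This time the relevant block $(\braid_{V, V^*})_{3 4}(\braid_{V, V^*})_{2 3}(\braid_{V^*, V^*})_{3 4}$ matches the right-hand side of \eqref{eq:braid-equation}, shifted to legs $2,3,4$, for the triple $V, V^*, V^*$; rewriting it as the corresponding left-hand side and cancelling an inverse pair leaves $q^{(\lambda, \lambda + 2\rho)} \EV_{3 4}(\braid_{V^*, V^*})_{2 3}(\braid_{V, V^*})_{3 4}$, which is $q^{(\lambda, \lambda + 2\rho)}\EV_{2 3}$ by the relation $\EV_{2 3} = \EV_{3 4}(\braid_{V^*, W})_{2 3}(\braid_{V, W})_{3 4}$ from \eqref{eq:evaluations} with $W = V^*$.

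Finally \eqref{eq:S-evaluation2} follows formally. Rearranging the second identity gives $\EV_{2 3}\opSt_{2 3 4} = q^{-(\lambda, \lambda + 2\rho)}\EVp_{3 4}$, so that $\EVp_{1 2}\opS_{1 2 3}\opSt_{2 3 4} = q^{(\lambda, \lambda + 2\rho)}\EV_{2 3}\opSt_{2 3 4} = \EVp_{3 4}$, using the first identity and then this rearrangement; similarly $\EV_{2 3}\opSt_{2 3 4}\opS_{3 4 5} = q^{-(\lambda, \lambda + 2\rho)}\EVp_{3 4}\opS_{3 4 5} = \EV_{4 5}$, using the rearrangement followed by the first identity shifted to legs $3,4,5$. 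I expect the main obstacle to be the index bookkeeping rather than any conceptual difficulty: one must track which module sits in each leg at every stage so that the braid equation and the naturality relations \eqref{eq:evaluations} are invoked with the correct labels, and check that the spectator legs present when composing the identities genuinely play no role.
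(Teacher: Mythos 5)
Your proposal is correct and rests on the same two ingredients as the paper's proof, namely \cref{lem:relation-evaluations} and the naturality relations \eqref{eq:evaluations}, and your derivation of \eqref{eq:S-evaluation2} from \eqref{eq:S-evaluation} is exactly what the paper means by ``the other identities easily follow.'' The only difference is in proving \eqref{eq:S-evaluation} itself: the paper first slides $\EVp$ across the outer conjugating braiding using \eqref{eq:evaluations}, so the middle factor of $\opS$ (resp.\ $\opSt^{-1}$) cancels at once and the braid equation is never invoked, with \eqref{eq:identity-EEp} applied only at the last step; you instead convert $\EVp$ into $\EV$ at the outset and then need one application of the braid equation \eqref{eq:braid-equation} to untangle the resulting block --- both routes are valid and of comparable length.
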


\begin{proof}
Consider the first identity. Using \eqref{eq:evaluations} we compute
\[
\begin{split}
\EVp_{1 2} \opS_{1 2 3}
& = \EVp_{1 2} (\braid_{V, V^*})_{2 3} (\braid_{V, V})_{1 2} (\braid_{V, V^*}^{-1})_{2 3}
= \EVp_{2 3} (\braid_{V, V}^{-1})_{1 2} (\braid_{V, V})_{1 2} (\braid_{V, V^*}^{-1})_{2 3} \\
& = \EVp_{2 3} (\braid_{V, V^*}^{-1})_{2 3} = q^{(\lambda, \lambda + 2 \rho)} \EV_{2 3}.
\end{split}
\]
In the last step we have used \eqref{eq:identity-EEp}. Similarly, for the second identity we have
\[
\begin{split}
\EVp_{3 4} \opSt_{2 3 4}^{-1}
& = \EVp_{3 4} (\braid_{V, V^*})_{2 3} (\braid_{V^*, V^*})_{3 4} (\braid_{V, V^*}^{-1})_{2 3}
= \EVp_{2 3} (\braid_{V^*, V^*}^{-1})_{3 4} (\braid_{V^*, V^*})_{3 4} (\braid_{V, V^*}^{-1})_{2 3} \\
& = \EVp_{2 3} (\braid_{V, V^*}^{-1})_{2 3} = q^{(\lambda, \lambda + 2 \rho)} \EV_{2 3}.
\end{split}
\]
The other identities easily follow from these.
\end{proof}

The next result, again in the case of a simple module $V$, shows that we can get rid of the term $\opT_{1 2 3 4}$ when performing the evaluations $\EVp_{1 2} \EV_{2 3}$.

\begin{lemma}
\label{lem:identity-EpET}
Let $V = V(\lambda)$ be a simple module. Then we have
\begin{equation}
\label{eq:identity-EpET}
\EVp_{1 2} \EV_{2 3} \opT_{1 2 3 4} = \EVp_{1 2} \EV_{2 3}.
\end{equation}
\end{lemma}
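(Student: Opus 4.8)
The plan is to reduce the claim to a purely categorical identity by trading the evaluation $\EV_{2 3}$ for $\EVp_{2 3}$ using \cref{lem:relation-evaluations}, and then exploiting the explicit form of $\opT_{1 2 3 4}$. Concretely, \cref{lem:relation-evaluations} rearranges to $\EV_{2 3} = q^{-(\lambda, \lambda + 2 \rho)} \EVp_{2 3} (\braid^{-1}_{V, V^*})_{2 3}$, and I would apply this substitution on both sides of the desired identity. Since the same scalar $q^{-(\lambda, \lambda + 2 \rho)}$ appears on each side, it suffices to compare the two sides after the substitution.

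First I would rewrite the left-hand side. Recalling $\opT_{1 2 3 4} = (\braid_{V, V^*})_{2 3} (\braid_{V, V})_{1 2} (\braid^{-1}_{V^*, V^*})_{3 4} (\braid^{-1}_{V, V^*})_{2 3}$ from \eqref{eq:T-map}, the factor $(\braid^{-1}_{V, V^*})_{2 3}$ produced by the substitution cancels the leftmost factor $(\braid_{V, V^*})_{2 3}$ of $\opT_{1 2 3 4}$, giving
\[
\EVp_{1 2} \EV_{2 3} \opT_{1 2 3 4} = q^{-(\lambda, \lambda + 2 \rho)} \EVp_{1 2} \EVp_{2 3} (\braid_{V, V})_{1 2} (\braid^{-1}_{V^*, V^*})_{3 4} (\braid^{-1}_{V, V^*})_{2 3}.
\]
Applying the same substitution to the right-hand side gives $\EVp_{1 2} \EV_{2 3} = q^{-(\lambda, \lambda + 2 \rho)} \EVp_{1 2} \EVp_{2 3} (\braid^{-1}_{V, V^*})_{2 3}$. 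Cancelling the scalar and the common invertible factor $(\braid^{-1}_{V, V^*})_{2 3}$ on the right, the lemma reduces to
\[
\EVp_{1 2} \EVp_{2 3} (\braid_{V, V})_{1 2} (\braid^{-1}_{V^*, V^*})_{3 4} = \EVp_{1 2} \EVp_{2 3}
\]
as maps $V \otimes V \otimes V^* \otimes V^* \to \bbC$, where $\EVp_{1 2} \EVp_{2 3}$ is the nested double evaluation pairing the inner legs $(2,3)$ and the outer legs $(1,4)$.

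Finally I would prove this reduced identity by naturality of the braiding. The key observation is that, under the nested double cap $\EVp_{1 2} \EVp_{2 3}$, the inner pairing connects leg $2$ to leg $3$ and the outer pairing connects leg $1$ to leg $4$; sliding the crossing $(\braid_{V, V})_{1 2}$ of the two left strands around the caps, by means of the compatibility relations \eqref{eq:evaluations} and \eqref{eq:coevaluations}, turns it into the crossing of the two right strands, that is $\EVp_{1 2} \EVp_{2 3} (\braid_{V, V})_{1 2} = \EVp_{1 2} \EVp_{2 3} (\braid_{V^*, V^*})_{3 4}$. This is precisely the statement that the braiding is compatible with duality (the transpose of $\braid_{V, V}$ is $\braid_{V^*, V^*}$). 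Substituting it into the left-hand side, the factors $(\braid_{V^*, V^*})_{3 4}$ and $(\braid^{-1}_{V^*, V^*})_{3 4}$ cancel and the reduced identity follows. I expect this last naturality step, together with the careful bookkeeping of which legs each evaluation contracts under the leg-relabelling convention, to be the main obstacle; the remaining steps are formal cancellations.
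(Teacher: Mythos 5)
Your proposal is correct and follows essentially the same route as the paper: both begin by using \cref{lem:relation-evaluations} to trade $\EV_{2 3}$ for $q^{-(\lambda,\lambda+2\rho)}\EVp_{2 3}(\braid^{-1}_{V,V^*})_{2 3}$, thereby absorbing the outer conjugating factors of $\opT_{1 2 3 4}$, and both then slide the surviving crossings through the nested caps using the naturality relations \eqref{eq:evaluations}. The one step you defer as the expected obstacle --- the transpose identity $\EVp_{1 2}\EVp_{2 3}(\braid_{V,V})_{1 2} = \EVp_{1 2}\EVp_{2 3}(\braid_{V^*,V^*})_{3 4}$, with the positive crossing on both sides --- does hold and is precisely what the paper's two middle manipulations establish via the intermediate form $\EVp_{1 2}\EVp_{3 4}(\braid_{V,V^*})_{2 3}$, so your cancellation against $(\braid^{-1}_{V^*,V^*})_{3 4}$ goes through; note only that verifying it needs the inverse-braiding variants of the relations in \eqref{eq:evaluations} (the coevaluation relations \eqref{eq:coevaluations} are not actually used).
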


\begin{proof}
We have $\EV_{1 2} (\braid_{V, V^*})_{1 2} = q^{-(\lambda, \lambda + 2 \rho)} \EVp_{1 2}$ from \eqref{eq:identity-EEp}. Then
\[
\EVp_{1 2} \EV_{2 3} \opT_{1 2 3 4} = q^{-(\lambda, \lambda + 2 \rho)} \EVp_{1 2} \EVp_{2 3} (\braid^{-1}_{V^*, V^*})_{3 4} (\braid_{V, V})_{1 2} (\braid^{-1}_{V, V^*})_{2 3}.
\]
By \eqref{eq:evaluations} we have $\EVp_{2 3} (\braid^{-1}_{V^*, V^*})_{3 4} = \EVp_{3 4} (\braid_{V, V^*})_{2 3}$. We obtain
\[
\begin{split}
\EVp_{1 2} \EV_{2 3} \opT_{1 2 3 4} & = q^{-(\lambda, \lambda + 2 \rho)} \EVp_{1 2} \EVp_{3 4} (\braid_{V, V^*})_{2 3} (\braid_{V, V})_{1 2} (\braid^{-1}_{V, V^*})_{2 3} \\
& = q^{-(\lambda, \lambda + 2 \rho)} \EVp_{1 2} \EVp_{1 2} (\braid_{V, V^*})_{2 3} (\braid_{V, V})_{1 2} (\braid^{-1}_{V, V^*})_{2 3}.
\end{split}
\]
Again by \eqref{eq:evaluations} we have $\EVp_{1 2} (\braid_{V, V^*})_{2 3} = \EVp_{2 3} (\braid^{-1}_{V, V})_{1 2}$. Then
\[
\begin{split}
\EVp_{1 2} \EV_{2 3} \opT_{1 2 3 4} & = q^{-(\lambda, \lambda + 2 \rho)} \EVp_{1 2} \EVp_{2 3} (\braid^{-1}_{V, V})_{1 2} (\braid_{V, V})_{1 2} (\braid^{-1}_{V, V^*})_{2 3} \\
& = q^{-(\lambda, \lambda + 2 \rho)} \EVp_{1 2} \EVp_{2 3} (\braid^{-1}_{V, V^*})_{2 3} = \EVp_{1 2} \EV_{2 3}.
\end{split}
\]
In the last step we have used again \eqref{eq:identity-EEp}.
\end{proof}

Finally we need the following identity in the proof of \cref{prop:connection-plus}.

\begin{lemma}
\label{lem:identity-StESt}
Let $V = V(\lambda)$ be a simple module. Then we have
\begin{equation}
\label{eq:identity-StESt}
\opSt_{2 3 4} \EV_{4 5} \opSt_{4 5 6} = \EV_{4 5} \opSt_{4 5 6} \opSt_{2 3 4}.
\end{equation}
\end{lemma}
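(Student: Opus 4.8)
The plan is to reduce the whole identity to the single relation
\[
\EV_{4 5} \opSt_{4 5 6} = q^{-(\lambda, \lambda + 2 \rho)} \EVp_{5 6},
\]
after which the claim follows immediately from the fact that $\opSt_{2 3 4}$ and $\EVp_{5 6}$ act on disjoint tensor legs. So the real content is this factorized evaluation identity, and everything else is formal.

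First I would establish the relation above. The cleanest route reuses the two identities already packaged in \cref{lem:S-evaluation}: shifting the first identity of \eqref{eq:S-evaluation} by two positions gives $\EVp_{3 4} \opS_{3 4 5} = q^{(\lambda, \lambda + 2 \rho)} \EV_{4 5}$, while the second identity of \eqref{eq:S-evaluation2} reads $\EV_{2 3} \opSt_{2 3 4} \opS_{3 4 5} = \EV_{4 5}$. Equating the two expressions for $\EV_{4 5}$ and cancelling the invertible operator $\opS_{3 4 5}$ on the right yields $\EV_{2 3} \opSt_{2 3 4} = q^{-(\lambda, \lambda + 2 \rho)} \EVp_{3 4}$, which is the desired relation after a shift of indices. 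Alternatively one can argue directly: expanding $\opSt_{4 5 6}$ via \eqref{eq:S-maps}, the leftmost factor $(\braid_{V, V^*})_{4 5}$ combines with $\EV_{4 5}$ through \cref{lem:relation-evaluations} to give $q^{-(\lambda, \lambda + 2 \rho)} \EVp_{4 5}$; sliding this right evaluation past the remaining braidings by the naturality relation $\EVp_{4 5} (\braid^{-1}_{V^*, V^*})_{5 6} = \EVp_{5 6} (\braid_{V, V^*})_{4 5}$ from \eqref{eq:evaluations} leaves the surviving pair $(\braid_{V, V^*})_{4 5} (\braid^{-1}_{V, V^*})_{4 5}$, which cancels.

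With the factorized identity in hand I would substitute $\EV_{4 5} \opSt_{4 5 6} = q^{-(\lambda, \lambda + 2 \rho)} \EVp_{5 6}$ into both sides of \eqref{eq:identity-StESt}. The left-hand side becomes $q^{-(\lambda, \lambda + 2 \rho)} \opSt_{2 3 4} \EVp_{5 6}$ and the right-hand side becomes $q^{-(\lambda, \lambda + 2 \rho)} \EVp_{5 6} \opSt_{2 3 4}$. Since $\opSt_{2 3 4}$ involves only legs $2, 3, 4$ and $\EVp_{5 6}$ only legs $5, 6$, the two operators are supported on disjoint legs and therefore commute, so the two sides coincide and the lemma follows.

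The main obstacle is purely bookkeeping inside the factorized identity: one must track which legs carry $V$ and which carry $V^*$ as the braidings are applied, and be careful about the relabeling of legs induced by the evaluation, since the leg that survives $\EV_{4 5} \opSt_{4 5 6}$ is not literally the same physical leg as the one surviving $\EVp_{5 6}$, even though the two expressions define the same morphism. A good safeguard is that the identity feeds back consistently into \eqref{eq:S-evaluation2}, which confirms both the index shifts and the scalar $q^{-(\lambda, \lambda + 2 \rho)}$.
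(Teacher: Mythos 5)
Your argument is correct, and it is genuinely different from (and shorter than) the paper's. The paper proves \eqref{eq:identity-StESt} by inserting $\opS_{1 2 3} \opS_{1 2 3}^{-1}$, rewriting $\opSt_{2 3 4} \opS_{1 2 3} = \opT_{1 2 3 4}$, invoking $\opT_{1 2 3 4} \EV_{4 5} = \EV_{2 3} \opT_{3 4 5 6} \opT_{1 2 3 4}$ from \eqref{eq:identity-ETT}, the braid equation for $\opSt$ from \eqref{eq:S-braid-equation}, and finally $\EV_{2 3} \opS_{3 4 5} \opSt_{2 3 4} = \EV_{4 5}$ from \eqref{eq:S-evaluation2}. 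You instead observe that the composite $\EV_{4 5} \opSt_{4 5 6}$ collapses to the rescaled evaluation $q^{-(\lambda, \lambda + 2 \rho)} \EVp_{5 6}$, after which commutation with $\opSt_{2 3 4}$ is the interchange law for morphisms supported on disjoint legs (and since the evaluated legs sit to the right of legs $2,3,4$, no renumbering interferes). Your key identity is in fact just the second identity of \eqref{eq:S-evaluation} multiplied on the right by $\opSt_{2 3 4}$ and shifted by two positions, so your detour through \eqref{eq:S-evaluation2} — while correct — is not even needed; your direct derivation via \cref{lem:relation-evaluations} and the naturality relations \eqref{eq:evaluations} also checks out. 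The one subtlety you rightly flag, that the leg surviving $\EV_{4 5} \opSt_{4 5 6}$ is not the same physical leg as the one surviving $\EVp_{5 6}$, is harmless: the two composites are equal as morphisms with identical source and target (both surviving slots carry $V^*$), so post-composing with $\opSt_{2 3 4}$ preserves the equality. What your route buys is economy and a stronger statement for free — the same argument shows $\EV_{4 5} \opSt_{4 5 6}$ commutes with \emph{any} operator supported on legs $1$ through $4$ — whereas the paper's route stays within the $\opT$-calculus it uses systematically elsewhere and avoids singling out the collapse to an evaluation.
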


\begin{proof}
Taking into account that $\opT_{1 2 3 4} = \opSt_{2 3 4} \opS_{1 2 3}$ we write
\[
\opSt_{2 3 4} \EV_{4 5} \opSt_{4 5 6}
= \opSt_{2 3 4} \opS_{1 2 3} \opS_{1 2 3}^{-1} \EV_{4 5} \opSt_{4 5 6}
= \opT_{1 2 3 4} \EV_{4 5} \opS_{1 2 3}^{-1} \opSt_{4 5 6}.
\]
Using $\opT_{1 2 3 4} \EV_{4 5} = \EV_{2 3} \opT_{3 4 5 6} \opT_{1 2 3 4}$ from \eqref{eq:identity-ETT} this becomes
\[
\opSt_{2 3 4} \EV_{4 5} \opSt_{4 5 6}
= \EV_{2 3} \opT_{3 4 5 6} \opT_{1 2 3 4} \opS_{1 2 3}^{-1} \opSt_{4 5 6} 
= \EV_{2 3} \opS_{3 4 5} \opSt_{4 5 6} \opSt_{2 3 4} \opSt_{4 5 6}.
\]
Now we use the "braid equation" for $\opSt$ from \eqref{eq:S-braid-equation}. We get
\[
\opSt_{2 3 4} \EV_{4 5} \opSt_{4 5 6}
= \EV_{2 3} \opS_{3 4 5} \opSt_{2 3 4} \opSt_{4 5 6} \opSt_{2 3 4}.
\]
Finally we have $\EV_{2 3} \opS_{3 4 5} \opSt_{2 3 4} = \EV_{4 5}$ from \eqref{eq:S-evaluation2}, which gives the result.
\end{proof}

\subsection{Differential calculus identities}

We now derive various identities involving some elements of the differential calculus $\calc$.
In the following $V$ always denotes the simple module $V(\omega_s)$.
We begin with some identities involving the metric.

\begin{lemma}
\label{lem:identity-metric}
We have the following identities for the metric $\met$.
\begin{enumerate}
\item For $\metPM$ we have
\begin{equation}
\label{eq:identity-metPM}
\begin{split}
\metPM p & = q^{(\omega_s, \omega_s) - (\alpha_s, \alpha_s)} \EVp_{1 2} \opSt_{2 3 4} \del p \otimes \delbar p \\
& = q^{(\omega_s, \omega_s) - (\alpha_s, \alpha_s)} \EVp_{3 4} \opS_{1 2 3}^{-1} \del p \otimes \delbar p.
\end{split}
\end{equation}
\item For $\metMP$ we have
\begin{equation}
\label{eq:identity-metMP}
p \metMP = q^{(\omega_s, 2 \rho)} \EV_{2 3} \delbar p \otimes \del p.
\end{equation}
\end{enumerate}
\end{lemma}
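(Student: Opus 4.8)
The plan is to prove the two statements separately; in both cases the strategy is to push the generator $p$ through the appropriate leg of the metric and then collapse the resulting expression using the evaluation relations \eqref{eq:evaluations-calculi}, the right-module relations \eqref{eq:right-module}, and the categorical identities of \cref{lem:S-evaluation} and \cref{lem:identity-ETT}.

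For part (1), I would start from $\metPM p = \EVp_{1 2} \EV_{2 3} \del p \otimes \delbar p p$, pushing $p$ into the second leg. The key step is the right-module relation $\delbar p p = q^{(\omega_s, \omega_s) - (\alpha_s, \alpha_s)} \opSt_{2 3 4} p \delbar p$ from \eqref{eq:S-right-module}; since the factor $\delbar p p$ sits on legs $3,4,5,6$, its relabelled form $\opSt_{4 5 6}$ appears, giving $q^{(\omega_s, \omega_s) - (\alpha_s, \alpha_s)} \EVp_{1 2} \EV_{2 3} \opSt_{4 5 6} \del p \otimes p \delbar p$. Using that the tensor product is over $\subalg$, I would rewrite $\del p \otimes p \delbar p = \del p p \otimes \delbar p$, commute $\EV_{2 3}$ past $\opSt_{4 5 6}$ (disjoint legs, so it relabels to $\opSt_{2 3 4}$ once legs $2,3$ are contracted), and apply $\EV_{2 3} \del p p = \del p$ from \eqref{eq:evaluations-calculi}. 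This yields the first expression $\metPM p = q^{(\omega_s, \omega_s) - (\alpha_s, \alpha_s)} \EVp_{1 2} \opSt_{2 3 4} \del p \otimes \delbar p$. The second expression is then purely categorical: combining $\opS_{1 2 3} \opSt_{2 3 4} = \opSt_{2 3 4} \opS_{1 2 3}$ from \eqref{eq:S-commutation} with $\EVp_{1 2} \opS_{1 2 3} \opSt_{2 3 4} = \EVp_{3 4}$ from \eqref{eq:S-evaluation2} and right-multiplying by $\opS_{1 2 3}^{-1}$ gives $\EVp_{1 2} \opSt_{2 3 4} = \EVp_{3 4} \opS_{1 2 3}^{-1}$, which substitutes directly.

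For part (2), I would write $p \metMP = \EVp_{3 4} \EV_{4 5} (p \delbar p \otimes \del p)$, the evaluation legs having shifted by two to accommodate the prepended $p$, and set $Y := \EV_{4 5} (p \delbar p \otimes \del p)$. First I would trade the $\EVp$-contraction for an $\EV$-contraction: by \eqref{eq:S-evaluation} we have $\EVp_{3 4} = q^{(\omega_s, \omega_s + 2 \rho)} \EV_{2 3} \opSt_{2 3 4}$, while the left-module relation $\opS_{1 2 3} p \delbar p = q^{(\omega_s, \omega_s)} p \delbar p$ from \eqref{eq:S-action-left} gives $\opS_{1 2 3} Y = q^{(\omega_s, \omega_s)} Y$. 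Writing $\opSt_{2 3 4} = \opT_{1 2 3 4} \opS_{1 2 3}^{-1}$ and using this eigenvalue relation turns $p \metMP$ into $q^{(\omega_s, 2 \rho)} \EV_{2 3} \opT_{1 2 3 4} Y$. The remaining task is to show $\EV_{2 3} \opT_{1 2 3 4} Y = \EV_{2 3} \delbar p \otimes \del p$: here I would invoke \eqref{eq:identity-ETT} in the form $\opT_{1 2 3 4} \EV_{4 5} = \EV_{2 3} \opT_{3 4 5 6} \opT_{1 2 3 4}$ and then apply the right-module relations \eqref{eq:right-module} twice (moving a $p$ across $\otimes_\subalg$ in between), which collapse $\opT_{3 4 5 6} \opT_{1 2 3 4} (p \delbar p \otimes \del p)$ to $\delbar p \otimes \del p p$; a final application of $\EV_{2 3} \del p p = \del p$ then gives the claim.

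The main obstacle in both parts is the bookkeeping of the leg-indices: each contraction relabels the maps to its right, and one must track carefully which legs belong to $p$, $\del p$ and $\delbar p$ as factors are transported across the balanced tensor product $\otimes_\subalg$. Part (1) is the more transparent, since the right-module relations \eqref{eq:S-right-module} are adapted to right multiplication; part (2) instead requires threading the evaluation identity \eqref{eq:S-evaluation}, the eigenvalue relation for $\opS_{1 2 3}$, and the double application of \eqref{eq:identity-ETT} in the correct order, and it is precisely this last collapse of the two copies of $\opT$ via the right-module relations where I expect the computation to be most delicate.
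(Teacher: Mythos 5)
Your proposal is correct and follows essentially the same route as the paper's proof: part (1) via the right-module relation $\opSt_{4 5 6}$, transport of $p$ across $\otimes_\subalg$, and collapse with $\EV_{2 3}\del p\, p = \del p$, with the second expression obtained from the identity $\EVp_{1 2}\opSt_{2 3 4} = \EVp_{3 4}\opS_{1 2 3}^{-1}$ (your derivation from \eqref{eq:S-evaluation2} is equivalent to the paper's use of \eqref{eq:S-evaluation}); part (2) via $\EVp_{3 4} = q^{(\omega_s,\omega_s+2\rho)}\EV_{2 3}\opSt_{2 3 4}$, the factorization $\opSt_{2 3 4} = \opT_{1 2 3 4}\opS_{1 2 3}^{-1}$ with the eigenvalue relation from \eqref{eq:S-action-left}, the identity \eqref{eq:identity-ETT}, and the cancelling pair of right-module relations. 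The leg-bookkeeping steps you flag as delicate are exactly the ones the paper carries out, and your handling of them is sound.
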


\begin{proof}
(1) Using the right $\subalg$-module relations \eqref{eq:S-right-module} and \eqref{eq:evaluations-calculi} we compute
\[
\begin{split}
\EV_{2 3} \del p \otimes \delbar p p
& = q^{(\omega_s, \omega_s) - (\alpha_s, \alpha_s)} \EV_{2 3} \opSt_{4 5 6} \del p p \otimes \delbar p \\
& = q^{(\omega_s, \omega_s) - (\alpha_s, \alpha_s)} \opSt_{2 3 4} \EV_{2 3} \del p p \otimes \delbar p \\
& = q^{(\omega_s, \omega_s) - (\alpha_s, \alpha_s)} \opSt_{2 3 4} \del p \otimes \delbar p.
\end{split}
\]
Applying $\EVp_{1 2}$ we get $\metPM p = q^{(\omega_s, \omega_s) - (\alpha_s, \alpha_s)} \EVp_{1 2} \opSt_{2 3 4} \del p \otimes \delbar p$.
The second expression can be obtained from the first one, since using \eqref{eq:S-evaluation} we can rewrite
\[
\EVp_{1 2} \opSt_{2 3 4}
= q^{(\omega_s, \omega_s + 2 \rho)} \EV_{2 3} \opS_{1 2 3}^{-1} \opSt_{2 3 4}
= q^{(\omega_s, \omega_s + 2 \rho)} \EV_{2 3} \opSt_{2 3 4} \opS_{1 2 3}^{-1}
= \EVp_{3 4} \opS_{1 2 3}^{-1}.
\]

(2) Using $\EVp_{3 4} = q^{(\omega_s, \omega_s + 2 \rho)} \EV_{2 3} \opSt_{2 3 4}$ from \eqref{eq:S-evaluation} we write
\[
p \metMP = \EVp_{3 4} \EV_{4 5} p \delbar p \otimes \del p = q^{(\omega_s, \omega_s + 2 \rho)} \EV_{2 3} \opSt_{2 3 4} \EV_{4 5} p \delbar p \otimes \del p.
\]
Since $\opT_{1 2 3 4} = \opSt_{2 3 4} \opS_{1 2 3}$ and $\opS^{-1}_{1 2 3} p \delbar p = q^{-(\omega_s, \omega_s)} p \delbar p$ from \eqref{eq:S-action-left}, we get
\[
\begin{split}
p \metMP & = q^{(\omega_s, \omega_s + 2 \rho)} \EV_{2 3} \opT_{1 2 3 4} \opS_{1 2 3}^{-1} \EV_{4 5} p \delbar p \otimes \del p \\
& = q^{(\omega_s, \omega_s + 2 \rho)} \EV_{2 3} \opT_{1 2 3 4} \EV_{4 5} \opS_{1 2 3}^{-1} p \delbar p \otimes \del p \\
& = q^{(\omega_s, 2 \rho)} \EV_{2 3} \opT_{1 2 3 4} \EV_{4 5} p \delbar p \otimes \del p.
\end{split}
\]
Next, we can use the identity $\opT_{1 2 3 4} \EV_{4 5} = \EV_{2 3} \opT_{3 4 5 6} \opT_{1 2 3 4}$ from \eqref{eq:identity-ETT}.
Finally, taking into account the right $\subalg$-module relations \eqref{eq:right-module} and $\EV_{2 3} \del p p = \del p$, we get
\[
\begin{split}
p \metMP & = q^{(\omega_s, 2 \rho)} \EV_{2 3} \EV_{2 3} \opT_{3 4 5 6} \opT_{1 2 3 4} p \delbar p \otimes \del p = q^{(\omega_s, 2 \rho)} \EV_{2 3} \EV_{2 3} \delbar p \otimes \del p p \\
& = q^{(\omega_s, 2 \rho)} \EV_{2 3} \EV_{4 5} \delbar p \otimes \del p p = q^{(\omega_s, 2 \rho)} \EV_{2 3} \delbar p \otimes \del p. \qedhere
\end{split}
\]
\end{proof}

The next two identities we discuss appear in the proof of \cite[Proposition 3.11]{heko}.
The first one lets us rewrite $\del \delbar p$ as a product of one-forms.

\begin{lemma}
\label{lem:deldelbar-formula}
We have
\begin{equation}
\label{eq:deldelbarp}
\del \delbar p = \EV_{2 3} (\del p \wedge \delbar p + \delbar p \wedge \del p).
\end{equation}
Moreover we have
\[
\EV_{2 3} p \del \delbar p = \EV_{2 3} \del \delbar p p = \EV_{2 3} \delbar p \wedge \del p.
\]
\end{lemma}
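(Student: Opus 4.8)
The plan is to obtain both statements by differentiating the ``projection relations'' \eqref{eq:evaluations-calculi} and exploiting the fact that the Heckenberger--Kolb calculus carries the bidegree decomposition $\diff = \del + \delbar$ in all degrees. Concretely, the element $\EV_{2 3} p \del p$ has bidegree $(1,0)$, so applying $\diff$ to the relation $\EV_{2 3} p \del p = 0$ and collecting the $(1,1)$-component is the same as applying $\delbar$ and setting the result to zero; dually, $\del$ extracts the $(1,1)$-part of the differential of a $(0,1)$-form. Throughout I would use the graded Leibniz rule together with $\del^2 = \delbar^2 = 0$ and $\del \delbar = - \delbar \del$.

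I would prove the second statement first. Applying $\delbar$ to $\EV_{2 3} p \del p = 0$ and using $\delbar(p \cdot \del p) = \delbar p \wedge \del p + p\, \delbar \del p = \delbar p \wedge \del p - p\, \del \delbar p$ gives $\EV_{2 3} \delbar p \wedge \del p - \EV_{2 3} p\, \del \delbar p = 0$, that is $\EV_{2 3} p\, \del \delbar p = \EV_{2 3} \delbar p \wedge \del p$. Dually, applying $\del$ to $\EV_{2 3} \delbar p\, p = 0$ and using $\del(\delbar p \cdot p) = (\del \delbar p) p - \delbar p \wedge \del p$ yields $\EV_{2 3} (\del \delbar p) p = \EV_{2 3} \delbar p \wedge \del p$. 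Together these establish the chain of equalities $\EV_{2 3} p\, \del \delbar p = \EV_{2 3} (\del \delbar p) p = \EV_{2 3} \delbar p \wedge \del p$.

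For the first statement, I would differentiate the relation $\EV_{2 3} p \delbar p = \delbar p$ by $\del$. Since $p$ has degree zero, the Leibniz rule gives $\del(p \cdot \delbar p) = \del p \wedge \delbar p + p\, \del \delbar p$, hence $\del \delbar p = \EV_{2 3} \del p \wedge \delbar p + \EV_{2 3} p\, \del \delbar p$. Substituting $\EV_{2 3} p\, \del \delbar p = \EV_{2 3} \delbar p \wedge \del p$ from the previous paragraph yields exactly $\del \delbar p = \EV_{2 3}(\del p \wedge \delbar p + \delbar p \wedge \del p)$.

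The computations are short; the only points requiring care are the Koszul signs in the graded Leibniz rule, in particular the sign in $\delbar \del p = - \del \delbar p$, which is what turns the differentiated relations into statements about $\del \delbar p$, and the justification that applying $\delbar$ (resp. $\del$) to a $(1,0)$- (resp. $(0,1)$-) relation and setting the outcome to zero is legitimate. The latter follows because $\diff = \del + \delbar$ respects the bidegree, so the $(1,1)$-component of $\diff(0) = 0$ must vanish on its own. I expect the sign bookkeeping to be the only mild obstacle. Note that no use is made of the quadratic (projective-space) hypothesis, so these identities hold for every quantum irreducible flag manifold.
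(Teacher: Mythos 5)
Your proposal is correct and follows essentially the same route as the paper: both arguments rest on applying $\del$ to $\delbar p = \EV_{2 3}\, p\, \delbar p$ and $\delbar$ to $\EV_{2 3}\, p\, \del p = 0$, with the sign from $\del\delbar = -\delbar\del$ doing the work. The only cosmetic difference is that you obtain $\EV_{2 3}\, \del\delbar p\, p = \EV_{2 3}\, \delbar p \wedge \del p$ by differentiating $\EV_{2 3}\, \delbar p\, p = 0$ directly, whereas the paper deduces it from the already-established formula for $\del\delbar p$ together with \eqref{eq:evaluations-calculi}; both are valid.
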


\begin{proof}
Using the identity $\delbar p = \EV_{2 3} p \delbar p$ from \eqref{eq:evaluations-calculi} and the Leibniz rule we have
\[
\del \delbar p = \EV_{2 3} \del(p \delbar p) = \EV_{2 3} (\del p \wedge \delbar p + p \del \delbar p).
\]
Now we apply $\delbar$ to $\EV_{2 3} p \del p = 0$.
Using $\del \delbar = - \delbar \del$ we get $\EV_{2 3} p \del \delbar p = \EV_{2 3} \delbar p \wedge \del p$.
Plugging this into the expression above we get the result for $\del \delbar p$.

For the other identities, using the formula above we compute
\[
\begin{split}
\EV_{2 3} p \del \delbar p & = \EV_{2 3} (E_V)_{4 5} (p \del p \wedge \delbar p + p \delbar p \wedge \del p) \\
& = \EV_{2 3} \EV_{2 3} (p \del p \wedge \delbar p + p \delbar p \wedge \del p) \\
& = \EV_{2 3} \delbar p \wedge \del p.
\end{split}
\]
In the last step we have used \eqref{eq:evaluations-calculi}.
The identity $\EV_{2 3} \del \delbar p p = \EV_{2 3} \delbar p \wedge \del p$ follows similarly.
\end{proof}

The second identity from \cite[Proposition 3.11]{heko} is a right $\subalg$-module relation for $\del \delbar p$.

\begin{lemma}
\label{lem:deldelbar-pp}
We have $\del \delbar p p = \opT_{1 2 3 4} p \del \delbar p$.
\end{lemma}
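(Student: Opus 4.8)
The plan is to start from the explicit expression for $\del \delbar p$ as a product of one-forms and then transport the extra factor $p$ to the left using the right $\subalg$-module relations. By \cref{lem:deldelbar-formula} we have $\del \delbar p = \EV_{2 3}(\del p \wedge \delbar p + \delbar p \wedge \del p)$, so that
\[
\del \delbar p\, p = \EV_{2 3}\big[(\del p \wedge \delbar p + \delbar p \wedge \del p)\, p\big],
\]
where I regard the two wedge products as occupying legs $1,2,3,4$ and the factor $p$ as occupying legs $5,6$; this is legitimate because $\EV_{2 3}$ touches only legs $2,3$ and hence commutes with right multiplication by $p$ on the far legs.

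First I would move $p$ to the left through each wedge product using the relations $\del p\, p = q^{(\alpha_s, \alpha_s)} \opT_{1 2 3 4}\, p\, \del p$ and $\delbar p\, p = q^{-(\alpha_s, \alpha_s)} \opT_{1 2 3 4}\, p\, \delbar p$ from \eqref{eq:right-module}. Transporting $p$ past the two one-forms in either order produces the factor $\opT_{3 4 5 6} \opT_{1 2 3 4}$, and crucially the scalars $q^{(\alpha_s, \alpha_s)}$ and $q^{-(\alpha_s, \alpha_s)}$ cancel for both terms, giving
\[
(\del p \wedge \delbar p + \delbar p \wedge \del p)\, p = \opT_{3 4 5 6} \opT_{1 2 3 4}\, p\, (\del p \wedge \delbar p + \delbar p \wedge \del p),
\]
now with $p$ on legs $1,2$ and the wedge block on legs $3,4,5,6$.

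The key categorical input is then \cref{lem:identity-ETT}, which gives $\EV_{2 3} \opT_{3 4 5 6} \opT_{1 2 3 4} = \opT_{1 2 3 4} \EV_{4 5}$. Applying this and commuting $\EV_{4 5}$ past $p$ (their legs are disjoint), I would obtain
\[
\del \delbar p\, p = \opT_{1 2 3 4}\, p\, \EV_{4 5}(\del p \wedge \delbar p + \delbar p \wedge \del p).
\]
Since $\EV_{4 5}$ contracts the two inner legs of the wedge block sitting on legs $3,4,5,6$, it reproduces exactly the defining formula \eqref{eq:deldelbarp} for $\del \delbar p$, so the right-hand side equals $\opT_{1 2 3 4}\, p\, \del \delbar p$, which is the claim.

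I expect the only delicate point to be the leg bookkeeping: one must check that the evaluation maps genuinely commute with right multiplication by $p$ on disjoint legs, and that transporting $p$ through the wedge products preserves the antisymmetrized structure so that the two summands acquire the same prefactor. Everything else is a routine application of the right-module relations together with the identity \eqref{eq:identity-ETT}.
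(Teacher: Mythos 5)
Your proposal is correct and follows essentially the same route as the paper: expand $\del\delbar p$ via \eqref{eq:deldelbarp}, transport $p$ to the left with \eqref{eq:right-module} (where the scalars $q^{\pm(\alpha_s,\alpha_s)}$ indeed cancel in each summand, yielding the factor $\opT_{3 4 5 6}\opT_{1 2 3 4}$), and then apply the identity $\EV_{2 3} \opT_{3 4 5 6} \opT_{1 2 3 4} = \opT_{1 2 3 4} \EV_{4 5}$ from \eqref{eq:identity-ETT} to reassemble $\del\delbar p$. The leg bookkeeping you flag as the delicate point works out exactly as you describe.
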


\begin{proof}
Using the identity for $\del \delbar p$ from \eqref{eq:deldelbarp} and the bimodule relations \eqref{eq:right-module} we get
\[
\del \delbar p p = \EV_{2 3} \opT_{3 4 5 6} \opT_{1 2 3 4} p (\del p \wedge \delbar p + \delbar p \wedge \del p).
\]
Then, using $\EV_{2 3} \opT_{3 4 5 6} \opT_{1 2 3 4} = \opT_{1 2 3 4} \EV_{4 5}$ from \eqref{eq:identity-ETT}, we obtain
\[
\del \delbar p p = \opT_{1 2 3 4} \EV_{4 5} p (\del p \wedge \delbar p + \delbar p \wedge \del p) = \opT_{1 2 3 4} p \del \delbar p. \qedhere
\]
\end{proof}

Finally, the next identity lets us rewrite $\del p \wedge \delbar p$ in terms of $\delbar p \wedge \del p$ under the evaluation $\EV_{2 3}$.
We have used it in the computation of the torsion in \cref{prop:torsion}.

\begin{lemma}
\label{lem:identity-torsion}
We have
\[
q^{(\alpha_s, \alpha_s)} \EV_{2 3} \opT_{1 2 3 4} \delbar p \wedge \del p = - \EV_{2 3} \del p \wedge \delbar p + (q^{(\alpha_s, \alpha_s)} - 1) \EV_{2 3} \delbar p \wedge \del p.
\]
\end{lemma}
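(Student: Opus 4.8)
The plan is to begin by using \eqref{eq:deldelbarp} to eliminate the symmetric combination. Since that identity gives $\EV_{23}\del p\wedge\delbar p=\del\delbar p-\EV_{23}\delbar p\wedge\del p$, substituting it into the right-hand side shows that the asserted identity is equivalent to the single relation $q^{(\alpha_s,\alpha_s)}\EV_{23}(\opT_{1234}-1)\delbar p\wedge\del p=-\del\delbar p$. The whole difficulty is therefore concentrated in evaluating $\EV_{23}\opT_{1234}\delbar p\wedge\del p$, and I would attack this using the factorization $\opT_{1234}=\opS_{123}\opSt_{234}$ from \cref{prop:S-properties} together with the quadratic (Hecke) relation for the braiding.

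First I would linearize the outer evaluation. Writing $\EV_{23}\opT_{1234}=\EV_{23}\opS_{123}\opSt_{234}$ and inserting the quadratic relation \eqref{eq:S-quadratic1} for $\opS_{123}$, combined with $\EV_{23}\opS_{123}^{-1}=q^{-(\omega_s,\omega_s+2\rho)}\EVp_{12}$ from \eqref{eq:S-evaluation}, splits $\EV_{23}\opS_{123}$ into a multiple of $\EVp_{12}$ plus a multiple of $\EV_{23}$, exactly as in \eqref{eq:identity-compatibility}. The coefficient $q^{(\alpha_s,\alpha_s)}-1$ appearing in the statement is precisely the shadow of this Hecke splitting, so this is the step that manufactures the correct constant.

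Next I would dispose of the two resulting pieces. The $\EV_{23}$-piece involves $\EV_{23}\opSt_{234}$, which by \cref{lem:relation-evaluations} together with naturality of the braiding (the relations \eqref{eq:evaluations}) collapses to $q^{-(\omega_s,\omega_s+2\rho)}\EVp_{34}$; applied to $\delbar p\wedge\del p$ this vanishes, because $\EVp_{12}\del p=0$ by the last relation of \eqref{eq:calcP-relations} (here with shifted legs). Thus only the $\EVp_{12}$-piece survives. For it I would use $\EVp_{12}\opSt_{234}=\EVp_{34}\opS_{123}^{-1}$, obtained by combining the two evaluations in \eqref{eq:S-evaluation}, and then bring in an auxiliary factor $p$ through the relations $\EV_{23}\del p\,p=\del p$ and $\EV_{23}p\,\delbar p=\delbar p$ of \eqref{eq:evaluations-calculi}, moving it across with the right-module relations \eqref{eq:right-module} and sliding the evaluation past the resulting $\opT$'s by means of \eqref{eq:identity-ETT}. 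The surviving $\del p$- and $\delbar p$-terms then recombine, via \eqref{eq:deldelbarp} read in reverse, into $-q^{-(\alpha_s,\alpha_s)}\del\delbar p$ together with the expected multiple of $\EV_{23}\delbar p\wedge\del p$.

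The main obstacle is the leg- and type-bookkeeping: the operators $\opS$, $\opSt$, $\EV$, $\EVp$ repeatedly share a tensor leg, so naturality must be applied at exactly the right place, and one must be careful that a naive attempt to introduce or remove the factor $p$ through \eqref{eq:evaluations-calculi} and \eqref{eq:identity-ETT} does not simply run in a circle. The decisive non-circular input is the asymmetry of the relations \eqref{eq:evaluations-calculi}, namely $\EV_{23}\del p\,p=\del p$ but $\EV_{23}\delbar p\,p=0$, and dually for $p\,\del p$ and $p\,\delbar p$: this is what breaks the symmetry between $\del p\wedge\delbar p$ and $\delbar p\wedge\del p$ and, in conjunction with the Hecke splitting above, yields the asymmetric coefficients $-1$ and $q^{(\alpha_s,\alpha_s)}-1$.
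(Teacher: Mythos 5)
Your opening reduction is correct: substituting $\EV_{2 3} \del p \wedge \delbar p = \del \delbar p - \EV_{2 3} \delbar p \wedge \del p$ from \eqref{eq:deldelbarp} does turn the claim into $q^{(\alpha_s, \alpha_s)} \EV_{2 3} (\opT_{1 2 3 4} - 1) \delbar p \wedge \del p = - \del \delbar p$. Your next two observations are also sound: the Hecke splitting of $\EV_{2 3} \opS_{1 2 3}$ as in \eqref{eq:identity-compatibility} is valid, and the $\EV_{2 3}$-piece does vanish because $\EV_{2 3} \opSt_{2 3 4} = q^{-(\omega_s, \omega_s + 2\rho)} \EVp_{3 4}$ kills $\del p$ in legs $3,4$. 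But note that this already undercuts your own claim that the coefficient $q^{(\alpha_s, \alpha_s)} - 1$ is "the shadow of the Hecke splitting": the piece of the splitting carrying the factor $(1 - q^{-(\alpha_s,\alpha_s)})$ is exactly the piece you have just shown to vanish. What survives is $\EV_{2 3} \opT_{1 2 3 4} \delbar p \wedge \del p = q^{-(\omega_s,2\rho)} q^{(\omega_s,\omega_s)-(\alpha_s,\alpha_s)} \EVp_{1 2} \opSt_{2 3 4} \delbar p \wedge \del p$, so everything you have done so far is a rewriting of the left-hand side; none of it yet touches the right-hand side. The entire content of the lemma is now concentrated in proving
\[
\EVp_{1 2} \opSt_{2 3 4} \, \delbar p \wedge \del p = q^{(\omega_s, 2\rho) - (\omega_s, \omega_s)} \bigl( - \del \delbar p + q^{(\alpha_s, \alpha_s)} \EV_{2 3} \delbar p \wedge \del p \bigr),
\]
and this is precisely the step your proposal only gestures at ("bring in an auxiliary factor $p$ \ldots the surviving terms then recombine \ldots into the expected multiple"). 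That evaluation is not routine: $\opSt_{2 3 4}$ straddles one leg of $\delbar p$ and both legs of $\del p$, so neither of the eigenvalue relations \eqref{eq:S-action-left}--\eqref{eq:S-action-right} applies directly, and inserting $p$ via \eqref{eq:evaluations-calculi} forces the evaluations and the $\opS$, $\opSt$ operators onto non-adjacent legs, which is exactly the bookkeeping trap you flag but do not resolve. As it stands this is a genuine gap, not a detail.

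The paper avoids all of this with a two-line argument in a different direction: apply $\delbar$ to the right-module relation $\del p \, p = q^{(\alpha_s, \alpha_s)} \opT_{1 2 3 4} p \del p$ from \eqref{eq:right-module}, which by the graded Leibniz rule gives $\delbar \del p \, p - \del p \wedge \delbar p = q^{(\alpha_s, \alpha_s)} \opT_{1 2 3 4} (\delbar p \wedge \del p + p \, \delbar \del p)$; then \cref{lem:deldelbar-pp} identifies $\opT_{1 2 3 4} p \, \del \delbar p$ with $\del \delbar p \, p$, so the two $\delbar\del p\, p$ terms combine into the factor $(1 - q^{(\alpha_s,\alpha_s)})$, and applying $\EV_{2 3}$ together with $\EV_{2 3} \del \delbar p \, p = \EV_{2 3} \delbar p \wedge \del p$ from \cref{lem:deldelbar-formula} finishes the proof. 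This is where the asymmetric coefficients actually come from — the right-module relation for $\del p$ versus the two-sided relation for $\del\delbar p$ — rather than from the Hecke splitting. If you want to salvage your route, you must carry out the evaluation of $\EVp_{1 2} \opSt_{2 3 4} \delbar p \wedge \del p$ explicitly and verify the stated multiple; otherwise I would recommend switching to the differentiation argument.
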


\begin{proof}
Applying $\delbar$ to $\del p p = q^{(\alpha_s, \alpha_s)} \opT_{1 2 3 4} p \del p$ gives
\[
\delbar \del p p - \del p \wedge \delbar p = q^{(\alpha_s, \alpha_s)} \opT_{1 2 3 4} \delbar p \wedge \del p + q^{(\alpha_s, \alpha_s)} \opT_{1 2 3 4} p \delbar \del p.
\]
Using \cref{lem:deldelbar-pp} we rewrite this as
\[
q^{(\alpha_s, \alpha_s)} \opT_{1 2 3 4} \delbar p \wedge \del p = - \del p \wedge \delbar p + (1 - q^{(\alpha_s, \alpha_s)}) \delbar \del p p.
\]
Now we use the identity $\EV_{2 3} \del \delbar p p = \EV_{2 3} \delbar p \wedge \del p$ from \cref{lem:deldelbar-formula}, taking into account that $\del \delbar = - \delbar \del$.
This gives the result.
\end{proof}

\section{Bimodule maps}
\label{sec:bimodule-maps}

In this appendix we introduce various bimodule maps, which in the main text were used to define the inverse metric and check the bimodule property of the connections.

\subsection{Inverse metric}

First we introduce certain $\alg$-bimodule maps involving the FODCs $\overcalcP$ and $\overcalcM$ over $\alg$.
We assume that we are in the \emph{quadratic case}, which means that the only relations are as in \eqref{eq:overcalc-quadratic}.
For this part the tensor products are taken over $\bbC$, unless specified.
We denote by $\FovercalcP$ and $\FovercalcM$ the \emph{free} left $\alg$-modules generated by $\del f$ and $\delbar v$ respectively, with $\alg$-bimodule structures given by \eqref{eq:overcalcP-right} and \eqref{eq:overcalcM-right}.

\begin{lemma}
Define the $\alg$-bimodule maps
\[
\PhiPM: \FovercalcP \otimes \FovercalcM \to \alg, \quad
\PhiMP: \FovercalcM \otimes \FovercalcP \to \alg,
\]
by the formulae
\[
\PhiPM(\del f \otimes \delbar v) = \CV_1, \quad
\PhiMP(\delbar v \otimes \del f) = \CVp_1.
\]
Then they descend to maps on the tensor product over $\subalg$.
\end{lemma}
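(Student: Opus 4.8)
The plan is to handle $\PhiPM$ and $\PhiMP$ in parallel, first confirming that they are genuine $\alg$-bimodule maps and then that they are balanced over $\subalg$. Since $\FovercalcP$ and $\FovercalcM$ are \emph{free} left $\alg$-modules, the natural thing is to take $\PhiPM$ and $\PhiMP$ to be left $\alg$-module maps extending the coevaluation pairing given on the generators, so that left $\alg$-linearity holds by construction. The only genuine content for the bimodule property is therefore right $\alg$-linearity, and since $\alg$ is generated by $f$ and $v$ it is enough to check compatibility with right multiplication by these two generators.

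For $\PhiPM$ I would verify $\PhiPM(\del f \otimes \delbar v \cdot g) = \CV_1 \cdot g$ for $g \in \{f, v\}$. Using the right $\alg$-module relations \eqref{eq:overcalcM-right} one rewrites $\delbar v \cdot g$ as a braiding applied to $g\,\delbar v$; the braiding is then moved past the coevaluation $\CV_1$ by means of the naturality identities \eqref{eq:coevaluations}, and the result is collapsed with the duality relations \eqref{eq:duality}. The key point is that the braided right-module structure in \eqref{eq:overcalcM-right} is arranged precisely so that the coevaluation pairing is preserved, whence the two sides agree. The argument for $\PhiMP$ is entirely analogous, now using \eqref{eq:overcalcP-right} and the coevaluation $\CVp_1$ in place of $\CV_1$.

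For the descent to $\otimes_\subalg$ I would show that $\PhiPM$ is balanced over $\subalg$, i.e. $\PhiPM(\del f \cdot b \otimes \delbar v) = \PhiPM(\del f \otimes b \cdot \delbar v)$ for all $b \in \subalg$. As $\subalg$ is generated by $p = fv$, it suffices to take $b = p$. Expanding $\del f \cdot p = (\del f \cdot f)\cdot v$ through \eqref{eq:overcalcP-right} and $p \cdot \delbar v = fv\,\delbar v$, applying $\PhiPM$ and absorbing the resulting braidings with \eqref{eq:coevaluations} and \eqref{eq:duality}, both expressions reduce to the same element of $\alg$; the braidings produced by moving $p$ through the right action of $\FovercalcP$ are exactly undone by the coevaluation, reproducing the left action of $p$ on $\FovercalcM$. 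It is worth noting that this balancing holds only for $\subalg$ and not for all of $\alg$: the right action of a single generator $f$ or $v$ carries a different $q$-twist than the corresponding left action, and it is only in the combination $p = fv$ that these twists cancel. The same computation with $\CVp_1$ handles $\PhiMP$.

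The main obstacle I expect is the braiding bookkeeping: applying the naturality relations \eqref{eq:coevaluations} and the duality \eqref{eq:duality} in the correct order so that the coevaluation pairing is seen to be compatible with the braiding-twisted module structures, both in the right $\alg$-linearity check and in the $\subalg$-balancing computation with $p = fv$. Everything else is routine once this compatibility is set up.
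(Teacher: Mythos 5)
Your proposal is correct and follows essentially the same route as the paper: the descent is checked by balancing over the generator $p = fv$, expanding $\del f\, f$ and $\del f\, v$ via \eqref{eq:overcalcP-right}, moving the resulting braidings past the coevaluation with \eqref{eq:coevaluations}, and observing that the opposite $q$-twists $q^{\pm(\alpha_s,\alpha_s)}$ produced by $f$ and $v$ separately cancel exactly in the combination $p = fv$. The only cosmetic difference is that the paper treats the bimodule-map property on the free modules $\FovercalcP$, $\FovercalcM$ as part of the definition rather than verifying right $\alg$-linearity separately as you propose.
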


\begin{proof}
To prove that $\PhiPM$ descends to a map $\FovercalcP \otimes_\subalg \FovercalcM \to \alg$ we need to show the equality $\PhiPM(\del f p \otimes \delbar v) = \PhiPM(\del f \otimes p \delbar v)$, where $p = f v \in \subalg$ are the generators of $\subalg$.

Using the relations from \eqref{eq:overcalcP-right} and \eqref{eq:overcalcM-right} we compute
\[
\PhiPM(\del f f \otimes \delbar v)
= q^{(\alpha_s, \alpha_s) - (\omega_s, \omega_s)} (\braid_{V, V})_{1 2} \PhiPM(f \del f \otimes \delbar v)
= q^{(\alpha_s, \alpha_s) - (\omega_s, \omega_s)} (\braid_{V, V})_{1 2} \CV_2 f.
\]
We have $(\braid_{V, V})_{1 2} \CV_2 = (\braid^{-1}_{V, V^*})_{2 3} \CV_1$ from \eqref{eq:coevaluations}. Then
\[
\begin{split}
\PhiPM(\del f f \otimes \delbar v)
& = q^{(\alpha_s, \alpha_s) - (\omega_s, \omega_s)} (\braid^{-1}_{V, V^*})_{2 3} \CV_1 f
= q^{(\alpha_s, \alpha_s) - (\omega_s, \omega_s)} (\braid^{-1}_{V, V^*})_{2 3} \PhiPM(\del f \otimes \delbar v f) \\
& = q^{(\alpha_s, \alpha_s)} \PhiPM(\del f \otimes f \delbar v).
\end{split}
\]
Similarly we compute
\[
\PhiPM(\del f v \otimes \delbar v)
= q^{-(\omega_s, \omega_s)} (\braid^{-1}_{V, V^*})_{1 2} \PhiPM(v \del f \otimes \delbar v)
= q^{-(\omega_s, \omega_s)} (\braid^{-1}_{V, V^*})_{1 2} \CV_2 v.
\]
We have $(\braid^{-1}_{V, V^*})_{1 2} \CV_2 = (\braid_{V^*, V^*})_{2 3} \CV_1$ from \eqref{eq:coevaluations}. Then
\[
\begin{split}
\PhiPM(\del f v \otimes \delbar v)
& = q^{-(\omega_s, \omega_s)} (\braid_{V^*, V^*})_{2 3} \CV_1 v
= q^{-(\omega_s, \omega_s)} (\braid_{V^*, V^*})_{2 3} \PhiPM(\del f \otimes \delbar v v) \\
& = q^{-(\alpha_s, \alpha_s)} \PhiPM(\del f \otimes v \delbar v).
\end{split}
\]
Using these identities and $p = f v$, we obtain that $\PhiPM(\del f p \otimes \delbar v) = \PhiPM(\del f \otimes p \delbar v)$.
The computations for the map $\PhiMP$ are very similar and we omit the details.
\end{proof}

We proceed similarly for the following "multiplication" maps.

\begin{lemma}
Define the $\alg$-bimodule maps
\[
\PsiPM: \FovercalcP \otimes \FovercalcM \to \alg, \quad
\PsiMP: \FovercalcM \otimes \FovercalcP \to \alg,
\]
by the formulae
\[
\PsiPM(\del f \otimes \delbar v) = f v, \quad
\PsiMP(\delbar v \otimes \del f) = v f.
\]
Then they descend to maps on the tensor product over $\subalg$.
\end{lemma}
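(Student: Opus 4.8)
The plan is to mimic the proof of the preceding lemma for $\PhiPM$ and $\PhiMP$, verifying that $\PsiPM$ (and symmetrically $\PsiMP$) respects the middle $\subalg$-linearity. Since $\subalg$ is generated by the elements $p = f v$, it suffices to check the single equality $\PsiPM(\del f \, p \otimes \delbar v) = \PsiPM(\del f \otimes p \, \delbar v)$. The strategy is to move the two factors $f$ and $v$ of $p$ across the tensor sign one at a time, using the right $\alg$-module relations \eqref{eq:overcalcP-right} and \eqref{eq:overcalcM-right} together with the defining quadratic relations \eqref{eq:relationsA} of $\alg$.

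First I would establish the two crossing identities
\[
\PsiPM(\del f \, f \otimes \delbar v) = q^{(\alpha_s, \alpha_s)} \PsiPM(\del f \otimes f \delbar v), \quad
\PsiPM(\del f \, v \otimes \delbar v) = q^{-(\alpha_s, \alpha_s)} \PsiPM(\del f \otimes v \delbar v).
\]
For the first, I would use $\del f \, f = q^{(\alpha_s, \alpha_s) - (\omega_s, \omega_s)} (\braid_{V, V})_{1 2} f \del f$ from \eqref{eq:overcalcP-right} and left $\alg$-linearity to reduce the left-hand side to $q^{(\alpha_s, \alpha_s) - (\omega_s, \omega_s)} (\braid_{V, V})_{1 2} f f v$; applying the relation $f f = q^{-(\omega_s, \omega_s)} (\braid_{V, V})_{1 2} f f$ of \eqref{eq:relationsA} then collapses the braiding to a scalar and yields $q^{(\alpha_s, \alpha_s)} f f v$. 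Running the analogous computation on the right-hand side, now pulling the $f$ back to the first factor via $\delbar v f = q^{(\omega_s, \omega_s)} (\braid_{V, V^*})_{1 2} f \delbar v$ from \eqref{eq:overcalcM-right} and then using $v f = q^{(\omega_s, \omega_s)} (\braid_{V, V^*})_{1 2} f v$ from \eqref{eq:relationsA}, gives $\PsiPM(\del f \otimes f \delbar v) = f f v$, whence the first identity. The second identity is obtained in the same manner, replacing the $f$-relations by their $v$-counterparts (here one ends with the element $f v v$).

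Finally, I would combine the two crossing identities: writing $\del f \, p = \del f \, f \, v$ and moving $f$ across (factor $q^{(\alpha_s, \alpha_s)}$) and then $v$ across (factor $q^{-(\alpha_s, \alpha_s)}$), the two scalars cancel precisely because $p = f v$ has degree zero in the $\bbZ$-grading, giving $\PsiPM(\del f \, p \otimes \delbar v) = \PsiPM(\del f \otimes p \, \delbar v)$. The argument for $\PsiMP$ is entirely symmetric, using \eqref{eq:overcalcM-right} in place of \eqref{eq:overcalcP-right} and interchanging the roles of $f$ and $v$.

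I do not expect a genuine obstacle here. Unlike the maps $\PhiPM$ and $\PhiMP$, whose verification requires pushing braidings through the coevaluations $\CV$ and $\CVp$ via the naturality relations \eqref{eq:coevaluations}, the maps $\PsiPM$ and $\PsiMP$ are plain multiplication, so every braiding that appears after applying a right-module relation is immediately absorbed by a quadratic relation of $\alg$. The only point requiring care is the bookkeeping of the leg indices in the leg-notation, which is routine.
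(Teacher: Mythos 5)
Your proposal is correct and follows essentially the same route as the paper: establish the two crossing identities $\PsiPM(\del f\, f \otimes \delbar v) = q^{(\alpha_s,\alpha_s)}\PsiPM(\del f \otimes f \delbar v)$ and $\PsiPM(\del f\, v \otimes \delbar v) = q^{-(\alpha_s,\alpha_s)}\PsiPM(\del f \otimes v \delbar v)$ by pushing the generators across with the right-module relations \eqref{eq:overcalcP-right}, \eqref{eq:overcalcM-right} and absorbing the resulting braidings into scalars via \eqref{eq:relationsA}, then observe that the two scalars cancel for $p = f v$. The only cosmetic difference is that you evaluate the two sides of each crossing identity separately and compare, whereas the paper runs a single chain of equalities; the content is identical.
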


\begin{proof}
Consider the map $\PsiPM$. Taking into account the relations \eqref{eq:relationsA} we compute
\[
\begin{split}
\PsiPM(\del f f \otimes \delbar v)
& = q^{(\alpha_s, \alpha_s) - (\omega_s, \omega_s)} (\braid_{V, V})_{1 2} \PsiMP(f \del f \otimes \delbar v)
= q^{(\alpha_s, \alpha_s) - (\omega_s, \omega_s)} (\braid_{V, V})_{1 2} f f v \\
& = q^{(\alpha_s, \alpha_s)} f f v
= q^{(\alpha_s, \alpha_s) - (\omega_s, \omega_s)} (\braid^{-1}_{V, V^*})_{2 3} f v f \\
& = q^{(\alpha_s, \alpha_s) - (\omega_s, \omega_s)} (\braid^{-1}_{V, V^*})_{2 3} \PsiPM(\del f \otimes \delbar v f) \\
& = q^{(\alpha_s, \alpha_s)} \PsiPM(\del f \otimes f \delbar v).
\end{split}
\]
Similarly we compute
\[
\begin{split}
\PsiPM(\del f v \otimes \delbar v)
& = q^{- (\omega_s, \omega_s)} (\braid^{-1}_{V, V^*})_{1 2} \PsiMP(v \del f \otimes \delbar v)
= q^{- (\omega_s, \omega_s)} (\braid^{-1}_{V, V^*})_{1 2} v f v \\
& = f v v
= q^{- (\omega_s, \omega_s)} (\braid_{V^*, V^*})_{2 3} f v v
= q^{- (\omega_s, \omega_s)} (\braid_{V^*, V^*})_{2 3} \PsiPM(\del f \otimes \delbar v v) \\
& = q^{-(\alpha_s, \alpha_s)} \PsiPM(\del f \otimes v \delbar v).
\end{split}
\]
Since $p = f v$, these identities show that $\PsiPM(\del f p \otimes \delbar v) = \PsiPM(\del f \otimes p \delbar v)$.
The computations for the map $\PsiMP$ are completely analogous and we omit them.
\end{proof}

Finally, we show that certain linear combinations of the maps $\Phi$ and $\Psi$ defined above descend to the FODCs $\overcalcP$ and $\overcalcM$.

\begin{proposition}
\label{prop:bimodule-metric}
Let $\Phi$ and $\Psi$ be the maps defined above.
\begin{enumerate}
\item The map $\PhiPM - \PsiPM$ descends to a map $\overcalcP \otimes_\subalg \overcalcM \to \alg$.
\item The map $\PhiMP - q^{-(\omega_s, 2 \rho)} \PsiPM$ descends to a map $\overcalcM \otimes_\subalg \overcalcP \to \alg$.
\end{enumerate}
\end{proposition}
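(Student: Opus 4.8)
The plan is to treat $\overcalcP$ and $\overcalcM$ as quotients of the free left $\alg$-modules $\FovercalcP$ and $\FovercalcM$. In the quadratic case the only defining relations are those of \eqref{eq:overcalc-quadratic}, so $\overcalcP = \FovercalcP / (r_P)$ and $\overcalcM = \FovercalcM / (r_M)$, where $r_P := \EV_{12} v \del f$ and $r_M := \EVp_{12} f \delbar v$ generate the respective relation sub-bimodules. The two preceding lemmas already exhibit $\Phi$ and $\Psi$ as $\alg$-bimodule maps on the tensor products over $\subalg$ of the free modules. By right-exactness of $\otimes_\subalg$, a combination such as $\PhiPM - \PsiPM$ factors through $\overcalcP \otimes_\subalg \overcalcM$ exactly when it annihilates $r_P$ in the first slot and $r_M$ in the second; since the maps are bimodule maps, it suffices to check this on the generators, i.e.\ to verify $(\PhiPM - \PsiPM)(r_P \otimes \delbar v) = 0$ and $(\PhiPM - \PsiPM)(\del f \otimes r_M) = 0$, and analogously for item~(2).

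First I would carry out the two computations in which a duality relation collapses everything immediately, since these fix the coefficients. For item~(1), left $\alg$-linearity together with $\PsiPM(\del f \otimes \delbar v) = f v$ gives $\PsiPM(r_P \otimes \delbar v) = (\EV_{12} v f)\, v = v$ by the normalization $\EV_{12} v f = 1$ of \eqref{eq:relationsA}, while $\PhiPM(\del f \otimes \delbar v) = \CV_1$ and the duality $\EV\,\CV = \id$ from \eqref{eq:duality} yield $\PhiPM(r_P \otimes \delbar v) = v$ as well; the difference thus vanishes with coefficient $1$. The parallel computation for item~(2) on $r_M$ in the first slot gives $\PsiMP(r_M \otimes \del f) = (\EVp_{12} f v)\, f = q^{(\omega_s, 2\rho)} f$, this time using $\EVp_{12} f v = q^{(\omega_s, 2\rho)}$, whereas $\PhiMP(r_M \otimes \del f) = f$ by the duality $\EVp\,\CVp = \id$. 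The resulting mismatch by the factor $q^{(\omega_s, 2\rho)}$ is precisely why item~(2) must carry the coefficient $q^{-(\omega_s, 2\rho)}$ in front of $\PsiMP$, so that $(\PhiMP - q^{-(\omega_s, 2\rho)}\PsiMP)(r_M \otimes \del f) = f - f = 0$. (I note in passing that the map in item~(2) should read $\PhiMP - q^{-(\omega_s, 2\rho)}\PsiMP$, as only $\PsiMP$ has the correct domain $\overcalcM \otimes_\subalg \overcalcP$.)

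It then remains to treat the relation sitting in the \emph{other} slot of each item, where the relation element carries an extra generator ($f$ or $v$) bridging the two tensor factors and hence cannot be transported across $\otimes_\subalg$ directly. Here I would proceed exactly as in the computation of $\PhiPM(\del p \otimes \delbar p)$ in the inverse-metric lemma: use the right $\alg$-module relations \eqref{eq:overcalcP-right} and \eqref{eq:overcalcM-right} to push the bridging generator to the outer edge, apply bimodule linearity and the defining formulae for $\Phi$ and $\Psi$, and then simplify the resulting braidings. The subtlety is that in these slots the relation $r_P$ uses $\EV$ while the map $\Phi$ involves $\CVp$ (and vice versa for $r_M$ versus $\CV$), so the contraction is no longer a direct duality but acquires the weight factor $q^{\pm(\omega_s, \omega_s + 2\rho)}$ governed by \cref{lem:relation-evaluations} (equivalently \eqref{eq:identity-EEp}).

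I expect this last bookkeeping to be the main obstacle: one must check that the weight factors generated by $\Phi$ through \eqref{eq:identity-EEp} coincide with those generated by $\Psi$ through the algebra relations, so that in each remaining slot the two contributions cancel with the coefficient already fixed above ($1$ for item~(1), $q^{-(\omega_s,2\rho)}$ for item~(2)). Once this is confirmed in both bridging slots, all four relations are annihilated and both maps descend to the stated quotients, completing the proof.
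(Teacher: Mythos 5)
Your proposal follows the same route as the paper: reduce to checking the two defining relations of \eqref{eq:overcalc-quadratic} in each tensor slot, use the slots where the contraction is a direct duality to fix the coefficients (including the correct explanation for the factor $q^{-(\omega_s, 2\rho)}$ in item~(2), and the correct observation that the statement should read $\PsiMP$ rather than $\PsiPM$ there), and handle the remaining ``bridging'' slots via the right $\alg$-module relations \eqref{eq:overcalcP-right}--\eqref{eq:overcalcM-right} together with \eqref{eq:identity-EEp}. The only part you leave unexecuted --- the bridging-slot computations --- goes through exactly as you outline, and is precisely what the paper writes out; in particular the weight-factor matching you flag as the main obstacle is automatic, because the right-module rewriting produces a single common prefactor $q^{\pm(\omega_s, 2\rho)}$ in front of the \emph{difference} $\Phi - c\,\Psi$ before the two terms are contracted, after which both contractions are direct dualities yielding the same element.
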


\begin{proof}
(1) We need to check that the relations of $\overcalcP$ and $\overcalcM$ are preserved under these maps.
According to \eqref{eq:overcalc-quadratic} we need to consider $\EV_{1 2} v \del f = 0$ and $\EVp_{1 2} f \delbar v = 0$.

Using the duality relations \eqref{eq:duality} and the relations of $\alg$ from \eqref{eq:relationsA} we compute
\[
\EV_{1 2} (\PhiPM - \PsiPM)(v \del f \otimes \delbar v) = \EV_{1 2} \CV_2 v - \EV_{1 2} v f v = v - v = 0.
\]
Similarly, using the same relations together with \eqref{eq:overcalcM-right} and \eqref{eq:identity-EEp}, we compute
\[
\begin{split}
\EVp_{2 3} (\PhiPM - \PsiPM)(\del f \otimes f \delbar v)
& = q^{-(\omega_s, \omega_s)} \EVp_{2 3} (\braid_{V, V^*}^{-1})_{2 3} (\PhiPM - \PsiPM)(\del f \otimes \delbar v f) \\
& = q^{(\omega_s, 2 \rho)} \EV_{2 3} (\PhiPM - \PsiPM)(\del f \otimes \delbar v f) \\
& = q^{(\omega_s, 2 \rho)} \EV_{2 3} \CV_1 f - q^{(\omega_s, 2 \rho)} \EV_{2 3} f v f \\
& = q^{(\omega_s, 2 \rho)} f - q^{(\omega_s, 2 \rho)} f = 0.
\end{split}
\]
This shows that $\PhiPM - \PsiPM$ descends to a map $\overcalcP \otimes_\subalg \overcalcM \to \alg$.

(2) The computations for $\PhiMP - q^{-(\omega_s, 2 \rho)} \PsiPM$ are very similar and we omit them.
\end{proof}

\subsection{Bimodule connections}

Consider the terms $\sigma_{a b}$ with $a, b \in \{ +, - \}$ appearing in \cref{lem:connM-bimodule} and \cref{lem:connP-bimodule}.
Our goal is to show that they correspond to $\subalg$-bimodule maps $\calc_a \otimes_\subalg \calc_b \to \calc_b \otimes_\subalg \calc_a$ defined by the same expressions.

\begin{proposition}
\label{prop:sigmaPP}
We have a $\subalg$-bimodule map $\sigmaPP: \calcP \otimes_\subalg \calcP \to \calcP \otimes_\subalg \calcP$ given by
\[
\sigmaPP(\del p \otimes \del p) = q^{(\alpha_s, \alpha_s)} \opT_{1 2 3 4} \del p \otimes \del p.
\]
\end{proposition}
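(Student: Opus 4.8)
The plan is to regard $\sigmaPP$ as a map specified on the single $\subalg$-bimodule generator $\del p \otimes \del p$ of $\calcP \otimes_\subalg \calcP$ and extended $\subalg$-bilinearly, and then to verify that this assignment is compatible with all the defining relations of $\calcP \otimes_\subalg \calcP$. First I would note that the right-hand side $q^{(\alpha_s, \alpha_s)} \opT_{1 2 3 4} \del p \otimes \del p$ automatically lands in $\calcP \otimes_\subalg \calcP$: since $\opT_{1 2 3 4}$ is built from braidings, it sends $\del p \otimes \del p = \sum \del p^{i j} \otimes \del p^{k l}$ to a linear combination of the same elements $\del p^{a b} \otimes \del p^{c d}$, each of which is a legitimate element of the codomain. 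So the real content is well-definedness on the domain, which amounts to three checks: compatibility with the left-module relations of $\calcP$ in the first tensor factor, the same in the second factor, and the balancing relation $\del p\, b \otimes \del p = \del p \otimes b\, \del p$ for $b \in \subalg$ that defines the tensor product over $\subalg$.

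For the factor relations I would use the factorization $\opT_{1 2 3 4} = \opS_{1 2 3} \opSt_{2 3 4}$ together with the commutation of $\opS$ and $\opSt$ from \cref{prop:S-properties}. Compatibility with $\EVp_{1 2} \del p = 0$ from \eqref{eq:calcP-quadratic} requires that $\EVp_{1 2} \opT_{1 2 3 4} \del p \otimes \del p = 0$; applying $\EVp_{1 2} \opS_{1 2 3} = q^{(\omega_s, \omega_s + 2 \rho)} \EV_{2 3}$ from \eqref{eq:S-evaluation} reduces this to $\EV_{2 3} \opSt_{2 3 4} \del p \otimes \del p$, which one verifies to vanish using the evaluation identities of \cref{lem:S-evaluation} together with the calculus relations \eqref{eq:evaluations-calculi}. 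Compatibility with $(\opSt_{2 3 4} - q^{-(\omega_s, \omega_s)})\, p\, \del p = 0$ from \eqref{eq:S-action-left} is where the commutation relation is essential: since $\opSt$ commutes with $\opS$, the factor $\opSt$ can be transported through $\opT$ so that it acts directly on the relevant $\del p$, where \eqref{eq:calcP-quadratic} applies. The second factor (legs $3, 4$) is handled symmetrically using the companion identities of \cref{lem:S-evaluation}.

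The main obstacle is the balancing relation, since it is precisely the compatibility of the prospective generalized braiding with the right $\subalg$-action. Here I would rewrite both $\del p\, p \otimes \del p$ and $\del p \otimes p\, \del p$ using the right-module relation $\del p\, p = q^{(\alpha_s, \alpha_s)} \opT_{1 2 3 4}\, p\, \del p$ from \eqref{eq:right-module} to move the interior $p$, and then invoke the braid equation for $\opT$, namely $\opT_{1 2 3 4} \opT_{3 4 5 6} \opT_{1 2 3 4} = \opT_{3 4 5 6} \opT_{1 2 3 4} \opT_{3 4 5 6}$, which follows from \eqref{eq:S-braid-equation} as derived in \cref{lem:connP-bimodule}, together with the evaluation identity \eqref{eq:identity-ETT}. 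These allow the factor $p$ to be transported across the tensor sign in a way consistent with the $\opT_{1 2 3 4}$ appearing in the definition of $\sigmaPP$. In effect this reproduces, at the level of the bimodule map, the computation of $\connP(\del p\, p)$ already carried out in \cref{lem:connP-bimodule}, so the two are mutually consistent. Assembling the three checks yields that $\sigmaPP$ descends to a well-defined $\subalg$-bimodule map $\calcP \otimes_\subalg \calcP \to \calcP \otimes_\subalg \calcP$.
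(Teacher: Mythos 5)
Your overall skeleton matches the paper's proof: reduce well-definedness to compatibility with the two defining relations of each tensor factor (the $\EVp$ relations and the $\opSt$-eigenvalue relations) plus the balancing relation over $\subalg$, using the factorization $\opT_{1 2 3 4} = \opS_{1 2 3}\opSt_{2 3 4}$, the evaluation identities of \cref{lem:S-evaluation}, and a braid equation for the balancing step. The first-factor $\EVp_{1 2}$ check and the balancing check are described correctly (the latter can be run either with the $\opT$-form of the right-module relation and the braid equation for $\opT$, as you propose, or with the $\opS$-form and \eqref{eq:S-braid-equation}, as the paper does; these are equivalent, and \eqref{eq:identity-ETT} is not actually needed there).

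However, two of your verifications would not go through as stated. First, for the relation $(\opSt_{2 3 4} - q^{-(\omega_s,\omega_s)})\,p\,\del p = 0$ you claim that ``$\opSt$ can be transported through $\opT$'' by the commutation relations. But $\opSt_{2 3 4}$ commutes only with the $\opS_{3 4 5}$ factor of $\opT_{3 4 5 6}$; against $\opSt_{4 5 6}$ it satisfies the braid relation \eqref{eq:S-braid-equation}, not commutativity. The missing ingredient is the identity $\opSt_{2 3 4}\,\del p \otimes \del p = q^{-(\omega_s,\omega_s)}\,\del p \otimes \del p$ (proved from \eqref{eq:S-action-left}, \eqref{eq:S-action-right} and \eqref{eq:evaluations-calculi}), which lets one replace $\opT\,\del p\otimes\del p$ by $q^{-(\omega_s,\omega_s)}\opS\,\del p\otimes\del p$; only then does the commutation of $\opSt_{2 3 4}$ with $\opS_{3 4 5}$ finish the check. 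Second, the $\EVp_{3 4}$ check is not ``symmetric'' to the $\EVp_{1 2}$ one: there is no analogue of $\EVp_{1 2}\opS_{1 2 3}\opSt_{2 3 4} = \EVp_{3 4}$ with the evaluation on the right, since $\EVp_{3 4}\opT_{1 2 3 4} = \EVp_{1 2}\opT_{1 2 3 4}^2$. This check genuinely requires the quadratic (Hecke) relation \eqref{eq:S-quadratic1} to trade $\opS_{1 2 3}$ for $\opS_{1 2 3}^{-1}$ plus a scalar, together with $\EV_{2 3}\,\del p \otimes \del p = 0$ to kill the scalar term; it is the one place where the restriction to quantum projective spaces enters, and your sketch does not invoke it.
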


\begin{proof}
First we check that $\sigmaPP$ is well-defined as a map $\calcP \otimes \calcP \to \calcP \otimes \calcP$.
We treat the first factor as a left $\subalg$-module using \eqref{eq:S-action-left} and the second factor as a right $\subalg$-module using \eqref{eq:S-action-right}.
Using this description we need to check the relations
\[
\begin{gathered}
\EVp_{1 2} \sigmaPP(\del p \otimes \del p) = 0, \quad
\EVp_{3 4} \sigmaPP(\del p \otimes \del p) = 0, \\
(\opSt_{2 3 4} - q^{-(\omega_s, \omega_s)}) \sigmaPP(p \del p \otimes \del p) = 0, \quad
(\opSt_{4 5 6} - q^{-(\omega_s, \omega_s)}) \sigmaPP(\del p \otimes \del p p) = 0.
\end{gathered}
\]
Once this is done, we check that $\sigmaPP$ descends to a map $\calcP \otimes_\subalg \calcP \to \calcP \otimes_\subalg \calcP$.

It is convenient to rewrite $\sigmaPP$ in a slightly different form.
Using $\opSt_{2 3 4} p \del p = q^{-(\omega_s, \omega_s)} p \del p$ from \eqref{eq:S-action-left} allows us to obtain the identity
\[
\begin{split}
\del p \otimes \del p
& = \EV_{2 3} \del p p \otimes \del p = \EV_{2 3} \del p \otimes p \del p
= q^{(\omega_s, \omega_s)} \EV_{2 3} \opSt_{4 5 6} \del p \otimes p \del p \\
& = q^{(\omega_s, \omega_s)} \opSt_{2 3 4} \EV_{2 3} \del p p \otimes \del p
= q^{(\omega_s, \omega_s)} \opSt_{2 3 4} \del p \otimes \del p.
\end{split}
\]
Using this identity we can rewrite
\[
\sigmaPP(\del p \otimes \del p) = q^{(\alpha_s, \alpha_s)} \opS_{1 2 3} \opSt_{2 3 4} \del p \otimes \del p
= q^{(\alpha_s, \alpha_s) - (\omega_s, \omega_s)} \opS_{1 2 3} \del p \otimes \del p.
\]
Now we proceed with the verifications.

$\bullet$ \textbf{Action of $\EVp_{1 2}$}.
Consider the expression $\sigmaPP(\del p \otimes \del p) = q^{(\alpha_s, \alpha_s)} \opT_{1 2 3 4} \del p \otimes \del p$.
We have the identity $\EVp_{1 2} \opS_{1 2 3} \opSt_{2 3 4} = \EVp_{3 4}$ from \eqref{eq:S-evaluation2}. Then
\[
\EVp_{1 2} \sigmaPP(\del p \otimes \del p) = q^{(\alpha_s, \alpha_s)} \EVp_{3 4} \del p \otimes \del p = 0.
\]

$\bullet$ \textbf{Action of $\EVp_{3 4}$}.
Consider the expression $\sigmaPP(\del p \otimes \del p) = q^{(\alpha_s, \alpha_s) - (\omega_s, \omega_s)} \opS_{1 2 3} \del p \otimes \del p$.
Using $\del p \otimes \del p = q^{(\omega_s, \omega_s)} \opSt_{2 3 4} \del p \otimes \del p$ we rewrite this as
\[
\sigmaPP(\del p \otimes \del p) = q^{(\alpha_s, \alpha_s) - 2 (\omega_s, \omega_s)} \opSt_{2 3 4}^{-1} \opS_{1 2 3} \del p \otimes \del p.
\]
Next, using $\EVp_{3 4} \opSt_{2 3 4}^{-1} = q^{(\omega_s, \omega_s + 2 \rho)} \EV_{2 3}$ from \eqref{eq:S-evaluation} we get
\[
\EVp_{3 4} \sigmaPP(\del p \otimes \del p) = q^{(\omega_s, 2 \rho)} q^{(\alpha_s, \alpha_s) - (\omega_s, \omega_s)} \EV_{2 3} \opS_{1 2 3} \del p \otimes \del p.
\]
Now we use the quadratic condition $\opS_{1 2 3} = q^{2 (\omega_s, \omega_s) - (\alpha_s, \alpha_s)} \opS_{1 2 3}^{-1} + q^{(\omega_s, \omega_s)} (1 - q^{- (\alpha_s, \alpha_s)})$ from \eqref{eq:S-quadratic1}. Then we obtain
\[
\begin{split}
\EVp_{3 4} \sigmaPP(\del p \otimes \del p)
& = q^{(\omega_s, 2 \rho)} q^{(\omega_s, \omega_s)} \EV_{2 3} \opS_{1 2 3}^{-1} \del p \otimes \del p \\
& + q^{(\omega_s, 2 \rho)} q^{(\alpha_s, \alpha_s)} (1 - q^{- (\alpha_s, \alpha_s)}) \EV_{2 3} \del p \otimes \del p.
\end{split}
\]
The second term vanishes due to $\EV_{2 3} \del p \otimes \del p = 0$.
Finally using $\EV_{2 3} \opS_{1 2 3}^{-1} = q^{-(\omega_s, \omega_s + 2 \rho)} \EVp_{1 2}$ from \eqref{eq:S-evaluation} we conclude that
\[
\EVp_{3 4} \sigmaPP(\del p \otimes \del p) = \EVp_{1 2} \del p \otimes \del p = 0.
\]

$\bullet$ \textbf{Action of $\opSt_{2 3 4}$}.
Using the expression $\sigmaPP(\del p \otimes \del p) = q^{(\alpha_s, \alpha_s) - (\omega_s, \omega_s)} \opS_{1 2 3} \del p \otimes \del p$ and the identity $\opSt_{2 3 4} \del p \otimes \del p = q^{-(\omega_s, \omega_s)} \del p \otimes \del p$ we easily get
\[
\opSt_{2 3 4} \sigmaPP(p \del p \otimes \del p) = q^{-(\omega_s, \omega_s)} \sigmaPP(p \del p \otimes \del p).
\]

$\bullet$ \textbf{Action of $\opSt_{4 5 6}$}.
As for the case of $\opSt_{2 3 4}$ we immediately get
\[
\opSt_{4 5 6} \sigmaPP(\del p \otimes \del p p) = q^{-(\omega_s, \omega_s)} \sigmaPP(\del p \otimes \del p p).
\]

$\bullet$ \textbf{Tensor product}.
Consider the expression $\sigmaPP(\del p \otimes \del p) = q^{(\alpha_s, \alpha_s) - (\omega_s, \omega_s)} \opS_{1 2 3} \del p \otimes \del p$.
Then using $\del p p = q^{(\alpha_s, \alpha_s) - (\omega_s, \omega_s)} \opS_{1 2 3} p \del p$ from \eqref{eq:S-right-module} we compute
\[
\begin{split}
\sigmaPP(\del p p \otimes \del p) & = q^{(\alpha_s, \alpha_s) - (\omega_s, \omega_s)} \opS_{1 2 3} \sigmaPP(p \del p \otimes \del p)
= q^{2 (\alpha_s, \alpha_s) - 2 (\omega_s, \omega_s)} \opS_{1 2 3} \opS_{3 4 5} p \del p \otimes \del p \\
& = \opS_{1 2 3} \opS_{3 4 5} \opS_{1 2 3}^{-1} \opS_{3 4 5}^{-1} \del p \otimes \del p p.
\end{split}
\]
Using the "braid equation" for $\opS$ from \eqref{eq:S-braid-equation} we obtain
\[
\begin{split}
\sigmaPP(\del p p \otimes \del p)
& = \opS_{3 4 5}^{-1} \opS_{1 2 3} \opS_{3 4 5} \opS_{3 4 5}^{-1} \del p \otimes \del p p
= q^{(\omega_s, \omega_s) - (\alpha_s, \alpha_s)} \opS_{3 4 5}^{-1} \sigmaPP(\del p \otimes \del p p) \\
& = \sigmaPP(\del p \otimes p \del p).
\end{split}
\]
This shows that $\sigmaPP$ descends to a map $\calcP \otimes_\subalg \calcP \to \calcP \otimes_\subalg \calcP$.
\end{proof}

The case of $\sigmaMM$ is fairly similar.

\begin{proposition}
\label{prop:sigmaMM}
We have a $\subalg$-bimodule map $\sigmaMM: \calcM \otimes_\subalg \calcM \to \calcM \otimes_\subalg \calcM$ given by
\[
\sigmaMM(\delbar p \otimes \delbar p) = q^{-(\alpha_s, \alpha_s)} \opT_{1 2 3 4} \delbar p \otimes \delbar p.
\]
\end{proposition}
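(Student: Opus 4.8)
The plan is to follow the proof of \cref{prop:sigmaPP} almost verbatim, interchanging the roles of $\opS$ and $\opSt$ and replacing the eigenvalue $q^{-(\omega_s,\omega_s)}$ by $q^{(\omega_s,\omega_s)}$, since $\calcM$ is governed by $\opS$ exactly where $\calcP$ was governed by $\opSt$ (compare the relations \eqref{eq:calcM-quadratic} with those for $\calcP$). First I would check that $\sigmaMM$ is well-defined as a map $\calcM \otimes \calcM \to \calcM \otimes \calcM$ over $\bbC$, treating the first tensor factor as a left $\subalg$-module via \eqref{eq:S-action-left} and the second as a right $\subalg$-module via \eqref{eq:S-action-right}. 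Concretely this requires verifying the four relations $\EVp_{1 2}\sigmaMM(\delbar p \otimes \delbar p) = 0$, $\EVp_{3 4}\sigmaMM(\delbar p \otimes \delbar p) = 0$, $(\opS_{1 2 3} - q^{(\omega_s,\omega_s)})\sigmaMM(p\delbar p \otimes \delbar p) = 0$ and $(\opS_{3 4 5} - q^{(\omega_s,\omega_s)})\sigmaMM(\delbar p \otimes \delbar p p) = 0$, and finally showing that $\sigmaMM$ descends to the tensor product over $\subalg$, i.e. that $\sigmaMM(\delbar p p \otimes \delbar p) = \sigmaMM(\delbar p \otimes p\delbar p)$.

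The key preliminary step is to rewrite $\sigmaMM$ in a more convenient shape, mirroring the identity $\del p \otimes \del p = q^{(\omega_s,\omega_s)}\opSt_{2 3 4}\del p \otimes \del p$ used for $\sigmaPP$. Using the reconstruction identities in \eqref{eq:evaluations-calculi} (here the left version $\EV_{2 3}p\delbar p = \delbar p$) together with the right relation $(\opS_{1 2 3} - q^{(\omega_s,\omega_s)})\delbar p p = 0$ from \eqref{eq:S-action-right}, I expect to obtain $\delbar p \otimes \delbar p = q^{-(\omega_s,\omega_s)}\opS_{1 2 3}\delbar p \otimes \delbar p$, equivalently $\opS_{1 2 3}\delbar p \otimes \delbar p = q^{(\omega_s,\omega_s)}\delbar p \otimes \delbar p$. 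Since $\opS$ and $\opSt$ commute by \eqref{eq:S-commutation} and $\opT_{1 2 3 4} = \opS_{1 2 3}\opSt_{2 3 4}$, this yields the simplified expression $\sigmaMM(\delbar p \otimes \delbar p) = q^{(\omega_s,\omega_s)-(\alpha_s,\alpha_s)}\opSt_{2 3 4}\delbar p \otimes \delbar p$, the exact mirror of the form used for $\sigmaPP$.

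With both forms in hand, three of the four verifications should be routine. The check of $\EVp_{1 2}$ follows immediately from $\EVp_{1 2}\opS_{1 2 3}\opSt_{2 3 4} = \EVp_{3 4}$ in \eqref{eq:S-evaluation2} together with $\EVp_{1 2}\delbar p = 0$. The two eigenvalue checks for $\opS_{1 2 3}$ and $\opS_{3 4 5}$ are immediate from the simplified form and the eigenvalue relations $\opS_{1 2 3}p\delbar p = q^{(\omega_s,\omega_s)}p\delbar p$ and $\opS_{3 4 5}\delbar p p = q^{(\omega_s,\omega_s)}\delbar p p$, using that the relevant $\opS$ commutes with $\opSt_{2 3 4}$. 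The tensor descent is obtained exactly as in \cref{prop:sigmaPP}: starting from the simplified form, one moves $p$ across using the right module relation \eqref{eq:S-right-module} and invokes the braid equation for $\opSt$ in \eqref{eq:S-braid-equation} to conclude $\sigmaMM(\delbar p p \otimes \delbar p) = \sigmaMM(\delbar p \otimes p\delbar p)$.

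The main obstacle is the check $\EVp_{3 4}\sigmaMM = 0$, which is the one place where the quadratic (Hecke) case is genuinely used. Here I would start from $\sigmaMM(\delbar p \otimes \delbar p) = q^{(\omega_s,\omega_s)-(\alpha_s,\alpha_s)}\opSt_{2 3 4}\delbar p \otimes \delbar p$, apply the quadratic relation $\opSt_{2 3 4} = q^{(\alpha_s,\alpha_s)-2(\omega_s,\omega_s)}\opSt_{2 3 4}^{-1} + q^{-(\omega_s,\omega_s)}(1 - q^{(\alpha_s,\alpha_s)})$ from \eqref{eq:S-quadratic2}, and then use $\EVp_{3 4}\opSt_{2 3 4}^{-1} = q^{(\omega_s,\omega_s+2\rho)}\EV_{2 3}$ from \eqref{eq:S-evaluation}. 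The term proportional to $\EVp_{3 4}$ vanishes because $\EVp_{1 2}\delbar p = 0$, while the remaining term is proportional to $\EV_{2 3}\delbar p \otimes \delbar p$, which vanishes by \eqref{eq:evaluations-calculi} (as already noted in the proof of \cref{prop:metric-kahler}). The only real difficulty is keeping the leg indices consistent across the reconstructions and commutations; the algebraic content is identical to the $\sigmaPP$ computation.
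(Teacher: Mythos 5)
Your proposal is correct and matches the paper's own treatment: the paper proves \cref{prop:sigmaMM} precisely by mirroring the $\sigmaPP$ argument, recording the same list of four relations plus descent, the same key identity $\opS_{1 2 3}\,\delbar p \otimes \delbar p = q^{(\omega_s,\omega_s)}\,\delbar p \otimes \delbar p$, and the same simplified form $\sigmaMM(\delbar p \otimes \delbar p) = q^{(\omega_s,\omega_s)-(\alpha_s,\alpha_s)}\opSt_{2 3 4}\,\delbar p \otimes \delbar p$. Your write-up actually supplies more of the details (the derivation of the key identity via $\EV_{4 5}\delbar p p \otimes \delbar p$ and the $\EVp_{3 4}$ check via \eqref{eq:S-quadratic2} and \eqref{eq:S-evaluation}) than the paper, which simply defers to \cref{prop:sigmaPP}.
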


\begin{proof}
We skip the computations, as they are quite similar to the case of $\sigmaPP$.
To verify that $\sigmaMM$ is well-defined as a map $\calcM \otimes \calcM \to \calcM \otimes \calcM$ we have to check the relations
\[
\begin{gathered}
\EVp_{1 2} \sigmaMM(\delbar p \otimes \delbar p) = 0, \quad
\EVp_{3 4} \sigmaMM(\delbar p \otimes \delbar p) = 0, \\
(\opS_{1 2 3} - q^{(\omega_s, \omega_s)}) \sigmaMM(p \delbar p \otimes \delbar p) = 0, \quad
(\opS_{3 4 5} - q^{(\omega_s, \omega_s)}) \sigmaMM(\delbar p \otimes \delbar p p) = 0.
\end{gathered}
\]
One can show the identity $\opS_{1 2 3} \delbar p \otimes \delbar p = q^{(\omega_s, \omega_s)} \delbar p \otimes \delbar p$.
This allows us to rewrite
\[
\sigmaMM(\delbar p \otimes \delbar p) = q^{(\omega_s, \omega_s) - (\alpha_s, \alpha_s)} \opSt_{2 3 4} \delbar p \otimes \delbar p.
\]
The various verifications are then performed as in \cref{prop:sigmaPP}.
\end{proof}

Next we consider the term $\sigmaPM$, which is more involved.

\begin{proposition}
\label{prop:sigmaPM}
We have a $\subalg$-bimodule map $\sigmaPM: \calcP \otimes_\subalg \calcM \to \calcM \otimes_\subalg \calcP$ given by
\[
\sigmaPM(\del p \otimes \delbar p) = q^{2 (\alpha_s, \alpha_s) - 2 (\omega_s, \omega_s)} \opS_{1 2 3} \opSt_{2 3 4}^{-1} \delbar p \otimes \del p - (q^{(\alpha_s, \alpha_s)} - 1) q^{(\alpha_s, \alpha_s)} q^{-(\omega_s, 2 \rho)} p \metMP p.
\]
\end{proposition}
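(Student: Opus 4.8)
The plan is to follow the template established in \cref{prop:sigmaPP} and \cref{prop:sigmaMM}: first verify that the formula defining $\sigmaPM$ yields a well-defined map $\calcP \otimes \calcM \to \calcM \otimes \calcP$, and then that it descends to the tensor product over $\subalg$. Before starting, I would rewrite the correction term in a more convenient shape. Using $p \metMP = q^{(\omega_s, 2 \rho)} \EV_{2 3} \delbar p \otimes \del p$ from \eqref{eq:identity-metMP} together with the centrality of $\metMP$, the term $p \metMP p$ becomes $q^{(\omega_s, 2 \rho)} \EV_{2 3} \delbar p \otimes \del p\, p$, which is manifestly an element of $\calcM \otimes_\subalg \calcP$ and is well suited to the relation checks below. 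Treating the first factor of the domain as a left $\subalg$-module via \eqref{eq:S-action-left} and the second as a right $\subalg$-module via \eqref{eq:S-action-right}, the relations to be verified encode both that the output lies in the codomain $\calcM \otimes_\subalg \calcP$ and that the map kills the relations of the domain; concretely these are the two evaluation relations $\EVp_{1 2} \sigmaPM(\del p \otimes \delbar p) = 0$ and $\EVp_{3 4} \sigmaPM(\del p \otimes \delbar p) = 0$, the module relations obtained from the $\opS$- and $\opSt$-eigenvalue conditions of \eqref{eq:S-action-left} and \eqref{eq:S-action-right}, and the balancing identity $\sigmaPM(\del p\, p \otimes \delbar p) = \sigmaPM(\del p \otimes p\, \delbar p)$.

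For the evaluation relations I would apply $\EVp_{1 2}$ and $\EVp_{3 4}$ to the leading braiding term $\opS_{1 2 3} \opSt_{2 3 4}^{-1} \delbar p \otimes \del p$ and simplify using $\EVp_{1 2} \opS_{1 2 3} = q^{(\omega_s, \omega_s + 2 \rho)} \EV_{2 3}$ and $\EVp_{3 4} \opSt_{2 3 4}^{-1} = q^{(\omega_s, \omega_s + 2 \rho)} \EV_{2 3}$ from \eqref{eq:S-evaluation}, followed by \eqref{eq:S-evaluation2} and the quadratic relations \eqref{eq:S-quadratic1} and \eqref{eq:S-quadratic2}. The leading term alone does not vanish under these evaluations; its residual contribution should be cancelled exactly by the evaluation of the correction term, where one uses $\EVp_{1 2} p = q^{(\omega_s, 2 \rho)}$ from \eqref{eq:relationsB} and the vanishing relations $\EV_{2 3} p \del p = 0$ and $\EV_{2 3} \delbar p\, p = 0$ from \eqref{eq:evaluations-calculi}. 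For the module relations I would commute $\opS_{1 2 3}$ and $\opSt_{2 3 4}$ past the braiding factors using the commutation relations \eqref{eq:S-commutation} and the braid equations \eqref{eq:S-braid-equation}, and then reduce to the eigenvalue conditions \eqref{eq:S-action-left} and \eqref{eq:S-action-right}; the correction term behaves compatibly since it is already written in terms of $\delbar p \otimes \del p\, p$.

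For the balancing over $\subalg$ I would transport a generator $p$ across the tensor product, computing $\sigmaPM(\del p\, p \otimes \delbar p)$ by first applying $\del p\, p = q^{(\alpha_s, \alpha_s) - (\omega_s, \omega_s)} \opS_{1 2 3} p \del p$ from \eqref{eq:S-right-module} and then using the braid equation \eqref{eq:S-braid-equation} together with \eqref{eq:S-commutation} to move the operators, exactly as in the tensor-product step of \cref{prop:sigmaPP}. I expect the main obstacle to be the coordinated bookkeeping of the correction term $p \metMP p$ throughout: the leading braiding term fails each evaluation relation on its own, and confirming that the specific scalar coefficient $-(q^{(\alpha_s, \alpha_s)} - 1) q^{(\alpha_s, \alpha_s)} q^{-(\omega_s, 2 \rho)}$ forces exact cancellation is where the quadratic Hecke relations and the evaluation identities must be combined with care. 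Once these cancellations are confirmed, the well-definedness over $\subalg$ follows as in the analogous propositions.
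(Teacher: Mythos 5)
Your plan matches the paper's proof: the same five verifications (the two $\EVp$ evaluation relations, the two $\opS$/$\opSt$ eigenvalue relations on $p\,\del p \otimes \delbar p$ and $\del p \otimes \delbar p\, p$, and the balancing over $\subalg$), carried out with the same identities \eqref{eq:S-evaluation}, \eqref{eq:S-evaluation2}, \eqref{eq:S-quadratic1}--\eqref{eq:S-quadratic2}, \eqref{eq:S-braid-equation} and \eqref{eq:identity-metMP}, and with the same cancellation mechanism between the residual of the leading braiding term under evaluation and the correction term $p\,\metMP\, p$. Your preliminary rewriting of $p\,\metMP\, p$ as $q^{(\omega_s, 2\rho)} \EV_{2 3}\, \delbar p \otimes \del p\, p$ is exactly the identity the paper invokes mid-computation, so this is only a cosmetic difference.
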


\begin{proof}
First we check that $\sigmaPM$ is well-defined as a $\subalg$-bimodule map $\calcP \otimes \calcM \to \calcM \otimes \calcP$.
In other words, we need to check that the following identities hold
\[
\begin{gathered}
\EVp_{1 2} \sigmaPM(\del p \otimes \delbar p) = 0, \quad
\EVp_{3 4} \sigmaPM(\del p \otimes \delbar p) = 0, \\
(\opSt_{2 3 4} - q^{-(\omega_s, \omega_s)}) \sigmaPM(p \del p \otimes \delbar p) = 0, \quad
(\opS_{3 4 5} - q^{(\omega_s, \omega_s)}) \sigmaPM(\del p \otimes \delbar p p) = 0.
\end{gathered}
\]
Then we check that $\sigmaPM$ descends to a map $\calcP \otimes_\subalg \calcM \to \calcM \otimes_\subalg \calcP$.

$\bullet$ \textbf{Action of $\EVp_{1 2}$}.
We use $\EVp_{1 2} \opS_{1 2 3} = q^{(\omega_s, \omega_s + 2 \rho)} \EV_{2 3}$ from \eqref{eq:S-evaluation} and the identity $\opSt_{2 3 4}^{-1} = q^{2 (\omega_s, \omega_s) - (\alpha_s, \alpha_s)} \opSt_{2 3 4} + (1 - q^{-(\alpha_s, \alpha_s)}) q^{(\omega_s, \omega_s)}$ from \eqref{eq:S-quadratic2}. We get
\[
\begin{split}
\EVp_{1 2} \opS_{1 2 3} \opSt_{2 3 4}^{-1} \delbar p \otimes \del p
& = q^{(\omega_s, \omega_s + 2 \rho)} \EV_{2 3} \opSt_{2 3 4}^{-1} \delbar p \otimes \del p \\
& = q^{2 (\omega_s, \omega_s) - (\alpha_s, \alpha_s)} q^{(\omega_s, \omega_s + 2 \rho)} \EV_{2 3} \opSt_{2 3 4} \delbar p \otimes \del p \\
& + (1 - q^{-(\alpha_s, \alpha_s)}) q^{(\omega_s, \omega_s)} q^{(\omega_s, \omega_s + 2 \rho)} \EV_{2 3} \delbar p \otimes \del p.
\end{split}
\]
The first term vanishes since, $\EV_{2 3} \opSt_{2 3 4} = q^{-(\omega_s, \omega_s + 2 \rho)} \EVp_{3 4}$ by \eqref{eq:S-evaluation} and $\EVp_{1 2} \del p = 0$.
For the second term we use $\EV_{2 3} \delbar p \otimes \del p = q^{-(\omega_s, 2 \rho)} p \metMP$ from \eqref{eq:identity-metMP}. Then
\[
\EVp_{1 2} \opS_{1 2 3} \opSt_{2 3 4}^{-1} \delbar p \otimes \del p = (1 - q^{-(\alpha_s, \alpha_s)}) q^{2 (\omega_s, \omega_s)} p \metMP.
\]
Finally using $\EVp_{1 2} p = q^{(\omega_s, 2 \rho)}$ we obtain
\[
\EVp_{1 2} \sigmaPM(\del p \otimes \delbar p) = q^{2 (\alpha_s, \alpha_s)} (1 - q^{-(\alpha_s, \alpha_s)}) p \metMP - (q^{(\alpha_s, \alpha_s)} - 1) q^{(\alpha_s, \alpha_s)} \metMP p = 0.
\]

$\bullet$ \textbf{Action of $\EVp_{3 4}$}.
We use $\EVp_{3 4} \opSt_{2 3 4}^{-1} = q^{(\omega_s, \omega_s + 2 \rho)} \EV_{2 3}$ from \eqref{eq:S-evaluation} and the identity $\opS_{1 2 3} = q^{2 (\omega_s, \omega_s) - (\alpha_s, \alpha_s)} \opS_{1 2 3}^{-1} + (1 - q^{-(\alpha_s, \alpha_s)}) q^{(\omega_s, \omega_s)}$ from \eqref{eq:S-quadratic1}. We get
\[
\begin{split}
\EVp_{3 4} \opSt_{2 3 4}^{-1} \opS_{1 2 3} \delbar p \otimes \del p
& = q^{(\omega_s, \omega_s + 2 \rho)} \EV_{2 3} \opS_{1 2 3} \delbar p \otimes \del p \\
& = q^{2 (\omega_s, \omega_s) - (\alpha_s, \alpha_s)} q^{(\omega_s, \omega_s + 2 \rho)} \EV_{2 3} \opS_{1 2 3}^{-1} \delbar p \otimes \del p \\
& + (1 - q^{-(\alpha_s, \alpha_s)}) q^{(\omega_s, \omega_s)} q^{(\omega_s, \omega_s + 2 \rho)} \EV_{2 3} \delbar p \otimes \del p.
\end{split}
\]
The first term vanishes, since $\EV_{2 3} \opS_{1 2 3}^{-1} = q^{-(\omega_s, \omega_s + 2 \rho)} \EVp_{1 2}$ and $\EVp_{1 2} \delbar p = 0$. Then
\[
\EVp_{3 4} \opSt_{2 3 4}^{-1} \opS_{1 2 3} \delbar p \otimes \del p = (1 - q^{-(\alpha_s, \alpha_s)}) q^{2 (\omega_s, \omega_s)} p \metMP.
\]
Therefore we obtain
\[
\EVp_{3 4} \sigmaPM(\del p \otimes \delbar p) = q^{2 (\alpha_s, \alpha_s)} (1 - q^{-(\alpha_s, \alpha_s)}) p \metMP - (q^{(\alpha_s, \alpha_s)} - 1) q^{(\alpha_s, \alpha_s)} p \metMP = 0.
\]

$\bullet$ \textbf{Action of $\opSt_{2 3 4}$}.
Let us write $\elemMPL = \opSt_{4 5 6}^{-1} p \delbar p \otimes \del p$.
Using the relations \eqref{eq:S-action-left}, \eqref{eq:S-right-module} and the "braid equation" for $\opSt$ \eqref{eq:S-braid-equation} we compute
\[
\begin{split}
\opSt_{2 3 4}^{-1} \elemMPL
& = \opSt_{2 3 4}^{-1} \opSt_{4 5 6}^{-1} p \delbar p \otimes \del p
= q^{(\alpha_s, \alpha_s) - (\omega_s, \omega_s)} \opSt_{2 3 4}^{-1} \opSt_{4 5 6}^{-1} \opSt_{2 3 4}^{-1} \delbar p \otimes p \del p \\
& = q^{(\alpha_s, \alpha_s) - (\omega_s, \omega_s)} \opSt_{4 5 6}^{-1} \opSt_{2 3 4}^{-1} \opSt_{4 5 6}^{-1} \delbar p \otimes p \del p
= q^{(\alpha_s, \alpha_s)} \opSt_{4 5 6}^{-1} \opSt_{2 3 4}^{-1} \delbar p p \otimes \del p \\
& = q^{(\omega_s, \omega_s)} \opSt_{4 5 6}^{-1} p \delbar p \otimes \del p = q^{(\omega_s, \omega_s)} \elemMPL.
\end{split}
\]
Now we rewrite $\sigmaPM(p \del p \otimes \delbar p)$ in terms of $\elemMPL$ as
\[
\sigmaPM(p \del p \otimes \delbar p)
= q^{2 (\alpha_s, \alpha_s) - 2 (\omega_s, \omega_s)} \opS_{3 4 5} \elemMPL - (q^{(\alpha_s, \alpha_s)} - 1) q^{(\alpha_s, \alpha_s)} q^{-(\omega_s, 2 \rho)} p p \metMP p.
\]
Then using $\opSt_{2 3 4} \elemMPL = q^{-(\omega_s, \omega_s)} \elemMPL$ and $\opSt_{2 3 4} p p = q^{-(\omega_s, \omega_s)} p p$ we obtain
\[
(\opSt_{2 3 4} - q^{-(\omega_s, \omega_s)}) \sigmaPM(p \del p \otimes \delbar p) = 0.
\]

$\bullet$ \textbf{Action of $\opS_{3 4 5}$}.
Let us write $\elemMPR = \opS_{1 2 3} \delbar p \otimes \del p p$. As above we compute
\[
\begin{split}
\opS_{3 4 5} \elemMPR
& = \opS_{3 4 5} \opS_{1 2 3} \delbar p \otimes \del p p
= q^{(\alpha_s, \alpha_s) - (\omega_s, \omega_s)} \opS_{3 4 5} \opS_{1 2 3} \opS_{3 4 5} \delbar p p \otimes \del p \\
& = q^{(\alpha_s, \alpha_s) - (\omega_s, \omega_s)} \opS_{1 2 3} \opS_{3 4 5} \opS_{1 2 3} \delbar p p \otimes \del p
= q^{(\alpha_s, \alpha_s)} \opS_{1 2 3} \opS_{3 4 5} \delbar p \otimes p \del p \\
& = q^{(\omega_s, \omega_s)} \opS_{1 2 3} \delbar p \otimes \del p p = q^{(\omega_s, \omega_s)} \elemMPR.
\end{split}
\]
Now we rewrite $\sigmaPM(\del p \otimes \delbar p p)$ in terms of $\elemMPR$ as
\[
\sigmaPM(\del p \otimes \delbar p p)
= q^{2 (\alpha_s, \alpha_s) - 2 (\omega_s, \omega_s)} \opSt_{2 3 4}^{-1} \elemMPR - (q^{(\alpha_s, \alpha_s)} - 1) q^{(\alpha_s, \alpha_s)} q^{-(\omega_s, 2 \rho)} p \metMP p p.
\]
Then using $\opS_{3 4 5} \elemMPR = q^{(\omega_s, \omega_s)} \elemMPR$ and $\opS_{1 2 3} p p = q^{(\omega_s, \omega_s)} p p$ we obtain
\[
(\opS_{3 4 5} - q^{(\omega_s, \omega_s)}) \sigmaPM(\del p \otimes \delbar p p) = 0.
\]

$\bullet$ \textbf{Tensor product}.
We want to show that $\sigmaPM(\del p p \otimes \delbar p) = \sigmaPM(\del p \otimes p \delbar p)$.
Using the right $\subalg$-module relations \eqref{eq:S-right-module} we compute
\[
\begin{split}
\sigmaPM(\del p p \otimes \delbar p)
& = q^{(\alpha_s, \alpha_s) - (\omega_s, \omega_s)} \opS_{1 2 3} \sigmaPM(p \del p \otimes \delbar p) \\
& = q^{3 (\alpha_s, \alpha_s) - 3 (\omega_s, \omega_s)} \opS_{1 2 3} \opS_{3 4 5} \opSt_{4 5 6}^{-1} p \delbar p \otimes \del p \\
& - q^{(\alpha_s, \alpha_s) - (\omega_s, \omega_s)} (q^{(\alpha_s, \alpha_s)} - 1) q^{(\alpha_s, \alpha_s)} q^{-(\omega_s, 2 \rho)} \opS_{1 2 3} p p \metMP p.
\end{split}
\]
We focus on the first term. Using again \eqref{eq:S-right-module} we have
\[
\begin{split}
\opS_{1 2 3} \opS_{3 4 5} \opSt_{4 5 6}^{-1} p \delbar p \otimes \del p
& = \opS_{1 2 3} \opS_{3 4 5} \opSt_{4 5 6}^{-1} \opSt_{2 3 4}^{-1} \opS_{3 4 5}^{-1} \delbar p \otimes \del p p \\
& = \opSt_{4 5 6}^{-1} \opS_{1 2 3} \opSt_{2 3 4}^{-1} \delbar p \otimes \del p p.
\end{split}
\]
We also used that the terms $\opS$ and $\opSt$ commute.
For the second term we have
\[
\opS_{1 2 3} p p \metMP p = q^{(\omega_s, \omega_s)} p p \metMP p = q^{(\omega_s, \omega_s)} p \metMP p p = \opSt_{4 5 6}^{-1} p \metMP p p.
\]
Using these identities we obtain
\[
\sigmaPM(\del p p \otimes \delbar p) = q^{(\alpha_s, \alpha_s) - (\omega_s, \omega_s)} \opSt_{4 5 6}^{-1} \sigmaPM(\del p \otimes \delbar p p)  = \sigmaPM(\del p \otimes p \delbar p). \qedhere
\]
\end{proof}

Finally we consider $\sigmaMP$, which is similar to $\sigmaPM$.
However, since the details are fairly involved, we include the necessary steps also in this case.

\begin{proposition}
\label{prop:sigmaMP}
We have a $\subalg$-bimodule map $\sigmaMP: \calcM \otimes_\subalg \calcP \to \calcP \otimes_\subalg \calcM$ given by
\[
\sigmaMP(\delbar p \otimes \del p) = q^{2 (\omega_s, \omega_s) - 2 (\alpha_s, \alpha_s)} \opS_{1 2 3}^{-1} \opSt_{2 3 4} \del p \otimes \delbar p - (q^{-(\alpha_s, \alpha_s)} - 1) q^{-(\omega_s, 2 \rho)} p \metPM p.
\]
\end{proposition}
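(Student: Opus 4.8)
The plan is to follow the proof of \cref{prop:sigmaPM} step by step, with the roles of the holomorphic and antiholomorphic parts (and hence of $\opS$ and $\opSt$, and of $\metMP$ and $\metPM$) interchanged. First I would check that the stated formula defines a map $\calcM \otimes \calcP \to \calcP \otimes \calcM$ of vector spaces, treating the first factor $\calcM$ through its left-module relations and the second factor $\calcP$ through its right-module relations coming from \eqref{eq:S-action-right}. In view of \eqref{eq:calcM-quadratic} and \eqref{eq:calcP-quadratic} this amounts to the four identities
\[
\EVp_{1 2} \sigmaMP(\delbar p \otimes \del p) = 0, \qquad
\EVp_{3 4} \sigmaMP(\delbar p \otimes \del p) = 0,
\]
\[
(\opS_{1 2 3} - q^{(\omega_s, \omega_s)}) \sigmaMP(p \delbar p \otimes \del p) = 0, \qquad
(\opSt_{4 5 6} - q^{-(\omega_s, \omega_s)}) \sigmaMP(\delbar p \otimes \del p p) = 0.
\]
Once these hold, it remains to show that $\sigmaMP$ descends to the tensor product over $\subalg$, that is $\sigmaMP(\delbar p p \otimes \del p) = \sigmaMP(\delbar p \otimes p \del p)$.

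For the two evaluation identities I would commute $\opS_{1 2 3}^{-1}$ past $\opSt_{2 3 4}$ using \eqref{eq:S-commutation}, move the evaluation through with \eqref{eq:S-evaluation}, and then apply the quadratic relation \eqref{eq:S-quadratic1} (or \eqref{eq:S-quadratic2}) to split the leading term into two pieces. One piece vanishes by $\EVp_{1 2} \del p = 0$ or $\EVp_{1 2} \delbar p = 0$, while the other reproduces a multiple of $\metPM p$; this multiple then cancels exactly against the explicit $p \metPM p$ summand of $\sigmaMP$ after using $\EVp_{1 2} p = q^{(\omega_s, 2 \rho)}$. The one genuine asymmetry relative to \cref{prop:sigmaPM} is that here one must invoke the $\metPM$-form \eqref{eq:identity-metPM} of the metric identity rather than its $\metMP$-form \eqref{eq:identity-metMP}, since $\metPM$ and $\metMP$ are not interchanged by a naive relabeling; this is the only place where real care is required in the evaluation checks.

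The two module-action identities together with the descent to $\otimes_\subalg$ form the main obstacle. Here I would introduce auxiliary elements $\elemPML$ and $\elemPMR$ analogous to the $M_{- +}^L, M_{- +}^R$ used in \cref{prop:sigmaPM}. Rewriting $\sigmaMP$ on the relevant legs in terms of these, one first verifies that $\elemPML$ and $\elemPMR$ are eigenvectors of $\opS_{1 2 3}$ and $\opSt_{4 5 6}$ respectively — and likewise that the central $p\,\metPM\,p$ term is — which yields the two module-action identities at once; one then matches $\sigmaMP(\delbar p p \otimes \del p)$ with $\sigmaMP(\delbar p \otimes p \del p)$. This final matching rests on the braid equations \eqref{eq:S-braid-equation} for $\opS$ and $\opSt$, the commutation relations \eqref{eq:S-commutation}, and the right-module relations \eqref{eq:S-right-module}. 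I expect no new idea beyond those already deployed for $\sigmaPM$; the genuine difficulty is tracking the leg indices through the braid-equation manipulations, complicated by the inherent offset between $\opS_{1 2 3}$ and $\opSt_{2 3 4}$, which must be handled carefully throughout.
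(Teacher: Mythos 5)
Your proposal is correct and follows essentially the same route as the paper's proof: the same four defining relations are checked (the two $\EVp$ relations via the quadratic identities \eqref{eq:S-quadratic1}--\eqref{eq:S-quadratic2} and the metric identity \eqref{eq:identity-metPM}, the two module-action relations via the eigenvector elements $\elemPML$ and $\elemPMR$), followed by the same descent argument over $\subalg$ using \eqref{eq:S-right-module} and the braid equations. Your observation that the $\metPM$-form of the metric identity must be used here, rather than the $\metMP$-form appearing in \cref{prop:sigmaPM}, is exactly the point where the paper's computation also diverges from a naive relabeling.
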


\begin{proof}
We need to verify the following relations
\[
\begin{gathered}
\EVp_{1 2} \sigmaMP(\delbar p \otimes \del p) = 0, \quad
\EVp_{3 4} \sigmaMP(\delbar p \otimes \del p) = 0, \\
(\opS_{1 2 3} - q^{(\omega_s, \omega_s)}) \sigmaMP(p \delbar p \otimes \del p) = 0, \quad
(\opSt_{4 5 6} - q^{-(\omega_s, \omega_s)}) \sigmaMP(\delbar p \otimes \del p p) = 0.
\end{gathered}
\]
Then we check that $\sigmaMP$ descends to a map $\calcM \otimes_\subalg \calcP \to \calcP \otimes_\subalg \calcM$.

$\bullet$ \textbf{Action of $\EVp_{1 2}$}.
We have the identity $\opS_{1 2 3}^{-1} = q^{(\alpha_s, \alpha_s) - 2 (\omega_s, \omega_s)} \opS_{1 2 3} + (1 - q^{(\alpha_s, \alpha_s)}) q^{-(\omega_s, \omega_s)}$ from \eqref{eq:S-quadratic1}. Then we can rewrite
\[
\begin{split}
\EVp_{1 2} \opS_{1 2 3}^{-1} \opSt_{2 3 4} \del p \otimes \delbar p
& = q^{(\alpha_s, \alpha_s) - 2 (\omega_s, \omega_s)} \EVp_{1 2} \opS_{1 2 3} \opSt_{2 3 4} \del p \otimes \delbar p \\
& + (1 - q^{(\alpha_s, \alpha_s)}) q^{-(\omega_s, \omega_s)} \EVp_{1 2} \opSt_{2 3 4} \del p \otimes \delbar p.
\end{split}
\]
The first term vanishes, since $\EVp_{1 2} \opS_{1 2 3} \opSt_{2 3 4} = \EVp_{3 4}$ from \eqref{eq:S-evaluation2} and $\EVp_{1 2} \delbar p = 0$.
For the second term we use $\metPM p = q^{(\omega_s, \omega_s) - (\alpha_s, \alpha_s)} \EVp_{1 2} \opSt_{2 3 4} \del p \otimes \delbar p$ from \eqref{eq:identity-metMP}. Then
\[
\EVp_{1 2} \opS_{1 2 3}^{-1} \opSt_{2 3 4} \del p \otimes \delbar p = (q^{-(\alpha_s, \alpha_s)} - 1) q^{2 (\alpha_s, \alpha_s) - 2 (\omega_s, \omega_s)} \metPM p.
\]
Using this identity we conclude that
\[
\EVp_{1 2} \sigmaMP(\delbar p \otimes \del p) = (q^{-(\alpha_s, \alpha_s)} - 1) \metPM p - (q^{-(\alpha_s, \alpha_s)} - 1) \metPM p = 0.
\]

$\bullet$ \textbf{Action of $\EVp_{3 4}$}.
We have the identity $\opSt_{2 3 4} = q^{(\alpha_s, \alpha_s) - 2 (\omega_s, \omega_s)} \opSt_{2 3 4}^{-1} + (1 - q^{(\alpha_s, \alpha_s)}) q^{-(\omega_s, \omega_s)}$ from \eqref{eq:S-quadratic2}. Then we can rewrite
\[
\begin{split}
\EVp_{3 4} \opSt_{2 3 4} \opS_{1 2 3}^{-1} \del p \otimes \delbar p
& = q^{(\alpha_s, \alpha_s) - 2 (\omega_s, \omega_s)} \EVp_{3 4} \opSt_{2 3 4}^{-1} \opS_{1 2 3}^{-1} \del p \otimes \delbar p \\
& + (1 - q^{(\alpha_s, \alpha_s)}) q^{-(\omega_s, \omega_s)} \EVp_{3 4} \opS_{1 2 3}^{-1} \del p \otimes \delbar p.
\end{split}
\]
The first term vanishes, since $\EVp_{3 4} \opSt_{2 3 4}^{-1} \opS_{1 2 3}^{-1} = \EVp_{1 2}$ from \eqref{eq:S-evaluation2} and $\EVp_{1 2} \del p = 0$.
For the second term we use $p \metPM = q^{(\omega_s, \omega_s) - (\alpha_s, \alpha_s)} \EVp_{3 4} \opS_{1 2 3}^{-1} \del p \otimes \delbar p$ from \eqref{eq:identity-metPM}. Then
\[
\EVp_{3 4} \opSt_{2 3 4} \opS_{1 2 3}^{-1} \del p \otimes \delbar p = (q^{-(\alpha_s, \alpha_s)} - 1) q^{2 (\alpha_s, \alpha_s) - 2(\omega_s, \omega_s)} p \metPM.
\]
Using this identity we conclude that
\[
\EVp_{3 4} \sigmaMP(\delbar p \otimes \del p) = (q^{-(\alpha_s, \alpha_s)} - 1) p \metPM - (q^{-(\alpha_s, \alpha_s)} - 1) p \metPM = 0.
\]

$\bullet$ \textbf{Action of $\opS_{1 2 3}$}.
Let us write $\elemPML = \opS_{3 4 5}^{-1} p \del p \otimes \delbar p$.
Then using the relations \eqref{eq:S-action-left}, \eqref{eq:S-right-module} and the "braid equation" for $\opS$ \eqref{eq:S-braid-equation} we compute
\[
\begin{split}
\opS_{1 2 3}^{-1} \elemPML & = \opS_{1 2 3}^{-1} \opS_{3 4 5}^{-1} p \del p \otimes \delbar p
= q^{(\omega_s, \omega_s) - (\alpha_s, \alpha_s)} \opS_{1 2 3}^{-1} \opS_{3 4 5}^{-1} \opS_{1 2 3}^{-1} \del p \otimes p \delbar p \\
& = q^{(\omega_s, \omega_s) - (\alpha_s, \alpha_s)} \opS_{3 4 5}^{-1} \opS_{1 2 3}^{-1} \opS_{3 4 5}^{-1} \del p \otimes p \delbar p
= q^{- (\alpha_s, \alpha_s)} \opS_{3 4 5}^{-1} \opS_{1 2 3}^{-1} \del p \otimes p \delbar p \\
& = q^{-(\omega_s, \omega_s)} \opS_{3 4 5}^{-1} \del p \otimes p \delbar p = q^{-(\omega_s, \omega_s)} \elemPML.
\end{split}
\]
Now we rewrite $\sigmaMP(p \delbar p \otimes \del p)$ in terms of $\elemPML$ as
\[
\sigmaMP(p \delbar p \otimes \del p)
= q^{2 (\omega_s, \omega_s) - 2 (\alpha_s, \alpha_s)} \opSt_{4 5 6} \elemPML - (q^{-(\alpha_s, \alpha_s)} - 1) q^{-(\omega_s, 2 \rho)} p p \metPM p.
\]
Then using $\opS_{1 2 3} \elemPML = q^{(\omega_s, \omega_s)} \elemPML$ and $\opS_{1 2 3} p p = q^{(\omega_s, \omega_s)} p p$ we obtain
\[
(\opS_{1 2 3} - q^{(\omega_s, \omega_s)}) \sigmaMP(p \delbar p \otimes \del p) = 0.
\]

$\bullet$ \textbf{Action of $\opSt_{4 5 6}$}.
Let us write $\elemPMR = \opSt_{2 3 4} \del p \otimes \delbar p p$. As above we compute
\[
\begin{split}
\opSt_{4 5 6} \elemPMR
& = \opSt_{4 5 6} \opSt_{2 3 4} \del p \otimes \delbar p p
= q^{(\omega_s, \omega_s) - (\alpha_s, \alpha_s)} \opSt_{4 5 6} \opSt_{2 3 4} \opSt_{4 5 6} \del p p \otimes \delbar p \\
& = q^{(\omega_s, \omega_s) - (\alpha_s, \alpha_s)} \opSt_{2 3 4} \opSt_{4 5 6} \opSt_{2 3 4} \del p p \otimes \delbar p
= q^{-(\alpha_s, \alpha_s)} \opSt_{2 3 4} \opSt_{4 5 6} \del p \otimes p \delbar p \\
& = q^{-(\omega_s, \omega_s)} \opSt_{2 3 4} \del p \otimes \delbar p p = q^{-(\omega_s, \omega_s)} \elemPMR.
\end{split}
\]
Now we rewrite $\sigmaMP(\delbar p \otimes \del p p)$ in terms of $\elemPMR$ as
\[
\sigmaMP(\delbar p \otimes \del p p)
= q^{2 (\omega_s, \omega_s) - 2 (\alpha_s, \alpha_s)} \opS_{1 2 3}^{-1} \elemPMR - (q^{-(\alpha_s, \alpha_s)} - 1) q^{-(\omega_s, 2 \rho)} p \metPM p p.
\]
Then using $\opSt_{4 5 6} \elemPMR = q^{-(\omega_s, \omega_s)} \elemPMR$ and $\opSt_{2 3 4} p p = q^{-(\omega_s, \omega_s)} p p$ we obtain
\[
(\opSt_{4 5 6} - q^{-(\omega_s, \omega_s)}) \sigmaMP(\delbar p \otimes \del p p) = 0.
\]

$\bullet$ \textbf{Tensor product}.
We want to show that $\sigmaMP(\delbar p p \otimes \del p) = \sigmaMP(\delbar p \otimes p \del p)$.
Using the right $\subalg$-module relations \eqref{eq:S-right-module} we compute
\[
\begin{split}
\sigmaMP(\delbar p p \otimes \del p)
& = q^{(\omega_s, \omega_s) - (\alpha_s, \alpha_s)} \opSt_{2 3 4} \sigmaMP(p \delbar p \otimes \del p) \\
& = q^{3 (\omega_s, \omega_s) - 3 (\alpha_s, \alpha_s)} \opSt_{2 3 4} \opS_{3 4 5}^{-1} \opSt_{4 5 6} p \del p \otimes \delbar p \\ 
& - q^{(\omega_s, \omega_s) - (\alpha_s, \alpha_s)} (q^{-(\alpha_s, \alpha_s)} - 1) q^{-(\omega_s, 2 \rho)} \opSt_{2 3 4} p p \metPM p.
\end{split}
\]
We focus on the first term. Using again \eqref{eq:S-right-module} we have
\[
\begin{split}
\opSt_{2 3 4} \opS_{3 4 5}^{-1} \opSt_{4 5 6} p \del p \otimes \delbar p
& = \opSt_{2 3 4} \opS_{3 4 5}^{-1} \opSt_{4 5 6} \opS_{1 2 3}^{-1} \opSt_{4 5 6}^{-1} \del p \otimes \delbar p p \\
& = \opS_{3 4 5}^{-1} \opS_{1 2 3}^{-1} \opSt_{2 3 4} \del p \otimes \delbar p p.
\end{split}
\]
For the second term we have
\[
\opSt_{2 3 4} p p \metMP p
= q^{-(\omega_s, \omega_s)} p p \metMP p = q^{-(\omega_s, \omega_s)} p \metMP p p = \opS_{3 4 5}^{-1} p \metMP p p.
\]
Using these identities we obtain
\[
\sigmaMP(\delbar p p \otimes \del p)
= q^{(\omega_s, \omega_s) - (\alpha_s, \alpha_s)} \opS_{3 4 5}^{-1} \sigmaMP(\delbar p \otimes \del p p) = \sigmaMP(\delbar p \otimes p \del p). \qedhere
\]
\end{proof}

\end{document}